\documentclass[12pt]{amsart}
\usepackage[margin=0.9in]{geometry} 
\usepackage{esint,amsthm,amsmath,amssymb,amsfonts,graphicx,color,comment,enumerate,caption,bbm,bm,hyperref,enumitem,mathrsfs}
\usepackage{mathtools}



\usepackage{cleveref}
\allowdisplaybreaks

\DeclareMathOperator*{\esssup}{ess\,sup}
\DeclareMathOperator\supp{supp}

\newtheorem{lemma}{Lemma}[section]
\newtheorem{remark}{Remark}[section]
\numberwithin{equation}{section}
\newtheorem{theorem}{Theorem}[section]
\newtheorem{proposition}[theorem]{Proposition}
\newtheorem{definition}[theorem]{Definition}

\title[Schr\"odinger pseudo-multiplier]{Quantitative Weighted Estimates for Schr\"odinger Pseudo-Multipliers and its Commutators}

\author[S. Bagchi, R. Basak, J. Singh, M. N. Vempati]
{Sayan Bagchi \and Riju Basak \and Joydwip Singh \and Manasa N. Vempati} 

\address[S. Bagchi]{Department of Mathematics and Statistics, Indian Institute of Science Education and Research Kolkata, Mohanpur--741246, West Bengal, India.}
\email{sayan.bagchi@iiserkol.ac.in}

\address[R. Basak]{Department of Mathematics, National Taiwan Normal University, Taipei City, 116059, Taiwan.}
\email{rijubasak52@gmail.com}

\address[J. Singh]{Department of Mathematics and Statistics, Indian Institute of Science Education and Research Kolkata, Mohanpur--741246, West Bengal, India.}
\email{js20rs078@iiserkol.ac.in}

\address[M. N. Vempati]{Department of Mathematics, Louisiana State University, Baton Rouge, 70803, USA.}
\email{nvempati@lsu.edu}

\subjclass[2020]{42B25, 42B20, 35S05}

\keywords{Schr\"odinger operator, Pseudo-differential operator, Sparse domination, Commutators, Compact operators}

\begin{document}

\begin{abstract}
In this article, we investigate the unweighted and weighted $L^p$-boundedness of pseudo-multipliers associated with a class of Schr\"odinger operators. The weight classes we consider are tailored to this framework and strictly contain the classical Muckenhoupt $A_p$-classes. To establish the weighted boundedness, we prove a quantitative version of reverse H\"older's inequality and quantitative weighted estimates for general sparse operators, which are of independent interest. We also study commutators of Schr\"odinger pseudo-multipliers, establishing their boundedness and compactness results on these weighted $L^p$-spaces. 

\end{abstract}

\maketitle

\section{Introduction and Main results}
The theory of classical Fourier multipliers, originating with M. Riesz and later refined in the Mihlin–Hörmander framework \cite{Hormander_Translation_invariant, Mihlin_Fourier_Multiplier}, characterizes translation-invariant linear operators through multiplication in the Fourier domain. Fourier multipliers find extensive applications in elliptic and dispersive partial differential equations, fluid dynamics, and signal processing. Pseudo-differential operators generalize this framework by permitting multipliers that vary with both space and frequency, allowing for a more precise analysis of singularities. This generalization has become a cornerstone of modern analysis, connecting classical Fourier multiplier techniques with broader operator-theoretic approaches \cite{Grafakos_Classical_Fourier, Stein_Weiss_Fourier_Analysis, Hormander_Pseudo_Differential_operator, Stein-book, Taylor_Pseudo_differential, Ruzhansky_Turunen_Book}. In this paper, we extend these ideas to pseudo-multipliers associated with a class of Schrödinger operators. To present our results, we first review the existing literature on pseudo-differential operators in Euclidean spaces and their extensions to more general settings.

Given a bounded measurable function $\sigma$ on $\mathbb{R}^n \times \mathbb{R}^n$, the corresponding pseudo-differential operator
 $\sigma(x, D)$ is defined by
\begin{align*}
    \sigma(x,D)f(x) = \frac{1}{(2\pi)^{n/2}} \int_{\mathbb{R}^n} \sigma(x, \xi) \widehat{f}(\xi)\, e^{i x \cdot \xi} \ d\xi ,
\end{align*}
for functions $f$ in Schwartz class, where $\widehat{f}$ denotes the Fourier transform of $f$ given as
\begin{align*}
    \widehat{f}(\xi) &= \frac{1}{(2\pi)^{n/2}} \int_{\mathbb{R}^n} f(x) e^{-i x \cdot \xi} \ dx .
\end{align*}

The function $\sigma$ is referred to as the symbol of the pseudo-differential operator $\sigma(x,D)$. When the symbol $\sigma$ is independent of $x$, such operators reduce to the classical Fourier multipliers $\sigma(D)$. In this case, the $L^2$-boundedness of $\sigma(D)$ follows directly from Plancherel's theorem together with the $L^{\infty}$-boundedness of $\sigma$. However, for general pseudo-differential operators, the boundedness of $\sigma$ in $L^{\infty}$ alone does not ensure $L^2$-boundedness. It is well known that additional regularity conditions on $\sigma$ are required to establish both $L^2$-boundedness and, more generally, $L^p$-boundedness for $p \neq 2$. To proceed further, we first introduce the relevant symbol classes.

For $m \in \mathbb{R}$ and $\rho, \delta \geq 0$, we denote by $S^m_{\rho, \delta}(\mathbb{R}^n)$, the set of all smooth functions on $\mathbb{R}^n \times \mathbb{R}^n$, which satisfies the following condition
\begin{align*}
    |\partial_x^{\alpha} \partial_{\xi}^{\beta} \sigma(x, \xi)| &\leq C_{\alpha, \beta} (1+|\xi|)^{m-\rho|\beta|+\delta|\alpha|} ,
\end{align*}
for all $\alpha, \beta \in \mathbb{N}^n$.

By the celebrated theorem of Calder\'on and Vaillancourt \cite{Calderon_Vaillancourt_Pseudo_differential_1971}, the pseudo-differential operator $\sigma(x,D)$ extends to a bounded operator on $L^2(\mathbb{R}^n)$ whenever $\sigma \in S^0_{\rho,\delta}(\mathbb{R}^n)$ for $0 \leq \delta \leq \rho \leq 1$ with $\delta \neq 1$. In the special case $\sigma \in S^0_{1,0}(\mathbb{R}^n)$, Laptev \cite{Laptev_Fourier_Integral_Op} showed that $\sigma(x,D)$ belongs to the class of Calder\'on-Zygmund operators. More generally, if $\sigma \in S^0_{1,\delta}(\mathbb{R}^n)$ with $0 \leq \delta < 1$, then $\sigma(x,D)$ is bounded on $L^p(\mathbb{R}^n)$ for all $1 < p < \infty$ (see \cite[Proposition~4]{Stein-book}). For the class $S^0_{1,0}(\mathbb{R}^n)$, Miller established weighted $L^p$-boundedness of $\sigma(x,D)$ for the Muckenhoupt $A_p$ weights with $1 < p < \infty$. Subsequently, Tang \cite{Tang_Pseudo_Commutators_2012} extended these results to symbols in $S^0_{1,\delta}(\mathbb{R}^n)$ with $0 \leq \delta < 1$, obtaining weighted $L^p$-boundedness for weight classes strictly larger than the classical Muckenhoupt classes. In the last decade, quantitative weighted estimates for a wide range of operators in harmonic analysis have become a central topic of research. In particular, Beltr\'an and Cladek \cite{Beltran-Cladek} investigated such estimates for pseudo-differential operators using the method of sparse domination, and \cite[Theorem~1.3]{Beltran-Cladek} yields the following result.

\begin{theorem}
    Let $\sigma \in S^0_{1,0}(\mathbb{R}^n)$. Then for $1\leq r <p<\infty$ and $\omega \in A_{p/r}$, there exists a constant $C>0$ independent of $\omega$ such that
    \begin{align*}
        \|\sigma(x,D) f\|_{L^p(\omega)} &\leq C [\omega]_{A_{p/r}}^{\max\left\{ \frac{1}{p - r}, 1 \right\}} \|f\|_{L^p(\omega)} .
    \end{align*}
\end{theorem}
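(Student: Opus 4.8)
The plan is to establish the pointwise sparse domination

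\[
|\sigma(x,D)f(x)| \lesssim \sum_{Q \in \mathcal{S}} \left( \frac{1}{|Q|} \int_Q |f|^r \right)^{1/r} \mathbbm{1}_Q(x),
\]

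valid for a sparse family $\mathcal{S}$ depending on $f$, and then to feed this into the sharp weighted theory of sparse operators. Since $\sigma \in S^0_{1,0}(\mathbb{R}^n)$, the operator $\sigma(x,D)$ is a Calder\'on--Zygmund operator by Laptev's result quoted above, so its kernel satisfies the standard size and smoothness estimates. The first step is therefore to recall the kernel bounds and then run the Lerner--Lacey style argument: for a fixed cube $Q_0$, decompose $f = f \mathbbm{1}_{3Q_0} + f \mathbbm{1}_{(3Q_0)^c}$, control the ``far'' part by the oscillation / CZ tails, iterate the maximal-truncation grand maximal operator over a stopping-time family of subcubes where the local $L^r$-averages roughly double, and verify sparseness of the selected cubes. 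The gain of the exponent $r > 1$ in the averages (rather than just $r = 1$) comes from the strong $(1,1)$-type bound available for CZ operators combined with a pigeonholing; this is exactly the mechanism that, upon testing against $A_{p/r}$ weights, produces the exponent $\max\{1/(p-r), 1\}$ rather than the classical $\max\{1/(p-1),1\}$.

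Once the sparse bound is in hand, the second step is the weighted estimate for the sparse operator $\mathcal{A}_{\mathcal{S},r}f = \sum_Q \langle |f|^r\rangle_Q^{1/r}\mathbbm{1}_Q$. Writing $g = |f|^r$, one has $\mathcal{A}_{\mathcal{S},r}f = (\mathcal{A}_{\mathcal{S},1} g)^{1/r}$ pointwise, so for $\omega \in A_{p/r}$,

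\[
\|\sigma(x,D)f\|_{L^p(\omega)} \lesssim \|\mathcal{A}_{\mathcal{S},1} g\|_{L^{p/r}(\omega)}^{1/r} \lesssim [\omega]_{A_{p/r}}^{\max\{1/((p/r)-1),1\}/r}\,\|g\|_{L^{p/r}(\omega)}^{1/r},
\]

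where the inner inequality is the sharp $A_q$ bound of Lerner/Moen for the (classical, $r=1$) sparse operator with $q = p/r > 1$. Since $\|g\|_{L^{p/r}(\omega)}^{1/r} = \|f\|_{L^p(\omega)}$ and the exponent simplifies as $\frac{1}{r}\max\{\frac{1}{p/r - 1},1\} = \frac{1}{r}\max\{\frac{r}{p-r},1\} = \max\{\frac{1}{p-r},\frac{1}{r}\} \le \max\{\frac{1}{p-r},1\}$, we recover the stated bound; a more careful bookkeeping (using that the sparse form is symmetric in the dual exponent) gives exactly $\max\{1/(p-r),1\}$.

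The main obstacle is the first step: proving the $L^r$-averaged sparse domination uniformly in $\sigma$, i.e. with constant depending only on the dimension and finitely many of the seminorms $C_{\alpha,\beta}$ defining $S^0_{1,0}(\mathbb{R}^n)$. Although $\sigma(x,D)$ is a CZ operator, one must be careful that all the quantitative inputs (the weak $(1,1)$ bound, the kernel regularity constants, and the grand-maximal-truncation bound) are controlled by these seminorms and not by any larger, non-quantitative quantity; the smoothing exponent $r$ enters precisely because the relevant maximal operator controlling the local error terms is $M_r$-type rather than $M$-type. Everything downstream — the reverse H\"older inequality, the $A_q$ characteristic computation — is standard once this uniform sparse bound is established, so this is indeed the crux.
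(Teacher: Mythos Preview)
Your overall strategy---pointwise sparse domination with $L^r$ averages, followed by the sharp weighted bound for the resulting sparse operator---is correct and matches both the Beltran--Cladek approach the paper cites and the paper's own treatment of the Schr\"odinger analogue. The first step (via the grand maximal truncation operator and the CZ kernel bounds) is fine.

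The gap is in your second step. The claimed pointwise identity $\mathcal{A}_{\mathcal{S},r}f = (\mathcal{A}_{\mathcal{S},1}|f|^r)^{1/r}$ is false: the left side is $\sum_Q \langle |f|^r\rangle_Q^{1/r}\mathbbm{1}_Q$ while the right is $\bigl(\sum_Q \langle |f|^r\rangle_Q\,\mathbbm{1}_Q\bigr)^{1/r}$, and for $r>1$ the subadditivity $(\sum a_Q)^{1/r}\le \sum a_Q^{1/r}$ gives the inequality in the \emph{wrong} direction to bound $\mathcal{A}_{\mathcal{S},r}f$ from above. Consequently the reduction to the Moen/Lerner $A_q$ bound at $q=p/r$ does not go through, and the exponent you compute (along with the ``more careful bookkeeping'' remark) is unjustified. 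The paper does not itself prove this Euclidean statement---it is quoted from \cite{Beltran-Cladek}---but its Proposition~\ref{Prop: Boundedness of sparse operator} (take $p_0=1$, $\theta=0$) establishes exactly the bound $\|\mathcal{A}_{\mathcal{S},r}f\|_{L^p(\omega)}\lesssim [\omega]_{A_{p/r}}^{\max\{1/(p-r),1\}}\|f\|_{L^p(\omega)}$ directly, via duality against $g\in L^{(p)'}$, H\"older with the dual weight $\nu=\omega^{1-(p/r)'}$, and a Carleson-embedding argument over the disjoint sets $E_Q$; that direct argument is the correct replacement for your second step.
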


Let us now turn our attention to the commutators of the pseudo-differential operators. For a function $b$, the commutator of an operator $T$ is denoted by $[b,T]$ and defined by
\begin{align*}
    [b, T]f &:= b\, T(f)-T(bf) .
\end{align*}

The study of commutators was initiated by Calder\'on \cite{Calderon_Commutators} and later developed by Coifman, Rochberg, and Weiss \cite{Coifman_Rochberg_Weiss_Commutators}. Commutators play a fundamental role in the analysis of the regularity of solutions to second-order elliptic partial differential equations.

When $T$ is the pseudo-differential operator $\sigma(x,D)$, it is known that the corresponding commutator $[b,\sigma(x,D)]$ associated with the symbol class $S^{0}_{1,\delta}(\mathbb{R}^n)$ for $0 \leq \delta < 1$ is bounded on $L^p(\mathbb{R}^n)$ for $1 < p < \infty$, and also on weighted spaces $L^p(\omega)$ with $\omega \in A_p$ (see \cite{Alvarez_Bagby_Kurtz_Weighted_estimates, Michalowski_Rule_Staubach_Pseudo_differential}). In \cite{Tang_Pseudo_Commutators_2012}, Tang extended the previously known weighted boundedness results for commutators of pseudo-differential operators with symbols in $S^{0}_{1,0}(\mathbb{R}^n)$ by considering more general weight classes that strictly contain the classical $A_p$ weights. More recently, Wen, Wu, and Xue \cite{Wen_Wu_Xue_pseudo_commutator_2020} studied quantitative weighted estimates for commutators of pseudo-differential operators with symbols $\sigma \in S^{m}_{1,0}(\mathbb{R}^n)$, $m < 0$, using the method of sparse domination.

The compactness of commutators has been extensively studied over the years. The problem was initiated by Uchiyama \cite{Uchiyama_Compactness_Commutators} for the Hankel-type operators and subsequently developed for various important operators; see \cite{Chen_Duong_Li_Wu_Compactness_Riesz_Transform_Stratified_group_2019}, \cite{Tao_Yang_Yuan_Zhang_compact_ball_Banach_2023}, \cite{Duong_Li_Mo_Wu_Yang_compactness_Bessel_2018} and references therein. Recently, in \cite{Guo_Zhou_Compactness_Pseudo_differential_2019}, the authors studied the compactness of pseudo-differential commutators for the general weight class introduced by \cite{Tang_Pseudo_Commutators_2012}, with symbol $\sigma \in S^{0}_{1, 0}(\mathbb{R}^n)$.

The theory of pseudo-differential operators has been developed beyond the Euclidean setting. For example, on compact Lie groups, Heisenberg groups, and more generally on graded Lie groups, see \cite{Ruzhansky_Turunen_Book}, \cite{Bahouri_Fermanian_Gallagher_PseudodifferentialHeisenberg}, \cite{Cardona_Delgado_Ruzhansky_JGA2021}, \cite{Fischer_Ruzhansky_QuantizationBook}, and the references therein. Outside of the group settings, pseudo-differential operators (pseudo-multipliers) have also been studied for various nonnegative self-adjoint operators on doubling metric measure spaces. In particular, for the Hermite operator see \cite{EppersonHermitePseudo}, \cite{BagchiThangaveluHermitePseudo}, \cite{Bui_Hermite_Pseudo_2020}, \cite{Ly_L2_boundedness_JFA}, \cite{Ly_Hermite_Weighted}; for the Grushin operator see \cite{Bagchi_Garg_L2_boundedness_JFA}, \cite{Bagchi_Basak_Garg_Ghosh_Grushin_2023}, \cite{Bagchi_Basak_Garg_Ghosh_Operator_valued_2024}. More generally, for any nonnegative self-adjoint operator on a space of homogeneous type, under suitable assumptions on the operator, Bernicot and Frey \cite{Bernicot_Frey_Pseudo_semigroup_2014} studied the unweighted $L^p$-boundedness of pseudo-differential operators. Quantitative weighted estimates for pseudo-multipliers in the case of Grushin operators were first obtained in \cite{Bagchi_Basak_Garg_Ghosh_Grushin_2023}, \cite{BBGG-2}, and \cite{Bagchi_Basak_Garg_Ghosh_Operator_valued_2024} using the sparse domination technique.

In this paper, we investigate pseudo-differential operator analogues (pseudo-multipliers) associated with Schr\"odinger operators and establish quantitative weighted $L^p$-boundedness results. Furthermore, we prove quantitative weighted boundedness and compactness for commutators of Schr\"odinger pseudo-multipliers. To present our main results, we begin with preliminaries on Schr\"odinger operators.

\subsection{Schr\"odinger pseudo-multiplier}
Consider the Schr\"odinger operator on $\mathbb{R}^n$ $(n\geq 3)$ given by
\begin{align*}
    \mathcal{L} = -\Delta + V(x)
\end{align*}
Here $\Delta$ is the Laplace operator on $\mathbb{R}^n$ and $V$ is the non-negative potential function. We assume that $V$ is not identically zero and $V \in RH_q$ for some $q >n$, that is $V \in L^q_{loc}(\mathbb{R}^n), V \geq 0$, and there exists a constant $C>0$ such that the reverse H\"older inequality
\begin{align*}
    \left(\frac{1}{|B(x,r)|} \int_{B(x,r)} V^q(y) \ dy \right)^{1/q} \leq C \left(\frac{1}{|B(x,r)|} \int_{B(x,r)} V(y) \ dy \right) ,
\end{align*}
holds for every $x \in \mathbb{R}^n$ and $r>0$, where $B(x,r)$ denotes the ball centered at $x$ and radius $r$.

The operator $\mathcal{L}$ is a non-negative and self-adjoint operator on $L^2(\mathbb{R}^n)$. For every Borel measurable function $F: \mathbb{R} \to \mathbb{C}$, the spectral theorem gives
\begin{align*}
    F(\sqrt{\mathcal{L}}) &= \int_0^{\infty} F(\sqrt{\lambda}) \, dE(\lambda) ,
\end{align*}
$dE(\lambda)$ be the spectral resolution of the operator $\mathcal{L}$ for $\lambda \geq 0$. Then $F(\sqrt{\mathcal{L}})$ is bounded on $L^2(\mathbb{R}^n)$ if and only if the spectral multiplier $F$ is $E$-essentially bounded. For other $p \in (1, \infty)$, the $L^p$ boundedness of $F(\sqrt{\mathcal{L}})$ was studied by Hebisch \cite{Hebisch_Multiplier_Schrodinger}. In particular, when $V(x) = |x|^2$, the operator $H = -\Delta + |x|^2$ is the well-known Hermite operator and the corresponding spectral multiplier has been studied by \cite{Mauceri_Weyl_Transform_JFA}, \cite{Thangavelu_Multiplier_Hermite}.

Now we define the pseudo multiplier associated with the Schr\"odinger operator. For that first we need to define the symbol classes.
\begin{definition}
    For any $m \in \mathbb{R}$ and $\delta \geq 0$, we say that $\sigma \in C^{\infty}(\mathbb{R}^n \times \mathbb{R})$ belongs to the symbol class $S^m_{1, \delta}(\sqrt{\mathcal{L}})$ if it satisfies the following estimate:
    \begin{align*}
        \left|\partial_x^{\alpha} \partial_{\eta}^l \sigma(x, \eta) \right| \leq C_{l, \alpha} (1+\eta)^{m-l+\delta|\alpha|}
    \end{align*}
    for all $l \in \mathbb{N}$ and $\alpha \in \mathbb{N}^n$.
\end{definition}

Let $\sigma: \mathbb{R}^n \times \mathbb{R} \to \mathbb{C}$ be a smooth function. Take $\psi_0 \in C_c^{\infty}(-2,2)$ and $\psi_1 \in C_c^{\infty}(1/2,2)$ such that $0\leq \psi_0, \psi_1 \leq 1$ and 
\begin{align*}
    \sum_{j=0}^{\infty} \psi_j(\eta) =1,
\end{align*}
where $\psi_j(\eta)=\psi_1(2^{-(j-1)}\eta)$ for $j \geq 2$. Using this partition of unity, we decompose $\sigma$ as
\begin{align}
\label{Equation: dyadic decomposition of pseudo multiplier}
    \sigma(x, \lambda) = \sum_{j=0}^{\infty} \sigma(x, \lambda) \psi_j(\lambda) =: \sum_{j=0}^{\infty} \sigma_j(x, \lambda).
\end{align}

Let us define $F(x, \lambda)= \sigma_j(x, 2^j \sqrt{\lambda}) e^{\lambda} $. Then we have $\sigma_j(x, \sqrt{\lambda})= F(x, 2^{-2j} \lambda) e^{-2^{-2j}\lambda}$. Therefore, by the Fourier inversion formula in the second variable, we get
\begin{align}
\label{Taking inverse Fourier transform}
    \sigma_j(x, \sqrt{\lambda}) &= \frac{1}{2 \pi} \int_{\mathbb{R}} \widehat{F}(x, \tau) \exp{({-(1-i \tau)2^{-2j} \lambda})} \,  d\tau , \quad x \in \mathbb{R}^n
\end{align}
where for $x$-fixed, $\widehat{F}(x, \tau)$ stands for the Fourier transform of $F$ with respect to the second variable.
Then we define
\begin{align}\label{Definition: original defintion of sigmaj}
    \sigma_j(x, \sqrt{\mathcal{L}})f(x) &:= \frac{1}{2 \pi} \int_{\mathbb{R}} \widehat{F}(x, \tau) \exp{({-(1-i \tau)2^{-2j} \mathcal{L}})}f(x) \,  d\tau .
\end{align}
For each $j \geq 0$, one can see that $\sigma_j(x, \sqrt{\mathcal{L}})f \in L^2(\mathbb{R}^n)$ for $f \in L^2(\mathbb{R}^n)$. Indeed, since for each $\tau \in \mathbb{R}$ we have $|\exp{({-(1-i \tau)2^{-2j} \lambda})}| \leq 1$, therefore an application of Plancherel's theorem yields
\begin{align}
\label{Uniform boundedness of sigmaj term}
    \|\sigma_j(x, \sqrt{\mathcal{L}})f\|_{L^2} &\leq \frac{1}{2 \pi} \int_{\mathbb{R}} \sup_{x \in \mathbb{R}^n}|\widehat{F}(x, \tau)| \|\exp{({-(1-i \tau)2^{-2j} \mathcal{L}})}f\|_{L^2} \,  d\tau \\
    &\nonumber \leq C \|f\|_{L^2} \int_{\mathbb{R}} \sup_{x \in \mathbb{R}^n}|\widehat{F}(x, \tau)| \,  d\tau \leq C \|f\|_{L^2} ,
\end{align}
where we have used the fact $F(x, \lambda)$ is $C_c^{\infty}(\mathbb{R})$ function with respect to second variable and $F \in L^{\infty}(\mathbb{R}^n \times \mathbb{R})$. This further implies that for any positive integer $N$ and $f \in L^2(\mathbb{R}^n)$, $\mathcal{T}_{N}f =\sum_{j=0}^{N} \sigma_j(x, \sqrt{\mathcal{L}})f$ also belongs to $L^2(\mathbb{R}^n)$.

Let us define
\begin{align*}
    \mathcal{D}(\mathcal{L}) &:= \{f \in L^2(\mathbb{R}^n) : f = E_{[0, N)}f \quad \text{for some}\ N>0\} .
\end{align*}
It is easy to see that $\mathcal{D}(\mathcal{L})$ is dense in $L^2(\mathbb{R}^n)$. Moreover, for $f \in \mathcal{D}(\mathcal{L})$ one can easily show that $\mathcal{T}_N f$ converges in $L^2(\mathbb{R}^n)$ as $N \to \infty$. In fact, first note that if $f \in \mathcal{D}(\mathcal{L})$ then we can write $f = \sum_{j=0}^{N(f)} \psi_j(\sqrt{L})f$ for some $N(f)>0$. Since the support of $\sigma_j (x, \cdot)$ with respect to the second variable is contained in $[2^{2j-4}, 2^{2j}]$, we have
\begin{align*}
    \sum_{j>N(f)+2} \sigma_j(x, \sqrt{\mathcal{L}})f = 0 .
\end{align*}

Therefore for $f \in \mathcal{D}(\mathcal{L})$ we can conclude that as $N \to \infty$, $\mathcal{T}_N f = \sum_{j=0}^{N} \sigma_j(x, \sqrt{\mathcal{L}})f$ converges in $L^2$, which we denote by $Tf := \sigma(x, \sqrt{\mathcal{L}})f$.

Then we can also show that the following $L^2$-boundedness result holds for $\mathcal{T}_N$.
\begin{proposition}\label{prop:L2-sigma-j}
Let $\sigma \in S^0_{1,0}(\sqrt{\mathcal{L}})$. Then for each positive number $N$,
\begin{align*}
    \|\mathcal{T}_{N}\|_{L^2\rightarrow L^2} \leq C ,
\end{align*}
for some positive constant $C$ independent of $N$.

Moreover, the operator $\mathcal{T}_N=\sum_{j=0}^{N} \sigma_j(x, \sqrt{\mathcal{L}})$ converges in the strong operator topology of $\mathcal{B}(L^2(\mathbb{R}^n))$.
\end{proposition}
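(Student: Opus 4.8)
The plan is to obtain the uniform bound from the Cotlar--Stein almost orthogonality lemma applied to the summands $T_j:=\sigma_j(x,\sqrt{\mathcal{L}})$, so that $\mathcal{T}_N=\sum_{j=0}^{N}T_j$. Each $\|T_j\|_{L^2\to L^2}\le C$ is already furnished by \eqref{Uniform boundedness of sigmaj term}, so everything comes down to the two off-diagonal estimates $T_jT_k^{*}=0$ for $|j-k|\ge 4$ and $\|T_j^{*}T_k\|_{L^2\to L^2}\lesssim 2^{-2|j-k|}$ for $|j-k|$ large (the remaining near-diagonal terms being controlled by $\|T_j\|\,\|T_k\|\lesssim 1$); together these give $\sup_j\sum_k\bigl(\|T_jT_k^{*}\|^{1/2}+\|T_j^{*}T_k\|^{1/2}\bigr)\lesssim 1$ uniformly in $N$, hence $\|\mathcal{T}_N\|_{L^2\to L^2}\le C$. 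The first identity is a spectral localization: for each fixed $x$ the function $\lambda\mapsto\sigma_j(x,\sqrt{\lambda})$ is supported in $[2^{2j-4},2^{2j}]$, so picking a real $\chi_j\in C_c^{\infty}$ with $0\le\chi_j\le 1$, $\chi_j\equiv 1$ on $[2^{2j-4},2^{2j}]$ and $\supp\chi_j\subseteq[2^{2j-5},2^{2j+1}]$, the spectral theorem gives $T_j=T_j\chi_j(\mathcal{L})$, hence $T_j^{*}=\chi_j(\mathcal{L})T_j^{*}$, and therefore $T_jT_k^{*}=T_j\bigl(\chi_j(\mathcal{L})\chi_k(\mathcal{L})\bigr)T_k^{*}=0$ once $\supp\chi_j\cap\supp\chi_k=\emptyset$, i.e.\ $|j-k|\ge 4$.

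The second estimate is where the separation between the scales $2^{2j}$ and $2^{2k}$ must be converted into decay, using the uniform-in-$j$ smoothness of the symbol pieces (this is precisely where $\delta=0$ is used). Assume $j\ge k$. Writing $\langle T_j^{*}T_ku,v\rangle=\langle T_k\chi_k(\mathcal{L})u,\,T_j\chi_j(\mathcal{L})v\rangle$, expanding $T_k$ and $T_j$ through \eqref{Definition: original defintion of sigmaj}, and using Fubini (legitimate since $\widehat{F_j}(x,\tau)$ decays rapidly in $\tau$ uniformly in $x$), one reduces $\langle T_j^{*}T_ku,v\rangle$ to a $(\tau,\tau')$-superposition of terms $\langle a_{\tau,\tau'}w_\tau,\,v'_{\tau'}\rangle$, where $a_{\tau,\tau'}(x)=\widehat{F_k}(x,\tau)\overline{\widehat{F_j}(x,\tau')}$ (with $\widehat{F_l}$ the partial Fourier transform in $\lambda$ of $F_l(x,\lambda)=\sigma_l(x,2^l\sqrt{\lambda})e^{\lambda}$, cf.\ \eqref{Taking inverse Fourier transform}) and $w_\tau=e^{-(1-i\tau)2^{-2k}\mathcal{L}}\chi_k(\mathcal{L})u$, $v'_{\tau'}=e^{-(1-i\tau')2^{-2j}\mathcal{L}}\chi_j(\mathcal{L})v$ are spectrally localized in $\supp\chi_k$, resp.\ $\supp\chi_j$, with $\|w_\tau\|\le\|u\|$, $\|v'_{\tau'}\|\le\|v\|$ (only the $L^2$-contractivity $\|e^{-z\mathcal{L}}\|\le1$ for $\operatorname{Re}z\ge0$ is used). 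Since $v'_{\tau'}=\tilde\chi_j(\mathcal{L})v'_{\tau'}$ for a cutoff $\tilde\chi_j\equiv 1$ on $\supp\chi_j$ with $\supp\tilde\chi_j\subseteq[2^{2j-6},2^{2j+2}]$, and $\tilde\chi_j(\mathcal{L})=\bigl(\tilde\chi_j(\mathcal{L})\mathcal{L}^{-1}\bigr)\mathcal{L}$ with $\|\tilde\chi_j(\mathcal{L})\mathcal{L}^{-1}\|\lesssim 2^{-2j}$, the problem reduces to bounding $\|\mathcal{L}(a_{\tau,\tau'}w_\tau)\|$. Here $a_{\tau,\tau'}w_\tau\in\mathrm{Dom}(\mathcal{L})$ (since $w_\tau$ is spectrally localized and $a_{\tau,\tau'}\in C^{\infty}$ with bounded derivatives), and $\mathcal{L}(aw)=a\mathcal{L}w-(\Delta a)w-2\nabla a\cdot\nabla w$ as the commutator only sees $-\Delta$; combined with $\|\mathcal{L}w_\tau\|\lesssim 2^{2k}\|w_\tau\|$ and $\|\nabla w_\tau\|^2=\langle -\Delta w_\tau,w_\tau\rangle\le\langle \mathcal{L}w_\tau,w_\tau\rangle\lesssim 2^{2k}\|w_\tau\|^2$ (using $V\ge 0$ and the localization of $w_\tau$), this yields $\|\mathcal{L}(a_{\tau,\tau'}w_\tau)\|\lesssim 2^{2k}\|a_{\tau,\tau'}\|_{C^2}\|w_\tau\|$. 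Finally $\|a_{\tau,\tau'}\|_{C^2}\lesssim_M (1+|\tau|)^{-M}(1+|\tau'|)^{-M}$ uniformly in $j,k$, by the argument preceding \eqref{Uniform boundedness of sigmaj term} now applied to $\partial_x$-derivatives as well, which remain uniform exactly because $\delta=0$. Putting these together and integrating in $\tau,\tau'$ with $M=2$ gives $\|T_j^{*}T_k\|\lesssim 2^{-2(j-k)}$.

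For the strong-operator convergence, recall that for $f\in\mathcal{D}(\mathcal{L})$ the sequence $\mathcal{T}_Nf$ is eventually constant (it is already observed that $\sum_{j>N(f)+2}T_jf=0$), hence convergent; for a general $f\in L^2$ one approximates by the dense subspace $\mathcal{D}(\mathcal{L})$ and uses the uniform bound $\sup_N\|\mathcal{T}_N\|\le C$ in a routine $\varepsilon/3$ argument to conclude that $(\mathcal{T}_Nf)$ is Cauchy, so $\mathcal{T}_N$ converges strongly to some $T\in\mathcal{B}(L^2(\mathbb{R}^n))$ with $\|T\|\le C$. The step I expect to be the real obstacle is the bound $\|T_j^{*}T_k\|\lesssim 2^{-2|j-k|}$: organizing the semigroup/Fubini expansion so that the gap between the two spectral scales turns into honest decay, verifying the domain membership $a_{\tau,\tau'}w_\tau\in\mathrm{Dom}(\mathcal{L})$ that legitimizes the commutator computation, and---most importantly---keeping every symbol estimate on $\widehat{F_j}$ uniform in $j$, which is exactly what restricts the statement to the class $S^{0}_{1,0}(\sqrt{\mathcal{L}})$.
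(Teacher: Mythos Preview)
Your Cotlar--Stein argument is correct and takes a genuinely different route from the paper's proof. The paper establishes the uniform $L^2$ bound by proving a \emph{localized} estimate
\[
\int_{B(u,1)}|\mathcal{T}_Nf(x)|^2\,dx\;\lesssim\;\int_{\mathbb{R}^n}\frac{|f(y)|^2}{(1+|y-u|)^{2r}}\,dy
\]
and then integrating over $u$. To obtain this, the authors split $f$ into a near part $f_1$ supported in $B(u,3)$ and a far part $f_2$; for $f_1$ they introduce a ``frozen'' kernel $K_j^z(x,y)$ (with an independent variable $z$ replacing the symbol-variable $x$), apply Sobolev embedding in $z$ to trade the pseudo-multiplier for a genuine spectral multiplier with frozen symbol, and then invoke Plancherel; for $f_2$ they use weighted $L^2$ kernel estimates from \cite{Duong_Ouhabaz_Sikora_Sharp_Multiplier_2002} summed over $j$. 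Your approach, by contrast, is purely operator-theoretic: the spectral-localization identity $T_j=T_j\chi_j(\mathcal{L})$ kills $T_jT_k^{*}$ for $|j-k|\ge 4$, and the commutator calculation $\mathcal{L}(aw)=a\,\mathcal{L}w-(\Delta a)w-2\nabla a\cdot\nabla w$ converts the spectral gap into the decay $\|T_j^{*}T_k\|\lesssim 2^{-2|j-k|}$. This avoids the heat-kernel machinery altogether for the $L^2$ step, at the cost of the domain verification $a_{\tau,\tau'}w_\tau\in\mathrm{Dom}(\mathcal{L})$; the latter is justified by Shen's estimate $\|\nabla^2 f\|_{L^2}+\|Vf\|_{L^2}\lesssim\|\mathcal{L}f\|_{L^2}$ for $V\in RH_q$ with $q>n/2$ \cite{Shen_Schrodinger_operator_certain_potential_1995}, which gives $w_\tau\in H^2$ and $Vw_\tau\in L^2$, hence $a_{\tau,\tau'}w_\tau\in H^2$ with $V(a_{\tau,\tau'}w_\tau)=a_{\tau,\tau'}\cdot Vw_\tau\in L^2$. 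Both proofs use $\delta=0$ in the same essential way---to keep the $x$-derivatives of $\sigma_j(x,2^j\sqrt{\lambda})$ uniformly bounded in $j$---but your argument isolates this point more transparently. The treatment of strong-operator convergence is identical in both.
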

Note that when defining the operator $\sigma_j(x, \sqrt{\mathcal{L}})$, we used the fact that $\sigma_j$ is supported on a compact set in the second variable. However, for symbols $\sigma \in S_{1,0}^0(\sqrt{\mathcal{L}})$ it is not clear immediately how one can make sense of the definition \eqref{Definition: original defintion of sigmaj}. Nevertheless, Proposition \ref{prop:L2-sigma-j} indicates that for each positive integer $N$, the operator norm of $\mathcal{T}_N$ is bounded uniformly over $N$ and converges in the strong operator topology of $\mathcal{B}(L^2(\mathbb{R}^n))$ as $N\rightarrow\infty$. Based on this observation, we will define our pseudo-multipliers for the general symbols $\sigma$ as the limit of the partial sum operator $\mathcal{T}_N$ of the operators $\sigma_j(x, \sqrt{\mathcal{L}})$ as follows. 

For $\sigma \in S^0_{1,0}(\sqrt{\mathcal{L}})$, we define the corresponding pseudo-multiplier by
\begin{align*}
    \sigma(x,{\mathcal{\sqrt{L}}})=\lim_{N\rightarrow\infty} \sum_{j=0}^{N} \sigma_j(x,{\mathcal{\sqrt{L}}}).
\end{align*}
Here, the limit is taken in the sense of the strong operator topology of $\mathcal{B}(L^2(\mathbb{R}^n))$, that is $\lim_{N\rightarrow\infty} \|\sigma(x,{\mathcal{\sqrt{L}}})f-\sum_{j=0}^{N} \sigma_j(x,{\mathcal{\sqrt{L}}})f\|_{L^2(\mathbb{R}^n)}=0$ for $f\in L^2(\mathbb{R}^n)$. Since $\|\mathcal{T}_N \|_{L^2 \to L^2} \leq C$, where $C$ is independent of $N$, then an application of uniform boundedness principle implies that $T= \sigma(x, \sqrt{\mathcal{L}})$ is bounded operator on $L^2(\mathbb{R}^n)$.

\medskip

Moreover, we have the following result.
\begin{theorem}
\label{Theorem: L^2 boundedness of the symbol S^0_{1,0}}
    Let $\sigma \in S^0_{1,0}(\sqrt{\mathcal{L}})$. Then the operator $\sigma(x, \sqrt{\mathcal{L}})$ is of weak type $(1,1)$ and bounded on $L^p(\mathbb{R}^n)$ for $1<p\leq 2$.
\end{theorem}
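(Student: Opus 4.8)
The plan is to establish that $T=\sigma(x,\sqrt{\mathcal{L}})$ is a Calder\'on--Zygmund operator on $\mathbb{R}^n$. Its $L^2(\mathbb{R}^n)$-boundedness is already in hand from the discussion preceding the statement (Proposition~\ref{prop:L2-sigma-j} together with the uniform boundedness principle), so the core of the argument is to produce an off-diagonal kernel $K(x,y)$ representing $T$ and to verify the two standard kernel estimates: the size bound $|K(x,y)|\lesssim |x-y|^{-n}$ and a H\"older-type regularity in the second variable, $|K(x,y)-K(x,y')|\lesssim (|y-y'|/|x-y|)^{\gamma}|x-y|^{-n}$ for $|x-y|\ge 2|y-y'|$ and some $\gamma\in(0,1]$. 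Granting these, the weak type $(1,1)$ bound follows from the usual Calder\'on--Zygmund decomposition: decomposing $f=g+b$ at height $\alpha$ with $b=\sum_i b_i$, each $b_i$ mean zero on a cube $Q_i$, the good part $g$ is handled by Chebyshev and the $L^2$ bound, while for the bad part one writes $Tb_i(x)=\int(K(x,y)-K(x,y_i))b_i(y)\,dy$ for $x\notin 2Q_i$ and uses the $y$-regularity of $K$ to obtain $\int_{(2Q_i)^c}|Tb_i|\lesssim \|b_i\|_1$. The $L^p$-bound for $1<p<2$ is then obtained by Marcinkiewicz interpolation between this weak $(1,1)$ estimate and the $L^2$ bound, and $p=2$ is the $L^2$-boundedness itself.

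The kernel is built from the dyadic decomposition $\sigma=\sum_{j\ge 0}\sigma_j$ in \eqref{Equation: dyadic decomposition of pseudo multiplier} and the representation \eqref{Definition: original defintion of sigmaj}. Writing $p_z^{\mathcal{L}}(x,y)$ for the heat kernel of $\mathcal{L}$ at a complex time $z$ with $\mathrm{Re}\,z>0$, and $F_j(x,\lambda)=\sigma_j(x,2^j\sqrt\lambda)e^{\lambda}$, each $\sigma_j(x,\sqrt{\mathcal{L}})$ has kernel
\[
K_j(x,y)=\frac{1}{2\pi}\int_{\mathbb{R}}\widehat{F_j}(x,\tau)\,p^{\mathcal{L}}_{(1-i\tau)2^{-2j}}(x,y)\,d\tau ,
\]
the $\tau$-integral converging absolutely. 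The point is that for $j\ge 2$, after the rescaling the cutoff $\psi_j(2^j\sqrt\lambda)=\psi_1(2\sqrt\lambda)$ is one fixed bump, so $F_j(x,\cdot)$ is supported in the $j$-independent compact interval $[2^{-4},1]$; using the symbol bounds of $S^0_{1,0}(\sqrt{\mathcal{L}})$ (the powers $2^{-jl}$ from $\partial_\eta^l\sigma$ cancel the factors $2^{jl}$ produced by differentiating $2^j\sqrt\lambda$, and $\delta=0$ gives no loss from $\partial_x$) one finds $|\partial_x^\alpha\partial_\lambda^l F_j(x,\lambda)|\le C_{\alpha,l}$ uniformly in $j$ and $x$, hence, integrating by parts in $\lambda$, $|\partial_x^\alpha\widehat{F_j}(x,\tau)|\le C_{\alpha,M}(1+|\tau|)^{-M}$ for every $M$, uniformly in $j$ and $x$ (the terms $j=0,1$ are treated directly and are easier).

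For the heat kernel, $V\ge 0$ gives, by the Feynman--Kac formula, $0\le p^{\mathcal{L}}_t(x,y)\le (4\pi t)^{-n/2}e^{-|x-y|^2/4t}$ for $t>0$, and since $V\in RH_q$ with $q>n$ (so $V\in L^{q}_{loc}$ with $q>n/2$) standard parabolic regularity (the De Giorgi--Nash--Moser theory for $\partial_t+\mathcal{L}$) yields local H\"older continuity of the kernel and hence $|p^{\mathcal{L}}_t(x,y)-p^{\mathcal{L}}_t(x,y')|\lesssim (|y-y'|/\sqrt t)^{\gamma}t^{-n/2}e^{-c|x-y|^2/t}$ when $|y-y'|\le\sqrt t$, for some $\gamma\in(0,1]$. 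Propagating these to complex times $z=(1-i\tau)2^{-2j}$ by analyticity of the semigroup (with an admissible polynomial loss in $|z|/\mathrm{Re}\,z=(1+\tau^2)^{1/2}$), and using $\mathrm{Re}\,z=2^{-2j}$ and $\mathrm{Re}\,z\,|x-y|^2/|z|^2=2^{2j}|x-y|^2/(1+\tau^2)$, one gets bounds of the form $|p^{\mathcal{L}}_z(x,y)|\lesssim 2^{jn}(1+|\tau|)^{\kappa}e^{-c\,2^{2j}|x-y|^2/(1+\tau^2)}$ and a matching H\"older estimate with an extra factor $(2^j|y-y'|)^{\gamma}$ when $2^j|y-y'|\le 1$. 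Inserting these into $K_j$ and absorbing the $(1+|\tau|)^{\kappa}$ into the rapid decay of $\widehat{F_j}$ (taking $M$ large), the $\tau$-integral yields, for every $N$,
\[
|K_j(x,y)|\le \frac{C_N\,2^{jn}}{(1+2^j|x-y|)^{N}},
\qquad
|K_j(x,y)-K_j(x,y')|\le \frac{C_N\,2^{jn}(2^j|y-y'|)^{\gamma}}{(1+2^j|x-y|)^{N}}
\]
(the second bound for $2^j|y-y'|\le 1$; when $2^j|y-y'|>1$ and $|x-y|\ge 2|y-y'|$ one simply adds the two size bounds, using $|x-y|\sim|x-y'|$). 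Summing in $j$, splitting the series at $2^j\sim|x-y|^{-1}$ for the first estimate and also at $2^j\sim|y-y'|^{-1}$ for the second (with $N>n+\gamma$), gives the asserted size and regularity estimates for $K=\sum_j K_j$; and one checks, for $f$ in a dense class and $x\notin\supp f$, that $Tf(x)=\int K(x,y)f(y)\,dy$, which is routine once these bounds are available (pass to a subsequence in the $L^2$-convergence $\mathcal{T}_N f\to Tf$ of Proposition~\ref{prop:L2-sigma-j} and use dominated convergence for the $j$-sum at fixed $x$).

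The step I expect to be the main obstacle is establishing the complex-time heat kernel estimates for $\mathcal{L}=-\Delta+V$ — the Gaussian-type size bound and, above all, the H\"older regularity in the spatial variable — with the dependence on $\mathrm{Re}\,z/|z|$ (equivalently, on $\tau$) kept explicit enough to survive both the $\tau$-integration and the subsequent summation over $j$. This is exactly where the hypothesis $V\in RH_q$, $q>n$, enters: it upgrades mere positivity of $V$ (which alone only yields the domination $p^{\mathcal{L}}_t\le p^{-\Delta}_t$) to quantitative interior regularity of caloric functions of $\mathcal{L}$, and hence to continuity estimates for its kernel. A secondary but genuinely necessary technical point is the bookkeeping of $j$-uniformity for the symbol side: one must exploit that under the rescaling $\eta=2^j\sqrt\lambda$ all the $\sigma_j$, $j\ge 2$, are transplanted to a single fixed compact $\lambda$-interval, so that the $S^0_{1,0}(\sqrt{\mathcal{L}})$ estimates produce constants for $\widehat{F_j}$ that do not depend on $j$.
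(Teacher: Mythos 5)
Your plan is correct in outline but takes a genuinely different route from the paper: the paper's own proof of this theorem is a one-line citation --- the weak $(1,1)$ bound is quoted from Theorem~3.7 of \cite{Bagchi_Basak_Garg_Ghosh_Grushin_2023}, a general weak $(1,1)$ theorem for pseudo-multipliers of self-adjoint operators with Gaussian heat-kernel bounds --- followed by Marcinkiewicz interpolation with Proposition~\ref{prop:L2-sigma-j}, exactly as you describe for the last step. You instead unwind that citation and rebuild the Calder\'on--Zygmund machinery from scratch: construct $K=\sum_jK_j$, prove size and regularity estimates, run the CZ decomposition, then interpolate. That is a perfectly reasonable alternative, your handling of the $j$-uniformity of $\widehat{F_j}$ via the rescaling to a fixed compact $\lambda$-interval is correct, and your dyadic summation of the $K_j$-estimates (split at $2^j\sim|x-y|^{-1}$ and $2^j\sim|y-y'|^{-1}$, $N>n+\gamma$) is organized correctly.

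The one step that is not fully justified is the one you yourself flag. First, ``propagating to complex times by analyticity of the semigroup (with an admissible polynomial loss)'' is not a one-step deduction for \emph{pointwise} Gaussian-type kernel bounds; analyticity gives operator-norm control, not kernel decay. The standard device --- and the one the paper uses in Lemma~\ref{Lemma: L infinity gradient estimate of heat kernel}, following Duong--Ouhabaz--Sikora \cite{Duong_Ouhabaz_Sikora_Sharp_Multiplier_2002} --- is a Phragm\'en--Lindel\"of argument applied to a carefully constructed holomorphic function built from the kernel. Second, under $V\in RH_q$ with $q>n$ you actually have the stronger pointwise \emph{gradient} estimate $|\nabla p_t(x,y)|\lesssim t^{-(n+1)/2}e^{-|x-y|^2/(ct)}(1+\sqrt t/\rho(x)+\sqrt t/\rho(y))^{-N}$ (Lemma~\ref{Lemma: Heat kernel estimate for L}, quoted from Duong--Yan--Zhang), so there is no reason to retreat to De Giorgi--Nash--Moser H\"older continuity: Lipschitz ($\gamma=1$) regularity is available, and it feeds cleanly into the Phragm\'en--Lindel\"of step. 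With these two substitutions your derivation lines up precisely with Section~\ref{Section: Kernel estimate}: Proposition~\ref{Prop: Kernel estimate} gives $|\nabla^\Gamma K_j(x,y)|\lesssim |x-y|^{-\beta}2^{j(n+|\Gamma|-\beta)}$ for $|\Gamma|\le 1$, and summing in $j$ yields Lemma~\ref{Lemma: Difference kernel estimate with sum over j}, i.e.\ $\sum_j|K_j(y,w)-K_j(z,w)|\lesssim|y-z|\,|x_Q-w|^{-(n+1)}$, which is exactly the CZ regularity your decomposition requires. As written, the complex-time H\"older claim is a gap; with the Phragm\'en--Lindel\"of argument and the gradient bound it closes.
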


Now, before going to our next results, let us first define the weight classes associated with the Schr\"odinger operators. We define $\Psi_{\theta}(B) = (1+r_B/\rho(x_B))^{\theta}$, where $\theta\geq 0$, and $x_B$, $r_B$ denote the center and radius of $B:=B(x_B, r_B)$ respectively, and $\rho$ is the critical function defined by
\begin{align}\label{Expression: Auxiliary function}
    \rho(x) &:= \sup_{r>0} \left\{r: \frac{1}{r^{n-2}}\int_{B(x,r)} V(y)\, dy \leq 1 \right\} .
\end{align}

We say that a non-negative locally integrable function $\omega$ belongs to the weight class $A_p^{\rho, \theta}$ for $1<p<\infty$ and $\theta\geq 0$, if there is a constant $C$ such that for all balls $B=B(x,r)$, $r>0$ there holds
\begin{align}
\label{Definition: weight class}
    [\omega]_{A_p^{\rho, \theta}} := \left(\frac{1}{\Psi_{\theta}(B)|B|}\int_B \omega(y) \, dy \right) \left(\frac{1}{\Psi_{\theta}(B)|B|}\int_B \omega^{-\frac{1}{p-1}}(y) \, dy \right)^{p-1} \leq C ,
\end{align}
and we say that $\omega$ belongs to the weight class $A_1^{\rho, \theta}$, if there exists a constant $C$ such that
\begin{align*}
    \mathcal{M}_{\rho, \theta}\omega(x) \leq C\, \omega(x) \quad \text{for a.e.}\ x \in \mathbb{R}^n ,
\end{align*}
where
\begin{align}
\label{Definition: Maximal function}
    \mathcal{M}_{\rho, \theta}f(x) &= \sup_{B \ni x} \frac{1}{\Psi_{\theta}(B) |B|} \int_B |f| \, dy .
\end{align}

This weight class was first introduced by Bongioanni, Harboure and Salinas in \cite{Bongioanni_Harboure_Salinas_Schrodinger_first}, where they studied weighted Lebesgue space boundedness of various important operators of harmonic analysis related to the Schr\"odinger operator, for example the Riesz transform, the fractional integral, and the square function. After that there are lots of work has been done with this weight class $A_p^{\rho, \theta}$, see \cite{Bongioanni_Harboure_Salinas_Schrodinger_second}, \cite{Tang_Weighted_Schrodinger_Forum_Math_2015}, \cite{Tang_Wang_Zhu_Weighted_Schrodinger_2019} and references therein. Note that for $\theta = 0$, the weight class $ A_{p}^{\rho, 0}$ coincides with the classical Muckenhoupt's $A_p$ weight class. Since $\Psi_{\theta}(B) \geq 1$, we have $A_p \subset A_p^{\rho, \theta}$ for $1<p<\infty$. It is also known that this $A_p^{\rho, \theta}$ weight class is strictly larger than the classical weight class $A_p$, for more details see \cite[Section 2]{Tang_Weighted_Schrodinger_Forum_Math_2015}.

In the sequel we denote the operator $\sigma(x, \sqrt{\mathcal{L}})$ by $T$. Then we have the following quantitative weighted estimate for the operator $T$.
\begin{theorem}
\label{Theorem: Weighted L^p boundedness of pseudo multiplier}
    Let $\sigma \in S^0_{1,0}(\sqrt{\mathcal{L}})$. Then for $1\leq r <p<\infty$ and $\omega \in A_{p/r}^{\rho, \theta}$, there exists a constant $C>0$ independent of $\omega$ such that
    \begin{align*}
        \|T f\|_{L^p(\omega)} &\leq C [\omega]_{A_{p/r}^{\rho, \theta}}^{\max\left\{ \frac{1}{p - r}, 1 \right\}} \|f\|_{L^p(\omega)} .
    \end{align*}
\end{theorem}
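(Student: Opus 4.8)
The plan is to obtain Theorem~\ref{Theorem: Weighted L^p boundedness of pseudo multiplier} as a consequence of a pointwise (or bilinear form) sparse domination for $T$, followed by a quantitative weighted bound for the resulting $\rho,\theta$-adapted sparse operators. Concretely, I would first prove that for every $f$ in a suitable dense class there is a sparse family $\mathcal{S}$ of dyadic-type cubes (adapted to the critical function $\rho$) such that
\begin{align*}
    |Tf(x)| \lesssim \mathcal{A}_{\mathcal{S},r}f(x) := \Bigl(\sum_{Q\in\mathcal{S}} \Bigl(\frac{1}{|Q|}\int_Q |f|^r\Bigr)^{1/r}\,\Psi_{\theta}(Q)^{\kappa}\, \mathbf{1}_Q(x)\Bigr),
\end{align*}
with the exponent $r$ coming from the off-diagonal / local decay estimates available for the pieces $\sigma_j(x,\sqrt{\mathcal L})$ (Gaussian-type bounds for $e^{-t\mathcal L}$ and the Mihlin-type symbol estimates defining $S^0_{1,0}(\sqrt{\mathcal L})$). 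The dyadic decomposition \eqref{Equation: dyadic decomposition of pseudo multiplier}, the heat-semigroup representation \eqref{Taking inverse Fourier transform}--\eqref{Definition: original defintion of sigmaj}, the uniform $L^2$ bound of Proposition~\ref{prop:L2-sigma-j}, and the weak $(1,1)$ / $L^p$, $1<p\le 2$, bounds of Theorem~\ref{Theorem: L^2 boundedness of the symbol S^0_{1,0}} are exactly the ingredients needed to run the standard Lerner-type stopping-time/good-$\lambda$ argument that produces such a sparse bound; the $\Psi_\theta$ factor enters because the kernel estimates for $\mathcal L$ only hold up to powers of $(1+r/\rho)$.

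Second, I would establish the quantitative weighted estimate for the sparse operator $\mathcal{A}_{\mathcal{S},r}$ against the weight class $A_{p/r}^{\rho,\theta}$: for $1\le r<p<\infty$ and $\omega\in A_{p/r}^{\rho,\theta}$,
\begin{align*}
    \|\mathcal{A}_{\mathcal{S},r}f\|_{L^p(\omega)} \lesssim [\omega]_{A_{p/r}^{\rho,\theta}}^{\max\{\frac{1}{p-r},1\}}\|f\|_{L^p(\omega)}.
\end{align*}
This is where a quantitative reverse Hölder inequality for $A_p^{\rho,\theta}$ weights (announced in the abstract) is used: it lets one absorb the $\Psi_\theta$ factors and run the usual Moen/Cruz-Uribe--Martell--P\'erez duality and testing argument for sparse forms, recovering the sharp exponent $\max\{\frac{1}{p-r},1\}$ exactly as in the Euclidean case (Beltr\'an--Cladek). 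By a change of variables $p\mapsto p/r$, $\omega\mapsto\omega$, this reduces to the $r=1$ sharp sparse bound $[\omega]_{A_s^{\rho,\theta}}^{\max\{1/(s-1),1\}}$ with $s=p/r$, applied to $|f|^r$ with exponent $s$.

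Combining the two steps gives the theorem: $\|Tf\|_{L^p(\omega)}\lesssim \|\mathcal{A}_{\mathcal{S},r}f\|_{L^p(\omega)}\lesssim [\omega]_{A_{p/r}^{\rho,\theta}}^{\max\{\frac{1}{p-r},1\}}\|f\|_{L^p(\omega)}$, with constant independent of $\omega$ and of the sparse family (hence of $f$), and then a density argument extends it from the dense class to all of $L^p(\omega)$.

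I expect the main obstacle to be the first step, namely producing the $\rho$-adapted sparse domination with the correct growth exponent $r$. The difficulty is twofold: one must quantify precisely how the local part (small scales relative to $\rho$) behaves like a Calder\'on--Zygmund operator while the global part (scales $\gtrsim\rho$) contributes only polynomially in $r_B/\rho$, and one must carry the $\Psi_\theta$ weights through the stopping-time construction without losing the sharp power of $[\omega]$. Handling the convergence issues in the very definition of $T$ (it is only defined as a strong-operator-topology limit of $\mathcal T_N$, not via an absolutely convergent kernel integral) also requires care: I would prove the sparse bound uniformly for each $\mathcal T_N$ and pass to the limit, using Proposition~\ref{prop:L2-sigma-j} and Fatou's lemma. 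The second step is comparatively routine once the quantitative reverse Hölder inequality for $A_p^{\rho,\theta}$ is in hand.
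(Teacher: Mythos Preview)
Your proposal has the key mechanism inverted, and this would prevent the argument from closing. You write that the $\Psi_\theta$ factor in the sparse form arises because ``the kernel estimates for $\mathcal{L}$ only hold up to powers of $(1+r/\rho)$'', and that the quantitative reverse H\"older inequality is then needed to ``absorb the $\Psi_\theta$ factors''. In fact the opposite is true: the Schr\"odinger heat kernel carries an \emph{extra decay} factor $(1+\sqrt{t}/\rho(x)+\sqrt{t}/\rho(y))^{-N}$ (Lemma~\ref{Lemma: Heat kernel estimate for L}), and this propagates to give $K_j$ and its gradient decay of the form $(1+|x-y|/\rho(x))^{-N}$ for every $N$ (Proposition~\ref{Prop: Kernel estimate}, Lemma~\ref{Lemma: Difference kernel estimate with sum over j}). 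The $\rho$-adapted sparse operator that appears is therefore $\mathcal{A}_{\mathcal{S}}^{r,1,\rho,N}$ with a factor $\Psi_N(Q)^{-1}$ (decay, not growth), and this decay is exactly what compensates for $A_{p/r}^{\rho,\theta}$ being strictly larger than $A_{p/r}$. A sparse form with a positive power $\Psi_\theta(Q)^\kappa$ would not be bounded on $L^p(\omega)$ for general $\omega\in A_{p/r}^{\rho,\theta}$, no matter what reverse H\"older input you feed in. Relatedly, the reverse H\"older inequality (Proposition~\ref{Proposition: Reverse Holders inequality}) is \emph{not} used in the proof of Theorem~\ref{Theorem: Weighted L^p boundedness of pseudo multiplier}; the weighted bound for $\mathcal{A}_{\mathcal{S}}^{r,1,\rho,N}$ (Proposition~\ref{Prop: Boundedness of sparse operator}) is a direct duality/Carleson embedding argument. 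Reverse H\"older enters only later, in the commutator theorem, via Lemma~\ref{Lemma: llogl estimate in terms of maximal function}.

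The paper's route also differs structurally from your single global sparse bound. It first localizes using the covering $\mathbb{R}^n=\bigcup_m B(x_m,\rho(x_m))$ of Lemma~\ref{Lemma: Decomposition of space into ball}, writing $\chi_{B_m}Tf=\chi_{B_m}T(f\chi_{4Q_m})+\chi_{B_m}T(f\chi_{(4Q_m)^c})$. The near part is handled by a \emph{standard} (non-$\rho$-adapted) local sparse bound (Proposition~\ref{Prop: Pointwise sparse domination for pseudo}), obtained from the weak $(1,1)$ of $T$ plus $\mathcal{M}_{T,\alpha}^{\#}f\lesssim \mathcal{M}_r f$; since all cubes $P$ in that family sit inside $Q_m$ where $r_P/\rho(x_P)\lesssim 1$, the factor $\Psi_N(P)^{-1}$ is inserted for free, giving $\mathcal{A}_{\mathcal{S}}^{r,1,\rho,N}(f\chi_{4Q_m})$. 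The far part is not sparse-dominated at all: the kernel decay from Lemma~\ref{Lemma: Difference kernel estimate with sum over j} yields the pointwise bound $|T(f\chi_{(4Q_m)^c})(x)|\lesssim \mathcal{M}_{r,\rho,\theta/(p-r)}f(x)$, and one concludes with Proposition~\ref{Prop: Maximal function new weight with r power}. If you want to salvage your one-shot sparse plan, you would need to build a global $\rho$-adapted stopping-time argument that produces $\Psi_N(Q)^{-1}$ on every cube; that is more delicate than what the paper does, and the localize-then-split approach avoids it entirely.
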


Note that in \cite{BagchiThangaveluHermitePseudo}, the authors studied weighted $L^p$ estimates for the pseudo-multiplier associated with the Hermite operator $H$ for weights in the Muckenhoupt $A_p$ class. Therefore, Theorem \ref{Theorem: Weighted L^p boundedness of pseudo multiplier} generalizes Theorem 1.4 of \cite{BagchiThangaveluHermitePseudo} in two directions: first, it applies to the more general Schr\"odinger operator $\mathcal{L}$; second, it holds for a broader weight class than the Muckenhoupt $A_p$ class.

Next, we discuss the quantitative weighted estimates for commutators for pseudo-multipliers associated to the Schr\"odinger operators. In order to state our result, we have to define the space of bounded mean oscillation associated to the Schr\"odinger operator $\mathcal{L}$. The bounded mean oscillation space associated to the Schr\"odinger operator $\mathcal{L}$ is denoted by $BMO_{\theta}(\rho)$ and is defined by
\begin{align*}
    \|f\|_{BMO_{\theta}(\rho)} &= \sup_{B \subset \mathbb{R}^n} \frac{1}{\Psi_{\theta}(B)|B|} \int_{B} |f(x)-f_B| \, dx < \infty .
\end{align*}
Note that $BMO \subset BMO_{\theta}(\rho)$, where $BMO$ is the classical bounded mean oscillation space in $\mathbb{R}^n$. Actually, this containment is strict, for details see \cite[Page 116]{Bongioanni_Harboure_Salinas_Schrodinger_JFFA_2011}. 

\begin{theorem}
\label{Theorem: Weighted L^p boundedness of pseudo commutator}
    Suppose $\sigma \in S^0_{1,0}(\sqrt{\mathcal{L}})$. Let $b \in BMO_{\theta}(\rho)$, $1<p<\infty$ and $\omega \in A_p^{\rho, \theta}$. Then there exists a constant $C>0$ such that 
    \begin{align*}
        \|[b, T]f\|_{L^p(\omega)} &\leq C [\omega]_{A_{p}^{\rho, \theta}}^{2\max\left\{ \frac{1}{p - 1}, 1 \right\}} \|b\|_{BMO_{\theta}(\rho)} \|f\|_{L^p(\omega)} . 
    \end{align*}
\end{theorem}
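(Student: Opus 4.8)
The plan is to reduce this to quantitative weighted bounds for \emph{sparse commutator forms}, in the spirit of the Euclidean theory of Lerner--Ombrosi--Rivera-R\'ios, but carried out with the critical-function corrections $\Psi_\theta$ and the adapted classes $A_p^{\rho,\theta}$ and $BMO_\theta(\rho)$. By homogeneity one may assume $\|b\|_{BMO_\theta(\rho)}=1$, and by density (recall $T$ is defined as a strong $L^2$-limit) it suffices to prove the estimate for $f\ge 0$ in a suitable dense class. \textbf{Step 1 (sparse domination of the commutator).} The sparse domination of $T$ underlying Theorem~\ref{Theorem: Weighted L^p boundedness of pseudo multiplier}, specialized to $L^1$-averages (legitimate since $T$ is of weak type $(1,1)$ by Theorem~\ref{Theorem: L^2 boundedness of the symbol S^0_{1,0}}), says that for finitely many dyadic lattices there are sparse families $\mathcal S$ with $|Tf|\lesssim\sum_{Q\in\mathcal S}\Psi_\theta(Q)^{\kappa_0}\langle|f|\rangle_Q\mathbf 1_Q$, $\kappa_0\ge 0$ absolute. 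Inserting this into the localization identity $[b,T]f=(b-b_Q)Tf-T((b-b_Q)f)$ on each stopping cube and running Lerner's grand-maximal-truncation argument, I would obtain sparse families $\mathcal S$ with
\[
|[b,T]f(x)|\lesssim \sum_{Q\in\mathcal S}\Psi_\theta(Q)^{\kappa_1}\,|b(x)-b_Q|\,\langle|f|\rangle_Q\,\mathbf 1_Q(x)+\sum_{Q\in\mathcal S}\Psi_\theta(Q)^{\kappa_1}\,\langle|b-b_Q|\,|f|\rangle_Q\,\mathbf 1_Q(x),
\]
for an absolute $\kappa_1\ge 0$; call the two operators on the right $\mathcal A^1_{b,\mathcal S}f$ and $\mathcal A^2_{b,\mathcal S}f$.

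\textbf{Step 2 (auxiliary quantitative tools).} Three ingredients feed the weighted estimate. First, a quantitative John--Nirenberg inequality for $BMO_\theta(\rho)$: for every cube $Q$ and $1\le s<\infty$, $\langle|b-b_Q|^s\rangle_Q^{1/s}\lesssim s\,\Psi_\theta(Q)^{\kappa_2}\|b\|_{BMO_\theta(\rho)}$ with $\kappa_2\ge 0$ absolute; this is a refinement, with explicit $s$- and $\Psi_\theta$-dependence, of the self-improvement of $BMO_\theta(\rho)$ due to Bongioanni--Harboure--Salinas, and does \emph{not} follow from the classical John--Nirenberg inequality. Second, the quantitative reverse H\"older inequality for $A_\infty^{\rho,\theta}$ proved earlier in the paper, which for $w\in A_\infty^{\rho,\theta}$ furnishes an exponent $\varepsilon\sim[w]_{A_\infty^{\rho,\theta}}^{-1}$ with $\langle w^{1+\varepsilon}\rangle_Q^{1/(1+\varepsilon)}\lesssim\Psi_\theta(Q)^{\kappa_3}\langle w\rangle_Q$. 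Third, the quantitative $A_p^{\rho,\theta}$ bound for generalized sparse operators proved earlier, namely $\big\|\sum_Q\Psi_\theta(Q)^{\kappa}\langle\,\cdot\,\rangle_Q\mathbf 1_Q\big\|_{L^p(w)\to L^p(w)}\lesssim[w]_{A_p^{\rho,\theta}}^{\max\{1,1/(p-1)\}}$.

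\textbf{Step 3 (assembling the bound).} Pairing $\mathcal A^1_{b,\mathcal S}f$ with $g\in L^{p'}(w)$, writing $\sigma=w^{1-p'}$, and telescoping $|b(x)-b_Q|$ along the stopping chains (each link costing a factor $\Psi_\theta(\cdot)^{\kappa_2}$ by the first tool, with the number of links absorbed by sparseness), one is led to a bilinear ``double sparse'' form; distributing it by H\"older at the reverse-H\"older exponents of $w$ and of $\sigma$ (second tool) and then invoking the sparse-operator bound (third tool) gives
\[
\|\mathcal A^1_{b,\mathcal S}f\|_{L^p(w)}\lesssim\|b\|_{BMO_\theta(\rho)}\,[w]_{A_p^{\rho,\theta}}^{\max\{1,\frac1{p-1}\}}\big([w]_{A_\infty^{\rho,\theta}}+[\sigma]_{A_\infty^{\rho,\theta}}\big)\|f\|_{L^p(w)}.
\]
The term $\mathcal A^2_{b,\mathcal S}f$ is treated the same way (slightly easier, applying the first tool once more inside the average) with the same bound. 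Since $[w]_{A_\infty^{\rho,\theta}}\lesssim[w]_{A_p^{\rho,\theta}}$ and $[\sigma]_{A_\infty^{\rho,\theta}}\lesssim[\sigma]_{A_{p'}^{\rho,\theta}}=[w]_{A_p^{\rho,\theta}}^{1/(p-1)}$, the bracket is $\lesssim[w]_{A_p^{\rho,\theta}}^{\max\{1,1/(p-1)\}}$, and combining yields the claimed exponent $2\max\{1,1/(p-1)\}$.

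\textbf{Main obstacle.} The delicate point is not the real-variable scheme, which is by now standard, but the bookkeeping of the critical-function weights: every occurrence of $\Psi_\theta(Q)^{\kappa}$ — arising from the sparse domination of the commutator, from the John--Nirenberg inequality for $BMO_\theta(\rho)$, and from the reverse H\"older inequality — must be reabsorbed into the $A_p^{\rho,\theta}$ and $A_\infty^{\rho,\theta}$ functionals (this is exactly what the tailored weight classes and the quantitative reverse H\"older inequality are built to permit), while simultaneously preventing the power of $[w]_{A_p^{\rho,\theta}}$ from degrading past $2\max\{1,1/(p-1)\}$. Establishing the quantitative John--Nirenberg inequality on the \emph{strictly larger} space $BMO_\theta(\rho)$, with the correct linear-in-$s$ growth and a controlled power of $\Psi_\theta$, is the main self-contained technical lemma the argument requires.
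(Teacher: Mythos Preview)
Your plan has the right high-level shape but contains a genuine gap in the $\Psi_\theta$-bookkeeping that, as written, makes the argument fail.

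The central issue is the sign of the critical-function factor. You claim a global sparse domination of $[b,T]$ by forms carrying $\Psi_\theta(Q)^{\kappa_1}$ with $\kappa_1\ge 0$, and then invoke as your third tool a bound $\big\|\sum_Q\Psi_\theta(Q)^{\kappa}\langle\,\cdot\,\rangle_Q\mathbf 1_Q\big\|_{L^p(w)\to L^p(w)}\lesssim[w]_{A_p^{\rho,\theta}}^{\max\{1,1/(p-1)\}}$. Neither statement is available. Since $A_p^{\rho,\theta}$ strictly contains $A_p$, already the bare sparse operator ($\kappa=0$) is \emph{not} bounded on $L^p(w)$ for general $w\in A_p^{\rho,\theta}\setminus A_p$; positive $\kappa$ only makes this worse. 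The sparse operators the paper actually bounds (Propositions~\ref{Prop: Boundedness of sparse operator} and \ref{Prop: Operator norm for sparse commutator operator}) carry a \emph{decay} factor $\Psi_N(Q)^{-1}$ with $N$ large, and it is precisely this decay that buys the passage from $A_p$ to $A_p^{\rho,\theta}$. Your ``main obstacle'' paragraph notes that the $\Psi_\theta$-growth must be reabsorbed, but supplies no mechanism; in fact no amount of $A_p^{\rho,\theta}$ or $A_\infty^{\rho,\theta}$ information on the weight can compensate for growth in the operator.

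The paper's mechanism is structurally different from a global sparse bound: it first covers $\mathbb R^n$ by the critical balls $B_m=B(x_m,\rho(x_m))$ (Lemma~\ref{Lemma: Decomposition of space into ball}) and splits $[b,T]f$ on each $B_m$ into a local piece $[b,T](f\chi_{4Q_m})$ and a far piece $[b,T](f\chi_{(4Q_m)^c})$. On the local piece one runs the standard Lerner-type sparse domination (Proposition~\ref{Prop: Pointwise sparse domination for pseudo commutator}); because every cube $P$ in the resulting family sits inside $Q_m$, one has $r_P\lesssim\rho(x_P)$ and hence $\Psi_N(P)\sim 1$, so the decay factor $\Psi_N(P)^{-1}$ can be inserted \emph{for free}, reducing to Proposition~\ref{Prop: Operator norm for sparse commutator operator}. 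On the far piece the pointwise kernel decay $(1+|x-y|/\rho(x))^{-N}|x-y|^{-n}$ (Lemma~\ref{Lemma: Difference kernel estimate with sum over j}) converts the commutator directly into bounds by the maximal functions $\mathcal M_{\rho,\theta/(p-1)}$ and $\mathcal M_{\mathcal L\log\mathcal L,\rho,\theta}$. This local/global split is where the $\Psi_N^{-1}$ decay comes from; a one-shot global sparse domination in the style of your Step~1 is never established and would not by itself produce it. Finally, the paper makes no use of an $A_\infty^{\rho,\theta}$ constant: the exponent $2\max\{1,1/(p-1)\}$ comes from two applications of Proposition~\ref{Prop: Boundedness of sparse operator} inside the proof of Proposition~\ref{Prop: Operator norm for sparse commutator operator}, not from an $A_p\cdot A_\infty$ factorization.
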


The next theorem gives the compactness of commutators for pseudo-multipliers associated with the Schr\"odinger operators. Define $CMO_{\theta}(\rho)$ to be the closure of $C_c^{\infty}(\mathbb{R}^n)$ functions in the topology of $BMO_{\theta}(\rho)$.

\begin{theorem}
\label{Theorem: Weighted compactness of pseudo commutator}
    Suppose $\sigma \in S^0_{1,0}(\sqrt{\mathcal{L}})$. Let $b \in CMO_{\theta}(\rho)$, $1<p<\infty$ and $\omega \in A_p^{\rho, \theta}$. Then the commutator $[b, T]$ is a compact operator on $L^p(\omega)$.
\end{theorem}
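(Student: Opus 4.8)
The plan is to deduce compactness from the boundedness already established in Theorem~\ref{Theorem: Weighted L^p boundedness of pseudo commutator}, combined with a density argument in the symbol $b$ and a uniform approximation/decomposition scheme for the operator $T$. Since $C_c^{\infty}(\mathbb{R}^n)$ is dense in $CMO_{\theta}(\rho)$ and the map $b \mapsto [b,T]$ is bounded from $BMO_{\theta}(\rho)$ into $\mathcal{B}(L^p(\omega))$ by Theorem~\ref{Theorem: Weighted L^p boundedness of pseudo commutator}, the set of compact operators being closed in operator norm means it suffices to prove that $[b,T]$ is compact on $L^p(\omega)$ for $b \in C_c^{\infty}(\mathbb{R}^n)$. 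Thus the first step is this reduction; the remaining work is entirely for smooth compactly supported $b$.

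For $b \in C_c^{\infty}$, I would split $T = \sum_{j\ge 0}\sigma_j(x,\sqrt{\mathcal{L}})$ into a finite low-frequency piece $T^{\le M} = \sum_{j\le M}\sigma_j(x,\sqrt{\mathcal{L}})$ and a tail $T^{>M}$. The heat-semigroup representation \eqref{Definition: original defintion of sigmaj} shows each $\sigma_j(x,\sqrt{\mathcal{L}})$ has a kernel with Gaussian decay at scale $2^{2j}$ and the requisite Hörmander-type regularity, so $[b,\sigma_j(x,\sqrt{\mathcal{L}})]$ has an integrable kernel that is, moreover, uniformly continuous and compactly-ish localized (because $b$ is compactly supported and the commutator kernel is $(b(x)-b(y))K_j(x,y)$, which vanishes when both $x,y$ lie outside $\operatorname{supp}b$ plus a controlled scale). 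Hence $[b,T^{\le M}]$ is, for each fixed $M$, an operator with a kernel satisfying the three standard Fréchet–Kolmogorov-type conditions on $L^p(\omega)$ — uniform $L^p(\omega)$-control of translates, of dilations/truncations near the diagonal, and of the tails at infinity — and is therefore compact; here I would invoke the weighted analogue of the Fréchet–Kolmogorov compactness criterion (as used in \cite{Uchiyama_Compactness_Commutators} and the weighted versions in \cite{Guo_Zhou_Compactness_Pseudo_differential_2019}, adapted to $A_p^{\rho,\theta}$). The second step is therefore to verify these three kernel conditions for $[b,T^{\le M}]$ using smoothness of $b$, the kernel bounds on $\sigma_j(x,\sqrt{\mathcal{L}})$, and the $A_p^{\rho,\theta}$ weighted boundedness of the local maximal function $\mathcal{M}_{\rho,\theta}$.

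The third step is to show $\|[b,T^{>M}]\|_{L^p(\omega)\to L^p(\omega)} \to 0$ as $M\to\infty$. This is where the quantitative structure pays off: re-examining the proof of Theorem~\ref{Theorem: Weighted L^p boundedness of pseudo commutator}, the contribution of the high-frequency block $\sum_{j>M}\sigma_j$ to the commutator bound should carry an extra summable-in-$j$ decay factor (coming from the symbol estimates $|\partial_\eta^l\sigma_j|\lesssim (1+\eta)^{-l}$ together with the dyadic localization, which gives genuine negative-order gain when one exploits one derivative), so that the operator-norm bound for $[b,T^{>M}]$ is $O(2^{-\varepsilon M}\|b\|_{BMO_\theta(\rho)})$ for some $\varepsilon>0$. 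Combining: $[b,T] = [b,T^{\le M}] + [b,T^{>M}]$ is a compact operator plus an operator of norm $\to 0$, hence compact. Assembling this with the density reduction of the first step completes the proof.

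The main obstacle I anticipate is the third step — obtaining genuine quantitative decay in $M$ for the high-frequency tail of the commutator on the weighted space. The boundedness proof of Theorem~\ref{Theorem: Weighted L^p boundedness of pseudo commutator} presumably already produces such decay for the linear operator $T$ (one is summing a sparse/Calderón–Zygmund type series over $j$), but for the commutator one must check that inserting $b\in BMO_\theta(\rho)$ does not destroy the per-$j$ gain; this requires tracking how the $BMO_\theta(\rho)$ norm interacts with the scale-$2^{2j}$ localization of $\sigma_j(x,\sqrt{\mathcal{L}})$ and with the critical function $\rho$. A secondary technical point is establishing the correct weighted Fréchet–Kolmogorov criterion on $L^p(\omega)$ for $\omega\in A_p^{\rho,\theta}$ rather than $\omega\in A_p$; this should follow by the same arguments as in the classical case once one knows $\mathcal{M}_{\rho,\theta}$ is bounded on $L^p(\omega)$ and that balls of radius comparable to $\rho$ behave like a doubling family, but it needs to be stated carefully.
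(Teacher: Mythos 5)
Your overall architecture — reduce by density to $b\in C_c^\infty(\mathbb{R}^n)$, then approximate $[b,T]$ in operator norm by compact operators and invoke a weighted Kolmogorov–Riesz criterion — is the right skeleton, and the first (density) step matches the paper's exactly. The difference is in how you produce the approximating compact operators: you propose a frequency truncation $T^{\le M}=\sum_{j\le M}\sigma_j(x,\sqrt{\mathcal L})$, whereas the paper introduces a \emph{spatial} truncation near the diagonal, replacing each $K_j$ by $K_{j,\gamma}(x,y)=K_j(x,y)\,[1-\phi(\gamma^{-1}|x-y|)]$ and letting $\gamma\to 0$. The two cut-offs are morally parallel (removing frequencies above $2^M$ is comparable to removing the kernel singularity at scale $2^{-M}$), and the Kolmogorov–Riesz verification (Theorem~\ref{Theorem: Characterizations of compactness}) has to be done in both. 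The paper's spatial cut-off is cleaner for the verification because $T_\gamma$ has a kernel that vanishes identically on $\{|x-y|\le\gamma\}$, so the translation-equicontinuity and tail conditions reduce to off-diagonal kernel estimates already established in Lemma~\ref{Lemma: Difference kernel estimate with sum over j}; your $T^{\le M}$ kernel is smooth but still peaks at the diagonal with $C^1$ norm $\sim 2^{M(n+1)}$, so the equicontinuity step, while doable for each fixed $M$, requires more bookkeeping.

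There is, however, a genuine error in your third step. You assert that $\|[b,T^{>M}]\|_{L^p(\omega)\to L^p(\omega)}=O(2^{-\varepsilon M}\|b\|_{BMO_\theta(\rho)})$, attributing the per-$j$ decay to the symbol estimates $|\partial_\eta^l\sigma_j|\lesssim(1+\eta)^{-l}$. Neither the mechanism nor the dependence is correct. The symbol bounds only say each $\sigma_j$ is of order $0$ uniformly in $j$; they give no decay in $j$. If such a bound with only $\|b\|_{BMO_\theta(\rho)}$ on the right held, it would prove compactness of $[b,T]$ for \emph{every} $b\in BMO_\theta(\rho)$, which is false — that is precisely why one needs $b\in CMO_\theta(\rho)$. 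The decay in $j$ that is actually available (and which your reduction to $b\in C_c^\infty$ makes usable) comes from the Lipschitz bound $|b(x)-b(y)|\lesssim\|\nabla b\|_{L^\infty}|x-y|$ paired with Proposition~\ref{Prop: Kernel estimate}: the factor $|x-y|$ cancels one power of $2^j$ in the kernel estimate $|K_j(x,y)|\lesssim 2^{j(n-\beta)}|x-y|^{-\beta}$, so $\|[b,\sigma_j(x,\sqrt{\mathcal L})]\|\lesssim 2^{-\varepsilon j}\|\nabla b\|_{L^\infty}$ after one dominates by $\mathcal M_{\rho,N}$. This is exactly the mechanism the paper exploits in the estimate \eqref{Estimate: Difference of commutator and dilated commutator}, where the bound $|b(x)-b(y)|\lesssim|x-y|\le 2\gamma$ on the support of the cut-off yields $|[b,T]f-[b,T_\gamma]f|\lesssim\gamma\,\mathcal M_{\rho,\theta/(p-1)}f$. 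So your step can be repaired by replacing the $BMO$ norm with the Lipschitz constant of $b$ and correcting the source of the gain, but as written the claim is wrong and the justification is off.
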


\subsection{Examples:}
Let us discuss some examples where our results can be applied. If we take $V(x) = |x|^{\alpha}$ with $\alpha q > -n$, then $V \in RH_q$ (see \cite{Shen_Schrodinger_operator_certain_potential_1995}). To apply our results, we also need $q>n$. In particular, when $\alpha=2$, that is $V(x)=|x|^2$, the pseudo-multiplier associated with the Hermite operator has been studied by many mathematicians, see, for example, \cite{EppersonHermitePseudo}, \cite{BagchiThangaveluHermitePseudo}, \cite{Bui_Hermite_Pseudo_2020}, \cite{Ly_L2_boundedness_JFA}.

Another interesting example is $V(x) = |x_n- \varphi(x')|^{\alpha}$ with $\alpha>0$, where $x' = (x_1, \ldots, x_{n-1}) \in \mathbb{R}^{n-1}$ and $\varphi : \mathbb{R}^{n-1} \to \mathbb{R}$ is a Lipschitz function. Then $V \in RH_q$ for all $q \in (1, \infty)$ (see \cite{Shen_Schrodinger_operator_certain_potential_1995}).

\medskip

The organization of our paper is as follows. In Section \ref{Section: preliminaries}, we recall various basic and important results related to the Schr\"odinger operator. We prove the quantitative weighted estimates of sparse operators and a quantitative version of reverse H\"older's inequality for the weight class associated with the Schr\"odinger operator $\mathcal{L}$ in Section \ref{Section: Sparse and reverse Holder}. The section \ref{Section: Kernel estimate} is devoted to the kernel estimates of the Schr\"odinger pseudo-multiplier $\sigma(x, \sqrt{\mathcal{L}})$. The unweighted $L^p$-boundedness of $\sigma(x, \sqrt{\mathcal{L}})$ (Proposition \ref{prop:L2-sigma-j} and Theorem \ref{Theorem: L^2 boundedness of the symbol S^0_{1,0}}) is proved in Section \ref{section: unweighted Lp boundedness of Schrodinger}, while the quantitative weighted $L^p$-boundedness of $\sigma(x, \sqrt{\mathcal{L}})$ (Theorem \ref{Theorem: Weighted L^p boundedness of pseudo multiplier}) is proved in Section \ref{Section: weighted Lp bound for operator}. In Section \ref{Section: Weighted boundedness of commutators} we prove quantitative weighted boundedness of the commutators of the Schr\"odinger pseudo-multiplier (Theorem \ref{Theorem: Weighted L^p boundedness of pseudo commutator}). Finally in Section \ref{Section: Compactness of commutators} we prove the compactness result (Theorem \ref{Theorem: Weighted compactness of pseudo commutator}) for the commutators associated to the Schr\"odinger pseudo-multiplier.

\medskip

\textbf{Notation:}
In this paper, we adopt standard notation. We denote $Q=Q(x_Q, r_Q)$, a cube centered at $x_Q$ and side length $r_Q$. For $\eta>0$, we write $\eta Q$ as the cube with side length $\eta$ times the side length of the cube $Q$. For a measurable set $E \subseteq \mathbb{R}^n$ and non-negative function $\omega$, we denote $\omega(E) = \int_E \omega $ and $\omega_E = \frac{\omega(E)}{|E|}$. For two non-negative numbers $N_1$ and $N_2$, by the expression $N_1\lesssim N_2$, we mean there exists a constant $C>0$ such that $N_1\leq C N_2$. Whenever the implicit constant depends on $\epsilon$, we write $N_1\lesssim_{\epsilon}N_2$. We write $N_1\sim N_2$, when both $N_1\lesssim N_2$ and $N_2\lesssim N_1$ hold.

\section{Preliminaries}
\label{Section: preliminaries}
In this section, we give some basic details about the Schr\"odinger operator $\mathcal{L}$. First, recall that $\rho$ is the critical function defined in \eqref{Expression: Auxiliary function}. One can easily see that $0<\rho(x)<\infty$ if $V \neq 0$. Also, in particular one can check $\rho(x) \sim 1$ when $V = 1$ and $\rho(x) \sim \frac{1}{(1+|x|)}$ when $V=|x|^2$.

The following lemma provides a pointwise comparison for the critical function $\rho(x)$, which will be useful later in the proof. 
\begin{lemma}\cite[Corollary 1.5]{Shen_Schrodinger_operator_certain_potential_1995}
\label{Lemma: Equivalence of two potential}
    For all $x, y \in \mathbb{R}^n$, there exists constants $l_0>0, C_0>1$ and $C>0$ such that
    \begin{align*}
        \frac{1}{C_0}\left(1+\frac{|x-y|}{\rho(x)} \right)^{-l_0} \leq \frac{\rho(y)}{\rho(x)} \leq C_0 \left(1+\frac{|x-y|}{\rho(x)} \right)^{l_0/(l_0+1)}.
    \end{align*}
    Moreover, for any $\kappa>0$ and $x \in \mathbb{R}^n$, we have
    \begin{align*}
        C (1+\kappa)^{-l_0} \rho(x) \leq \rho(y) \leq C (1+\kappa)^{l_0/(l_0+1)} \rho(x) ,
    \end{align*}
    for all $y \in B(x, \kappa \rho(x))$.
\end{lemma}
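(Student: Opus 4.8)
The statement is cited from Shen's paper, so strictly speaking one only needs to quote it; but let me describe how I would prove the first (pointwise) inequality from scratch, since the ``moreover'' part is an immediate corollary by setting $|x-y| \le \kappa \rho(x)$. The plan is to exploit the almost-monotonicity of the quantity
\[
    m(x,V)^{-1} := \sup\Bigl\{ r>0 : \frac{1}{r^{n-2}}\int_{B(x,r)} V \le 1 \Bigr\} = \rho(x),
\]
together with the doubling-type behaviour of the function $r \mapsto r^{2-n}\int_{B(x,r)} V$ that comes from the reverse H\"older hypothesis $V \in RH_q$ with $q > n/2$ (here in fact $q>n$). First I would record the scaling lemma: for $V \in RH_q$ there is $C$ so that for $0 < r < R$,
\[
    \frac{1}{r^{n-2}}\int_{B(x,r)} V \;\le\; C\Bigl(\frac{r}{R}\Bigr)^{2 - n/q}\,\frac{1}{R^{n-2}}\int_{B(x,R)} V,
\]
and a matching lower bound $\gtrsim (r/R)^{n-2}$ in the reverse direction (trivial since $V\ge 0$, by restricting the integral). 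These two estimates pin down $\rho(x)$ up to constants as the scale at which $r^{2-n}\int_{B(x,r)}V$ equals $1$, and make it a ``slowly varying'' quantity.

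The core step is the local comparison: if $|x-y| \le \rho(x)$, then $\rho(y) \sim \rho(x)$ with absolute constants. To see $\rho(y) \gtrsim \rho(x)$: take $r = c\rho(x)$ small; then $B(y,r) \subset B(x, 2\rho(x))$, and using the scaling lemma to pass from scale $2\rho(x)$ down, one gets $r^{2-n}\int_{B(y,r)} V \le r^{2-n}\int_{B(x,2\rho(x))} V \le C$, so $\rho(y) \ge c\rho(x)$. The reverse inequality $\rho(y)\lesssim \rho(x)$ is symmetric, swapping roles of $x$ and $y$ (legitimate because $|x-y|\le\rho(x)$ forces $|x-y|\lesssim\rho(y)$ by the bound just obtained). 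Then I would iterate/chain this local comparison along a sequence of points $x = z_0, z_1, \dots, z_k = y$ with $|z_{i+1}-z_i| \le \rho(z_i)$ and $k \sim \log(1 + |x-y|/\rho(x))$; each step multiplies the ratio $\rho(z_{i+1})/\rho(z_i)$ by a bounded factor, and summing the geometric-type growth yields exactly the power $(1+|x-y|/\rho(x))^{\pm l_0}$ (respectively $(1+|x-y|/\rho(x))^{l_0/(l_0+1)}$ on the upper side, the asymmetry in the exponent reflecting the asymmetry in the scaling lemma for $RH_q$). Finally, the ``moreover'' statement follows by substituting $|x-y| \le \kappa\rho(x)$ into both inequalities and absorbing constants, giving $C(1+\kappa)^{-l_0}\rho(x) \le \rho(y) \le C(1+\kappa)^{l_0/(l_0+1)}\rho(x)$.

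The main obstacle is making the chaining argument quantitative with the \emph{correct} exponents: a naive chain gives only an exponential-in-$k$, i.e. polynomial-in-$(1+|x-y|/\rho(x))$, bound with an unspecified exponent, and squeezing out precisely $l_0$ and $l_0/(l_0+1)$ requires carefully tracking how the $RH_q$ scaling constant $2 - n/q$ enters at each step and choosing the step sizes $|z_{i+1}-z_i|$ adaptively (proportional to $\rho(z_i)$, not to a fixed length). Since the lemma is quoted verbatim from \cite[Corollary 1.5]{Shen_Schrodinger_operator_certain_potential_1995}, in the paper itself I would simply cite it and omit this argument; the sketch above is how one reconstructs it if needed.
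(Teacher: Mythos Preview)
Your proposal is correct and matches the paper's treatment: the paper gives no proof of this lemma at all, simply citing \cite[Corollary 1.5]{Shen_Schrodinger_operator_certain_potential_1995}, exactly as you anticipate in your final paragraph. Your reconstruction sketch (the $RH_q$ scaling lemma, local comparability when $|x-y|\le\rho(x)$, then a chaining argument) is the standard route and is consistent with Shen's original argument, so nothing further is required.
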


\begin{remark}
\label{Remark: Equivalence of potential for all x,y}
For all $x, y \in \mathbb{R}^n$, from Lemma \ref{Lemma: Equivalence of two potential}, immediately we have
    \begin{align*}
        \frac{1}{C_0 \left(1+\frac{|x-y|}{\rho(x)}\right)^{l_0+1}} \lesssim \frac{1}{\left(1+\frac{|x-y|}{\rho(y)}\right)} \lesssim \frac{C_0}{ \left(1+\frac{|x-y|}{\rho(x)}\right)^{1/(l_0+1)}} .
    \end{align*}
\end{remark}

Now we discuss various maximal functions and their boundedness properties. For $r>0$ we define
\begin{align*}
    \mathcal{M}_{r, \rho, \theta}f(x) &= \sup_{B \ni x} \frac{1}{\Psi_{\theta}(B)} \left(\frac{1}{|B|} \int_B |f(y)|^r \, dy \right)^{1/r} .
\end{align*}
When $r=1$, we denote $\mathcal{M}_{1, \rho, \theta}f$ by $\mathcal{M}_{\rho, \theta}f$. When $\theta=0$, we denote $\mathcal{M}_{r,0,0}f$ by $\mathcal{M}_r f$, which is the standard Hardy-Littlewood maximal function. Then it is easy to see that
\begin{align*}
    |f(x)| \leq \mathcal{M}_{r, \rho, \theta}f(x) \leq \mathcal{M}_r f(x) \quad \text{for a.e.}\ x \in \mathbb{R}^n .
\end{align*}
Since
\begin{align*}
    \Psi_{\theta}(B) \leq \Psi_{\theta}(2 B) \leq 2^{\theta} \Psi_{\theta}(B) ,
\end{align*}
we can replace balls by cubes in the definition of $\mathcal{M}_{r, \rho, \theta}$ and again we denote it by the same notation. The same is true for the definition of $A_p^{\rho, \theta}$-weights (see \eqref{Definition: weight class}).

The following proposition gives the weighted $L^p$-boundedness of $\mathcal{M}_{r, \rho, \theta}$.
\begin{proposition}\cite[Proposition 2.7]{Bui_Bui_Duong_square_function_new_weight_2022}
\label{Prop: Maximal function new weight with r power}
For $r \in (0, \infty)$, $p \in (r, \infty)$ and $\theta \geq 0$, we have
\begin{align*}
    \|\mathcal{M}_{r, \rho, \theta}f\|_{L^p(\omega)} &\leq C [\omega]_{A_{p/r}^{\rho, \theta(p-r)}}^{\frac{1}{p-r}} \|f\|_{L^p(\omega)} .
\end{align*}
\end{proposition}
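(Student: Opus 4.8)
The plan is to reduce to the case $r=1$ and then to run a Buckley-type argument adapted to the critical function $\rho$.

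\textit{Step 1: reduction to $r=1$.} Using $\Psi_{\theta r}(B)^{1/r}=\Psi_{\theta}(B)$ and the fact that $t\mapsto t^{1/r}$ commutes with suprema, one has the pointwise identity
\begin{equation*}
\mathcal{M}_{r,\rho,\theta}f=\big(\mathcal{M}_{\rho,\theta r}(|f|^{r})\big)^{1/r}.
\end{equation*}
Set $s:=p/r\in(1,\infty)$ and $\vartheta:=\theta r$, so that $\theta(p-r)=\vartheta(s-1)$ and $\tfrac1{p-r}=\tfrac1r\cdot\tfrac1{s-1}$. Writing $g=|f|^{r}$ and taking $L^{s}(\omega)$-norms in the identity above (and noting that $g$ ranges over all nonnegative functions of $L^{s}(\omega)$ as $f$ ranges over $L^{p}(\omega)$), the asserted estimate is seen to be equivalent to
\begin{equation*}
\big\|\mathcal{M}_{\rho,\vartheta}g\big\|_{L^{s}(\omega)}\le C\,[\omega]_{A_{s}^{\rho,\vartheta(s-1)}}^{\frac{1}{s-1}}\|g\|_{L^{s}(\omega)},\qquad \omega\in A_{s}^{\rho,\vartheta(s-1)},\ s>1,\ \vartheta\ge0 ,
\end{equation*}
where now $\mathcal{M}_{\rho,\vartheta}g(x)=\sup_{B\ni x}\tfrac{1}{\Psi_{\vartheta}(B)|B|}\int_{B}|g|$. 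So it suffices to prove this $r=1$ statement.

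\textit{Step 2: pointwise domination by a weighted maximal operator.} Let $M_{\omega}$ be the uncentered Hardy--Littlewood maximal operator over Euclidean balls with respect to the measure $\omega\,dx$, i.e.\ $M_{\omega}h(x)=\sup_{B\ni x}\tfrac{1}{\omega(B)}\int_{B}|h|\,\omega$; by the Besicovitch covering lemma $M_{\omega}$ is of weak type $(1,1)$ with respect to $\omega$, hence bounded on $L^{q}(\omega)$ for every $q>1$ with a constant $C_{n,q}$ independent of $\omega$. Fix $1<t<s$ and $0\le\mu\le\vartheta$. For any ball $B$, Hölder's inequality applied to $|g|=(|g|^{t}\omega)^{1/t}\,\omega^{-1/t}$ with exponents $(t,t')$, combined with the inequality $\big(\omega^{-\frac{1}{t-1}}\big)_{B}^{\,t-1}\,\omega_{B}\le\Psi_{\mu}(B)^{t}\,[\omega]_{A_{t}^{\rho,\mu}}$ (which is just \eqref{Definition: weight class} rearranged, since $\omega^{1-t'}=\omega^{-1/(t-1)}$), gives
\begin{equation*}
\frac{1}{\Psi_{\vartheta}(B)|B|}\int_{B}|g|\ \le\ \frac{\Psi_{\mu}(B)}{\Psi_{\vartheta}(B)}\,[\omega]_{A_{t}^{\rho,\mu}}^{1/t}\Big(\frac{1}{\omega(B)}\int_{B}|g|^{t}\,\omega\Big)^{1/t}.
\end{equation*}
Since $\mu\le\vartheta$ forces $\Psi_{\mu}(B)\le\Psi_{\vartheta}(B)$, taking the supremum over $B\ni x$ yields $\mathcal{M}_{\rho,\vartheta}g(x)\le[\omega]_{A_{t}^{\rho,\mu}}^{1/t}\big(M_{\omega}(|g|^{t})(x)\big)^{1/t}$, and therefore, using $s/t>1$,
\begin{equation*}
\|\mathcal{M}_{\rho,\vartheta}g\|_{L^{s}(\omega)}\le[\omega]_{A_{t}^{\rho,\mu}}^{1/t}\big\|M_{\omega}(|g|^{t})\big\|_{L^{s/t}(\omega)}^{1/t}\le C_{n,s,t}\,[\omega]_{A_{t}^{\rho,\mu}}^{1/t}\,\|g\|_{L^{s}(\omega)},
\end{equation*}
provided $\omega\in A_{t}^{\rho,\mu}$.

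\textit{Step 3: the self-improvement of the weight --- the main obstacle.} It remains to show that membership $\omega\in A_{s}^{\rho,\vartheta(s-1)}$ already supplies an exponent $t\in(1,s)$ and a parameter $\mu\in[0,\vartheta]$ with $\omega\in A_{t}^{\rho,\mu}$ and $[\omega]_{A_{t}^{\rho,\mu}}^{1/t}\lesssim[\omega]_{A_{s}^{\rho,\vartheta(s-1)}}^{1/(s-1)}$; inserting this into Step~2 and undoing Step~1 then completes the proof. The tool for lowering the exponent is the quantitative reverse Hölder inequality for the classes $A_{p}^{\rho,\theta}$ established in Section~\ref{Section: Sparse and reverse Holder}: it produces $t=s-\varepsilon$ with $\varepsilon\sim[\omega]_{A_{s}^{\rho,\vartheta(s-1)}}^{-c}$, a comparable weight constant, and a controlled shift in the $\Psi$-exponent. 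Because decreasing $t$ only partially decreases the affordable $\Psi$-exponent (it would like to sit near $\vartheta(t-1)$, which exceeds $\vartheta$ once $t>2$), this has to be coupled with a local--global splitting of $\mathcal{M}_{\rho,\vartheta}$ relative to the critical balls $B(x,\rho(x))$: on balls with $r_{B}\lesssim\rho(x_{B})$ one has $\Psi_{\vartheta}(B)\sim1$ and, by Lemma~\ref{Lemma: Equivalence of two potential}, $\rho$ is essentially constant, so $A_{s}^{\rho,\vartheta(s-1)}$ restricts to a genuine local $A_{s}$ condition with constant $\lesssim[\omega]_{A_{s}^{\rho,\vartheta(s-1)}}$ and the classical sharp (Buckley) weighted bound for the Hardy--Littlewood maximal function applies on each critical ball, after which one sums over a bounded-overlap covering; on balls with $r_{B}\gtrsim\rho(x_{B})$ the honest decay $\Psi_{\vartheta}(B)^{-1}=(1+r_{B}/\rho(x_{B}))^{-\vartheta}$ absorbs the growth of the local weight constants on distant balls, the exponent $\vartheta(s-1)$ in the hypothesis being calibrated exactly so that this absorption closes. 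Combining the two regimes and optimizing in $\varepsilon$ yields the constant $[\omega]_{A_{s}^{\rho,\vartheta(s-1)}}^{1/(s-1)}$, in the spirit of the Hytönen--Pérez proof of Buckley's theorem. I expect the bookkeeping in this last step --- matching the $\Psi$-exponents across the reverse-Hölder improvement and the local--global splitting while keeping the weight constant sharp --- to be the principal technical difficulty.
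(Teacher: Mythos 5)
The paper does not prove this proposition; it is cited directly from \cite{Bui_Bui_Duong_square_function_new_weight_2022}, so there is no in-paper proof to compare against. Judged on its own merits, your Steps 1 and 2 are the right opening moves (the reduction to $r=1$ via $\mathcal{M}_{r,\rho,\theta}f=(\mathcal{M}_{\rho,\theta r}(|f|^r))^{1/r}$ and the H\"older/weighted-maximal dichotomy are standard and correct in spirit), but the proposal as written is not a proof, for two reasons.

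First, a fixable but real flaw in Step 2: you invoke Besicovitch to claim that the \emph{uncentered} maximal operator $M_\omega$ over all Euclidean balls is of weak type $(1,1)$ with respect to $\omega\,dx$ with a dimensional constant. Besicovitch gives this only for the centered operator; for the uncentered operator the constant depends on doubling, and a weight in $A_p^{\rho,\theta}$ is not uniformly doubling --- the doubling inequality it satisfies (cf.\ the estimate \eqref{eq:doub-weight} used in Section~\ref{Section: Compactness of commutators}) costs a factor $\Psi_{\theta}(Q)^{p}$. The remedy is to route the supremum through the finite family of dyadic grids of Lemma~\ref{Lemma: System of dyadic cubes} and use the dyadic maximal operator $\mathcal{M}_{\omega}^{\mathscr{D}}$ of Proposition~\ref{Prop: Boundedness of dyadic maximal functions}, whose $L^{q}(\omega)$ bound is universal; but as written Step 2 is not justified.

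Second, and decisively, Step 3 is where the entire content lies and it is not carried out. You correctly identify the central tension: the hypothesis lives in $A_{s}^{\rho,\vartheta(s-1)}$, whose $\Psi$-exponent $\vartheta(s-1)$ grows with $s$, while the pointwise bound from Step 2 can only tolerate $\mu\le\vartheta$ in the intermediate class $A_{t}^{\rho,\mu}$; and the reverse H\"older inequality available here (Proposition~\ref{Proposition: Reverse Holders inequality}) carries an extra loss $\Psi_{N_{0}}(Q)$ on the right-hand side, so every self-improvement step injects a further $\Psi$-penalty that must be tracked. You propose a local--global splitting along the critical radius $\rho(x)$ and assert that $\Psi_{\vartheta}(B)^{-1}$ together with the exponent $\vartheta(s-1)$ is ``calibrated exactly so that this absorption closes,'' but you do not verify it and explicitly defer the ``bookkeeping'' as the ``principal technical difficulty.'' That bookkeeping is not a detail --- it is precisely the statement to be proved (that the exponents balance to yield $[\omega]_{A_{s}^{\rho,\vartheta(s-1)}}^{1/(s-1)}$ and no worse), so Step 3 is a plan rather than an argument and the proposal has a genuine gap there.
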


Next, we define the dyadic Hardy–Littlewood maximal function associated with this weight class and discuss its mapping properties, which will be needed later in the proofs of the main theorems. In order to do so let us first recall some properties of dyadic system (see \cite{Bui_Bui_Duong_square_function_new_weight_2022}).

\begin{lemma}\cite[Proposition 2.2]{Bui_Bui_Duong_square_function_new_weight_2022}
\label{Lemma: System of dyadic cubes}
There exists a finite collection of dyadic system $\{\mathscr{D}^k : k=1, \ldots, M_0\}$ and $\Lambda_0 >1$ such that for any ball $B \subset \mathbb{R}^n$ there exists a cube $Q \in \mathscr{D}^k $ for some $k$ and satisfies $B \subset Q \subset \Lambda_0 B$.
    
\end{lemma}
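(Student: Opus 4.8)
The plan is to construct the family $\{\mathscr{D}^k\}$ explicitly as the adjacent dyadic grids in $\mathbb{R}^n$ furnished by the one-third trick, and then verify the covering property coordinate by coordinate. Concretely, for each $\beta=(\beta_1,\dots,\beta_n)$ with $\beta_i\in\{0,\tfrac13,\tfrac23\}$ I would set
\begin{align*}
\mathscr{D}^{\beta}:=\Bigl\{\,2^{-j}\bigl([0,1)^n+m\bigr)+(-1)^{j}\beta\ :\ j\in\mathbb{Z},\ m\in\mathbb{Z}^n\,\Bigr\},
\end{align*}
so that $M_0=3^n$. The alternating shift $(-1)^{j}\beta$ is included precisely so that each $\mathscr{D}^{\beta}$ is a genuine dyadic system: for fixed $j$ the cubes tile $\mathbb{R}^n$, and each cube of generation $j+1$ sits inside a unique cube of generation $j$. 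This nesting is the only point where the sign matters, and it reduces to a short one-coordinate interval inclusion check; alternatively one can simply invoke the general construction of adjacent dyadic systems on spaces of homogeneous type (Hyt\"onen--Kairema) specialized to Euclidean space.

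The heart of the argument is the one-dimensional covering statement: for every bounded interval $I\subset\mathbb{R}$ there is a grid $\mathscr{D}^{\beta_0}$, $\beta_0\in\{0,\tfrac13,\tfrac23\}$, and an interval $J\in\mathscr{D}^{\beta_0}$ with $I\subset J$ and $|J|<6|I|$. To see this, choose the generation $j$ with $3|I|\le 2^{-j}<6|I|$. The generation-$j$ endpoints of the three grids $\mathscr{D}^{0},\mathscr{D}^{1/3},\mathscr{D}^{2/3}$ together form the lattice $2^{-j}\mathbb{Z}/3$, of spacing $2^{-j}/3\ge|I|$, and each point of this lattice is an endpoint of exactly one of the three grids. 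Hence the open interval $I$ contains in its interior an endpoint of at most one of the three grids; picking $\beta_0$ to be one of the remaining two, $I$ meets no generation-$j$ endpoint of $\mathscr{D}^{\beta_0}$ and therefore lies in a single generation-$j$ cube $J$, which has length $2^{-j}<6|I|$.

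Passing to $\mathbb{R}^n$ is then routine. Given a ball $B=B(x_B,r_B)$, write $B\subset\prod_{i=1}^n(x_{B,i}-r_B,x_{B,i}+r_B)$, a product of intervals of length $2r_B$, and apply the one-dimensional lemma in each coordinate at the common generation $j$ with $6r_B\le 2^{-j}<12r_B$. This produces $\beta_i\in\{0,\tfrac13,\tfrac23\}$ and generation-$j$ intervals $J_i\in\mathscr{D}^{\beta_i}$ with $(x_{B,i}-r_B,x_{B,i}+r_B)\subset J_i$; setting $\beta=(\beta_1,\dots,\beta_n)$, the cube $Q:=\prod_i J_i$ lies in $\mathscr{D}^{\beta}$, has side length $2^{-j}<12r_B$, and contains $B$. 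Finally, since $x_B\in B\subset Q$ and $\operatorname{diam}(Q)=\sqrt{n}\,2^{-j}<12\sqrt{n}\,r_B$, every point of $Q$ lies within $12\sqrt{n}\,r_B$ of $x_B$, so $B\subset Q\subset\Lambda_0 B$ with $\Lambda_0:=12\sqrt{n}$.

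I expect the only genuinely delicate point to be reconciling the two requirements on the shifted grids simultaneously: each $\mathscr{D}^{\beta}$ must be a \emph{nested} dyadic system (so that it qualifies as a ``dyadic system'' in the sense used throughout the paper), while the union over $\beta$ of the generation-$j$ endpoints must be the full fine lattice $2^{-j}\mathbb{Z}/3$ so that the one-third trick can be applied at a single scale. The alternating-shift construction above is tailored to satisfy both, but checking nestedness carefully (as opposed to the naive translates $\mathscr{D}+t$, which fail to be nested) is the step most likely to require attention; everything else is bookkeeping of constants.
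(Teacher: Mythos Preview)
The paper does not supply a proof of this lemma; it is quoted verbatim from \cite[Proposition~2.2]{Bui_Bui_Duong_square_function_new_weight_2022}. Your proposal is the standard one-third-trick construction, and the covering argument you sketch (choice of scale, the fine lattice of endpoints, the coordinate-wise product) is correct and yields $M_0=3^n$ and $\Lambda_0=12\sqrt n$.

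There is, however, a genuine slip in the formula you wrote for $\mathscr{D}^\beta$: the shift $(-1)^j\beta$ lies \emph{outside} the dilation $2^{-j}$, so the translation is by a fixed amount independent of scale. With this definition the collection is \emph{not} nested. For $n=1$ and $\beta=\tfrac13$, the generation-$1$ interval coming from $j=1$, $m=1$ is $2^{-1}[1,2)-\tfrac13=[\tfrac16,\tfrac23)$, which straddles the generation-$0$ endpoint $\tfrac13$ and hence lies in no generation-$0$ interval $[m+\tfrac13,m+\tfrac43)$. The construction that actually works places the shift inside the dilation,
\[
\mathscr{D}^{\beta}=\Bigl\{2^{-j}\bigl([0,1)^n+m+(-1)^{j}\beta\bigr):j\in\mathbb{Z},\ m\in\mathbb{Z}^n\Bigr\},
\]
so that the translation at scale $j$ is $(-1)^j2^{-j}\beta$; nestedness then reduces to $3\beta\in\mathbb{Z}^n$. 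Your covering argument is insensitive to this distinction---the union of generation-$j$ endpoints is $(2^{-j}/3)\,\mathbb{Z}$ under either formula---so the error affects only the ``dyadic system'' half of the claim, precisely the point you flagged as needing care. With the corrected formula the check you propose does go through, and the rest of your argument stands.
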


\begin{remark}
\label{Remark: system of dyadic cube}
The above lemma is also true if we replace $B$ by the cube $Q$.
\end{remark}

Let us define $\mathscr{D} = \bigcup_{k=1}^{M_0} \mathscr{D}^k$. Given a weight $\omega$, we define the weighted dyadic Hardy-Littlewood maximal operator  $\mathcal{M}_{\omega}^{\mathscr{D}}$ by
\begin{align*}
    \mathcal{M}_{\omega}^{\mathscr{D}} f(x) = \sup_{\substack{Q \in \mathscr{D} : \\ Q \ni x}} \frac{1}{\omega(Q)} \int_Q |f(y)| \, \omega(y) \, dy.
\end{align*}

Regarding the $L^p$-boundedness of $\mathcal{M}_{\omega}^{\mathscr{D}}$ we have the following result.

\begin{proposition}\cite[Theorem 15.1]{Lrner_Nazarov_Dyadic_Calculus_2019}
\label{Prop: Boundedness of dyadic maximal functions}
The operator $\mathcal{M}_{\omega}^{\mathscr{D}}$ is weak $(1,1)$ and strong $(p,p)$ for $1<p\leq \infty$. That is for $\alpha>0$, we have
\begin{align*}
    \omega(\{x \in \mathbb{R}^n : \mathcal{M}_{\omega}^{\mathscr{D}}f(x) > \alpha \}) \leq \frac{1}{\alpha} \|f\|_{L^1(\omega)} ,
\end{align*}
and
\begin{align*}
    \|\mathcal{M}_w^{\mathscr{D}}f\|_{L^p(\omega)} \leq \frac{p}{p-1} \|f\|_{L^p(\omega)} .
\end{align*}
    
\end{proposition}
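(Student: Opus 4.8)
The plan is to recognize $\mathcal{M}^{\mathscr{D}}_\omega$ as the maximal operator of the dyadic martingale associated with the measure $d\mu:=\omega\,dx$, so that the two assertions are exactly Doob's weak-type and $L^p$ maximal inequalities; I will give the self-contained stopping-cube argument, since it is short and already delivers the sharp constant $p/(p-1)$. Since $\mathscr{D}=\bigcup_{k=1}^{M_0}\mathscr{D}^k$ is a \emph{finite} union of dyadic lattices and $\mathcal{M}^{\mathscr{D}}_\omega f\le\sum_{k=1}^{M_0}\mathcal{M}^{\mathscr{D}^k}_\omega f$ pointwise, it suffices to prove both estimates for a single dyadic lattice; accordingly I fix one $\mathscr{D}^k$ and rename it $\mathscr{D}$ (passing back to the finite union at most inserts the harmless factor $M_0$). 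The only structural facts used about $\mathscr{D}$ are that it is a nested family which partitions $\mathbb{R}^n$ at each scale, and that $\omega$ is positive and locally integrable, so $0<\omega(Q)<\infty$ for every $Q\in\mathscr{D}$.

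\emph{Weak $(1,1)$.} Fix $\alpha>0$ and put $E_\alpha:=\{\mathcal{M}^{\mathscr{D}}_\omega f>\alpha\}$. For each $x\in E_\alpha$ there is a ``bad'' cube $Q\in\mathscr{D}$ with $x\in Q$ and $\omega(Q)^{-1}\int_Q|f|\,\omega>\alpha$. Along any strictly increasing chain of cubes one has $\int_Q|f|\,\omega\le\|f\|_{L^1(\omega)}<\infty$ while $\omega(Q)$ is nondecreasing and bounded below, so no bad cube is contained in infinitely many bad cubes; hence every bad cube lies in a \emph{maximal} bad cube. Let $\{Q_j\}_j$ be the collection of maximal bad cubes. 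By nesting these are pairwise disjoint, and $E_\alpha=\bigcup_j Q_j$. Therefore
\begin{align*}
\omega(E_\alpha)=\sum_j\omega(Q_j)<\frac1\alpha\sum_j\int_{Q_j}|f|\,\omega=\frac1\alpha\int_{E_\alpha}|f|\,\omega\le\frac1\alpha\|f\|_{L^1(\omega)},
\end{align*}
which is the weak-type bound; note we have in fact obtained the stronger \emph{local} inequality $\omega(E_\alpha)\le\alpha^{-1}\int_{E_\alpha}|f|\,\omega$, which is what powers the sharp $L^p$ estimate.

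\emph{Strong $(p,p)$, $1<p\le\infty$.} The case $p=\infty$ is immediate, since each average is at most $\|f\|_{L^\infty(\omega)}$. For $1<p<\infty$, Marcinkiewicz interpolation against this trivial $L^\infty$ bound already yields strong $(p,p)$ with \emph{some} constant, so assume first that $\|\mathcal{M}^{\mathscr{D}}_\omega f\|_{L^p(\omega)}<\infty$. Using the layer-cake formula, the local weak bound above, Tonelli's theorem and Hölder's inequality,
\begin{align*}
\|\mathcal{M}^{\mathscr{D}}_\omega f\|_{L^p(\omega)}^p
&=\int_0^\infty p\,\alpha^{p-1}\,\omega(E_\alpha)\,d\alpha
\le\int_0^\infty p\,\alpha^{p-2}\Big(\int_{E_\alpha}|f|\,\omega\Big)\,d\alpha\\
&=\int_{\mathbb{R}^n}|f(x)|\Big(\int_0^{\mathcal{M}^{\mathscr{D}}_\omega f(x)}p\,\alpha^{p-2}\,d\alpha\Big)\omega(x)\,dx
=\frac{p}{p-1}\int_{\mathbb{R}^n}|f|\,\big(\mathcal{M}^{\mathscr{D}}_\omega f\big)^{p-1}\,\omega\\
&\le\frac{p}{p-1}\,\|f\|_{L^p(\omega)}\,\|\mathcal{M}^{\mathscr{D}}_\omega f\|_{L^p(\omega)}^{p-1},
\end{align*}
and dividing by the (finite, nonzero) factor $\|\mathcal{M}^{\mathscr{D}}_\omega f\|_{L^p(\omega)}^{p-1}$ gives $\|\mathcal{M}^{\mathscr{D}}_\omega f\|_{L^p(\omega)}\le\frac{p}{p-1}\|f\|_{L^p(\omega)}$. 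To drop the finiteness hypothesis, apply the estimate with $\min(|f|,N)\,\mathbbm{1}_{B(0,N)}$ in place of $|f|$ and with the truncated operator in which the supremum runs only over cubes of side length $\le 2^N$: the resulting maximal function is bounded and supported in a bounded set (hence lies in $L^p(\omega)$), so the computation applies and yields a bound uniform in $N$; letting $N\to\infty$ and invoking monotone convergence finishes the proof.

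There is no genuine obstacle here — the statement is classical (Doob's maximal inequality for the dyadic filtration relative to $\omega\,dx$). The only points needing a touch of care are the two finiteness justifications: the absence of infinite ascending chains of bad cubes in the stopping step, and the truncation used to legitimately divide by $\|\mathcal{M}^{\mathscr{D}}_\omega f\|_{L^p(\omega)}^{p-1}$; both are handled routinely as indicated above.
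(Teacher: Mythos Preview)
The paper does not give its own proof of this proposition; it simply cites Lerner--Nazarov, where the standard stopping-cube (Doob) argument appears. Your proof follows exactly that route and is essentially correct.

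Two small points are worth flagging. First, as you already note, reducing to a single lattice via $\mathcal{M}^{\mathscr{D}}_\omega f\le\sum_k\mathcal{M}^{\mathscr{D}^k}_\omega f$ costs a factor of $M_0$, so the displayed constants $1$ and $p/(p-1)$ really pertain to each $\mathscr{D}^k$ rather than to the union $\mathscr{D}$; this is harmless for every use made of the proposition in the paper, where only boundedness with a constant independent of $\omega$ is needed. Second, your justification that every bad cube lies in a maximal bad cube (``$\omega(Q)$ is nondecreasing and bounded below, so no bad cube is contained in infinitely many bad cubes'') is not quite a proof: if the ascending chain of dyadic ancestors of a cube exhausts a set of \emph{finite} $\omega$-measure, the averages along the chain need not eventually drop below $\alpha$, and an infinite chain of bad cubes is not excluded on those grounds alone. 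The cleanest repair is exactly the truncation you already invoke for the $L^p$ step: first restrict the supremum to cubes of side length at most $2^N$, so that maximal bad cubes trivially exist, derive the weak-type and local inequalities with constants independent of $N$, and then let $N\to\infty$ by monotone convergence.
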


Let us state some known results related to $BMO_{\theta}(\rho)$, which will be useful in the proof of Theorem \eqref{Theorem: Weighted L^p boundedness of pseudo commutator}.
\begin{lemma}\cite[Proposition 4.3]{Tang_Weighted_Schrodinger_Forum_Math_2015}
\label{Lemma: Complementary exponential function with BMO function}
    Suppose $f \in BMO_{\theta}(\rho)$. Then there exists $\gamma, C>0$ such that
    \begin{align*}
        \sup_{B} \frac{1}{|B|} \int_B \exp\left\{\frac{\gamma}{\|f\|_{BMO_{\theta}(\rho)} \Psi_{\theta'}(B)} |f(x)-f_B| \right\} \ dx \leq C ,
    \end{align*}
    where $\theta'= \theta (l_0+1)$.
\end{lemma}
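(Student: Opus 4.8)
The plan is to reduce the statement to the classical John--Nirenberg inequality applied on a single cube; the only place where the Schr\"odinger geometry intervenes is a comparison of the factor $\Psi_\theta$ across subcubes, and it is precisely there that the exponent degrades from $\theta$ to $\theta'=\theta(l_0+1)$. By homogeneity we may assume $\|f\|_{BMO_\theta(\rho)}=1$. Fix a ball $B=B(x_B,r_B)$ and let $Q=Q(x_B,2r_B)$ be the cube with centre $x_B$ and side length $2r_B$, so that $B\subset Q$, $|Q|=\kappa_n|B|$ with $\kappa_n$ depending only on $n$, $\rho(x_Q)=\rho(x_B)$, and $\Psi_\theta(Q)\le 2^\theta\Psi_\theta(B)$. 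Since the statement to be proved is a supremum over balls, it suffices to bound $\frac{1}{|B|}\int_B\exp\big(\gamma|f-f_B|/\Psi_{\theta'}(B)\big)\,dx$ by a constant depending only on $n$, $C_0$, $l_0$, $\theta$.

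The first and crucial step is the geometric estimate $\Psi_\theta(Q')\lesssim\Psi_{\theta'}(B)$, with implicit constant depending only on $n,C_0,l_0,\theta$, valid for \emph{every} subcube $Q'\subseteq Q$. Indeed, the centre $x_{Q'}$ of such a $Q'$ satisfies $|x_{Q'}-x_B|\le\sqrt n\,r_B$ and $r_{Q'}\le 2r_B$; feeding the lower bound for $\rho(x_{Q'})/\rho(x_B)$ from Lemma \ref{Lemma: Equivalence of two potential} into $r_{Q'}/\rho(x_{Q'})$ and using the elementary inequality $1+ct\le c(1+t)$ for $c\ge1$, $t\ge0$ gives
\[
1+\frac{r_{Q'}}{\rho(x_{Q'})}\lesssim\Big(1+\frac{r_B}{\rho(x_B)}\Big)^{l_0+1},
\]
and raising to the power $\theta$ proves the claim, since $\theta(l_0+1)=\theta'$. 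Writing $P:=C_1\Psi_{\theta'}(B)$ with $C_1$ the resulting constant, the definition of $BMO_\theta(\rho)$ then yields the uniform oscillation bound $|Q'|^{-1}\int_{Q'}|f-f_{Q'}|\,dx\le\Psi_\theta(Q')\le P$ for every subcube $Q'\subseteq Q$.

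Granted this uniform bound, I would run the standard Calder\'on--Zygmund iteration inside $Q$ relative to the dyadic grid obtained by repeated bisection of $Q$. Put $G(\lambda):=\sup_{Q'\subseteq Q}|Q'|^{-1}\big|\{x\in Q':|f(x)-f_{Q'}|>\lambda\}\big|$. A Calder\'on--Zygmund decomposition of $|f-f_{Q'}|$ at height $eP$ produces disjoint dyadic subcubes $\{Q'_i\}$ with $\sum_i|Q'_i|\le e^{-1}|Q'|$, with $|f-f_{Q'}|\le eP$ a.e.\ on $Q'\setminus\bigcup_iQ'_i$, and with $|f_{Q'_i}-f_{Q'}|\le 2^neP$; these give the recursion $G(\lambda)\le e^{-1}G(\lambda-2^neP)$ for $\lambda>2^neP$, hence, iterating and using $G\le 1$, $G(\lambda)\le e\,e^{-c\lambda/P}$ with $c=(2^ne)^{-1}$. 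In particular $|Q|^{-1}\big|\{x\in Q:|f-f_Q|>\lambda\}\big|\le e\,e^{-c\lambda/P}$, and integrating this distribution function against $e^{\gamma_0 t/P}$ with $\gamma_0:=c/2$ gives $|Q|^{-1}\int_Q e^{\gamma_0|f-f_Q|/P}\,dx\le 1+e$.

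It remains to transfer this to $B$. Since $B\subset Q$ we have $|f_B-f_Q|\le\frac{|Q|}{|B|}\,|Q|^{-1}\int_Q|f-f_Q|\le\kappa_n P=\kappa_nC_1\Psi_{\theta'}(B)$. Setting $\gamma:=\gamma_0/C_1$ and using $|f-f_B|\le|f-f_Q|+|f_B-f_Q|$ together with $P=C_1\Psi_{\theta'}(B)$, one gets
\[
\frac{1}{|B|}\int_B\exp\Big(\frac{\gamma|f-f_B|}{\Psi_{\theta'}(B)}\Big)\,dx\le e^{\gamma_0\kappa_n}\,\frac{|Q|}{|B|}\cdot\frac{1}{|Q|}\int_Q\exp\Big(\frac{\gamma_0|f-f_Q|}{P}\Big)\,dx\le e^{\gamma_0\kappa_n}\kappa_n(1+e),
\]
a constant independent of $B$ and $f$; taking the supremum over all balls completes the proof. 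The only genuine obstacle here is the geometric estimate of the second paragraph: tracking how $\Psi_\theta$ of arbitrarily small subcubes of $Q$ is dominated by $\Psi_{\theta'}$ of $B$ through Shen's comparison (Lemma \ref{Lemma: Equivalence of two potential}) is exactly what forces the passage $\theta\mapsto\theta(l_0+1)$, whereas, once the uniform oscillation bound is in hand, the remainder is the classical John--Nirenberg argument.
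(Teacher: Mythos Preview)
The paper does not supply its own proof of this lemma: it is quoted verbatim as \cite[Proposition~4.3]{Tang_Weighted_Schrodinger_Forum_Math_2015}, so there is nothing in the present paper to compare your argument against. That said, your proof is correct and is exactly the natural adaptation of the classical John--Nirenberg iteration to this setting. The only nontrivial input beyond the textbook argument is the geometric comparison $\Psi_\theta(Q')\lesssim\Psi_{\theta'}(B)$ for every subcube $Q'\subseteq Q$, which you derive from Lemma~\ref{Lemma: Equivalence of two potential}; this is precisely the estimate \eqref{Dominated of psi theta by psi theta l} in the paper, and it is what forces the exponent to degrade from $\theta$ to $\theta'=\theta(l_0+1)$.

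One small remark: the paper's definition of $\|f\|_{BMO_\theta(\rho)}$ is stated with balls, while your oscillation bound $|Q'|^{-1}\int_{Q'}|f-f_{Q'}|\le\Psi_\theta(Q')$ is for cubes. This is harmless---passing to the circumscribed ball $B'\supset Q'$ of comparable measure gives $|Q'|^{-1}\int_{Q'}|f-f_{Q'}|\lesssim\Psi_\theta(B')\sim\Psi_\theta(Q')$ with a purely dimensional constant that is absorbed into your $C_1$---but it would be worth saying explicitly.
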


Next, we discuss some facts about Luxemburg norm and the related maximal function. Let $\Phi$ be a nonnegative, continuous, convex, increasing function on $[0, \infty)$ with $\Phi(0)=0$ and $\Phi(t) \to \infty$ as $t \to \infty$. Such functions are called Young functions. The $\Phi$-average of a function $f$ over a cube $Q$ is defined by the following Luxemberg norm
\begin{align*}
    \|f\|_{\Phi, Q} &= \inf\left\{\lambda>0 : \frac{1}{|Q|} \int_Q \Phi \left(\frac{|f(y)|}{\lambda} \right) dy \leq 1 \right\}.
\end{align*}
In this case, we have the following generalized H\"older's inequality
\begin{align}
\label{Inequality: generalized Holders inequality}
    \frac{1}{|Q|} \int_Q |f(y) g(y)| dy &\leq \|f\|_{\Phi, Q} \|g\|_{\Bar{\Phi}, Q},
\end{align}
where $\Bar{\Phi}$ is the complementary Young function associated with $\Phi$. With the help of the Luxemburg norm, we define the corresponding maximal function by
\begin{align*}
    \mathcal{M}_{\Phi}f(x) &= \sup_{Q \ni x} \|f\|_{\Phi, Q} ,\quad \quad
    \text{and} \quad \quad \mathcal{M}_{\Phi, \rho, \theta} f(x) = \sup_{Q \ni x} \frac{1}{\Psi_{\theta}(Q)} \|f\|_{\Phi, Q} .
\end{align*}

In the following, we take $\Phi(t) = t \log(e+t)$, with the corresponding maximal function denoted by $\mathcal{M}_{\mathcal{L} \log \mathcal{L}, \rho, \theta}$. Then the complementary Young function is given by $\Bar{\Phi}(t) \sim e^t$, and the associated maximal function is denoted by $\mathcal{M}_{\exp{\mathcal{L}}, \rho, \theta}$. For more details, related to the Luxemberg norm, see \cite{rao_Ren_Theory_Orlich_1991}, \cite{Tang_Pseudo_Commutators_2012}, \cite{Wen-Wu_Xue_Sparse_rho_variation_2022}. 

As an application of Lemma \ref{Lemma: Complementary exponential function with BMO function}, we have the following result (see also \cite[Section 2]{Tang_Wang_Zhu_Weighted_Schrodinger_2019}):
\begin{align}
\label{Exponential norm of BMO function}
    \|b-b_Q\|_{\exp \mathcal{L}, Q} \leq C \|b\|_{BMO_{\theta}(\rho)} \Psi_{\theta'}(Q) ,
\end{align}
where $\theta'= \theta (l_0+1)$.

From Lemma 4.6 of \cite{Tang_Wang_Zhu_Weighted_Schrodinger_2019}, putting $\beta=0$, we get the following result, which we will use in the proof of Theorem \eqref{Theorem: Weighted L^p boundedness of pseudo commutator}.
\begin{lemma}
\label{Lemma: Relation between variant maximal function and L log L maximal function}
    Let $\mathcal{M}_{\rho, \theta/2}f$ be locally integrable. Then there exists positive constant $C_1$ and $C_2$ independent of $f$ and $x$ such that
    \begin{align*}
        C_2 \mathcal{M}_{\rho, \theta} \mathcal{M}_{\rho, \theta}f(x) \leq \mathcal{M}_{\mathcal{L} \log \mathcal{L}, \rho, \theta}f(x) \leq C_1 \mathcal{M}_{\rho, \theta/2} \mathcal{M}_{\rho, \theta/2}f(x) .
    \end{align*}
\end{lemma}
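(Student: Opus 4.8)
The plan is to mirror the classical pointwise equivalence $\mathcal{M}_{\mathcal{L}\log\mathcal{L}}f\approx \mathcal{M}(\mathcal{M}f)$ (P\'erez) in the $(\rho,\theta)$-setting, using two facts. The first is the elementary identity $\Psi_{\theta}(Q)=\Psi_{\theta/2}(Q)^{2}$, immediate from $\Psi_{\theta}(Q)=(1+r_{Q}/\rho(x_{Q}))^{\theta}$; this is precisely what forces $\theta$ on the left of the asserted inequalities and $\theta/2$ on the right. The second is the classical \emph{local} $L\log L$ estimate of Stein: for every cube $Q$,
\[
\frac{1}{|Q|}\int_{Q} \mathcal{M}_{1}(g\chi_{Q})(y)\,dy\;\approx\;\|g\|_{\Phi,Q},\qquad \Phi(t)=t\log(e+t),
\]
where $\mathcal{M}_{1}$ is the Hardy--Littlewood maximal operator; this estimate is $\rho$-independent and transfers verbatim. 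I will also use that $\Psi_{\theta}$ is doubling, $\Psi_{\theta}(CQ)\lesssim_{C,\theta}\Psi_{\theta}(Q)$ (iterate $\Psi_{\theta}(2Q)\le 2^{\theta}\Psi_{\theta}(Q)$), that $\mathcal{M}_{\rho,\theta}g\le \mathcal{M}_{1}g$ pointwise, and that $\Phi(t)\ge t$ gives $\tfrac{1}{|Q|}\int_{Q}|g|\le \|g\|_{\Phi,Q}$ and hence $\mathcal{M}_{\rho,\theta}g\le \mathcal{M}_{\mathcal{L}\log\mathcal{L},\rho,\theta}g$ pointwise.

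For the upper bound, I would fix $x$ and a cube $Q\ni x$, apply Stein's estimate to bound $\|f\|_{\Phi,Q}$ by $\tfrac{1}{|Q|}\int_{Q}\mathcal{M}_{1}(f\chi_{Q})$, and then localise: for $y\in Q$ a cube realising (up to a constant) $\mathcal{M}_{1}(f\chi_{Q})(y)$ may be taken inside $3Q$ or of volume at least $|Q|$, and in either case $\mathcal{M}_{1}(f\chi_{Q})(y)\lesssim \Psi_{\theta/2}(Q)\,\mathcal{M}_{\rho,\theta/2}(f\chi_{Q})(y)$ by the doubling of $\Psi_{\theta/2}$. Dividing by $\Psi_{\theta}(Q)=\Psi_{\theta/2}(Q)^{2}$ and using $\mathcal{M}_{\rho,\theta/2}(f\chi_{Q})\le \mathcal{M}_{\rho,\theta/2}f$ gives
\[
\frac{1}{\Psi_{\theta}(Q)}\|f\|_{\Phi,Q}\;\lesssim\;\frac{1}{\Psi_{\theta/2}(Q)}\cdot\frac{1}{|Q|}\int_{Q}\mathcal{M}_{\rho,\theta/2}f(y)\,dy\;\le\;\mathcal{M}_{\rho,\theta/2}\bigl(\mathcal{M}_{\rho,\theta/2}f\bigr)(x),
\]
and a supremum over $Q\ni x$ closes this direction.

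For the lower bound, I would fix $x$ and $Q\ni x$ and split $f=f\chi_{\kappa Q}+f\chi_{(\kappa Q)^{c}}$ for a large dimensional constant $\kappa$. For the local piece, $\mathcal{M}_{\rho,\theta}(f\chi_{\kappa Q})\le \mathcal{M}_{1}(f\chi_{\kappa Q})$, and Stein's estimate on $\kappa Q$ together with $\Psi_{\theta}(\kappa Q)\sim\Psi_{\theta}(Q)$, $|\kappa Q|\sim|Q|$, and $x\in\kappa Q$ yields $\tfrac{1}{\Psi_{\theta}(Q)|Q|}\int_{Q}\mathcal{M}_{1}(f\chi_{\kappa Q})\lesssim \Psi_{\theta}(\kappa Q)^{-1}\|f\|_{\Phi,\kappa Q}\le \mathcal{M}_{\mathcal{L}\log\mathcal{L},\rho,\theta}f(x)$. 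For the far piece, if $y\in Q$ and a cube $Q'\ni y$ meets $(\kappa Q)^{c}$ then choosing $\kappa$ large forces $r_{Q'}\gtrsim r_{Q}$, so $x\in Q\subseteq CQ'$ for a dimensional $C$; then the doubling of $\Psi_{\theta}$ and of Lebesgue measure give
\[
\frac{1}{\Psi_{\theta}(Q')|Q'|}\int_{Q'}|f\chi_{(\kappa Q)^{c}}|\;\le\;\frac{1}{\Psi_{\theta}(Q')|Q'|}\int_{Q'}|f|\;\lesssim\;\frac{1}{\Psi_{\theta}(CQ')|CQ'|}\int_{CQ'}|f|\;\le\;\mathcal{M}_{\rho,\theta}f(x)\;\le\;\mathcal{M}_{\mathcal{L}\log\mathcal{L},\rho,\theta}f(x),
\]
so $\mathcal{M}_{\rho,\theta}(f\chi_{(\kappa Q)^{c}})(y)\lesssim \mathcal{M}_{\mathcal{L}\log\mathcal{L},\rho,\theta}f(x)$ uniformly for $y\in Q$, and the same bound survives averaging over $Q$ and dividing by $\Psi_{\theta}(Q)\ge1$. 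Summing the two pieces and taking a supremum over $Q\ni x$ completes the proof.

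The only genuinely nontrivial analytic ingredient is the harder half of Stein's local inequality, $\|g\|_{\Phi,Q}\lesssim \tfrac1{|Q|}\int_{Q}\mathcal{M}_{1}(g\chi_{Q})$, proved by a Calder\'on--Zygmund decomposition of $g$ at heights $\lambda>\tfrac1{|Q|}\int_Q|g|$ followed by a layer-cake integration; this is classical and $\rho$-free, so it needs no modification. The main work is thus essentially bookkeeping: the two localisation arguments, and in the lower bound the point that ``far'' cubes $Q'$ --- which may be arbitrarily large --- still have $\Psi_{\theta}$-weighted averages dominated by $\mathcal{M}_{\rho,\theta}f(x)$, which is exactly where the doubling of $\Psi_{\theta}$ is indispensable. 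I expect that, together with the careful tracking of the parameter halving dictated by $\Psi_{\theta}=\Psi_{\theta/2}^{2}$, to be the only places requiring attention; nothing here can upgrade the estimate to a genuine pointwise equivalence with a single exponent.
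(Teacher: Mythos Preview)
The paper does not give a proof of this lemma; it simply records it as the case $\beta=0$ of Lemma~4.6 in \cite{Tang_Wang_Zhu_Weighted_Schrodinger_2019}. So there is no in-paper argument to compare to, and your sketch is a genuine attempt to supply one. Your lower bound is fine: the split $f=f\chi_{\kappa Q}+f\chi_{(\kappa Q)^c}$, Stein's local estimate on $\kappa Q$, and the far-cube argument via doubling of $\Psi_\theta$ all go through exactly as you say.

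The upper bound, however, has a real gap. The pointwise claim
\[
\mathcal{M}_1(f\chi_Q)(y)\;\lesssim\;\Psi_{\theta/2}(Q)\,\mathcal{M}_{\rho,\theta/2}(f\chi_Q)(y),\qquad y\in Q,
\]
does \emph{not} follow from doubling of $\Psi_{\theta/2}$ in the case $Q'\subseteq 3Q$. Doubling controls $\Psi_{\theta/2}$ under dilation of a cube about its own center; it says nothing about comparing $\Psi_{\theta/2}(Q')$ to $\Psi_{\theta/2}(Q)$ for an arbitrary subcube $Q'\subseteq 3Q$, because $\rho$ can be much smaller at $x_{Q'}$ than at $x_Q$. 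The paper itself records only the weaker monotonicity \eqref{Dominated of psi theta by psi theta l}, namely $\Psi_{\theta}(P)\lesssim\Psi_{\theta(1+l_0)}(Q)$ for $P\subseteq Q$, and this exponent loss is sharp in general: in the Hermite case $V(x)=|x|^2$ one has $\rho(x)\sim (1+|x|)^{-1}$, so for $Q=Q(0,R)$ and $Q'=Q(x',R/4)$ with $|x'|\sim R$ one gets $\Psi_{\theta/2}(Q)\sim R^{\theta/2}$ while $\Psi_{\theta/2}(Q')\sim R^{\theta}$, and the ratio is unbounded as $R\to\infty$. Thus your pointwise inequality fails, and with it the step ``$\mathcal{M}_1(f\chi_Q)(y)\lesssim \Psi_{\theta/2}(Q)\,\mathcal{M}_{\rho,\theta/2}f(y)$''.

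This does not mean the lemma is false---only that the passage from the $\Psi$-free Stein estimate to the $\Psi_{\theta/2}$-weighted iterated maximal function cannot be done pointwise in $y$; one has to exploit the averaging over $Q$ (or a covering of $Q$ by critical balls $B(x_m,\rho(x_m))$ on which $\Psi\sim 1$) in a more essential way. As written, your upper-bound argument would only yield the weaker conclusion $\mathcal{M}_{\mathcal{L}\log\mathcal{L},\rho,\theta}f\lesssim \mathcal{M}_{\rho,\theta/(2(1+l_0))}\,\mathcal{M}_{\rho,\theta/2}f$, which is not the stated inequality.
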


\section{Sparse operators and reverse H\"older's inequality}
\label{Section: Sparse and reverse Holder}
In this section, we define sparse family and discuss various properties of the sparse operators associated to the Schr\"odinger operator $\mathcal{L}$. Also we prove a quantitative version of reverse H\"older's inequality associated with the weight class $A_p^{\rho, \theta}$.

We say a collection $ \mathcal{S}$ of cubes of $\mathbb{R}^n$ to be an $\eta$-sparse family (for some $0<\eta<1$) if for every member $Q \in \mathcal{S}$ there exists a set $E_{Q} \subseteq Q$ such that $|E_{Q}|\geq \eta |Q|$, and the sets $\{E_{Q}\}_{Q \in {\mathcal{S}}}$ are pairwise disjoint.

Let $ p_0, r \in [1, \infty) $, $ p \in (r, \infty) $, $ \omega \in A_{p/r}^\theta $. For a sparse family $ \mathcal{S} $, we define the sparse operator
\begin{align*}
\mathcal{A}_{\mathcal{S}}^{r, p_0, \rho, N} f(x) &=
\left[\sum_{Q \in \mathcal{S}}\left( \frac{1}{|Q|} \int_Q |f|^{r} \right)^{\frac{p_0}{r}} \Psi_N(Q)^{-p_0} \chi_Q(x) \right]^{1/p_0},
\end{align*}
where
\begin{align*}
    \Psi_N(Q) &= \left(1 + \frac{r_Q}{\rho(x_Q)}\right)^N, \quad N > 0,
\end{align*}
and \( x_Q, r_Q \) denote the center and side length of \( Q \), respectively.

The following proposition gives the quantitative weighted $L^p$-estimate of the sparse operator $\mathcal{A}_{\mathcal{S}}^{r, p_0, \rho, N} $. For $p_0 =1$, similar result was proved in \cite[Theorem 2.6]{Bui_Bui_Duong_Singular_JGA_2021}.
\begin{proposition}
\label{Prop: Boundedness of sparse operator}
Let $ \mathcal{S} $ be an $ \eta $-sparse family for $0< \eta < 1$ and take $ p_0, r \in [1, \infty) $. For $ p \in (r, \infty) $ and $ \omega \in A_{p/r}^{\rho, \theta} $, we have
\begin{align*}
    \left\| \mathcal{A}_{\mathcal{S}}^{r, p_0, \rho, N} f \right\|_{L^p(\omega)} &\lesssim [\omega]_{A_{p/r}^{\rho, \theta}}^{\max\left\{ \frac{1}{p - r}, \frac{1}{p_0} \right\}} \|f\|_{L^p(\omega)},
\end{align*}
for $ N \geq \theta \cdot \frac{p}{r} \max\left\{ \frac{1}{p - r}, \frac{1}{p_0} \right\} $.
\end{proposition}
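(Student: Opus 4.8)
\textbf{Proof proposal for Proposition \ref{Prop: Boundedness of sparse operator}.}

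The plan is to reduce matters to the weighted boundedness of the weighted dyadic maximal operator $\mathcal{M}_\omega^{\mathscr{D}}$ (Proposition \ref{Prop: Boundedness of dyadic maximal functions}), following the classical Moen--Hyt\"onen--Lacey dualization scheme adapted to the $\rho$-weight setting. First I would set $s = p/r > 1$ and reduce to the case $r = 1$ by replacing $f$ with $|f|^r$, $p$ with $s$, and $\omega$ with itself; this converts the $p_0$-th power average inside $\mathcal{A}_{\mathcal{S}}^{r,p_0,\rho,N}$ into the operator $\mathcal{A}_{\mathcal{S}}^{1,p_0,\rho,N}$ acting on $|f|^r$, so it suffices to prove
\begin{align*}
    \left\| \mathcal{A}_{\mathcal{S}}^{1,p_0,\rho,N} g \right\|_{L^{s}(\omega)} \lesssim [\omega]_{A_{s}^{\rho,\theta}}^{\max\{\frac{1}{s-1},\frac{1}{p_0}\}} \|g\|_{L^{s}(\omega)},
\end{align*}
with $N \ge \theta s \max\{\frac{1}{s-1}, \frac{1}{p_0}\}$. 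Since $\mathscr{D}$ is a finite union of dyadic systems and, by Lemma \ref{Lemma: System of dyadic cubes} and Remark \ref{Remark: system of dyadic cube}, every cube $Q$ is contained in some $\widetilde Q \in \mathscr{D}^k$ with $\widetilde Q \subset \Lambda_0 Q$ (so that $|\widetilde Q| \sim |Q|$, $\Psi_N(\widetilde Q) \sim \Psi_N(Q)$ by Lemma \ref{Lemma: Equivalence of two potential}, and the sparseness is preserved up to a constant), I may assume $\mathcal{S} \subset \mathscr{D}^k$ for a single $k$, i.e.\ $\mathcal{S}$ is dyadic.

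Next I would dualize. For $p_0 > 1$ write, for a nonnegative $h$ with $\|h\|_{L^{(s/p_0)'}(\omega)} = 1$ (when $s > p_0$; the case $s \le p_0$ is handled by a direct estimate since then $\ell^{p_0} \hookrightarrow \ell^{s}$ after integrating),
\begin{align*}
    \big\| \mathcal{A}_{\mathcal{S}}^{1,p_0,\rho,N} g \big\|_{L^{s}(\omega)}^{p_0}
    = \sum_{Q \in \mathcal{S}} \left(\frac{1}{|Q|}\int_Q |g|\right)^{p_0} \Psi_N(Q)^{-p_0} \int_Q h\,\omega.
\end{align*}
On each $Q$ I factor the average as
\begin{align*}
    \frac{1}{|Q|}\int_Q |g|
    = \frac{\omega(Q)^{1/s}\sigma(Q)^{1/s'}}{|Q|} \cdot \frac{1}{\sigma(Q)^{1/s'}}\Big(\frac{1}{\omega(Q)^{1/s}}\int_Q |g|\,\sigma^{-1}\cdot\sigma\Big),
\end{align*}
where $\sigma = \omega^{-1/(s-1)}$, and use $\frac{\omega(Q)\sigma(Q)^{s-1}}{|Q|^{s}} \le \Psi_\theta(Q)^{s}[\omega]_{A_s^{\rho,\theta}}$ from \eqref{Definition: weight class}. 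This produces a factor $[\omega]_{A_s^{\rho,\theta}}^{1/s}\,\Psi_\theta(Q)$ per cube, which is absorbed by $\Psi_N(Q)^{-1}$ precisely when $N$ is large enough (here the constraint $N \ge \theta s \max\{\cdots\}$ enters, tracking how many copies of $\Psi_\theta$ appear after raising to the right power and iterating). What remains is a sum of the form $\sum_Q a_Q b_Q c_Q \,\sigma(Q)$ with $a_Q$ a $\sigma$-average of $g\sigma^{-1}$ and $b_Q$ an $\omega$-average of $h$; applying H\"older in the sequence space and the Carleson embedding theorem for the sparse family (with respect to $\sigma$ and $\omega$ respectively), and then Proposition \ref{Prop: Boundedness of dyadic maximal functions} to bound the resulting maximal functions $M_\sigma^{\mathscr{D}}(g\sigma^{-1})$ in $L^{s}(\sigma)$ and $M_\omega^{\mathscr{D}}h$ in $L^{(s/p_0)'}(\omega)$, closes the estimate. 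The exponent $\max\{\frac{1}{s-1},\frac{1}{p_0}\}$ emerges as the larger of the power coming from the $A_s$ characteristic in the Carleson/$\sigma$-side (which gives $\frac{1}{s-1}$ after the standard optimization, as in the $p_0=1$ case of \cite[Theorem 2.6]{Bui_Bui_Duong_Singular_JGA_2021}) and the power $\frac{1}{p_0}$ forced by the outer $p_0$-th root.

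The main obstacle I anticipate is \emph{bookkeeping the $\Psi$-factors}: one must verify that the $\Psi_\theta(Q)$ produced by each application of the $A_s^{\rho,\theta}$ condition, together with the $\Psi$-distortion incurred when passing from arbitrary cubes to dyadic ones (Lemma \ref{Lemma: Equivalence of two potential}) and when splitting a cube in the Carleson-embedding iteration, all sum to an exponent no larger than $N$, so that the damping factor $\Psi_N(Q)^{-p_0}$ genuinely absorbs them and no divergent constant appears. Getting the precise threshold $N \ge \theta\cdot\frac{p}{r}\max\{\frac{1}{p-r},\frac{1}{p_0}\}$ requires carefully choosing at which step to peel off the $A_s$ constant (all at once versus via a reverse-H\"older self-improvement of $\omega$) and counting powers; everything else is a routine adaptation of the Euclidean sparse-domination machinery.
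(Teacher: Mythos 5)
Your overall strategy is the same as the paper's: after reducing to a dyadic sparse family, dualize against the conjugate $L^q$-space, factor each term in the resulting sum into a $\nu$-piece and an $\omega$-piece controlled by the weighted dyadic maximal functions $\mathcal{M}_{\nu}^{\mathscr{D}}$ and $\mathcal{M}_{\omega}^{\mathscr{D}}$ (this is exactly the Carleson-embedding-via-disjointness-of-$E_Q$ step the paper carries out), and extract a constant $C_{\omega,\theta}$ whose $\Psi_\theta$ factors are absorbed by the damping $\Psi_N(Q)^{-p_0}$ under the stated hypothesis on $N$. The one genuine deviation is that you open with a reduction to $r=1$, whereas the paper keeps $r$ general and instead only reduces the regime $p - r < p_0$ to the balanced endpoint $p_0 = p - r$ via $\|\cdot\|_{\ell^{p_0}} \le \|\cdot\|_{\ell^{p-r}}$.

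That opening reduction, as you have written it, is incorrect: raising the $r$-sparse operator to the $r$-th power gives
\begin{align*}
\bigl(\mathcal{A}_{\mathcal{S}}^{r,p_0,\rho,N}f\bigr)^{r} = \mathcal{A}_{\mathcal{S}}^{1,\,p_0/r,\,\rho,\,Nr}\bigl(|f|^{r}\bigr),
\end{align*}
not $\mathcal{A}_{\mathcal{S}}^{1,p_0,\rho,N}(|f|^r)$: both the inner exponent (it becomes $p_0/r$) and the $\Psi$-damping (it becomes $Nr$) change under the substitution. Once this is corrected the arithmetic does close up — after applying the $r=1$ estimate with exponent $\max\{\frac{1}{s-1},\frac{r}{p_0}\}$ and taking the $1/r$-th power, the $r$'s cancel to produce the desired $\max\{\frac{1}{p-r},\frac{1}{p_0}\}$, and the constraint on $N$ likewise survives — but as stated the identity, the dual exponent $(s/p_0)'$, and the claimed $N$-threshold are all off by factors of $r$. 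The remaining structure of your argument, including the dichotomy ($s > p_0$ versus $s\le p_0$, which is a different but serviceable cut compared to the paper's $p \ge r + p_0$), is sound and matches the paper's proof.
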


\begin{proof}
Note that it is enough to prove for $ p \geq r + p_0 $. If $p-r < p_0$, then $\max\left\{ \frac{1}{p - r}, \frac{1}{p_0} \right\} = \frac{1}{p-r}$. Hence, using the fact $\| \cdot \|_{\ell^{p_0}} \leq \| \cdot \|_{\ell^{p-r}}$ and from the case $p-r=p_0$ we get
\begin{align*}
    \left\| \mathcal{A}_{\mathcal{S}}^{r, p_0, \rho, N} f \right\|_{L^p(\omega)} &= \left\|\left[\sum_{Q \in \mathcal{S}}\left( \frac{1}{|Q|} \int_Q |f|^{r} \right)^{\frac{p_0}{r}} \Psi_N(Q)^{-p_0} \chi_Q(x) \right]^{1/p_0} \right\|_{L^p(\omega)} \\
    &\leq \left\|\left[\sum_{Q \in \mathcal{S}}\left( \frac{1}{|Q|} \int_Q |f|^{r} \right)^{\frac{p-r}{r}} \Psi_N(Q)^{-(p-r)} \chi_Q(x) \right]^{1/(p-r)} \right\|_{L^p(\omega)} \\
    &\lesssim [\omega]_{A_{p/r}^{\rho, \theta}}^{\max\left\{ \frac{1}{p - r}, \frac{1}{p_0} \right\}} \|f\|_{L^p(\omega)} .
\end{align*}
Therefore, we assume $ p \geq r + p_0 $. Note that because of remark \ref{Remark: system of dyadic cube}, we can also assume $\mathcal{S} \subset \mathscr{D}$. Let us set $ q := \left(p/p_0 \right)' = \frac{p}{p - p_0}$. For $ \omega \in A_{p/r}^{\rho, \theta} $, take a function $ g \in L^q(\omega^{1 - q}) = \left(L^{p/p_0}(\omega)\right)^* $. Then by duality, we can write
\begin{align*}
    \left\| \mathcal{A}_{\mathcal{S}}^{r, p_0, \rho, N} f \right\|_{L^p(\omega)}^{p_0} &= \left\| |\mathcal{A}_{\mathcal{S}}^{r, p_0, \rho, N} f|^{p_0} \right\|_{L^{p/p_0}(\omega)} \\
    &= \sup_{\|g\|_{L^q(\omega^{1 - q})} \leq 1} \left| \int_{\mathbb{R}^n} \sum_{Q \in \mathcal{S}}\left( \frac{1}{|Q|} \int_Q |f|^{r} \right)^{\frac{p_0}{r}} \Psi_N(Q)^{-p_0} \chi_Q(x) \, g(x) \, dx \right| \\
    &= \sup_{\|g\|_{L^q(\omega^{1 - q})} \leq 1} \left\{\sum_{Q \in \mathcal{S}} |Q| \left( \frac{1}{|Q|} \int_Q |f|^{r} \right)^{\frac{p_0}{r}} \Psi_N(Q)^{-p_0} \left( \frac{1}{|Q|} \int_Q |g(x)| \, dx \right) \right\} .
\end{align*}
Note that $\frac{1}{p_0 q} = \frac{1}{p_0}-\frac{1}{p}$. Setting $\nu= \omega^{1-(p/r)'}$, and applying H\"older's inequality we get
\begin{align}
\label{Boundedness of sparse operator with r power}
    \left\| \mathcal{A}_{\mathcal{S}}^{r, p_0, \rho, N} f \right\|_{L^p(\omega)}^{p_0} &= \sup_{\|g\|_{L^q(\omega^{1 - q})} \leq 1} \sum_{Q \in \mathcal{S}} \left\{ \nu(E_Q)^{p_0/p} \left(\frac{|Q|}{\nu(Q)} \right)^{p_0/r} \left( \frac{1}{|Q|} \int_Q |f|^{r} \right)^{\frac{p_0}{r}} \right\} \\
    &\nonumber \left\{ \omega(E_Q)^{1/q} \frac{|Q|}{\omega(Q)} \left( \frac{1}{|Q|} \int_Q |g| \right) \right\} \left\{\frac{\omega(Q)^{\frac{1}{p_0}}}{\omega(E_Q)^{\frac{1}{p_0 q}}} \frac{\nu(Q)^{1/r}}{\nu(E_Q)^{1/p}} \frac{\Psi_N(Q)^{-1}}{|Q|^{1/r}} \right\}^{p_0} \\
    &\nonumber \leq \sup_{\|g\|_{L^q(\omega^{1 - q})} \leq 1} C_{\omega, \theta}^{p_0} \left\{\sum_{Q \in \mathcal{S}} \nu(E_Q) \left(\frac{|Q|}{\nu(Q)} \right)^{p/r} \left( \frac{1}{|Q|} \int_Q |f|^{r} \right)^{\frac{p}{r}} \right\}^{p_0/p} \\
    &\nonumber \hspace{4cm} \left\{\sum_{Q \in \mathcal{S}} \omega(E_Q) \left(\frac{|Q|}{\omega(Q)} \right)^q \left( \frac{1}{|Q|} \int_Q |g| \right)^q \right\}^{1/q} ,
\end{align}
where
\begin{align*}
    C_{\omega, \theta} &= \sup_{Q \in \mathscr{D}} \frac{\omega(Q)^{\frac{1}{p_0}}}{\omega(E_Q)^{\frac{1}{p_0}-\frac{1}{p}}} \frac{\nu(Q)^{1/r}}{\nu(E_Q)^{1/p}} \frac{\Psi_N(Q)^{-1}}{|Q|^{1/r}} .
\end{align*}
Let us first compute the following. Application of Proposition \ref{Prop: Boundedness of dyadic maximal functions} and the fact that $E_Q$ are disjoint yields
\begin{align*}
    \sum_{Q \in \mathcal{S}} \omega(E_Q) \left( \frac{|Q|}{\omega(Q)} \right)^{q} \left( \frac{1}{|Q|} \int_Q |g| \right)^q &= \sum_{Q \in \mathcal{S}} \omega(E_Q) \left( \frac{1}{\omega(Q)} \int_Q |g| \omega^{-1} \omega \right)^q \\
    &\lesssim \sum_{Q \in \mathcal{S}} \int_{E_Q} \mathcal{M}_{\omega}^{\mathscr{D}}(g \omega^{-1})(x)^q \omega(x)\,dx \\
    &\lesssim \left\| \mathcal{M}_{\omega}^{\mathscr{D}}(g \omega^{-1}) \right\|_{L^q(\omega)}^q \\
    &\lesssim \left\|g \right\|_{L^q(\omega^{1-q})}^q .
\end{align*}
Using a similar analysis as above, we get
\begin{align*}
    \sum_{Q \in \mathcal{S}} \nu(E_Q) \left(\frac{|Q|}{\nu(Q)} \right)^{p/r} \left( \frac{1}{|Q|} \int_Q |f|^{r} \right)^{\frac{p}{r}} &= \sum_{Q \in \mathcal{S}} \nu(E_Q) \left( \frac{1}{\nu(Q)} \int_Q |f|^{r} \nu^{-1} \nu \right)^{\frac{p}{r}} \\
    &\lesssim \left\| \mathcal{M}_{\nu}^{\mathscr{D}}(|f|^{r} \nu^{-1}) \right\|_{L^{p/r}(\nu)}^{p/r} \\
    &\lesssim \left\|f \right\|_{L^p(\omega)}^p .
\end{align*}
Therefore, putting the above two estimates into \eqref{Boundedness of sparse operator with r power} provides
\begin{align*}
    \left\| \mathcal{A}_{\mathcal{S}}^{r, p_0, \rho, N} f \right\|_{L^p(\omega)}^{p_0} \lesssim \sup_{\|g\|_{L^q(\omega^{1 - q})} \leq 1} C_{\omega, \theta}^{p_0} \left\|f \right\|_{L^p(\omega)}^{p_0} \left\|g \right\|_{L^q(\omega^{1-q})} \lesssim C_{\omega, \theta}^{p_0} \left\|f \right\|_{L^p(\omega)}^{p_0} .
\end{align*}
Now it remains to show that
\begin{align*}
    C_{\omega, \theta} &\lesssim [\omega]_{A_{p/r}^{\rho, \theta}}^{\max\left\{ \frac{1}{p - r}, \frac{1}{p_0} \right\}} .
\end{align*}
Fix a cube $Q \in \mathscr{D}$. Then, using the definition of sparse family and H\"older's inequality we obtain
\begin{align*}
    |Q|^{p/r} &\leq \eta^{-p/r} |E_Q|^{p/r} \leq \eta^{-p/r} \left[\left( \int_{E_Q} \omega \right)^{r/p} \left( \int_{E_Q} \omega^{-\frac{r}{p} (\frac{p}{r})'} \right)^{\frac{1}{(p/r)'}} \right]^{p/r} \leq \eta^{-p/r} \omega(E_Q) \nu(E_Q)^{\frac{p}{r}-1} .
\end{align*}
Hence, the above estimate implies
\begin{align}
\label{First time making of weight in sparse proof}
    \frac{\omega(Q)}{\omega(E_Q)} \left( \frac{\nu(Q)}{\nu(E_Q)} \right)^{\frac{p}{r}-1} \Psi_{\theta}(Q)^{-\frac{p}{r}} &\leq \eta^{-\frac{p}{r}}\, \frac{\omega(Q)}{|Q|} \left( \frac{\nu(Q)}{|Q|} \right)^{\frac{p}{r}-1} \Psi_{\theta}(Q)^{-\frac{p}{r}} \\
    &\nonumber \leq \eta^{-\frac{p}{r}} \left(\frac{1}{\Psi_{\theta}(Q) |Q|} \int_Q \omega \right) \left( \frac{1}{\Psi_{\theta}(Q) |Q|} \int_Q \omega^{-\frac{r}{p-r}} \right)^{\frac{p-r}{r}} \\
    &\nonumber \leq \eta^{-\frac{p}{r}}\, [\omega]_{A_{p/r}^{\rho, \theta}} .
\end{align}
Recall that $\Psi_{\theta}(Q) = \left(1+\frac{r_Q}{\rho(x_Q)} \right)^{\theta} \geq 1$. If we choose $N \geq \theta \frac{p}{r} \max\left\{ \frac{1}{p - r}, \frac{1}{p_0} \right\} $, then one can easily see that
\begin{align*}
    \Psi_N(Q)^{-1} \leq \Psi_{\theta}(Q)^{-\frac{1}{r}} \Psi_{\theta}(Q)^{-\frac{p}{r} \max\{\frac{1}{p_0}-\frac{1}{p}, \frac{1}{p} \frac{r}{p-r} \}} .
\end{align*}
With the help of the above fact and \eqref{First time making of weight in sparse proof}, we deduce that
\begin{align*}
    C_{\omega, \theta} &= \sup_{Q \in \mathscr{D}} \frac{\omega(Q)^{\frac{1}{p_0}}}{\omega(E_Q)^{\frac{1}{p_0}-\frac{1}{p}}} \frac{\nu(Q)^{1/r}}{\nu(E_Q)^{1/p}} \frac{\Psi_N(Q)^{-1}}{|Q|^{1/r}} \\
    &\leq \sup_{Q \in \mathscr{D}} \left[\frac{\omega(Q)}{\Psi_{\theta}(Q) |Q|} \left( \frac{\nu(Q)}{\Psi_{\theta}(Q) |Q|} \right)^{\frac{p}{r}-1} \right]^{1/p} \\
    &\hspace{3cm} \left[\left(\frac{\omega(Q)}{\omega(E_Q)} \right)^{\frac{1}{p_0}-\frac{1}{p}} \left(\frac{\nu(Q)}{\nu(E_Q)} \right)^{1/p} \Psi_{\theta}(Q)^{-\frac{p}{r} \max\{\frac{1}{p_0}-\frac{1}{p}, \frac{1}{p} \frac{r}{p-r} \}} \right] \\
    &\lesssim [\omega]_{A_{p/r}^{\rho, \theta}}^{\frac{1}{p}} \sup_{Q \in \mathscr{D}} \left[\frac{\omega(Q)}{\omega(E_Q)} \left(\frac{\nu(Q)}{\nu(E_Q)} \right)^{\frac{p}{r}-1} \Psi_{\theta}(Q)^{-\frac{p}{r}} \right]^{\max\{\frac{1}{p_0}-\frac{1}{p}, \frac{1}{p} \frac{r}{p-r} \}} \\
    &\lesssim [\omega]_{A_{p/r}^{\rho, \theta}}^{\frac{1}{p}} [\omega]_{A_{p/r}}^{\max\{\frac{1}{p_0}-\frac{1}{p}, \frac{1}{p} \frac{r}{p-r} \}} \\
    &\lesssim [\omega]_{A_{p/r}^{\rho, \theta}}^{\max\left\{ \frac{1}{p - r}, \frac{1}{p_0} \right\}} .
\end{align*}
This completes the proof of the proposition.
\end{proof}

Since in this paper we also study the quantitative weighted estimates for commutators of the Schr\"odinger pseudo-multipliers, in order to do so, for a sparse family $\mathcal{S}$, let us define the sparse operator $\mathcal{T}_{\mathcal{S}, b}^{\rho, N}$ as follows,
\begin{align*}
\mathcal{T}_{\mathcal{S}, b}^{\rho, N} f(x) &=
\sum_{Q \in \mathcal{S}} |b(x)-b_{Q}| \left( \frac{1}{|Q|} \int_Q f \right) \Psi_N(Q)^{-1} \chi_Q(x) ,\quad \quad x\in \mathbb{R}^n ,
\end{align*}
and the adjoint operator $\mathcal{T}_{\mathcal{S}, b}^{\rho, N, \star}$ to $\mathcal{T}_{\mathcal{S}, b}^{\rho, N}$ as 
\begin{align*}
    \mathcal{T}_{\mathcal{S}, b}^{\rho, N, \star}f(x) &= \sum_{Q \in \mathcal{S}} \left( \frac{1}{|Q|} \int_Q |b-b_{Q}| f \right) \Psi_N(Q)^{-1} \chi_Q(x) \quad \quad x\in \mathbb{R}^n .
\end{align*}

\begin{proposition}
\label{Prop: Operator norm for sparse commutator operator}
Let $ \mathcal{S} $ be a sparse family. For $1 < p < \infty$ and $\omega \in A_{p}^{\rho, \theta}$, we define $\nu=\omega^{-\frac{1}{p-1}}$. Then for $b \in BMO_{\theta}(\rho)$ and large positive $N$, we have the following estimates
\begin{align*}
    \max\{\|\mathcal{T}_{\mathcal{S}, b}^{\rho, N}\|_{L^{p}(\omega) \to L^p(\omega)} &= \|\mathcal{T}_{\mathcal{S}, b}^{\rho, N, \star}\|_{L^{p'}(\nu) \to L^{p'}(\nu)}, \|\mathcal{T}_{\mathcal{S}, b}^{\rho, N, \star}\|_{L^{p}(\omega) \to L^{p}(\omega)} \} \\
    &\lesssim [\omega]_{A_{p}^{\rho, \theta}}^{2\max\{1, \frac{1}{p-1}\}} \|b\|_{BMO_{\theta}(\rho)} = [\nu]_{A_{p'}^{\rho, \theta}}^{2\max\{1, \frac{1}{p'-1}\}} \|b\|_{BMO_{\theta}(\rho)} .
\end{align*}
    
\end{proposition}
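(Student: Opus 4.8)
The plan is to deduce the weighted bound for the sparse commutator operator $\mathcal{T}_{\mathcal{S},b}^{\rho,N}$ and its adjoint from the quantitative bound for the plain sparse operator $\mathcal{A}_{\mathcal{S}}^{r,p_0,\rho,N}$ (Proposition \ref{Prop: Boundedness of sparse operator}) with the parameters $r=1$ and $p_0=1$, after splitting the $BMO_\theta(\rho)$ factor by the generalized Hölder inequality. First I would handle the dual operator $\mathcal{T}_{\mathcal{S},b}^{\rho,N,\star}$ by the standard duality pairing: for $g\in L^{p'}(\omega^{1-p'})$ one writes
\begin{align*}
    \int_{\mathbb{R}^n} \mathcal{T}_{\mathcal{S}, b}^{\rho, N}f(x)\,g(x)\,dx
    = \sum_{Q\in\mathcal{S}} \Psi_N(Q)^{-1} \left(\frac{1}{|Q|}\int_Q |f|\right) \left(\int_Q |b-b_Q|\,|g|\right),
\end{align*}
which exhibits $\mathcal{T}_{\mathcal{S},b}^{\rho,N}$ and $\mathcal{T}_{\mathcal{S},b}^{\rho,N,\star}$ as formal adjoints and reduces the bound on $\mathcal{T}_{\mathcal{S},b}^{\rho,N}:L^p(\omega)\to L^p(\omega)$ to the bound on $\mathcal{T}_{\mathcal{S},b}^{\rho,N,\star}:L^{p'}(\nu)\to L^{p'}(\nu)$ (with $\nu=\omega^{1-p'}\in A_{p'}^{\rho,\theta}$ and $[\nu]_{A_{p'}^{\rho,\theta}}=[\omega]_{A_p^{\rho,\theta}}^{1/(p-1)}$, a standard identity for these weight classes). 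Thus it suffices to bound $\mathcal{T}_{\mathcal{S},b}^{\rho,N,\star}$ on $L^p(\omega)$ for arbitrary $1<p<\infty$ and $\omega\in A_p^{\rho,\theta}$.

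Next I would estimate the local averages appearing in $\mathcal{T}_{\mathcal{S},b}^{\rho,N,\star}$. For each $Q\in\mathcal{S}$, apply the generalized Hölder inequality \eqref{Inequality: generalized Holders inequality} with the Young pair $\Phi(t)=t\log(e+t)$, $\bar\Phi(t)\sim e^t$:
\begin{align*}
    \frac{1}{|Q|}\int_Q |b-b_Q|\,|f|
    \leq \|b-b_Q\|_{\exp\mathcal{L},Q}\,\|f\|_{\mathcal{L}\log\mathcal{L},Q}
    \lesssim \|b\|_{BMO_\theta(\rho)}\,\Psi_{\theta'}(Q)\,\|f\|_{\mathcal{L}\log\mathcal{L},Q},
\end{align*}
using \eqref{Exponential norm of BMO function}, where $\theta'=\theta(l_0+1)$. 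This converts $\mathcal{T}_{\mathcal{S},b}^{\rho,N,\star}f$ pointwise into $\|b\|_{BMO_\theta(\rho)}$ times a sparse operator of the same type as $\mathcal{A}_{\mathcal{S}}^{1,1,\rho,N-\theta'}$ but acting on the $\mathcal{L}\log\mathcal{L}$-averages; absorbing the extra $\Psi_{\theta'}(Q)$ into $\Psi_N(Q)^{-1}$ (permissible by taking $N$ large) I would then dominate $\|f\|_{\mathcal{L}\log\mathcal{L},Q}$ via $\mathcal{M}_{\mathcal{L}\log\mathcal{L},\rho,\theta}f$, and use Lemma \ref{Lemma: Relation between variant maximal function and L log L maximal function} together with Proposition \ref{Prop: Maximal function new weight with r power} (or an iteration argument) to control the resulting object in $L^p(\omega)$. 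Alternatively — and this is cleaner for tracking the exponent — one keeps the sparse structure and applies Proposition \ref{Prop: Boundedness of sparse operator} with $r=1$, $p_0=1$ directly to the sparse operator built from the averages $\frac{1}{|Q|}\int_Q|b-b_Q|\,|f|\,\Psi_{\theta'}(Q)^{-1}$, obtaining a factor $[\omega]_{A_p^{\rho,\theta}}^{\max\{1/(p-1),1\}}$; the second factor of $[\omega]_{A_p^{\rho,\theta}}^{\max\{1/(p-1),1\}}$ comes from the $BMO$-average estimate feeding through a second maximal-function bound, yielding the claimed exponent $2\max\{1,\frac{1}{p-1}\}$.

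The main obstacle I anticipate is getting the exponent $2\max\{1,\frac{1}{p-1}\}$ exactly right rather than something larger: the naive route (bounding $\|b-b_Q\|_{\exp\mathcal{L},Q}$ by the BMO norm and then applying the sparse bound with a crude pointwise domination) tends to produce a worse power of $[\omega]_{A_p^{\rho,\theta}}$ because the $\mathcal{L}\log\mathcal{L}$ maximal function is essentially $\mathcal{M}_{\rho,\theta}\circ\mathcal{M}_{\rho,\theta}$ (Lemma \ref{Lemma: Relation between variant maximal function and L log L maximal function}), so each composition contributes one power of the $A_\infty$-type constant. The careful bookkeeping — splitting the sparse sum so that one maximal function hits $f$ and the other is paired against the dual function $g$, and then invoking Proposition \ref{Prop: Boundedness of sparse operator} on each piece at the sharp exponent — is what produces the square. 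The only other point needing care is the choice of $N$: one must verify that $N-\theta'$ still satisfies the hypothesis $N-\theta'\geq\theta\cdot p\cdot\max\{1/(p-1),1\}$ of Proposition \ref{Prop: Boundedness of sparse operator} (with $r=p_0=1$), which holds once $N$ is taken large enough depending only on $p$, $\theta$, and $l_0$.
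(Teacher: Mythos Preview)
Your duality reduction is correct and matches the paper. The gap is in the second step: the Orlicz--H\"older route you outline does not yield the exponent $2\max\{1,\frac{1}{p-1}\}$, and the ``alternative'' you sketch is not well-defined as stated. Applying Proposition~\ref{Prop: Boundedness of sparse operator} ``directly to the sparse operator built from the averages $\frac{1}{|Q|}\int_Q|b-b_Q|\,|f|$'' is not possible, because the integrand depends on $Q$; the proposition only handles averages of a single fixed function. If instead you bound $\|f\|_{\mathcal{L}\log\mathcal{L},Q}\leq \Psi_\theta(Q)\inf_{Q}\mathcal{M}_{\mathcal{L}\log\mathcal{L},\rho,\theta}f$ and then invoke Lemma~\ref{Lemma: Relation between variant maximal function and L log L maximal function} together with Proposition~\ref{Prop: Maximal function new weight with r power}, you pick up one factor $[\omega]^{\max\{1,1/(p-1)\}}$ from the sparse bound and two further factors $[\omega]^{1/(p-1)}$ from the iterated maximal function; the total exponent $\max\{1,\frac{1}{p-1}\}+\frac{2}{p-1}$ is strictly larger than $2\max\{1,\frac{1}{p-1}\}$ for all $1<p<3$.

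The idea you are missing is the pointwise sparse domination of the oscillation $|b(x)-b_Q|$ itself. The paper invokes \cite[Lemma~5.1]{Lerner_Ombrosi_Rivera_Pointwise_Commutators_2017}: there is a larger sparse family $\widetilde{\mathcal{S}}\supseteq\mathcal{S}$ such that for every $Q\in\widetilde{\mathcal{S}}$ and a.e.\ $x\in Q$,
\[
|b(x)-b_Q|\lesssim \sum_{R\in\widetilde{\mathcal{S}},\,R\subseteq Q}\Bigl(\frac{1}{|R|}\int_R|b-b_R|\Bigr)\chi_R(x).
\]
Feeding this into $\int_Q|b-b_Q|\,|f|$, using $\Psi_\theta(R)\lesssim\Psi_{\theta(1+l_0)}(Q)$ for $R\subseteq Q$ to extract the $BMO_\theta(\rho)$ norm, and absorbing the resulting powers of $\Psi$ into $\Psi_N(Q)^{-1}$, one obtains the clean pointwise bound
\[
|\mathcal{T}_{\mathcal{S},b}^{\rho,N,\star}f(x)|\lesssim \|b\|_{BMO_\theta(\rho)}\,\mathcal{A}_{\widetilde{\mathcal{S}}}^{1,1,\rho,M}\bigl(\mathcal{A}_{\widetilde{\mathcal{S}}}^{1,1,\rho,M}f\bigr)(x),
\]
for suitable $M$ with $N$ large. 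Now two applications of Proposition~\ref{Prop: Boundedness of sparse operator} (with $r=p_0=1$) give exactly $[\omega]_{A_p^{\rho,\theta}}^{2\max\{1,1/(p-1)\}}$. This composition of two \emph{identical} plain sparse operators is what produces the square of the sharp single--sparse exponent; your Orlicz route never reaches such a factorisation.
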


\begin{proof}
First note that
\begin{align}
\label{Operator of Of Operator and its adjoint}
    & \|\mathcal{T}_{\mathcal{S}, b}^{\rho, N}\|_{L^{p}(\omega) \to L^p(\omega)} = \sup_{\substack{\|f\|_{L^p(\omega)} \leq 1 \\ \|g\|_{L^{p'}(\nu)} \leq 1}} \left|\int_{\mathbb{R}^n} \mathcal{T}_{\mathcal{S}, b}^{\rho, N}f(x) g(x) \, dx \right| \\
    &\nonumber = \sup_{\substack{\|f\|_{L^p(\omega)} \leq 1 \\ \|g\|_{L^{p'}(\nu)} \leq 1}} \left|\int_{\mathbb{R}^n} \sum_{Q \in \mathcal{S}} |b(x)-b_{Q}| \left( \frac{1}{|Q|} \int_Q f(y)\, dy \right) \Psi_N(Q)^{-1} \chi_Q(x) g(x) \, dx \right| \\
    &\nonumber = \sup_{\substack{\|f\|_{L^p(\omega)} \leq 1 \\ \|g\|_{L^{p'}(\nu)} \leq 1}} \left| \int_{\mathbb{R}^n} f(y) \left( \sum_{Q \in \mathcal{S}} \left( \frac{1}{|Q|} \int_Q |b(x)-b_{Q}| g(x) \, dx \right) \Psi_N(Q)^{-1} \chi_Q(y) \right) \,dy \right| \\
    &\nonumber = \|\mathcal{T}_{\mathcal{S}, b}^{\rho, N, \star}\|_{L^{p'}(\nu) \to L^{p'}(\nu)} .
\end{align}
Before proceeding, let us consider the following fact. If $P \subseteq Q$, then applying Lemma \ref{Lemma: Equivalence of two potential} yields
\begin{align*}
    \frac{r_P}{\rho(x_P)} \lesssim \frac{r_P}{\rho(x_Q)} \left(1+\frac{|x_P-x_Q|}{\rho(x_Q)} \right)^{l_0} \lesssim \frac{r_Q}{\rho(x_Q)} \left(1+\frac{r_Q}{\rho(x_Q)} \right)^{l_0} \lesssim \left(1+\frac{r_Q}{\rho(x_Q)} \right)^{1+l_0} .
\end{align*}
From this we get, whenever $P \subseteq Q$, it implies
\begin{align}
\label{Dominated of psi theta by psi theta l}
    \Psi_{\theta}(P) \lesssim \Psi_{\theta (1+l_0)}(Q) .
\end{align}
Note that because of remark \ref{Remark: system of dyadic cube}, we can assume $\mathcal{S} \subset \mathscr{D}$. By Lemma 5.1 of \cite{Lerner_Ombrosi_Rivera_Pointwise_Commutators_2017}, there exists sparse family $\widetilde{\mathcal{S}}$ containing $\mathcal{S}$ and for every $Q \in \widetilde{\mathcal{S}}$,
\begin{align}
\label{Sparse domination of BMO functions}
    |b(x)-b_Q| &\lesssim \sum_{R \in \widetilde{\mathcal{S}}, R \subseteq Q} \left(\frac{1}{|R|} \int_R |b(y)-b_R| \, dy \right) \chi_{R}(x) ,
\end{align}
for a.e. $x \in Q$.

Therefore, using \eqref{Sparse domination of BMO functions} and \eqref{Dominated of psi theta by psi theta l}, we obtain
\begin{align*}
    & \Psi_N(Q)^{-1} \int_Q |b(x)-b_Q| |f(x)|\, dx \\
    &\lesssim \Psi_N(Q)^{-1} \sum_{R \in \widetilde{\mathcal{S}}, R \subseteq Q} \left(\frac{1}{|R|} \int_R |b(y)-b_R| \, dy \right) \int_R |f(x)|\, dx \\
    &\lesssim \Psi_N(Q)^{-1} \Psi_{\theta (1+l_0)}(Q) \sum_{R \in \widetilde{\mathcal{S}}, R \subseteq Q} \left(\frac{1}{\Psi_{\theta}(R) |R|} \int_R |b(y)-b_R| \, dy \right) |R| \left(\frac{1}{|R|} \int_R |f(x)|\, dx \right) \\
    &\lesssim \|b\|_{BMO_{\theta}(\rho)} \Psi_{-N+(\theta+M)(1+l_0)}(Q) |Q| \sum_{R \in \widetilde{\mathcal{S}}, R \subseteq Q} \left(\frac{1}{|R|} \int_R |f| \right) \left(1+\frac{r_R}{\rho(x_R)} \right)^{-M} \chi_R(z) \\
    &\lesssim \|b\|_{BMO_{\theta}(\rho)} \left(1+\frac{r_Q}{\rho(x_Q)} \right)^{-N+(\theta+M)(1+l_0)} |Q| \inf_{ Q \ni z} \mathcal{A}_{\widetilde{\mathcal{S}}}^{1,1,\rho, M}f(z) \\
    &\lesssim \|b\|_{BMO_{\theta}(\rho)} \left(1+\frac{r_Q}{\rho(x_Q)} \right)^{-N+(\theta+M)(1+l_0)} \int_Q \mathcal{A}_{\widetilde{\mathcal{S}}}^{1,1,\rho, M}f(x)\, dx ,
\end{align*}
where $M>0$, which we choose later.

Then, using the above estimate we get
\begin{align*}
    & |\mathcal{T}_{\mathcal{S}, b}^{\rho, N, \star}f(x)| \leq \sum_{Q \in \mathcal{S}} \left( \frac{1}{|Q|} \int_Q |b-b_{Q}| |f| \right) \Psi_N(Q)^{-1} \chi_Q(x) \\
    &\lesssim \|b\|_{BMO_{\theta}(\rho)} \sum_{Q \in \mathcal{S}} \left( \frac{1}{|Q|} \int_Q \mathcal{A}_{\widetilde{\mathcal{S}}}^{1,1,\rho, M}f(x)\, dx \right) \Psi_M(Q)^{-1} \chi_Q(x) \left(1+\frac{r_Q}{\rho(x_Q)} \right)^{-N+M+(\theta+M)(1+l_0)} \\
    &\lesssim \|b\|_{BMO_{\theta}(\rho)} \mathcal{A}_{\widetilde{\mathcal{S}}}^{1,1,\rho, M}(\mathcal{A}_{\widetilde{\mathcal{S}}}^{1,1,\rho, M}f)(x) ,
\end{align*}
provided we choose $N>M+(\theta+M)(1+l_0)$.

Hence applying Proposition \ref{Prop: Boundedness of sparse operator} with $ M \geq \theta p \max\left\{ \frac{1}{p - 1}, 1 \right\} $, we obtain
\begin{align*}
    \|\mathcal{T}_{\mathcal{S}, b}^{\rho, N, \star}f\|_{L^{p}(\omega)} &\lesssim \|b\|_{BMO_{\theta}(\rho)} \|\mathcal{A}_{\widetilde{\mathcal{S}}}^{1,1,\rho, M}(\mathcal{A}_{\widetilde{\mathcal{S}}}^{1,1,\rho, M}f)\|_{L^{p}(\omega)}  \\
    &\lesssim [\omega]_{A_{p}^{\rho, \theta}}^{2\max\{1, \frac{1}{p-1}\}} \|b\|_{BMO_{\theta}(\rho)} \|f\|_{L^{p}(\omega)}.
\end{align*}
Similarly, using the fact $[\nu]_{A_{p'}^{\rho, \theta}} = [\omega]_{A_p^{\rho, \theta}}^{\frac{1}{p-1}}$ and Proposition \ref{Prop: Boundedness of sparse operator} with $ M \geq \theta p \max\left\{ \frac{1}{p - 1}, 1 \right\} $ yields
\begin{align*}
    \|\mathcal{T}_{\mathcal{S}, b}^{\rho, N, \star}f\|_{L^{p'}(\nu)} &\lesssim \|b\|_{BMO_{\theta}(\rho)} \|\mathcal{A}_{\widetilde{\mathcal{S}}}^{1,1,\rho, M}(\mathcal{A}_{\widetilde{\mathcal{S}}}^{1,1,\rho, M}f)\|_{L^{p'}(\nu)}  \\
    &\lesssim [\nu]_{A_{p'}^{\rho, \theta}}^{2\max\{1, \frac{1}{p'-1}\}} \|b\|_{BMO_{\theta}(\rho)} \|f\|_{L^{p'}(\nu)} \\
    &\lesssim [\omega]_{A_{p}^{\rho, \theta}}^{2\max\{1, \frac{1}{p-1}\}} \|b\|_{BMO_{\theta}(\rho)} \|f\|_{L^{p'}(\nu)} .
\end{align*}
Therefore, combining the above two estimates and \eqref{Operator of Of Operator and its adjoint} gives the required result.
    
\end{proof}

Next, we prove a quantitative version of the reverse H\"older's inequality for the weight class $A_p^{\rho, \theta}$ associated to the Schr\"odinger operator $\mathcal{L}$. The following result was proved in \cite[Lemma 3.28]{Perez_Singular_Integrals_Weights_2015} for the classical Muckenhoupt weight class $A_p$. Our result can be seen as a generalization of \cite[Lemma 3.28]{Perez_Singular_Integrals_Weights_2015} for the weight class $A_p^{\rho, \theta}$.

\begin{proposition}
\label{Proposition: Reverse Holders inequality}
For $1<p<\infty$, let $\omega \in A_p^{\rho, \theta}$ and $r_{\omega}=1+\frac{1}{2^{2p(1+\theta)+n+1} [\omega]_{A_p^{\rho, \theta}}}$. Then for any cube $Q$, there exists a positive number $N_0$ and $C>0$ independent of $\omega$ such that
\begin{align*}
    \left(\frac{1}{|Q|} \int_Q \omega^{r_{\omega}} \, dx \right)^{\frac{1}{r_{\omega}}} \leq C \left(\frac{1}{|Q|} \int_{Q} \omega \right) \Psi_{N_0}(Q) .
\end{align*}
    
\end{proposition}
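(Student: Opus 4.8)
The plan is to adapt the classical Calderón–Zygmund argument behind the reverse Hölder inequality (as in \cite[Lemma 3.28]{Perez_Singular_Integrals_Weights_2015}), keeping careful track of the extra factors $\Psi_\theta$ that appear in the definition of $A_p^{\rho,\theta}$. Fix a cube $Q$. The idea is to run a Calderón–Zygmund stopping-time decomposition of $\omega$ on $Q$ at heights $\lambda 2^{k}$ for $k \geq 0$ and $\lambda := \frac{1}{|Q|}\int_Q \omega$, producing for each $k$ a pairwise-disjoint family $\{Q_j^k\}_j$ of maximal dyadic subcubes of $Q$ on which $\frac{1}{|Q_j^k|}\int_{Q_j^k}\omega > \lambda 2^k$, with the standard consequences $\lambda 2^k < \frac{1}{|Q_j^k|}\int_{Q_j^k}\omega \leq 2^n \lambda 2^k$ and $\omega \leq \lambda 2^k$ a.e.\ off $\Omega_k := \bigcup_j Q_j^k$. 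One then writes $\int_Q \omega^{r_\omega} \lesssim \lambda^{r_\omega}|Q| + \sum_{k\geq 0} (\lambda 2^{k+1})^{r_\omega-1}\,\omega(\Omega_k \setminus \Omega_{k+1})$ in the usual way, so the whole estimate reduces to controlling $\omega(\Omega_{k+1}\cap Q_j^k)$ by a fixed fraction of $\omega(Q_j^k)$.

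The key step is the following "small-measure implies small-$\omega$-measure" estimate: for each $Q_j^k$, the set $E := \Omega_{k+1}\cap Q_j^k$ satisfies $|E| \leq \tfrac12 |Q_j^k|$ (a routine consequence of maximality, since inside $Q_j^k$ the average of $\omega$ jumps by at most $2^n$), and then I would invoke the $A_p^{\rho,\theta}$ condition in the form
\begin{align*}
\frac{\omega(E)}{\omega(Q_j^k)} \leq [\omega]_{A_p^{\rho,\theta}}^{1/(p-1)}\,\Psi_\theta(Q_j^k)^{p/(p-1)}\left(\frac{|E|}{|Q_j^k|}\right)^{1/(p-1)}.
\end{align*}
This is obtained exactly as in the classical case: apply Hölder's inequality to $\int_E \omega = \int_E \omega^{1/p}\cdot\omega^{-1/p}\cdot\omega$ and then use \eqref{Definition: weight class} (with the measure $\omega$ on $Q_j^k$), picking up one factor of $\Psi_\theta(Q_j^k)$ from each of the two averages. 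Because $Q_j^k \subseteq Q$, the comparison \eqref{Dominated of psi theta by psi theta l} (equivalently Lemma \ref{Lemma: Equivalence of two potential}) lets me replace $\Psi_\theta(Q_j^k)$ by $\Psi_{\theta(1+l_0)}(Q) =: \Psi_{N_1}(Q)$, a quantity that does not depend on $j,k$. Plugging $|E|\leq \tfrac12|Q_j^k|$ gives $\omega(\Omega_{k+1}\cap Q_j^k)\leq 2^{-1/(p-1)}[\omega]_{A_p^{\rho,\theta}}^{1/(p-1)}\Psi_{N_1}(Q)^{p/(p-1)}\,\omega(Q_j^k)$.

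With $\gamma := 2^{-1/(p-1)}[\omega]_{A_p^{\rho,\theta}}^{1/(p-1)}\Psi_{N_1}(Q)^{p/(p-1)} \cdot (\text{const})$ in hand, summing over $j$ and iterating in $k$ gives $\omega(\Omega_k) \leq \gamma^{k}\,\omega(\Omega_0) \leq \gamma^k \cdot 2^n\lambda|Q|$, and the geometric series $\sum_k (2^{k})^{r_\omega-1}\gamma^k$ converges precisely when $2^{r_\omega-1}\gamma < 1$. Here is where the explicit choice $r_\omega = 1 + \frac{1}{2^{2p(1+\theta)+n+1}[\omega]_{A_p^{\rho,\theta}}}$ enters: one checks, using $\Psi_{N_1}(Q)^{p/(p-1)} \le \Psi_{N_1 p/(p-1)}(Q)$ absorbed into the final $\Psi_{N_0}(Q)$ factor and $[\omega]_{A_p^{\rho,\theta}}\geq 1$, that $r_\omega - 1$ is small enough to force $2^{r_\omega-1}\gamma \leq \tfrac12$ (so the sum is $\leq 2$); the exponent $2p(1+\theta)+n+1$ is engineered exactly so that $2^{(r_\omega-1)}\le 2$ and the $[\omega]_{A_p^{\rho,\theta}}^{-1}$ in $r_\omega-1$ cancels the $\log_2$ of the $\Psi$ and dimensional constants. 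Collecting terms, $\frac{1}{|Q|}\int_Q \omega^{r_\omega} \lesssim \lambda^{r_\omega}$, and taking $r_\omega$-th roots yields $\big(\frac{1}{|Q|}\int_Q \omega^{r_\omega}\big)^{1/r_\omega} \leq C\big(\frac1{|Q|}\int_Q\omega\big)\Psi_{N_0}(Q)$ with $N_0 := N_1 p/(p-1)$ (or any larger exponent), which is the claim. The main obstacle is bookkeeping: ensuring the $\Psi$-factors picked up at each scale are uniformly dominated by a single power of $\Psi(Q)$ via Lemma \ref{Lemma: Equivalence of two potential}, and verifying that the stated $r_\omega$ genuinely makes the geometric series summable with an $\omega$-independent bound — everything else is the classical Gehring-type iteration.
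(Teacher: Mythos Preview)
Your iteration has a genuine gap at the step where you claim $2^{r_\omega-1}\gamma \le \tfrac12$. Recall your constant is
\[
\gamma \;=\; (\text{const})\cdot 2^{-1/(p-1)}\,[\omega]_{A_p^{\rho,\theta}}^{1/(p-1)}\,\Psi_{N_1}(Q)^{p/(p-1)},
\]
and this is the ratio in your geometric series $\sum_k (2^{k})^{r_\omega-1}\gamma^k$. The factor $\Psi_{N_1}(Q)^{p/(p-1)}$ is \emph{inside} $\gamma$, so after iteration it becomes $\Psi_{N_1}(Q)^{kp/(p-1)}$; it is not an overall multiplicative constant that can be ``absorbed into the final $\Psi_{N_0}(Q)$.'' For a cube $Q$ with $r_Q \gg \rho(x_Q)$ the quantity $\Psi_{N_1}(Q)$ is arbitrarily large, so $\gamma>1$ and the series diverges. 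No choice of $r_\omega$ that is independent of $Q$ --- in particular not the stated $r_\omega = 1 + (2^{2p(1+\theta)+n+1}[\omega]_{A_p^{\rho,\theta}})^{-1}$ --- can force $2^{r_\omega-1}\gamma<1$ uniformly in $Q$. (There is also a smaller issue: the claim $|\Omega_{k+1}\cap Q_j^k|\le \tfrac12|Q_j^k|$ is not ``routine'' with step $2$; the CZ jump is $2^n$, so one only gets $|\Omega_{k+1}\cap Q_j^k|\le 2^{n-1}|Q_j^k|$. This is easily fixed by using levels $A^k$ with $A>2^n$, but it still will not rescue the main point.)

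The paper handles this by splitting into two regimes. When $r_Q \le \rho(x_Q)$, one has $\Psi_\theta(Q)\le 2^\theta$ and, via Lemma~\ref{Lemma: Equivalence of two potential}, $\Psi_\theta(P)\lesssim 1$ for \emph{every} subcube $P\subset Q$; in this regime the $A_p^{\rho,\theta}$ condition behaves like classical $A_p$ up to absolute constants, and the Calder\'on--Zygmund/layer-cake argument goes through exactly as in \cite{Perez_Singular_Integrals_Weights_2015} with the specified $r_\omega$ and no $\Psi$-loss. When $r_Q>\rho(x_Q)$, one does \emph{not} iterate on $Q$ at all: instead one covers $Q$ by the balls $B(x_m,\rho(x_m))$ from Lemma~\ref{Lemma: Decomposition of space into ball}, applies the small-cube case to each (where the reverse H\"older holds with no $\Psi$-factor), and sums. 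The $\Psi_{N_0}(Q)$ on the right then arises honestly as a multiplicative constant from (i) the number/overlap of covering balls, (ii) comparing $|B(x_m,\rho(x_m))|$ to $|Q|$ via \eqref{exp: relation between xm and xnot}, and (iii) a doubling-type estimate for $\omega$ on the enlarged cube $\mathcal{C}_r Q$. This two-case structure is exactly what is missing from your argument: the Gehring iteration only survives on cubes where $\Psi_\theta$ is uniformly bounded, and the large-cube regime must be reduced to that case by a covering, not by iterating directly.
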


In order to prove the above reverse H\"older's inequality, we need the following crucial lemma, which is proved in \cite[Lemma 2.3]{Dziubanski_Zienkiewicz_Hardy_Schrodinger_1999} (see also \cite[Lemma 4.2]{Bui_Bui_Duong_square_function_new_weight_2022}).

\begin{lemma}
\label{Lemma: Decomposition of space into ball}
Let $\rho$ be the function defined in \eqref{Expression: Auxiliary function}. Then there exists a sequence $\{x_m\}_{m \in \mathbb{R}^n}$ such that
\begin{enumerate}
    \item $\bigcup_{m \in \mathbb{N}} B(x_m, \rho(x_m)) = \mathbb{R}^n$. \\
    \item There exists positive constants $C$ and $N$ such that, for every $\kappa \geq 1$ we have $$\sum_{m \in \mathbb{N}} \chi_{B(x_m, \kappa \rho(x_m))} \leq C \kappa^{N} .$$
\end{enumerate}
    
\end{lemma}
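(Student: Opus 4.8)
I would build the sequence $\{x_m\}$ as a maximal packing with respect to the critical radii and then read off both conclusions from Lemma \ref{Lemma: Equivalence of two potential}: property (1) from maximality and property (2) from a volume-counting argument. Fix once and for all $\epsilon = (C_0 3^{l_0}+1)^{-1}\in(0,1)$, with $C_0,l_0$ the constants of Lemma \ref{Lemma: Equivalence of two potential}. Call a family $\{y_k\}\subset\mathbb{R}^n$ \emph{admissible} if the balls $\{B(y_k,\epsilon\rho(y_k))\}$ are pairwise disjoint. Any admissible family is countable (a disjoint family of nonempty open balls in $\mathbb{R}^n$), and the union of a chain of admissible families is again admissible, so Zorn's lemma furnishes a maximal admissible family $\{x_m\}_{m\in\mathbb{N}}$. (Recall $0<\rho(x)<\infty$ for all $x$ since $V\not\equiv 0$, so these are genuine balls.)

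\textbf{Step 1: covering.} Given $y\in\mathbb{R}^n$, I would argue that $y$ lies in some $B(x_m,\rho(x_m))$. If $y$ is one of the $x_m$ this is trivial; otherwise maximality forces $\{x_m\}\cup\{y\}$ to fail admissibility, so there is an $m$ with $|y-x_m|<\epsilon(\rho(y)+\rho(x_m))$. Here a dichotomy on the sizes of $\rho(y)$ and $\rho(x_m)$ closes the argument. If $\rho(y)\le (C_0 3^{l_0})\rho(x_m)$, then $|y-x_m|<\epsilon(C_0 3^{l_0}+1)\rho(x_m)=\rho(x_m)$, as wanted. If instead $\rho(y)>(C_0 3^{l_0})\rho(x_m)$, then $|y-x_m|<\epsilon(1+(C_0 3^{l_0})^{-1})\rho(y)\le 2\epsilon\rho(y)<2\rho(y)$, and feeding $|y-x_m|/\rho(y)<2$ into the lower bound of Lemma \ref{Lemma: Equivalence of two potential} (roles $x\mapsto y$, $y\mapsto x_m$) gives $\rho(x_m)\ge C_0^{-1}3^{-l_0}\rho(y)>\rho(x_m)$, a contradiction; so this case cannot occur. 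Hence $\bigcup_m B(x_m,\rho(x_m))=\mathbb{R}^n$.

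\textbf{Step 2: bounded overlap.} Fix $x\in\mathbb{R}^n$ and $\kappa\ge 1$, and set $I=\{m: x\in B(x_m,\kappa\rho(x_m))\}$; the goal is $|I|\le C\kappa^N$. For $m\in I$ one has $|x-x_m|/\rho(x_m)<\kappa$, so applying both inequalities of Lemma \ref{Lemma: Equivalence of two potential} (roles $x\mapsto x_m$, $y\mapsto x$, and $1\le 1+\kappa\le 2\kappa$) gives
\begin{align*}
    \frac{\rho(x)}{C_0(2\kappa)^{l_0/(l_0+1)}}\ \le\ \rho(x_m)\ \le\ C_0(2\kappa)^{l_0}\,\rho(x).
\end{align*}
Therefore each $B(x_m,\epsilon\rho(x_m))$ with $m\in I$ is contained in $B\big(x,(\kappa+\epsilon)\rho(x_m)\big)\subseteq B\big(x, C_0 2^{l_0+1}\kappa^{l_0+1}\rho(x)\big)$ and has radius at least $\epsilon\rho(x)\big(C_0(2\kappa)^{l_0/(l_0+1)}\big)^{-1}$. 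Since these balls are pairwise disjoint, comparing Lebesgue measures yields
\begin{align*}
    |I|\ \le\ \Big(\tfrac{C_0 2^{l_0+1}\kappa^{l_0+1}}{\epsilon(C_0(2\kappa)^{l_0/(l_0+1)})^{-1}}\Big)^n\ =\ C\,\kappa^{N},\qquad N=n\Big(l_0+1+\tfrac{l_0}{l_0+1}\Big),
\end{align*}
with $C$ depending only on $n,C_0,l_0$. As $x$ and $\kappa$ were arbitrary, $\sum_m\chi_{B(x_m,\kappa\rho(x_m))}\le C\kappa^N$, which is (2).

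\textbf{Main obstacle.} The only subtle point is the calibration in Step 1: the packing constant $\epsilon$ must be chosen small relative to $C_0 3^{l_0}$ so that maximality truly forces each $y$ into a ball $B(x_m,\rho(x_m))$, the dangerous configuration being $\rho(y)\gg\rho(x_m)$, which is excluded by plugging $|y-x_m|\lesssim\epsilon\rho(y)$ back into the lower estimate of Lemma \ref{Lemma: Equivalence of two potential}. Once $\epsilon$ is fixed this way, Step 2 is routine tracking of powers of $\kappa$, and the countability and existence of the maximal family are standard.
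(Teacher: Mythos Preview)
Your proof is correct. The paper itself does not prove this lemma but instead cites \cite[Lemma 2.3]{Dziubanski_Zienkiewicz_Hardy_Schrodinger_1999} and \cite[Lemma 4.2]{Bui_Bui_Duong_square_function_new_weight_2022}; your argument---a maximal $\epsilon\rho$-packing followed by covering via maximality and bounded overlap via a volume count, all driven by the two-sided comparability in Lemma \ref{Lemma: Equivalence of two potential}---is precisely the standard proof found in those references, with the constants tracked explicitly.
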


\begin{proof}[Proof of Proposition \ref{Proposition: Reverse Holders inequality}]
Let us fix a cube $Q=Q(x_Q,r)$ of center $x_Q$ and side length $r$. We divide our analysis into two cases. 

\subsection*{Case-I: \texorpdfstring{$r \leq \rho(x_Q)$}{}}
Let us set $\omega_Q = \frac{1}{|Q|} \int_Q \omega = \frac{\omega(Q)}{|Q|}$. Then, using the layer cake decomposition, for $\delta>0$ we write
\begin{align*}
    &\frac{1}{|Q|} \int_Q \omega(x)^{\delta} \omega(x)\, dx \\
    &= \frac{\delta}{|Q|} \int_0^{\infty} t^{\delta} \omega(\{x \in Q: \omega(x) > t \}) \frac{dt}{t} \\
    &= \frac{\delta}{|Q|} \int_0^{\omega_Q} t^{\delta} \omega(\{x \in Q: \omega(x) > t \}) \frac{dt}{t} + \frac{\delta}{|Q|} \int_{\omega_Q}^{\infty} t^{\delta} \omega(\{x \in Q: \omega(x) > t \}) \frac{dt}{t} \\
    &= F_1 + F_2 .
\end{align*}
Estimate of $F_1$: In this case, we have
\begin{align*}
    F_1 &= \frac{\delta}{|Q|} \int_0^{\omega_Q} t^{\delta} \omega(\{x \in Q: \omega(x) > t \}) \frac{dt}{t} \leq \delta \omega_Q \int_0^{\omega_Q} t^{\delta-1} \, dt \leq \omega_Q^{\delta+1} .
\end{align*}
Estimate of $F_2$: For any cube $Q$, let
\begin{align*}
    E_Q &= \left\{x \in Q: \omega(x) \leq \frac{1}{2^{p(1+\theta)-1} [\omega]_{A_p^{\rho, \theta}}} \omega_Q \right\} .
\end{align*}
Then we make the following claim:
\begin{align}
\label{First Claim : for reverse Holder}
    |E_Q| &\leq \frac{1}{2} |Q| .
\end{align}
Let us start by proving the claim. Applying H\"older's inequality for any $f \geq 0$ we get
\begin{align}
\label{First use of Holder in reverse Holder}
    &\left(\frac{1}{|Q|} \int_Q f(y) \, dy \right)^p \omega(Q) \\
    &\nonumber \leq \left(\int_Q f(y)^p \omega(y) \, dy \right) \left(\frac{1}{\Psi_{\theta}(Q) |Q|} \int_Q \omega(y)^{-p'/p}\, dy \right)^{p/p'} \left(\frac{1}{\Psi_{\theta}(Q) |Q|} \int_Q \omega(y) \, dy \right) \Psi_{\theta}(Q)^p \\
    &\nonumber \leq [\omega]_{A_p^{\rho, \theta}} \Psi_{\theta}(Q)^p \left(\int_Q f(y)^p \omega(y) \, dy \right) \\
    &\nonumber \leq 2^{\theta p} [\omega]_{A_p^{\rho, \theta}} \left(\int_Q f(y)^p \omega(y) \, dy \right) ,
\end{align}
where in the last line we used the fact $\Psi_{\theta}(Q) \leq 2^{\theta}$, since $r \leq \rho(x_Q)$.

For $E \subset Q$, if we take $f=\chi_{E}$ in \eqref{First use of Holder in reverse Holder}, then
\begin{align*}
    \left(\frac{|E|}{|Q|} \right)^p \leq 2^{\theta p} [\omega]_{A_p^{\rho, \theta}} \frac{\omega(E)}{\omega(Q)} .
\end{align*}
In particular, if we take $E_Q$, then the above yields
\begin{align*}
    \left(\frac{|E_Q|}{|Q|} \right)^p &\leq 2^{\theta p} [\omega]_{A_p^{\rho, \theta}} \frac{\omega(E_Q)}{\omega(Q)} \leq 2^{\theta p} [\omega]_{A_p^{\rho, \theta}} \frac{1}{\omega(Q)} \frac{\omega_Q |E_Q|}{2^{p(1+\theta)-1} [\omega]_{A_p}^{\rho, \theta}} \leq \frac{1}{2^{p-1}} \frac{|E_Q|}{|Q|} .
\end{align*}
From this, the required claim \eqref{First Claim : for reverse Holder} follows.

Now we make another claim. For every $t> \omega_Q$,
\begin{align*}
    \omega\left(\{x \in Q: \omega(x)>t\} \right) \leq 2^{n+1} t \left|\left\{x \in Q: \omega(x) > \frac{t}{2^{p(1+\theta)-1}[\omega]_{A_p^{\rho, \theta}}} \omega_Q \right\}\right| .
\end{align*}
Since $t>\omega_Q$, hence we can consider the Calder\'on-Zygmund decomposition of $\omega$ at the level $t$, and find a family a disjoint cubes $\{Q_j\}$ contained in $Q$ such that
\begin{enumerate}
    \item $t<\omega_{Q_i} \leq 2^n t$ \quad for each $i$.
    \item $\bigcup_i Q_i = \{x \in Q : \mathcal{M}_Q^{\mathscr{D}}\omega(x)>t\}$,
\end{enumerate}
where $\mathcal{M}_Q^{\mathscr{D}}$ is the dyadic maximal function restricted to $Q$.

Note that $\omega(x) \leq \mathcal{M}_Q^{\mathscr{D}}\omega(x)$ a.e., therefore, except for a null set, we have the following.
\begin{align*}
    \{x \in Q : \omega(x)>t\} \subset \{x \in Q : \mathcal{M}_Q^{\mathscr{D}}\omega(x)>t\} = \bigcup_i Q_i .
\end{align*}
Using the above facts and claim \eqref{First Claim : for reverse Holder} we obtain
\begin{align*}
    \omega(\{x \in Q : \omega(x)>t\}) &\leq \sum_i \omega(Q_i) \leq 2^n t \sum_i |Q_i| \\
    &\leq 2^{n+1} t \sum_i \left|\left\{x \in Q_i : \omega(x) > \frac{1}{2^{p(1+\theta)-1} [\omega]_{A_p^{\rho, \theta}}} \omega_{Q_i} \right\}\right| \\
    &\leq 2^{n+1} t \left|\left\{x \in Q : \omega(x) > \frac{1}{2^{p(1+\theta)-1} [\omega]_{A_p^{\rho, \theta}}} t \right\}\right| .
\end{align*}
Therefore the above estimate and layer cake decomposition give
\begin{align*}
    F_2 &= \frac{\delta}{|Q|} \int_{\omega_Q}^{\infty} t^{\delta} \omega(\{x \in Q: \omega(x) > t \}) \frac{dt}{t} \\
    &\leq \frac{2^{n+1} \delta}{|Q|} \int_{\omega_Q}^{\infty} t^{\delta+1} \left|\left\{x \in Q : \omega(x) > \frac{1}{2^{p(1+\theta)-1} [\omega]_{A_p^{\rho, \theta}}} t \right\}\right| \frac{dt}{t} \\
    &\leq (2^{p(1+\theta)-1} [\omega]_{A_p^{\rho, \theta}})^{\delta+1} \frac{2^{n+1} \delta}{|Q|} \int_{0}^{\infty} t^{\delta+1} |\{x \in Q : \omega(x) > t \}| \frac{dt}{t} \\
    &= (2^{p(1+\theta)-1} [\omega]_{A_p^{\rho, \theta}})^{\delta+1} \frac{2^{n+1} \delta}{\delta+1} \frac{1}{|Q|} \int_{Q} \omega^{\delta+1} \, dx .
\end{align*}
Now, if we choose $\delta= \frac{1}{2^{2p(1+\theta)+n+1} [\omega]_{A_p^{\rho, \theta}}}$, then we can see
\begin{align*}
    F_2 &\leq \frac{1}{2} \frac{1}{|Q|} \int_{Q} \omega(x)^{\delta+1} \, dx .
\end{align*}
Therefore combining the estimate of $F_1$ and $F_2$ we obtain
\begin{align*}
    \frac{1}{|Q|} \int_{Q} \omega(x)^{\delta+1} \, dx &\leq \left(\frac{1}{|Q|} \int_Q \omega(x) \, dx \right)^{\delta+1} + \frac{1}{2} \frac{1}{|Q|} \int_{Q} \omega(x)^{\delta+1} \, dx ,
\end{align*}
and this implies the lemma for the case $r \leq \rho(x_Q)$ with $r_{\omega}=\delta+1$.

\subsection*{Case-II: \texorpdfstring{$r > \rho(x_Q)$}{}}
In this case using Lemma \ref{Lemma: Decomposition of space into ball} we can find a sequence $\{x_m\}_{m \in \mathbb{N}}$ and balls $B(x_m, \rho(x_m))$ such that $\bigcup_{m \in \mathbb{N}} B(x_m, \rho(x_m)) = \mathbb{R}^n$. Let us set $\mathcal{F} = \{m: Q(x_Q,r) \cap B(x_m, \rho(x_m)) \neq \emptyset\}$. For each $m \in \mathcal{F}$, it follows from Lemma \ref{Lemma: Equivalence of two potential} that there exist constants $l_0, C>0$ such that
\begin{align}
\label{exp: relation between xm and xnot}
    C^{-1} \rho(x_Q) \left(1+\frac{r}{\rho(x_Q)} \right)^{-l_0} \leq \rho(x_m) &\leq C \left(1+\frac{r}{\rho(x_Q)} \right)^{l_0/(l_0+1)} r .
\end{align}
We denote $\mathcal{C}_r = C \left(1+\frac{r}{\rho(x_Q)} \right)^{l_0/(l_0+1)}$. Note that we can choose $C>1$ so that $\mathcal{C}_r>1$. Then the above estimate immediately gives
\begin{align*}
    \bigcup_{m \in \mathcal{F}} B(x_m, \rho(x_m)) \subset \mathcal{C}_r Q(x_Q,r) .
\end{align*}
Choose $\delta>0$ to be the same as in Case-I. Then using Case-I, \eqref{exp: relation between xm and xnot}, along with Lemma \ref{Lemma: Decomposition of space into ball} we obtain
\begin{align}
\label{Exp: Estimate of reverse holder for r bigger}
    \left( \int_{Q} \omega(x)^{\delta+1} \, dx \right)^{\frac{1}{\delta+1}} &\leq C \sum_{m \in \mathcal{F}} \left( \int_{B(x_m, \rho(x_m))} \omega(x)^{\delta+1} \, dx \right)^{\frac{1}{\delta+1}} \\
    &\nonumber \leq 2 C \sum_{m \in \mathcal{F}} \left( \int_{B(x_m, \rho(x_m))} \omega(x) \, dx \right) |B(x_m, \rho(x_m))|^{-\frac{\delta}{\delta+1}} \\
    &\nonumber \leq C \rho(x_Q)^{-\frac{n \delta}{\delta+1}} \left(1+\frac{r}{\rho(x_Q)} \right)^{\frac{l_0 n \delta}{\delta+1}} \left( \int_{\mathcal{C}_r Q(x_Q,r)} \omega(x) \, dx \right).
\end{align}
Since $\omega \in A_p^{\rho, \theta}$ and $\mathcal{C}_r>1$, the last integral on the right hand side of the above inequality is dominated by
\begin{align}
\label{Estimate of the integral in RHS}
    \int_{\mathcal{C}_r Q(x_Q,r)} \omega(x) \, dx & \leq C \mathcal{C}_r^{n p} |Q|^p \left( \int_{\mathcal{C}_r Q(x_Q,r)} \omega(x)^{-\frac{1}{p-1}} \, dx \right)^{-p+1} \left(1+\frac{\mathcal{C}_r r}{\rho(x_Q)} \right)^{\theta p} \\
    &\nonumber \leq C |Q| \left( \int_{Q} \omega(x)^{-\frac{1}{p-1}} \, dx \right)^{-p+1} \left(1+\frac{r}{\rho(x_Q)} \right)^{\theta p + \frac{\theta p l_0}{l_0+1}+ \frac{n p l_0}{l_0+1}} \\
    &\nonumber \leq C \left( \int_{Q} \omega(x) \, dx \right) \left(1+\frac{r}{\rho(x_Q)} \right)^{\theta p + \frac{\theta p l_0}{l_0+1}+ \frac{n p l_0}{l_0+1}} ,
\end{align}
where in the last line we used the following fact,
\begin{align*}
    1 &= \frac{1}{|Q|} \int_Q \omega^{1/p} \omega^{-1/p} \leq \left\{\left(\frac{1}{|Q|} \int_{Q} \omega \right) \left(\frac{1}{|Q|} \int_{Q} \omega^{-\frac{1}{p-1}} \right)^{p-1} \right\}^{1/p} .
\end{align*}
Putting the estimate \eqref{Estimate of the integral in RHS} into \eqref{Exp: Estimate of reverse holder for r bigger} we get
\begin{align*}
    & \left(\frac{1}{|Q|} \int_{Q} \omega(x)^{\delta+1} \, dx \right)^{\frac{1}{\delta+1}} \\
    &\leq C \frac{|Q|}{|Q|^{\frac{1}{\delta+1}}} \rho(x_Q)^{-\frac{n \delta}{\delta+1}} \left(\frac{1}{|Q|} \int_{Q} \omega(x) \, dx \right) \left(1+\frac{r}{\rho(x_Q)} \right)^{\theta p + \frac{\theta p l_0}{l_0+1}+ \frac{n p l_0}{l_0+1}+ \frac{l_0 n \delta}{\delta+1}} \\
    &\leq C \left(\frac{r}{\rho(x_Q)} \right)^{\frac{n \delta}{\delta+1}} \left(\frac{1}{|Q|} \int_{Q} \omega(x) \, dx \right) \left(1+\frac{r}{\rho(x_Q)} \right)^{\theta p + \frac{\theta p l_0}{l_0+1}+ \frac{n p l_0}{l_0+1}+ \frac{l_0 n \delta}{\delta+1}} \\
    &\leq C \left(\frac{1}{|Q|} \int_{Q} \omega(x) \, dx \right) \Psi_{N_0}(Q) ,
\end{align*}
where $N_0=\theta p + (\theta+n)\frac{ p l_0}{l_0+1}+ (l_0+1)\frac{ n \delta}{\delta+1}$.

This completes the proof of the proposition.
\end{proof}

As an application of Proposition \ref{Proposition: Reverse Holders inequality}, we have the following lemma, which will be useful in the weighted boundedness of commutators of Schr\"odinger pseudo-multipliers.
\begin{lemma}
\label{Lemma: llogl estimate in terms of maximal function}
Let $s>1$ and $\omega \in A_p^{\rho, \theta}$. Then there exists a positive number $N_0$ such that
\begin{align*}
    \|f \omega\|_{\mathcal{L} \log\mathcal{L}, Q} \lesssim [\omega]_{A_p^{\rho, \theta}} \inf_{Q \ni x} \mathcal{M}_{\omega}(|f|^s)(x)^{1/s} \omega_Q \Psi_{N_0}(Q)^{1/s'} ,
\end{align*}
for every cube $Q$.
    
\end{lemma}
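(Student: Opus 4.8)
The plan is to verify directly that a suitable constant multiple of $[\omega]_{A_p^{\rho,\theta}}\big(\tfrac{1}{\omega(Q)}\int_Q|f|^s\omega\big)^{1/s}\,\omega_Q\,\Psi_{N_0}(Q)^{1/s'}$ is an admissible value of $\lambda$ in the Luxemburg-norm inequality $\tfrac{1}{|Q|}\int_Q\Phi\big(\tfrac{|f|\omega}{\lambda}\big)\le1$ defining $\|f\omega\|_{\mathcal{L}\log\mathcal{L},Q}$, where $\Phi(t)=t\log(e+t)$. As a first reduction, note that for $x\in Q$ the cube $Q$ itself competes in the supremum defining $\mathcal{M}_\omega$, so $\inf_{Q\ni x}\mathcal{M}_\omega(|f|^s)(x)\ge\tfrac{1}{\omega(Q)}\int_Q|f|^s\omega=:A^s$; hence it suffices to prove $\|f\omega\|_{\mathcal{L}\log\mathcal{L},Q}\lesssim[\omega]_{A_p^{\rho,\theta}}\,A\,\omega_Q\,\Psi_{N_0}(Q)^{1/s'}$ for a value of $N_0$ to be fixed, with a constant allowed to depend on $s,p,\theta,n$ but not on $\omega,Q,f$ (the cases $A\in\{0,\infty\}$ being trivial). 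I would set $\lambda=C'[\omega]_{A_p^{\rho,\theta}}A\,\omega_Q\,\Psi_{N_0}(Q)^{1/s'}$ with $C',N_0$ to be chosen, write $\tfrac{|f|\omega}{\lambda}=\tfrac{|f|}{A}\cdot\tfrac{\omega}{C'[\omega]_{A_p^{\rho,\theta}}\omega_Q\Psi_{N_0}(Q)^{1/s'}}$, and use $\log(e+xy)\le\log(e+x)+\log(e+y)$ $(x,y\ge0)$ to split $\tfrac{1}{|Q|}\int_Q\Phi(|f|\omega/\lambda)\le I_1+I_2$, where $I_1$ keeps the factor $\log(e+|f|/A)$ and $I_2$ keeps $\log(e+\omega/(C'[\omega]_{A_p^{\rho,\theta}}\omega_Q\Psi_{N_0}(Q)^{1/s'}))$.

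For $I_1$ I would use the elementary bound $t\log(e+t)\lesssim_s t+t^s$ $(t\ge0)$, the normalization $\tfrac{1}{\omega(Q)}\int_Q(|f|/A)^s\omega=1$, and Hölder's inequality in the measure $\omega\,dx$, to obtain $I_1\lesssim_s A\omega_Q/\lambda$, which is at most $\tfrac14$ once $C'$ is large (since $[\omega]_{A_p^{\rho,\theta}}\ge1$ and $\Psi_{N_0}(Q)\ge1$). For $I_2$ I would first apply Hölder in the exponents $(s,s')$ with respect to $dx$ to the splitting $|f|\omega=(|f|\omega^{1/s})(\omega^{1/s'})$, producing the factor $(\int_Q|f|^s\omega)^{1/s}=A\,\omega(Q)^{1/s}$ together with $\big(\int_Q\omega\,\log(e+\omega/(\cdots))^{s'}\big)^{1/s'}$. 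To bound the remaining factor I would feed in the quantitative reverse Hölder inequality of Proposition \ref{Proposition: Reverse Holders inequality}: choosing the exponent $\varepsilon=r_\omega-1$ in the elementary bound $\log(e+t)^{s'}\lesssim_s(r_\omega-1)^{-s'}(1+t^{r_\omega-1})$ makes $\omega^{1+(r_\omega-1)}=\omega^{r_\omega}$ appear, whereupon Proposition \ref{Proposition: Reverse Holders inequality} gives $\tfrac{1}{|Q|}\int_Q\omega^{r_\omega}\lesssim(\omega_Q\,\Psi_{N_1}(Q))^{r_\omega}$ with $N_1$ the exponent furnished there. Using $1<r_\omega\le\tfrac{3}{2}$ (which keeps $r_\omega$-th powers of absolute constants uniformly bounded and gives $\Psi_{N_1}(Q)^{r_\omega}\le\Psi_{2N_1}(Q)$), together with the identity $(r_\omega-1)^{-1}=2^{2p(1+\theta)+n+1}[\omega]_{A_p^{\rho,\theta}}$, one lands on $I_2\lesssim_{s,p,\theta,n}[\omega]_{A_p^{\rho,\theta}}\,A\,\omega_Q\,\Psi_{2N_1}(Q)^{1/s'}/\lambda$. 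Taking $N_0:=2N_1$ in the definition of $\lambda$ makes $I_2\le C_*/C'$, and a final large choice of $C'$ (depending only on $s,p,\theta,n$) yields $I_1+I_2<1$; hence $\|f\omega\|_{\mathcal{L}\log\mathcal{L},Q}\le\lambda$, which is exactly the asserted bound.

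The step I expect to be the main obstacle is securing the linear (not quadratic) dependence on $[\omega]_{A_p^{\rho,\theta}}$. The comparison $\log(e+t)^{s'}\lesssim_s(r_\omega-1)^{-s'}(1+t^{r_\omega-1})$ introduces a blow-up of order $(r_\omega-1)^{-s'}\sim[\omega]_{A_p^{\rho,\theta}}^{\,s'}$; the decisive point is that this constant sits inside the $s'$-th root coming from the $(s,s')$-Hölder splitting in $I_2$, so only one power of $(r_\omega-1)^{-1}\sim[\omega]_{A_p^{\rho,\theta}}$ survives. Carefully tracking this cancellation, and checking that all the stray $\Psi$-exponents can be absorbed uniformly by enlarging $N_0$ to the fixed multiple $2N_1$, is the technical heart of the argument; the rest is a routine verification using the elementary inequalities and Hölder's inequality indicated above.
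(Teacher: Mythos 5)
The paper does not spell out a proof here, referring instead to Lemma~3.1 of Cao--Yabuta, but your reconstruction is correct and follows exactly the route the paper indicates: test a candidate $\lambda$ in the Luxemburg norm, split $\Phi(gh)\le gh\log(e+g)+gh\log(e+h)$, control the first piece by $t\log(e+t)\lesssim_s t+t^s$ and H\"older in $\omega\,dx$, and control the second by the $(s,s')$-H\"older splitting followed by the quantitative reverse H\"older inequality of Proposition~\ref{Proposition: Reverse Holders inequality} with exponent $r_\omega-1$. Your observation that the $(r_\omega-1)^{-s'}\sim[\omega]_{A_p^{\rho,\theta}}^{s'}$ blow-up sits under the $s'$-th root, so only a linear power of $[\omega]_{A_p^{\rho,\theta}}$ survives, is precisely the crux, and the $\Psi$-bookkeeping (using $1<r_\omega\le \tfrac{3}{2}$ to absorb $\Psi_{N_1}(Q)^{r_\omega}$ into $\Psi_{2N_1}(Q)$ and discarding the harmless factor $(C'[\omega]_{A_p^{\rho,\theta}}\Psi_{N_0}(Q)^{1/s'})^{-(r_\omega-1)}\le 1$) is sound.
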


The above lemma can be proved by using the same line of arguments as in \cite[Lemma 3.1]{Cao_Yabuta_Multilinear_Littlewood_Paley_2019} with obvious modification. Therefore, we omit the details to make our paper less technical.

\section{Kernel estimates}
\label{Section: Kernel estimate}
In this section, we establish a pointwise estimate of the kernel and its gradient of the pseudo-multiplier associated with the Schr\"odinger operator $\mathcal{L}$. The proof of our kernel estimate is based on the method developed in \cite{Duong_Ouhabaz_Sikora_Sharp_Multiplier_2002}, using the heat kernel estimate for the Schr\"odinger operator $\mathcal{L}$.

Let $\{e^{-t\mathcal{L}}\}_{t>0}$ be the heat semigroup associated with $\mathcal{L}$. Then we can write $\{e^{-t\mathcal{L}}\}_{t>0}$ in terms of the kernel representation as
\begin{align*}
    e^{-t\mathcal{L}}f(x) = \int_{\mathbb{R}^n} K_{e^{-t\mathcal{L}}}(x,y) f(y) \ dy , \quad f \in C_c^{\infty}(\mathbb{R}^n) .
\end{align*}

The pointwise estimate of the kernel $K_{e^{-t\mathcal{L}}}(x,y)$ is well known and given by the following lemma. We denote $p_t(x,y) = K_{e^{-t\mathcal{L}}}(x,y)$.
\begin{lemma}\cite[Lemma 2.2, Lemma 3.8]{Duong_Yan_Zhang_Schrodinger_operator_2014}
\label{Lemma: Heat kernel estimate for L}
    For every $N>0$, there exists the constants $C_N$ and $c$ such that for $x,y \in \mathbb{R}^n$, $t>0$,
    \begin{align*}
        0\leq p_t(x,y) \leq C_N\, t^{-n/2} e^{-\frac{|x-y|^2}{ct}} \left(1+\frac{\sqrt{t}}{\rho(x)}+\frac{\sqrt{t}}{\rho(y)} \right)^{-N} ,
    \end{align*}
    and
    \begin{align*}
        |\nabla p_t(x,y)| + |\sqrt{t} \nabla \partial_t p_t(x,y)| \leq C_N\, t^{-(n+1)/2} e^{-\frac{|x-y|^2}{ct}} \left(1+\frac{\sqrt{t}}{\rho(x)}+\frac{\sqrt{t}}{\rho(y)} \right)^{-N} ,
    \end{align*}
    where $\nabla$ is the gradient in $\mathbb{R}^n$.
\end{lemma}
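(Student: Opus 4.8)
The plan is to obtain both displays by bootstrapping from the crudest available bound, using only standard properties of the semigroup $e^{-t\mathcal{L}}$ together with the reverse-H\"older structure of $V$. Since $V\ge 0$, the semigroup $e^{-t\mathcal{L}}$ is positivity preserving and, by the Trotter product formula (equivalently by the Feynman--Kac formula), it is dominated pointwise by the free heat semigroup; thus
\[
0\le p_t(x,y)\le (4\pi t)^{-n/2}\,e^{-|x-y|^2/(4t)} ,
\]
which already gives the asserted kernel bound in the case $N=0$ and settles non-negativity. Everything that follows is about improving the polynomial gain $N$ and then restoring the Gaussian factor.

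The substantive ingredient is the \emph{integrated} decay
\[
(e^{-t\mathcal{L}}\mathbf 1)(x)=\int_{\mathbb{R}^n}p_t(x,y)\,dy\ \lesssim_N\ \Big(1+\tfrac{\sqrt t}{\rho(x)}\Big)^{-N},\qquad t>0,\ x\in\mathbb{R}^n,
\]
which is precisely where the hypothesis $V\in RH_q$ and the definition \eqref{Expression: Auxiliary function} of $\rho$ come in. The mechanism is that $\rho(z)^{2-n}\int_{B(z,\rho(z))}V$ is comparable to $1$ by the very definition of $\rho$, so the zeroth-order term in $\partial_t u=\Delta u-Vu$ is non-negligible at scales $\gtrsim\rho$: a local energy/subsolution argument shows that $u(t,\cdot)=e^{-t\mathcal{L}}\mathbf 1$ loses a fixed fraction of its $L^\infty$ norm over any time interval of length $\sim\rho(x)^2$, and iterating over the $\sim t/\rho(x)^2$ such intervals contained in $[0,t]$ produces polynomial decay of any prescribed order (the scale-comparison estimates for $r\mapsto r^{2-n}\int_{B(x,r)}V$ furnished by $RH_q$, in the spirit of Lemma \ref{Lemma: Equivalence of two potential}, are what make the iteration quantitative; alternatively one runs Feynman--Kac, $p_t(x,y)=(4\pi t)^{-n/2}e^{-|x-y|^2/(4t)}\,\mathbb{E}^t_{x,y}\big[e^{-\int_0^t V(b_s)\,ds}\big]$, and controls the Brownian-bridge expectation from the fact that $\int_0^t V(b_s)\,ds$ is large with overwhelming probability once $t\gg\rho(x)^2$). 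With the integrated bound in hand, the semigroup identity localizes it: from
\[
p_t(x,y)=\int_{\mathbb{R}^n}p_{t/2}(x,z)\,p_{t/2}(z,y)\,dz\ \le\ \|p_{t/2}(\cdot,y)\|_{L^\infty}\,(e^{-(t/2)\mathcal{L}}\mathbf 1)(x)
\]
and the $N=0$ bound one obtains $p_t(x,y)\lesssim_N t^{-n/2}(1+\sqrt t/\rho(x))^{-N}$; the symmetry $p_t(x,y)=p_t(y,x)$ gives the companion factor in $\rho(y)$, and $(1+a)(1+b)\ge 1+a+b$ combines them into $p_t(x,y)\lesssim_N t^{-n/2}\big(1+\tfrac{\sqrt t}{\rho(x)}+\tfrac{\sqrt t}{\rho(y)}\big)^{-N}$ for every $N$.

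To recover the first display it now suffices to interpolate the two estimates already obtained: writing $p_t(x,y)=p_t(x,y)^{1-\varepsilon}\,p_t(x,y)^{\varepsilon}$, bounding the first factor by the Gaussian bound and the second by the $\rho$-decay bound applied with exponent $N/\varepsilon$, one gets
\[
p_t(x,y)\ \lesssim_N\ t^{-n/2}\,e^{-|x-y|^2/(ct)}\,\Big(1+\tfrac{\sqrt t}{\rho(x)}+\tfrac{\sqrt t}{\rho(y)}\Big)^{-N},\qquad c=\tfrac{4}{1-\varepsilon}.
\]
For the derivative bounds I would use parabolic/elliptic interior regularity, and here the hypothesis $q>n$ is used crucially. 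A standard bootstrap using analyticity of $e^{-t\mathcal{L}}$ on $L^2$ and the semigroup identity upgrades the bound on $p_t$ to one on $\partial_t p_t$ (with the extra factor $t^{-1}$, carrying the same Gaussian and $\rho$ decay); then, since $-\Delta_x p_t(\cdot,y)=-\partial_t p_t(\cdot,y)-V\,p_t(\cdot,y)\in L^q_{loc}$ with $q>n$ and $p_t(\cdot,y)$ bounded, the embedding $W^{2,q}_{loc}\hookrightarrow C^1_{loc}$ together with local elliptic estimates on balls of radius $\sim\sqrt t$ (over which the Gaussian factor and, by Lemma \ref{Lemma: Equivalence of two potential}, the $\rho$-factor vary controllably, at the cost of relabelling $N$) converts the pointwise bounds on $p_t$ and $\partial_t p_t$ into the desired pointwise bound on $\nabla_x p_t$; applying the same scheme one derivative higher yields the bound for $\sqrt t\,\nabla_x\partial_t p_t$.

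The main obstacle is the integrated decay estimate for $e^{-t\mathcal{L}}\mathbf 1$: this is the one step that genuinely exploits the $RH_q$ structure of $V$ and the geometry encoded by $\rho$ (via the scale comparisons underlying Lemma \ref{Lemma: Equivalence of two potential} and the doubling of $r\mapsto\int_{B(x,r)}V$), and extracting arbitrarily high polynomial order from the iteration, uniformly in $x$ and $t$, is where the real work lies; everything afterwards — interpolation for the Gaussian, interior regularity for the derivatives — is comparatively routine.
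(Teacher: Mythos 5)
This lemma is \emph{cited}, not proved, in the paper: the authors simply invoke \cite[Lemma~2.2, Lemma~3.8]{Duong_Yan_Zhang_Schrodinger_operator_2014}, who in turn rely on earlier heat-kernel estimates of Kurata and Dziuba\'nski--Zienkiewicz. So there is no in-paper proof to compare against; what you have written is a reconstruction of the literature argument, and as such it hits the right structural beats. The reduction scheme — Trotter/Feynman--Kac Gaussian domination to settle $N=0$, an integrated-decay bound for $e^{-t\mathcal{L}}\mathbf 1$, the Chapman--Kolmogorov split $p_t = p_{t/2}\ast p_{t/2}$ to convert integrated decay into pointwise decay, symmetry plus $(1+a)(1+b)\ge 1+a+b$ to produce the joint $\rho(x),\rho(y)$ factor, a power-splitting interpolation to restore the Gaussian, and interior parabolic/elliptic regularity (using $q>n$ and the $L^q_{\mathrm{loc}}$ integrability of $V$) for $\nabla p_t$ and $\sqrt t\,\nabla\partial_t p_t$ — is exactly the architecture used in that literature.

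Where you diverge from the cited source is in the mechanism for the one genuinely hard step, the integrated decay $(e^{-t\mathcal L}\mathbf1)(x)\lesssim_N(1+\sqrt t/\rho(x))^{-N}$. Kurata and Dziuba\'nski--Zienkiewicz obtain it from the Fefferman--Phong inequality $\int|\nabla u|^2+Vu^2\gtrsim\int\rho^{-2}u^2$ together with a chain/iteration over balls of radius $\sim\rho$, which is a cleanly quantitative functional inequality; your sketch replaces this with a subsolution ``fixed fractional loss over a $\rho(x)^2$-window'' argument or, alternatively, a Brownian-bridge probability estimate. These are morally equivalent but the loss-of-mass claim as you state it is pointwise in $x$ and needs the supremum over enlarging balls to close the induction (controllable, because $\rho$ is slowly varying by Lemma~\ref{Lemma: Equivalence of two potential}, but this is precisely the nontrivial bookkeeping). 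You flag this honestly, so I regard it as an acknowledged gap rather than an error. A second place where the details matter more than the sketch suggests is the elliptic step for $\nabla p_t$: when $\sqrt t\gg\rho(x)$ the ball $B(x,\sqrt t)$ is too large for a naive $W^{2,q}$ estimate because $\|V\|_{L^q(B(x,\sqrt t))}$ grows; one should localize at the scale $\min(\sqrt t,\rho(x))$ and let the already-established polynomial factor $(1+\sqrt t/\rho)^{-N}$ absorb the ensuing loss. With those two refinements, your route is sound and is essentially the standard one.
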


Using the decomposition as in \eqref{Equation: dyadic decomposition of pseudo multiplier}, we write
\begin{align}\label{eq:operator-decom}
    \sigma(x, \sqrt{\mathcal{L}}) &= \sum_{j=0}^{\infty} \sigma_j(x, \sqrt{\mathcal{L}}) ,
\end{align}
where $\sigma_j(x, \lambda) = \sigma(x, \lambda) \psi_j(\lambda)$.

Let $K_{j}(x,y)$ denote the kernel corresponding to the pseudo-multiplier $\sigma_j(x, \sqrt{\mathcal{L}})$.
\begin{proposition}
\label{Prop: Kernel estimate}
    Let $\Gamma \in \mathbb{N}^n$. Then for all $N\geq 0$ and $\beta \geq 0$, there exists a constant $C>0$ such that
    \begin{align*}
        \left(1+ \frac{|x-y|}{\rho(x)} + \frac{|x-y|}{\rho(y)} \right)^{N} |\nabla^{\Gamma} K_j(x,y)| &\leq C |x-y|^{-\beta} 2^{j(n+|\Gamma|-\beta)} ,
    \end{align*}
    with $|\Gamma|=0$ or $1$.
\end{proposition}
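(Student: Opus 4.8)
The plan is to exploit the representation \eqref{Definition: original defintion of sigmaj} of $\sigma_j(x,\sqrt{\mathcal L})$ as an average of the heat semigroup over complex times. Recognising $e^{-(1-i\tau)2^{-2j}\mathcal L}$ as the integral operator with kernel $p_{2^{-2j}(1-i\tau)}(x,y)$ (the analytic continuation of $p_t$ to the right half-plane), this yields
\begin{align*}
    K_j(x,y)=\frac{1}{2\pi}\int_{\mathbb R}\widehat F(x,\tau)\,p_{2^{-2j}(1-i\tau)}(x,y)\,d\tau,\qquad F(x,\lambda)=\sigma_j(x,2^j\sqrt\lambda)\,e^{\lambda}.
\end{align*}
I would then differentiate under the integral sign, expand $\nabla_x^{\Gamma}\big(\widehat F(x,\tau)\,p_z(x,y)\big)$ by the Leibniz rule (and argue symmetrically for a $y$-derivative), and reduce matters to two ingredients followed by a scalar integration in $\tau$. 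The finitely many small indices $j$ — in particular $j=0$, where $F(x,\cdot)$ fails to be smooth at the origin — I would handle separately by a direct argument: the corresponding symbols are smooth and supported in a fixed compact set, so the required bound follows from standard kernel estimates for functions of $\mathcal L$ (for $j=0$, using the finite propagation speed of $\cos(s\sqrt{\mathcal L})$). Hence I focus on $j$ large, where $F(x,\cdot)$ is supported in the fixed interval $(2^{-4},1)$.

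The first ingredient is a symbol estimate. Since $\sigma\in S^0_{1,0}(\sqrt{\mathcal L})$ has $\delta=0$, the bounds $|\partial_x^{\alpha}\partial_{\eta}^{l}\sigma(x,\eta)|\lesssim(1+\eta)^{-l}$ and the chain rule show that on $\supp F(x,\cdot)$, where $2^j\sqrt\lambda\sim2^j$, every $2^j$ produced by $\partial_{\lambda}$ is compensated by a factor $(1+2^j\sqrt\lambda)^{-1}\sim2^{-j}$; thus $F(x,\cdot)$ and all its $\lambda$-derivatives are bounded uniformly in $x$ and $j$. Taking Fourier transforms in $\lambda$ gives, for every $M$ and $\alpha$, $\sup_{x}|\partial_x^{\alpha}\widehat F(x,\tau)|\lesssim_{M,\alpha}(1+|\tau|)^{-M}$ with constant independent of $j$. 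The second ingredient is a complex-time kernel bound: writing $t=2^{-2j}$, $z=t(1-i\tau)$, I would split off a real-time factor at each end, $e^{-z\mathcal L}=e^{-\frac{t}{3}\mathcal L}\circ e^{-\frac{t}{3}(1-3i\tau)\mathcal L}\circ e^{-\frac{t}{3}\mathcal L}$, apply Lemma \ref{Lemma: Heat kernel estimate for L} (including its gradient part) to the two outer real-time factors — which carry the $\rho(x)$- and $\rho(y)$-decay — and the classical pointwise bound for the analytic semigroup near the imaginary axis to the inner factor, which only needs Gaussian decay; convolving the three Gaussians produces, for $|\Gamma_2|\in\{0,1\}$ and every $N\ge0$,
\begin{align*}
    \big|\nabla_x^{\Gamma_2}p_z(x,y)\big|\lesssim_N(1+|\tau|)^{a}\,2^{j(n+|\Gamma_2|)}\,e^{-\frac{(2^j|x-y|)^2}{c(1+\tau^2)}}\left(1+\frac{2^{-j}}{\rho(x)}+\frac{2^{-j}}{\rho(y)}\right)^{-N},
\end{align*}
for a fixed $a=a(n)$, and similarly with $\nabla_y$.

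Inserting these into the Leibniz expansion, the $\tau$-dependence from $\widehat F$ and $p_z$ collapses to $(1+|\tau|)^{a-M}$ with $M$ at our disposal, while the spatial factors are $2^{j(n+|\Gamma|)}$ times the Gaussian times the $\rho$-decay. Writing $u=2^j|x-y|$, the weight on the left of the claimed inequality satisfies the elementary bound
\begin{align*}
    \left(1+\frac{|x-y|}{\rho(x)}+\frac{|x-y|}{\rho(y)}\right)^{N}\lesssim(1+u)^{N}\left(1+\frac{2^{-j}}{\rho(x)}+\frac{2^{-j}}{\rho(y)}\right)^{N},
\end{align*}
because $\frac{|x-y|}{\rho(x)}=u\cdot\frac{2^{-j}}{\rho(x)}\le u\big(1+\frac{2^{-j}}{\rho(x)}\big)$ and likewise for $\rho(y)$; taking the exponent in the kernel bound at least $N$ absorbs the $\rho$-factors, leaving the scalar inequality
\begin{align*}
    (1+u)^{N}\int_{\mathbb R}(1+|\tau|)^{a-M}\,e^{-\frac{u^2}{c(1+\tau^2)}}\,d\tau\lesssim u^{-\beta}.
\end{align*}
If $u\le1$ this is immediate, since the left side is bounded and $u^{-\beta}\ge1$. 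If $u>1$, I would split the integral at $|\tau|=\sqrt u$: on $|\tau|\le\sqrt u$ one has $1+\tau^2\le2u$, hence $e^{-u^2/(c(1+\tau^2))}\le e^{-u/(2c)}$, beating every power of $u$; on $|\tau|>\sqrt u$ one discards the Gaussian and uses $(1+|\tau|)^{a-M}$, whose integral there is $\lesssim u^{(a-M+1)/2}$. Fixing $M$ large in terms of $N,\beta,a$ makes both contributions $\lesssim u^{-\beta}$; multiplying back by $2^{j(n+|\Gamma|)}$ and using $u^{-\beta}2^{j(n+|\Gamma|)}=|x-y|^{-\beta}2^{j(n+|\Gamma|-\beta)}$ finishes the proof for $j$ large.

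The step I expect to be the main obstacle is the second ingredient: the real-time estimate of Lemma \ref{Lemma: Heat kernel estimate for L} does not extend verbatim to complex time, and one must verify that the critical-function decay $\big(1+\tfrac{\sqrt t}{\rho}\big)^{-N}$ persists under analytic continuation — which is precisely why the semigroup is factored so that this decay is carried by real-time pieces, while only Gaussian decay (with an innocuous polynomial loss in $\tau$ coming from the widening $\sqrt{1+\tau^2}$) is demanded of the genuinely complex factor. Everything else — the uniform-in-$j$ bound on $\widehat F$ and the dissection of the $\tau$-integral trading the $\tau$-spread Gaussian for the polynomial factor $(2^j|x-y|)^{-\beta}$ — is routine but requires some care.
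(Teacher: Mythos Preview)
Your proposal is correct and reaches the same conclusion, but the route differs from the paper's in the two technical ingredients.

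For the complex-time heat kernel bound, the paper does not factor the semigroup. Instead it proves a separate lemma (Lemma~\ref{Lemma: L infinity gradient estimate of heat kernel}) via a Phragm\'en--Lindel\"of argument \`a la Duong--Ouhabaz--Sikora: fixing $y$, it treats $z\mapsto \nabla^{\Gamma}p_z(x,y)$ as a holomorphic function, feeds in the real-time bound of Lemma~\ref{Lemma: Heat kernel estimate for L} on the positive axis and the off-diagonal real-time bound on a ray, and extracts the estimate
\[
|\nabla^{\Gamma}p_{(1+i\tau)R^{-2}}(x,y)|\,(1+R|x-y|)^{s}\lesssim (1+|\tau|)^{s}R^{n+|\Gamma|}R^{N}\rho(x)^{N/2}\rho(y)^{N/2}.
\]
Note the $\rho$-decay appears in the algebraic form $R^{N}\rho^{N}$ rather than $(1+R^{-1}/\rho)^{-N}$; the final statement is then obtained by choosing $s=N+\beta$ and rearranging. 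Your factorisation $e^{-z\mathcal L}=e^{-\frac{t}{3}\mathcal L}e^{-\frac{t}{3}(1-3i\tau)\mathcal L}e^{-\frac{t}{3}\mathcal L}$ is a genuine alternative: it pushes the $\rho$-decay onto real-time flanks where Lemma~\ref{Lemma: Heat kernel estimate for L} applies verbatim, and only asks the inner factor for a Gaussian bound---which is indeed classical, though its standard proofs (Davies, Coulhon--Sikora) are themselves Phragm\'en--Lindel\"of arguments, so the complex analysis is hidden rather than avoided.

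For the $\tau$-integration, the paper applies Cauchy--Schwarz in $\tau$ and controls $\int|\widehat F(x,\tau)|^{2}(1+|\tau|)^{2s+1+2\varepsilon}d\tau$ by a Sobolev norm $\|\sigma_j(x,2^{j}\cdot)\|_{L^{\infty}_{s+1/2+\varepsilon}}$, using the symbol condition to make this uniform in $j$. You instead use the stronger pointwise decay $|\widehat F(x,\tau)|\lesssim(1+|\tau|)^{-M}$ and then split the $\tau$-integral at $|\tau|\sim\sqrt{u}$ to convert the spread Gaussian into $u^{-\beta}$. Both work; your route keeps an explicit Gaussian in $|x-y|$ throughout and reads off the $|x-y|^{-\beta}$ factor directly, whereas the paper trades the Gaussian for the polynomial weight $(1+2^{j}|x-y|)^{s}$ at the kernel-lemma stage and recovers $|x-y|^{-\beta}$ by the choice $s=N+\beta$. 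Your remark about $j=0$ (where $\lambda\mapsto\sigma_{0}(x,\sqrt\lambda)$ is not smooth at the origin) is a point the paper does not address explicitly.
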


In order to prove the above proposition, we need the following lemma.

\begin{lemma}
\label{Lemma: L infinity gradient estimate of heat kernel}
    For any $\tau \in \mathbb{R}$, $R>0$ and $s \geq 0$, there exists $C>0$ such that
    \begin{align*}
         |\nabla^{\Gamma} p_{(1+i \tau)R^{-2}}(x,y)| (1+R |x-y|)^s &\leq C (1+|\tau|)^s R^{(n+|\Gamma|)} R^N \rho(x)^{N/2} \rho(y)^{N/2},
    \end{align*}
    for all $x, y \in \mathbb{R}^n$.
\end{lemma}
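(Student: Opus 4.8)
The plan is to reduce the complex-time estimate to the real-time heat kernel bounds of Lemma~\ref{Lemma: Heat kernel estimate for L} via the semigroup law, treating the imaginary part of the time parameter as a perturbation handled by a Davies--Gaffney/analyticity argument. Write $z:=(1+i\tau)R^{-2}$ (the sign of $\tau$ is irrelevant, since the target inequality only involves $|\tau|$), and split the time as $z=a+(z-2a)+a$ with $a:=R^{-2}/4$, so that $\operatorname{Re}(z-2a)=R^{-2}/2>0$. By the semigroup property,
\begin{align*}
\nabla_x^{\Gamma}p_z(x,y)=\iint_{\mathbb{R}^n\times\mathbb{R}^n}\nabla_x^{\Gamma}p_a(x,u)\,p_{z-2a}(u,v)\,p_a(v,y)\,du\,dv ,
\end{align*}
and for $\nabla^{\Gamma}$ acting in the $y$-variable one places the derivative on the third factor instead; the restriction $|\Gamma|\le 1$ is exactly what makes the gradient part of Lemma~\ref{Lemma: Heat kernel estimate for L} available. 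Here and below $c,c',\dots$ denote harmless positive constants that may change from line to line.

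For the two real-time factors I would invoke Lemma~\ref{Lemma: Heat kernel estimate for L} with $t=a$ (so $\sqrt a\sim R^{-1}$), discarding the auxiliary terms $\tfrac{\sqrt a}{\rho(u)},\tfrac{\sqrt a}{\rho(v)}$, which only decrease the bound:
\begin{align*}
|\nabla_x^{\Gamma}p_a(x,u)|\lesssim_N R^{\,n+|\Gamma|}e^{-cR^2|x-u|^2}\Big(1+\tfrac{1}{R\rho(x)}\Big)^{-N},\qquad
|p_a(v,y)|\lesssim_N R^{\,n}e^{-cR^2|v-y|^2}\Big(1+\tfrac{1}{R\rho(y)}\Big)^{-N}.
\end{align*}
For the middle, complex-time factor the key input is the complex-time Gaussian bound
\begin{align*}
|p_{z-2a}(u,v)|\lesssim R^{\,n}\,e^{-cR^2|u-v|^2/(1+\tau^2)} ,
\end{align*}
which I would deduce from the real-time estimates of Lemma~\ref{Lemma: Heat kernel estimate for L} together with the self-adjointness of $\mathcal{L}$: splitting $z-2a$ into halves and using $|p_{z-2a}(u,v)|\le\|p_{(z-2a)/2}(u,\cdot)\|_{L^2}\|p_{(z-2a)/2}(v,\cdot)\|_{L^2}$ with the identity $\|p_{w}(u,\cdot)\|_{L^2}^2=p_{2\operatorname{Re}w}(u,u)\lesssim R^{\,n}$ produces the prefactor $R^n$, while the Gaussian decay (with the characteristic degradation by $1+\tau^2$) comes from inserting spatial cutoffs and applying the complex Davies--Gaffney off-diagonal bound $\|\chi_E e^{-w\mathcal{L}}\chi_F\|_{2\to 2}\lesssim e^{-c\,\mathrm{dist}(E,F)^2\operatorname{Re}(w)/|w|^2}$; alternatively one can simply invoke known complex-time heat kernel bounds for Schr\"odinger operators. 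This transfer is the step I expect to be the main obstacle, as it is where analyticity of the semigroup is genuinely used and where the $1+\tau^2$ degradation of the Gaussian enters.

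Granting these three ingredients, the remainder is Gaussian bookkeeping. One pulls the two $\rho$-factors out of the double integral and evaluates the triple Gaussian convolution
\begin{align*}
\iint e^{-cR^2|x-u|^2}\,e^{-cR^2|u-v|^2/(1+\tau^2)}\,e^{-cR^2|v-y|^2}\,du\,dv\lesssim R^{-2n}\,e^{-c'R^2|x-y|^2/(1+\tau^2)} ,
\end{align*}
which, combined with the prefactors $R^{\,n+|\Gamma|}\cdot R^{\,n}\cdot R^{\,n}$, gives
\begin{align*}
|\nabla_x^{\Gamma}p_z(x,y)|\lesssim_N R^{\,n+|\Gamma|}\,e^{-c'R^2|x-y|^2/(1+\tau^2)}\Big(1+\tfrac{1}{R\rho(x)}\Big)^{-N}\Big(1+\tfrac{1}{R\rho(y)}\Big)^{-N} .
\end{align*}
Multiplying by $(1+R|x-y|)^s$ and using the elementary bound $\sup_{r\ge 0}(1+r)^se^{-c'r^2/(1+\tau^2)}\lesssim_s(1+|\tau|)^s$ absorbs the weight at the cost of a factor $(1+|\tau|)^s$, and the elementary inequality $(1+u)(1+v)\ge 2\sqrt{uv}$ applied with $u=\tfrac{1}{R\rho(x)}$, $v=\tfrac{1}{R\rho(y)}$ yields
\begin{align*}
\Big(1+\tfrac{1}{R\rho(x)}\Big)^{-N}\Big(1+\tfrac{1}{R\rho(y)}\Big)^{-N}\le 2^{-N}\big(R\rho(x)\big)^{N/2}\big(R\rho(y)\big)^{N/2}=2^{-N}R^{\,N}\rho(x)^{N/2}\rho(y)^{N/2} .
\end{align*}
Putting these together gives precisely the claimed estimate.
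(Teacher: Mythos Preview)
Your approach is correct and takes a genuinely different route from the paper. The paper fixes $y$, builds the holomorphic function $z\mapsto e^{-zR^2}R^{-(n+|\Gamma|)}\int\nabla^\Gamma p_z(x,y)f(x)\,dx$ for test functions $f$ supported outside $B(y,r)$, bounds it on the positive real axis (using Lemma~\ref{Lemma: Heat kernel estimate for L} with its $\rho$-factor) and on rays (via $\|e^{-z\mathcal{L}}\|_{1\to\infty}\le\|e^{-|z|\cos\theta\,\mathcal{L}}\|_{1\to\infty}$, which follows from the $L^2$-contractivity of $e^{-is\mathcal{L}}$), and then applies the Phragm\'en--Lindel\"of lemma of \cite[Lemma~4.2]{Duong_Ouhabaz_Sikora_Sharp_Multiplier_2002} to reach $z=(1+i\tau)R^{-2}$; the $\rho$-decay and the Gaussian off-diagonal decay emerge simultaneously, and the weight $(1+R|x-y|)^s$ is recovered afterwards by summing over dyadic annuli in $|x-y|$ and interpolating with $s=0$. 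Your semigroup factorization $e^{-z\mathcal{L}}=e^{-a\mathcal{L}}e^{-(z-2a)\mathcal{L}}e^{-a\mathcal{L}}$ cleanly separates these ingredients: the $\rho$-weights drop out of the real-time outer factors, and the $(1+|\tau|)^s$ loss comes entirely from the complex-time middle factor via a transparent Gaussian convolution. This is more modular, but be aware that the pointwise bound $|p_{z-2a}(u,v)|\lesssim R^n e^{-cR^2|u-v|^2/(1+\tau^2)}$ is itself of the same depth as what the paper proves: the Davies--Gaffney $L^2\to L^2$ input you cite is correct, yet upgrading it to a \emph{pointwise} kernel estimate still needs a further step (another real-time sandwich, or precisely a Phragm\'en--Lindel\"of interpolation of the same flavor). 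So unless you cite the complex-time Gaussian bound as a known black box, you have relocated rather than eliminated the main analytic input.
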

\begin{proof}
Let $y \in \mathbb{R}^n$. Choose a function $f$ such that  $\|f\|_{L^1(\mathbb{R}^n)} = 1$ and supp$f \subseteq \mathbb{R}^n \setminus B(y,r)$. We define the holomorphic function $F_y : \{ z\in \mathbb{C} : Re(z) >0 \} \to \mathbb{C} $ by
\begin{align*}
    F_y(z) = e^{-z R^2} R^{-(n+|\Gamma|)} \left(\int_{\mathbb{R}^n} \nabla^{\Gamma} p_z(x,y) f(x) \ dx \right).
\end{align*}
Put $z= |z|e^{i \theta}$. One can show $\esssup_{x \in \mathbb{R}^n} |\nabla^{\Gamma} p_z(x,y)| \leq \esssup_{x \in \mathbb{R}^n} |\nabla^{\Gamma} p_{|z|\cos \theta}(x,y)| $.

Indeed, this can be seen as follows. A linear operator $S$ is bounded from $L^1(\mathbb{R}^n)$ to $L^{\infty}(\mathbb{R}^n)$ if and only if it is an integral operator with kernel $K(x,y)$ such that $\esssup_{x \in \mathbb{R}^n} |K(x,y)|$ is finite, in which case
\begin{align*}
    \esssup_{x \in \mathbb{R}^n} |K(x,y)| = \|S\|_{1 \to \infty}.
\end{align*}
Therefore, we can see
\begin{align*}
    & \esssup_{x \in \mathbb{R}^n} |p_z(x,y)| = \|\exp(-z \mathcal{L})\|_{1 \to \infty} \\
    &\leq \|\exp(-|z|\cos \theta \mathcal{L}/2)\|_{1 \to 2} \|\exp(-i |z| \sin \theta \mathcal{L})\|_{2 \to 2} \|\exp(-|z|\cos \theta \mathcal{L}/2)\|_{2 \to \infty} \\
    &= \|\exp(-|z|\cos \theta \mathcal{L}/2)\|_{1 \to 2}^2 \|\exp(-i |z| \sin \theta \mathcal{L})\|_{2 \to 2} \\
    &= \|\exp(-|z|\cos \theta \mathcal{L})\|_{1 \to \infty} = \esssup_{x \in \mathbb{R}^n} |p_{|z|\cos \theta}(x,y)| ,
\end{align*}
where we have used the fact that $\|T^* T\|_{1\to \infty} = \|T\|_{1\to 2}^2$ and $\mathcal{L}$ is self-adjoint.

Then we get
\begin{align}
\label{Expression of Fyz}
    |F_y(z)| &\leq e^{-R^2 |z|\cos \theta} R^{-(n+|\Gamma|)} \esssup_{x \in \mathbb{R}^n} |\nabla^{\Gamma} p_z(x,y)| \|f\|_{L^1(\mathbb{R}^n)} \\
    &\nonumber \leq e^{-R^2 |z|\cos \theta} R^{-(n+|\Gamma|)} \esssup_{x \in \mathbb{R}^n} |\nabla^{\Gamma} p_{|z|\cos \theta}(x,y)| .
\end{align}
From \Cref{Lemma: Heat kernel estimate for L}, for all $t>0$ we have
\begin{align*}
    |\nabla^{\Gamma} p_t(x,y)| &\leq C t^{-(n+|\Gamma|)/2} \left(1+\frac{\sqrt{t}}{\rho(x)} \right)^{-N} \left(\frac{\sqrt{t}}{\rho(y)} \right)^{-N} \\
    &\leq C t^{-(n+|\Gamma|)/2} t^{-N/2} \rho(y)^{N} .
\end{align*}
Therefore, using the above estimate in \eqref{Expression of Fyz} we get
\begin{align*}
    |F_y(z)| &\leq C e^{-R^2 |z|\cos \theta} R^{-(n+|\Gamma|)} (|z|\cos \theta)^{-(n+|\Gamma|)/2} (|z|\cos \theta)^{-N/2} \rho(y)^{N} \\
    &\leq C R^{-(n+|\Gamma|)} (|z|\cos \theta)^{-\frac{n+|\Gamma|+N}{2}} \rho(y)^{N}.
\end{align*}
Also note that on the support of $f$, i.e. for $|x-y| >r$ ,
\begin{align*}
    |\nabla^{\Gamma} p_t(x,y)| &\leq C t^{-(n+|\Gamma|)/2} e^{-\frac{|x-y|^2}{ct}} \left(1+\frac{\sqrt{t}}{\rho(x)}+\frac{\sqrt{t}}{\rho(y)} \right)^{-N} \\
    &\leq C t^{-(n+|\Gamma|)/2} e^{-\frac{r^2}{ct}} t^{-N/2} \rho(y)^{N}.
\end{align*}
Again with the help of the above gradient estimate of the heat kernel, from \eqref{Expression of Fyz} and taking $\theta = 0$ we get
\begin{align*}
    |F_y(z)| &\leq C R^{-(n+|\Gamma|)} |z|^{-\frac{n+|\Gamma|+N}{2}} e^{-\frac{r^2}{c|z|}} \rho(y)^{N}.
\end{align*}
Now if we take $z=|z|e^{i \theta} = (1+i \tau) R^{-2}$ then $|z|= R^{-2}(1+|\tau|^2)^{1/2}, |z|\cos \theta = R^{-2}, \cos \theta = (1+|\tau|^2)^{-1/2}$. Also putting $a_1= CR^{-(n+|\Gamma|)} \rho(y)^{N}, a_2 = r^2/c, \beta_1 = (n+|\Gamma|+N)/2, \beta_2 =1$ in Phragmen-Lindel\"of theorem (see \cite{Duong_Ouhabaz_Sikora_Sharp_Multiplier_2002}, Lemma 4.2) we get
\begin{align*}
    |F_y((1+i \tau)R^{-2})| &\leq C R^N \exp\left(-c^{\prime} (rR/(1+|\tau|))^2 \right) \rho(y)^{N}.
\end{align*}
Which is the same as
\begin{align*}
    & \left| e^{-(1+i \tau)R^{-2} R^2} R^{-(n+|\Gamma|)} \left(\int_{\mathbb{R}^n \setminus B(y,r)} \nabla^{\Gamma} p_{(1+i \tau)R^{-2}}(x,y) f(x) \ dx \right) \right| \\
    &\leq C R^N \exp\left(-c^{\prime} (rR/(1+|\tau|))^2 \right) \rho(y)^{N}.
\end{align*}
Now taking supremum over all $f \in L^1(\mathbb{R}^n)$ with $\|f\|_{L^1(\mathbb{R}^n)}=1$ we get
\begin{align*}
    \sup_{x \in \mathbb{R}^n \setminus B(y,r)} |\nabla^{\Gamma} p_{(1+i \tau)R^{-2}}(x,y)| &\leq C R^{(n+|\Gamma|)} R^N \exp\left(-c^{\prime} (rR/(1+|\tau|))^2 \right) \rho(y)^{N}.
\end{align*}
Therefore
\begin{align*}
    & \sup_{x \in \mathbb{R}^n} \left\{ |\nabla^{\Gamma} p_{(1+i \tau)R^{-2}}(x,y)| |x-y|^s \right\} \\
    &\leq \sum_{k=0}^{\infty} \sup_{k(1+|\tau|)R^{-1} \leq |x-y| \leq (k+1)(1+|\tau|)R^{-1}} \left\{ |\nabla^{\Gamma} p_{(1+i \tau)R^{-2}}(x,y)| |x-y|^s \right\} \\
    &\leq (1+|\tau|)^s R^{-s} \sum_{k=0}^{\infty} (k+1)^s \sup_{|x-y| \geq k(1+|\tau|)R^{-1}} \left\{ |\nabla^{\Gamma} p_{(1+i \tau)R^{-2}}(x,y)| \right\} \\
    &\leq C (1+|\tau|)^s R^{-s} R^{(n+|\Gamma|)} R^N \rho(y)^{N} \sum_{k=0}^{\infty} (k+1)^s \exp(-c^{\prime}k^2) \\
    &\leq C (1+|\tau|)^s R^{-s} R^{(n+|\Gamma|)} R^N \rho(y)^{N}.
\end{align*}
Now using the above estimate and interpolation with $s=0$ we get
\begin{align}
\label{Estimate: last Heat kernel gradient estimate with weight with y}
    |\nabla^{\Gamma} p_{(1+i \tau)R^{-2}}(x,y)| (1+R |x-y|)^s &\leq C (1+|\tau|)^s R^{(n+|\Gamma|)} R^N \rho(y)^{N}.
\end{align}
Similarly, following the same calculation by fixing the $x$-variable instead of $y$-variable we can also show that
\begin{align}
\label{Estimate: last Heat kernel gradient estimate with weight with x}
    |\nabla^{\Gamma} p_{(1+i \tau)R^{-2}}(x,y)| (1+R |x-y|)^s &\leq C (1+|\tau|)^s R^{(n+|\Gamma|)} R^N \rho(x)^{N}.
\end{align}
Now combining the estimates \eqref{Estimate: last Heat kernel gradient estimate with weight with y} and \eqref{Estimate: last Heat kernel gradient estimate with weight with x} we obtain
\begin{align*}
    |\nabla^{\Gamma} p_{(1+i \tau)R^{-2}}(x,y)| (1+R |x-y|)^s &\leq C (1+|\tau|)^s R^{(n+|\Gamma|)} R^N \rho(x)^{N/2} \rho(y)^{N/2} ,
\end{align*}
which is our desired estimate.
\end{proof}

Next, we proceed to prove the Proposition \ref{Prop: Kernel estimate}.
\begin{proof}[Proof of Proposition \ref{Prop: Kernel estimate}]
Let us define $F(x, \lambda)= \sigma_j(x, 2^j\sqrt{\lambda}) e^{\lambda} $. Then we have $\sigma_j(x, \sqrt{\lambda})= F(x, 2^{-2j} \lambda) e^{-2^{-2j}\lambda}$. Therefore, by the Fourier inversion formula, we get
\begin{align*}
    \sigma_j(x, \sqrt{\lambda}) &= \frac{1}{2 \pi} \int_{\mathbb{R}} \widehat{F}(x, \tau) \exp{(-(1-i \tau)2^{-2j} \lambda)} \  d\tau ,
\end{align*}
where $\widehat{F}(x, \tau)$ stands for the Fourier transform of $F$ with respect to the second variable for fixed $x$. From this we also write
\begin{align}
\label{Kernel expression for K_j}
    K_j(x,y) &= \frac{1}{2\pi} \int_{\mathbb{R}} \widehat{F}(x, \tau) K_{\exp{(-(1-i \tau)2^{-2j} \mathcal{L})}}(x,y) \ d\tau .
\end{align}
Then, by the Leibniz formula, we write
\begin{align*}
    \nabla_x^{\Gamma} K_j(x,y) &= C \sum_{\Gamma_1+\Gamma_2=\Gamma} \int_{\mathbb{R}} \nabla_x^{\Gamma_1} \widehat{F}(x, \tau) \nabla_x^{\Gamma_2} K_{\exp{(-(1-i \tau)2^{-2j} \mathcal{L})}}(x,y) \ d\tau .
\end{align*}
Therefore using Lemma \ref{Lemma: L infinity gradient estimate of heat kernel}, H\"older's inequality and $\sigma \in S^0_{1,0}(\sqrt{\mathcal{L}})$, we get
\begin{align}
\label{Estimate: Pseudo-kernel with weight}
    & |\nabla_x^{\Gamma} K_j(x,y)| (1+2^j |x-y|)^s \\
    &\nonumber \leq C \sum_{\Gamma_1+\Gamma_2=\Gamma} \int_{\mathbb{R}} |\nabla_x^{\Gamma_1} \widehat{F}(x, \tau)| |\nabla_x^{\Gamma_2} K_{\exp{(-(1-i \tau)2^{-2j} \mathcal{L})}}(x,y)| (1+2^j |x-y|)^s \ d\tau \\
    &\nonumber \leq C 2^{j n} 2^{j |\Gamma_2|} 2^{j N} \rho(x)^{N/2} \rho(y)^{N/2} \int_{\mathbb{R}} |\nabla_x^{\Gamma_1} \widehat{F}(x, \tau)| (1+|\tau|)^s \ d\tau \\
    &\nonumber \leq C 2^{j n} 2^{j |\Gamma_2|} 2^{j N} \rho(x)^{N/2} \rho(y)^{N/2} \left
    (\int_{\mathbb{R}} |\nabla_x^{\Gamma_1} \widehat{F}(x, \tau)|^2 (1+|\tau|^2)^{s+1/2+\varepsilon} \ d\tau \right)^{1/2} \\
    & \hspace{8cm} \nonumber \left(\int_{\mathbb{R}} \frac{d\tau}{(1+|\tau|^2)^{1/2+\varepsilon}} \right)^{1/2} \\
    &\nonumber \leq C 2^{j n} 2^{j |\Gamma_2|} 2^{j N} \rho(x)^{N/2} \rho(y)^{N/2} \|\nabla_x^{\Gamma_1} F(x, \cdot)\|_{L^2_{s+1/2+\varepsilon}} \\
    &\nonumber \leq C 2^{j n} 2^{j |\Gamma_2|} 2^{j N} \rho(x)^{N/2} \rho(y)^{N/2} \|\nabla_x^{\Gamma_1}\sigma_j(x, 2^j \cdot)\|_{L^{\infty}_{s+1/2+\varepsilon}} \\
    &\nonumber \leq C 2^{j n} 2^{j |\Gamma|} 2^{j N} \rho(x)^{N/2} \rho(y)^{N/2} ,
\end{align}
where we have used the fact that, for any $s>0$, $\|\sigma(x, 2^j \cdot)\|_{L^{\infty}_{s}} \leq C$.

Similarly one can also prove for $N \geq 0$,
\begin{align}
\label{Gradient kernel estimate with only x}
    |\nabla_x^{\Gamma} K_j(x,y)| (1+2^j |x-y|)^s &\leq C 2^{jn} 2^{j |\Gamma|} 2^{j N} \rho(x)^{N} ,
\end{align}
and
\begin{align}
\label{Gradient kernel estimate with only y}
    |\nabla_x^{\Gamma} K_j(x,y)| (1+2^j |x-y|)^s &\leq C 2^{jn} 2^{j |\Gamma|} 2^{jN} \rho(y)^{N} .
\end{align}
Now, from \eqref{Estimate: Pseudo-kernel with weight} we can see that
\begin{align*}
    |\nabla_x^{\Gamma} K_j(x,y)| |x-y|^s &\leq C 2^{j n} 2^{j |\Gamma|} 2^{-j s} 2^{j N} \rho(x)^{N/2} \rho(y)^{N/2} .
\end{align*}
Then taking $s=N+\beta$ with $\beta \geq 0$ we get
\begin{align*}
    |\nabla_x^{\Gamma} K_j(x,y)| &\leq C |x-y|^{-N-\beta} 2^{j n} 2^{j |\Gamma|} 2^{-j N-j \beta} 2^{j N} \rho(x)^{N/2} \rho(y)^{N/2} .
\end{align*}
So for any $N\geq 0$
\begin{align*}
    \left( \frac{|x-y|}{\rho(x)} \right)^{N/2} \left( \frac{|x-y|}{\rho(y)} \right)^{N/2} |\nabla_x^{\Gamma} K_j(x,y)| &\leq C |x-y|^{-\beta} 2^{j n} 2^{j |\Gamma|} 2^{-j \beta} .
\end{align*}
With the help of \eqref{Gradient kernel estimate with only x} and \eqref{Gradient kernel estimate with only y} and using the same idea as above for any $N \geq 0$ we obtain
\begin{align*}
    \left( \frac{|x-y|}{\rho(x)} \right)^{N} |\nabla_x^{\Gamma} K_j(x,y)| &\leq C |x-y|^{-\beta} 2^{j n} 2^{j |\Gamma|} 2^{-j \beta} ,\\
    \text{and} \quad \quad \left( \frac{|x-y|}{\rho(y)} \right)^{N} |\nabla_x^{\Gamma} K_j(x,y)| &\leq C |x-y|^{-\beta} 2^{j n} 2^{j |\Gamma|} 2^{-j \beta} .
\end{align*}
Hence, combining all the above three estimates yields
\begin{align}
\label{Kernel estimate Kj with dyadic support}
    \left(1+ \frac{|x-y|}{\rho(x)} + \frac{|x-y|}{\rho(y)} \right)^{N} |\nabla_x^{\Gamma} K_j(x,y)| &\leq C |x-y|^{-\beta} 2^{j n} 2^{j |\Gamma|} 2^{-j \beta} .
\end{align}
Estimate for $\nabla_y^{\Gamma} K_j(x,y)$ is similar, even more easier, so we omit the details for brevity. This completes the proof of the Proposition \ref{Prop: Kernel estimate}.
\end{proof}

\begin{lemma}
\label{Lemma: Difference kernel estimate with sum over j}
    Let $Q=Q(x_Q, r_Q)$ be a cube centered at $x_Q$ with side length $r_Q>0$. Then for any $y, z \in Q$ and $w \in (2 Q)^c$ we have
    \begin{enumerate}
        \item $\displaystyle{\sum_{j=0}^{\infty} |K_j(y,w)-K_j(z,w)| \leq C |y-z| \left(1+\frac{|x_Q-w|}{\rho(x_Q)} \right)^{-N}  |x_Q-w|^{-(n+1)}} $,
        \item $\displaystyle{\sum_{j=0}^{\infty} |K_j(y,w)| \leq C \left(1+\frac{|x_Q-w|}{\rho(x_Q)} \right)^{-N}  |x_Q-w|^{-n}} $ ,
    \end{enumerate}
    for all $N\geq 0$.
\end{lemma}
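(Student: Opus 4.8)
The plan is to derive both estimates directly from the pointwise kernel bound of Proposition~\ref{Prop: Kernel estimate}, combined with a mean value argument for the difference in part~(1) and a dyadic decomposition of the sum over $j$. First I would record the elementary geometry: for $y,z\in Q$ and $w\in(2Q)^c$ the segment $[y,z]$ lies in $Q$ by convexity, and every point $\xi\in Q$ satisfies $|x_Q-w|\gtrsim r_Q\gtrsim|x_Q-\xi|$ and $|\xi-w|\sim|x_Q-w|$. Using Lemma~\ref{Lemma: Equivalence of two potential} we get $\rho(\xi)\lesssim\rho(x_Q)\big(1+|x_Q-w|/\rho(x_Q)\big)^{l_0/(l_0+1)}$, and a short elementary manipulation (distinguishing only whether $|x_Q-w|/\rho(x_Q)$ is $\le 1$ or $>1$) upgrades this to
\[
  1+\frac{|\xi-w|}{\rho(\xi)}\;\gtrsim\;\Big(1+\frac{|x_Q-w|}{\rho(x_Q)}\Big)^{1/(l_0+1)}.
\]
Feeding this into Proposition~\ref{Prop: Kernel estimate} — applied with the exponent there taken to be $(l_0+1)N$, which is permissible since the proposition holds for every nonnegative exponent — and using $|\xi-w|\sim|x_Q-w|$, one obtains for all $N\ge0$, $\beta\ge0$, $\xi\in Q$, and $|\Gamma|\in\{0,1\}$,
\[
  |\nabla_x^{\Gamma}K_j(\xi,w)|\;\lesssim_{N,\beta}\;\Big(1+\frac{|x_Q-w|}{\rho(x_Q)}\Big)^{-N}|x_Q-w|^{-\beta}\,2^{\,j(n+|\Gamma|-\beta)}.
\]

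Next I would sum in $j$ by splitting at the scale $2^{j}\sim|x_Q-w|^{-1}$. On the small-$j$ range I take $\beta=0$, so $\sum 2^{j(n+|\Gamma|)}$ is geometric and bounded by its largest term $\sim|x_Q-w|^{-(n+|\Gamma|)}$; on the large-$j$ range I take $\beta=n+|\Gamma|+1$, which makes the exponent $n+|\Gamma|-\beta=-1$ negative, so $\sum_{2^j>|x_Q-w|^{-1}}|x_Q-w|^{-(n+|\Gamma|+1)}2^{-j}\sim|x_Q-w|^{-(n+|\Gamma|)}$ as well. (When $|x_Q-w|\ge1$ the small-$j$ range is empty and the large-$j$ bound alone already yields $|x_Q-w|^{-(n+|\Gamma|+1)}\le|x_Q-w|^{-(n+|\Gamma|)}$, so nothing extra is needed.) Adding the two contributions gives
\[
  \sum_{j=0}^{\infty}\sup_{\xi\in Q}|\nabla_x^{\Gamma}K_j(\xi,w)|\;\lesssim_{N}\;\Big(1+\frac{|x_Q-w|}{\rho(x_Q)}\Big)^{-N}|x_Q-w|^{-(n+|\Gamma|)}.
\]
Taking $|\Gamma|=0$ and $\xi=y$ is exactly part~(2). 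For part~(1) I first write $|K_j(y,w)-K_j(z,w)|\le|y-z|\,\sup_{\xi\in[y,z]}|\nabla_x K_j(\xi,w)|$ by the mean value theorem, then sum over $j$ using the $|\Gamma|=1$ case of the last display, which produces $|y-z|\,(1+|x_Q-w|/\rho(x_Q))^{-N}|x_Q-w|^{-(n+1)}$, as claimed.

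The one delicate point — and the step I expect to be the main obstacle — is the passage from the decay factor $\big(1+|\xi-w|/\rho(\xi)+|\xi-w|/\rho(w)\big)^{-N}$ in Proposition~\ref{Prop: Kernel estimate}, with $\xi$ ranging over the whole cube $Q$, to the clean centered factor $\big(1+|x_Q-w|/\rho(x_Q)\big)^{-N}$; this is precisely where Lemma~\ref{Lemma: Equivalence of two potential} (equivalently Remark~\ref{Remark: Equivalence of potential for all x,y}) is invoked, and the point is that one loses only a fixed power $1/(l_0+1)$ in the critical-function comparison, which is harmless because $N$ is arbitrary. Everything else — convexity of cubes, the bound $|x_Q-w|\gtrsim r_Q$ for $w\notin 2Q$, the mean value inequality, and the summation of geometric series — is routine; note that only the $\nabla_x$ part of Proposition~\ref{Prop: Kernel estimate} is needed here, since in the Lemma the kernels are differentiated in their first argument.
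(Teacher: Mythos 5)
Your proposal is correct and follows essentially the same route as the paper: pass from $K_j(\xi,w)$ to the gradient bound of Proposition~\ref{Prop: Kernel estimate}, use $|\xi-w|\sim|x_Q-w|$ together with Lemma~\ref{Lemma: Equivalence of two potential} (i.e.\ Remark~\ref{Remark: Equivalence of potential for all x,y}) to convert the decay factor $(1+|\xi-w|/\rho(\xi))^{-N'}$ into $(1+|x_Q-w|/\rho(x_Q))^{-N'/(l_0+1)}$, apply the mean value theorem for part~(1), and sum over $j$ by splitting at $2^j\sim|x_Q-w|^{-1}$ with $\beta=0$ on the small-$j$ range and $\beta>n+|\Gamma|$ on the large-$j$ range. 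Your added remarks — the explicit two-case check that upgrades the comparison to the power $1/(l_0+1)$, and the observation that the small-$j$ range may be empty when $|x_Q-w|\ge 1$ — make implicit steps of the paper's argument explicit but do not change the method.
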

\begin{proof}
    First, we will prove $(1)$. For each $j$, using mean value theorem and Proposition (\ref{Prop: Kernel estimate}) we get
    \begin{align*}
        |K_j(y,w)-K_j(z,w)| &\leq C |y-z| \int_{0}^{1} |\nabla K_j((1-t)y+tz,w)|\ dt \\
        &\leq C |y-z| \left(1+ \frac{|\gamma(t)-w|}{\rho(\gamma(t))} \right)^{-N'} |\gamma(t)-w|^{-\beta} 2^{j (n+1)} 2^{-j \beta} .
    \end{align*}
    where $\gamma(t) = (1-t)y+tz$. Since $z,w \in Q$ and $y \in Q_k$, we have $|x_Q-w| \sim |\gamma(t)-w|$. Then applying Remark (\ref{Remark: Equivalence of potential for all x,y}) we have
    \begin{align*}
        |K_j(y,w)-K_j(z,w)| &\leq C |y-z| \left(1+ \frac{|x_Q-w|}{\rho(x_Q)} \right)^{-N'/(l_0+1)} |x_Q-w|^{-\beta} 2^{j (n+1)} 2^{-j \beta} .
    \end{align*}
    Now summing over $j$ we can see
    \begin{align*}
        &\sum_{j=0}^{\infty} |K_j(y,w)-K_j(z,w)| \\
        &\leq C |y-z| \left(1+ \frac{|x_Q-w|}{\rho(x_Q)} \right)^{-N'/(l_0+1)} \Bigg[\sum_{2^j \leq |x_Q-w|^{-1}} |x_Q-w|^{-\beta}  2^{j (n+1)} 2^{-j \beta} \\
        & \hspace{8cm} + \sum_{2^j > |x_Q-w|^{-1}} |x_Q-w|^{-\beta}  2^{j (n+1)} 2^{-j \beta} \Bigg] \\
        &=: I + II .
    \end{align*}
    For $I$, taking $\beta=0$ we get
    \begin{align}
    \label{Estimate: Sum of j for I}
        I &\leq C |y-z| \left(1+ \frac{|x_Q-w|}{\rho(x_Q)} \right)^{-N'/(l_0+1)} \sum_{2^j \leq |x_Q-w|^{-1}} 2^{j (n+1)} \\
        &\nonumber\leq C |y-z| \left(1+\frac{|x_Q-w|}{\rho(x_Q)} \right)^{-N} |x_Q-w|^{-(n+1)} ,
    \end{align}
    and for $II$, choosing $\beta>n+1$ we obtain
    \begin{align}
    \label{Estimate: Sum of j for II}
        II &\leq C |y-z| \left(1+ \frac{|x_Q-w|}{\rho(x_Q)} \right)^{-N'/(l_0+1)} |x_Q-w|^{-\beta} \sum_{2^j > |x_Q-w|^{-1}}  2^{j (n+1)} 2^{-j \beta} \\
        &\nonumber \leq C |y-z| \left(1+\frac{|x_Q-w|}{\rho(x_Q)} \right)^{-N}  |x_Q-w|^{-(n+1)} ,
    \end{align}
    where $N=N'/(l_0+1)$. Hence, by combining (\ref{Estimate: Sum of j for I}) and (\ref{Estimate: Sum of j for II}) we get our required estimate. 
    
    The proof of part $(2)$ is similar to part $(1)$. The only difference is that here we have to use the pointwise estimate of the kernel instead of the gradient estimate of the Kernel. So we omit the details.
    
    This completes the proof of the lemma.
\end{proof}

\begin{remark}
If $x, y \in \mathbb{R}^n$, then using the similar idea as Lemma \ref{Lemma: Difference kernel estimate with sum over j}, one can prove the following:
\begin{align}
\label{Kernel estimate for all x and y}
    \sum_{j=0}^{\infty} |K_j(x,y)| &\lesssim \left(1+\frac{|x-y|}{\rho(x)} \right)^{-N}  |x-y|^{-n}  ,
\end{align}
for all $N\geq 0$.
    
\end{remark}

\section{\texorpdfstring{$L^p$}{} boundedness of Schr\"odinger pseudo-multipliers}
\label{section: unweighted Lp boundedness of Schrodinger}
The aim of this section is to prove the the $L^p$-boundedness of the Schr\"odinger pseudo-multiplier $ \sigma(x, \sqrt{\mathcal{L}})$ for the symbol class $\sigma \in S^0_{1,0}(\sqrt{\mathcal{L}})$ (Proposition \ref{prop:L2-sigma-j} and Theorem \ref{Theorem: L^2 boundedness of the symbol S^0_{1,0}}). In order to prove the $L^p$-boundedness for $1<p\leq 2$, we show that $T$ is weak type $(1,1)$ and bounded on $L^2$, then an interpolation argument gives the required result.

\begin{proof}[Proof of Proposition \ref{prop:L2-sigma-j}]

Recall that $\mathcal{T}_N f =\sum_{j=0}^{N} \sigma_j(x, \sqrt{\mathcal{L}})f$. We want to prove that if $\sigma \in S^0_{1,0}(\sqrt{\mathcal{L}})$, then 
\begin{align}
\label{proof of uniform operator norm}
    \|\mathcal{T}_{N}\|_{L^2\rightarrow L^2} \leq C ,
\end{align}
where the positive constant $C$ independent of $N$. 

Note that in order to prove \eqref{proof of uniform operator norm}, it is enough to show that for $f \in \mathcal{D}(\mathcal{L})$,
\begin{align}
\label{L2 boundedness for Tl operator}
    \|\mathcal{T}_N f\|_{L^2} &\leq C \|f\|_{L^2},
\end{align}
where $C$ is the non-negative constant and independent of $N$.

In order to prove \eqref{L2 boundedness for Tl operator}, we claim that for large $r>0$,
\begin{align}
\label{eq:L2-claim}
    \int_{B(u,1)} |\mathcal{T}_N f(x)|^2 \ dx &\leq C \int_{\mathbb{R}^n} \frac{|f(y)|^2}{(1+|y-u|)^{2r}} dy ,
\end{align}
for all $u \in \mathbb{R}^n$. 

Assuming the claim \eqref{eq:L2-claim} for a moment, we first conclude the proof of \eqref{L2 boundedness for Tl operator}. Integrating the left-hand side of \eqref{eq:L2-claim} with respect to $u$, we get
\begin{align*}
    \int_{\mathbb{R}^n} \int_{B(u,1)} |\mathcal{T}_N f(x)|^2 \ dx \ du &= \int_{\mathbb{R}^n} \left(\int_{\mathbb{R}^n} \chi_{\{u: |x-u|<1\}}(u) du \right) |\mathcal{T}_N f(x)|^2 \ dx \sim \int_{\mathbb{R}^n} |\mathcal{T}_N f(x)|^2 \ dx .
\end{align*}
On the other hand, for the right-hand side of \eqref{eq:L2-claim}, again integrating with respect to $u$, yields
\begin{align*}
    \int_{\mathbb{R}^n} \int_{\mathbb{R}^n} \frac{|f(y)|^2}{(1+|y-u|)^{2r}} dy \ du &= \int_{\mathbb{R}^n} \left(\int_{\mathbb{R}^n} \frac{du}{(1+|y-u|)^{2r}} \right) |f(y)|^2 \ dy \lesssim \int_{\mathbb{R}^n} |f(y)|^2 \ dy ,
\end{align*}
whenever $r>n/2$.

Combining the above two estimates yields \eqref{L2 boundedness for Tl operator}. Therefore, to complete the proof of \eqref{L2 boundedness for Tl operator}, it suffices to prove the claim \eqref{eq:L2-claim}. 

Let us define
    \begin{align*}
    K_j^z(x,y) &= \frac{1}{2\pi} \int_{\mathbb{R}} \widehat{F}(z, \tau) K_{\exp{(-(1-i \tau)2^{-2j} \mathcal{L})}}(x,y) \ d\tau .
\end{align*}
where $F(x, \lambda)= \sigma_j(x, 2^{j}\sqrt{\lambda}) e^{\lambda} $. Then note that $K_{j}^x(x,y) = K_j(x,y)$, where $K_j(x,y)$ is the kernel of $\sigma_j(x, \sqrt{\mathcal{L}})$ (see \eqref{Kernel expression for K_j}).

Fix $u \in \mathbb{R}^n$ and write $f=f_1 + f_2$ such that $\supp{f_1} \subseteq B(u, 3)$ and $\supp{f_2} \subseteq B(u,2)^c$. Also, let $\Theta_u \in C_c^{\infty}(\mathbb{R}^n)$ be such that $\Theta_u =1$ in $B(u,1)$ and supported on $B(u,3)$. In view of this decomposition of $f$, it suffices to estimate \eqref{eq:L2-claim} for $\mathcal{T}_N f_1$ and $\mathcal{T}_N f_2$ separately.

\subsection*{Estimate for \texorpdfstring{$\mathcal{T}_N f_1$:}{}}
Note that for $x \in B(u,1)$, we write
\begin{align*}
    \mathcal{T}_N f_1(x) = \int_{\mathbb{R}^n} \Theta_u(x) \sum_{j=0}^N K_j^{x}(x,y) f_1(y) \ dy.
\end{align*}
Therefore, using the Sobolev embedding theorem we get
\begin{align*}
    |\mathcal{T}_N f_1(x)|^2 &\lesssim \sup_{z \in \mathbb{R}^n} \left| \int_{\mathbb{R}^n} \Theta_u(z) \sum_{j=0}^N K_j^{z}(x,y) f_1(y) \ dy \right|^2 \\
    &\lesssim \sum_{i=1}^n \int_{\mathbb{R}^n} \left| \int_{\mathbb{R}^n} \frac{\partial}{\partial z_i} \left(\Theta_u(z) \sum_{j=0}^N K_j^{z}(x,y) \right) f_1(y) \ dy \right|^2 dz \\
    &\lesssim \sum_{i=1}^n \int_{B(u,3)} \left| \int_{\mathbb{R}^n} \frac{\partial}{\partial z_i} \left(\Theta_u(z) \sum_{j=0}^N K_j^{z}(x,y) \right) f_1(y) \ dy \right|^2 dz . 
\end{align*}
Then using Plancherel's theorem and since $\sigma \in S_{1,0}^0(\sqrt{\mathcal{L}})$ we have
\begin{align*}
    \int_{B(u,1)} |\mathcal{T}_l f_1(x)|^2 \ dx &\lesssim \sum_{i=1}^n \int_{B(u,3)} \left( \int_{B(u,1)} \left| \int_{\mathbb{R}^n} \frac{\partial}{\partial z_i} \left(\Theta_u(z) \sum_{j=0}^N K_j^{z}(x,y) \right) f_1(y) \ dy \right|^2 dx \right) \ dz \\
    &\lesssim \sum_{i=1}^n \int_{B(u,3)} \sup_{\eta} \left| \sum_{j=0}^N \frac{\partial}{\partial z_i} \sigma_j(z, \eta) \right|^2 \|f_1\|_{L^2}^2 \ dz \\
    &\lesssim \|f_1\|_{L^2}^2 \int_{B(u,3)} dz \\
    &\lesssim \int_{\mathbb{R}^n} \frac{|f(y)|^2}{(1+|y-u|)^{2r}} \ dy ,
\end{align*}
where in the last line we have used the fact that $\supp{f_1} \subseteq B(u, 1)$.

\subsection*{Estimate for \texorpdfstring{$\mathcal{T}_N f_2$:}{}} For $x\in B(u,1)$ we have
\begin{align*}
    \mathcal{T}_N f_2(x) &= \int_{B(u,2)^c} |x-y|^r \Theta_u(x) \sum_{j=0}^N K_j^{x} (x,y) |x-y|^{-r} f_2(y) \ dy .
\end{align*}
Note that if $y \in \supp{f_2}$ then $|y-u| \geq 2$. So that this implies
\begin{align*}
    |x-y| \geq \frac{1}{2} |y-u| \geq \frac{1}{4} (1+|y-u|) .
\end{align*}
Therefore, by the Cauchy-Schwarz inequality we have
\begin{align}
\label{Application of Cauchy-Schwartz in f2 case}
    |\mathcal{T}_N f_2(x)|^2 &\lesssim \left(\int_{\mathbb{R}^n} |x-y|^{2r} |\Theta_u(x)|^2 \left|\sum_{j=0}^N K_j^x(x,y) \right|^2 dy \right) \left(\int_{\mathbb{R}^n} \frac{|f_2(y)|^2}{(1+|y-u|)^{2r}} dy \right). 
\end{align}
Now, an application of the Sobolev embedding theorem yields
\begin{align}
\label{Application of Sobolov embedding in f2 case}
    & \int_{\mathbb{R}^n} |x-y|^{2r} |\Theta_u(x)|^2 \left|\sum_{j=0}^N K_j^x(x,y) \right|^2 dy \\
    &\nonumber \lesssim \int_{\mathbb{R}^n} |x-y|^{2r} \sup_{z \in \mathbb{R}^n} \left|\Theta_u(z) \sum_{j=0}^N K_j^z(x,y) \right|^2 dy \\
    &\nonumber \lesssim \sum_{i=1}^{n} \int_{\mathbb{R}^n} |x-y|^{2r} \int_{\mathbb{R}^n} \left|\frac{\partial}{\partial z_i} \left(\Theta_u(z) \sum_{j=0}^N K_j^z(x,y) \right) \right|^2 dz \, dy \\
    &\nonumber \lesssim \sum_{i=1}^{n} \int_{B(u,3)} \int_{\mathbb{R}^n} |x-y|^{2r} \left|\frac{\partial}{\partial z_i} \left(\Theta_u(z) \sum_{j=0}^N K_j^z(x,y) \right) \right|^2 dy \, dz .
\end{align}
Now claim that for sufficiently large $r$ we have
\begin{align}
\label{Claim in the L2 boundedness}
    \left( \int_{\mathbb{R}^n} |x-y|^{2r} \left|\frac{\partial}{\partial z_i} \left(\Theta_u(z) \sum_{j=0}^N K_j^z(x,y) \right) \right|^2 dy \right)^{1/2} &\leq C ,
\end{align}
where the constant $C$ is independent of $N$.

Note that
\begin{align}
\label{Use of triangle ineq for fixed z}
    & \left( \int_{\mathbb{R}^n} |x-y|^{2r} \left|\frac{\partial}{\partial z_i} \left(\Theta_u(z) \sum_{j=0}^N K_j^z(x,y) \right) \right|^2 dy \right)^{1/2} \\
    &\nonumber \leq \sum_{j=0}^N \left( \int_{\mathbb{R}^n} \left( |x-y|^{r} \left|\frac{\partial}{\partial z_i} \left(\Theta_u(z) K_j^z(x,y) \right) \right| \right)^2 dy \right)^{1/2} .
\end{align}
For $z \in \mathbb{R}^n$, let $\widetilde{K}_j^z(x,y)$ be the kernel corresponding to the multiplier $\frac{\partial}{\partial z_i} (\Theta_u(z) \sigma_j(z, \cdot))$. Then using part (a) of Lemma 4.3 of \cite{Duong_Ouhabaz_Sikora_Sharp_Multiplier_2002} and $\sigma \in S^0_{1,0}(\sqrt{\mathcal{L}})$, for $\varepsilon>0$ we get
\begin{align*}
    \left(\int_{\mathbb{R}^n} (2^j |x-y|)^{2r} | \widetilde{K}_j^z(x,y)|^2 dy \right)^{1/2} &\lesssim |B(x, 2^{-j})|^{-1/2} \left\|\frac{\partial}{\partial z_i} (\Theta_u(z) \sigma_j(z, 2^j \cdot)) \right\|_{L^{\infty}_{r+\varepsilon}} \\
    &\lesssim 2^{j n/2} 2^{j (r+\varepsilon)} 2^{-j (r+\varepsilon)} \\
    &\lesssim 2^{j n/2}.
\end{align*}
Now choosing $r>n/2$ from \eqref{Use of triangle ineq for fixed z} we obtain
\begin{align*}
    \left( \int_{\mathbb{R}^n} |x-y|^{2r} \left|\frac{\partial}{\partial z_i} \left(\Theta_u(z) \sum_{j=0}^N K_j^z(x,y) \right) \right|^2 dy \right)^{1/2} &\leq C \sum_{j=0}^{\infty} 2^{-j(r-n/2)} \leq C ,
\end{align*}
which proves the claim \eqref{Claim in the L2 boundedness}.

Therefore plugging the estimates \eqref{Claim in the L2 boundedness}, \eqref{Application of Sobolov embedding in f2 case} into \eqref{Application of Cauchy-Schwartz in f2 case} we have
\begin{align*}
    \int_{B(u,1)} |\mathcal{T}_N f(x)|^2 \ dx &\lesssim \int_{B(u,1)} \int_{\mathbb{R}^n} \frac{|f_2(y)|^2}{(1+|y-u|)^{2r}} \, dy \, dx \lesssim \int_{\mathbb{R}^n} \frac{|f_2(y)|^2}{(1+|y-u|)^{2r}} \, dy .
\end{align*}
This completes the proof of the claim \eqref{eq:L2-claim}.
\end{proof}

\begin{proof}[Proof of Theorem \ref{Theorem: L^2 boundedness of the symbol S^0_{1,0}}]

Weak $(1,1)$ boundedness of $\sigma(x, \sqrt{\mathcal{L}})$ follows from the more general result proved in Theorem 3.7 of \cite{Bagchi_Basak_Garg_Ghosh_Grushin_2023}. Therefore using standard interpolation argument we can conclude the $L^p$-boundedness result of $\sigma(x, \sqrt{\mathcal{L}})$ for $1<p\leq 2$.
\end{proof}

\section{Quantitative weighted \texorpdfstring{$L^p$}{}-estimates for the Schr\"odinger pseudo-multipliers}
\label{Section: weighted Lp bound for operator}
In this section, we present the proof of Theorem~\ref{Theorem: Weighted L^p boundedness of pseudo multiplier}, which provides the quantitative weighted estimates for the Schr\"odinger pseudo-multiplier $\sigma(x, \sqrt{\mathcal{L}})$ for $\sigma$ belonging to the symbol class $S^0_{1,0}(\sqrt{\mathcal{L}})$. The proof takes some inspiration from the approach in \cite{Bui_Bui_Duong_square_function_new_weight_2022}. We decompose the operator into two parts: one is dominated by the sparse operator $\mathcal{A}_{\mathcal{S}}^{r, 1, \rho, N}$, and the other by the maximal functions $\mathcal{M}_{r,\rho,\theta}$ associated with $\mathcal{L}$. By applying the weighted $L^p$-boundedness of the sparse operator (see Proposition~\ref{Prop: Boundedness of sparse operator}) and of the maximal functions (see Proposition~\ref{Prop: Maximal function new weight with r power}), we obtain the desired result. Before proceeding to the proof, we first discuss two useful propositions related to the sparse domination result.

Let $T$ be a sublinear operator. Then for $\alpha>0$, the localized sharp grand maximal truncation operator denoted by $\mathcal{M}_{T, \alpha}^{\#}$, defined by
\begin{align*}
    \mathcal{M}_{T, \alpha}^{\#}f(x) &= \sup_{Q \ni x}\, \esssup_{x', x'' \in Q} |T(f \chi_{\mathbb{R}^n \setminus \alpha Q})(x')-T(f \chi_{\mathbb{R}^n \setminus \alpha Q})(x'')| .
\end{align*}

Then we have the following sparse domination result for the operator $T$, which directly follows from the proof of \cite[Theorem 1.1]{Lerner_Ombrosi_pointwise_sparse_domination_2020}.
\begin{proposition}
\label{Prop: Pointwise sparse domination}
Let $T$ be a linear operator such that $T$ is of weak type $(q,q)$ and $\mathcal{M}_{T, \alpha}^{\#}$ is of weak type $(r,r)$ for some $\alpha \geq 3$ and $1\leq q, r<\infty$. Let $s=\max\{q,r\}$. Then for every function $f \in L^s(\mathbb{R}^n)$ supported in $\alpha Q$ for some cube $Q$, there exists a sparse family $\mathcal{S}$ of subcubes of $Q$ such that for a.e. $x \in Q$,
\begin{align*}
    |T(f \chi_{\alpha Q})(x)| \chi_Q(x) &\lesssim \sum_{P \in \mathcal{S}} \left(\frac{1}{|\alpha P|} \int_{\alpha P} |f|^s \right)^{1/s} \chi_{P}(x) .
\end{align*}
\end{proposition}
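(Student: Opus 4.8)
\emph{Proof proposal.} The plan is to reproduce the recursive stopping-cube argument from the proof of \cite[Theorem~1.1]{Lerner_Ombrosi_pointwise_sparse_domination_2020}. For a cube $P$ write $\langle|f|^s\rangle_{\alpha P}:=\frac{1}{|\alpha P|}\int_{\alpha P}|f|^s$. The whole proof reduces to a single \emph{one-step inequality}: for every cube $P$ there is a finite family of pairwise disjoint cubes $\{P_j\}\subset P$ with $\sum_j|P_j|\le\frac12|P|$ such that, for a.e.\ $x\in P$,
\begin{align*}
  |T(f\chi_{\alpha P})(x)|\,\chi_P(x)\ \le\ C\,\langle|f|^s\rangle_{\alpha P}^{1/s}\,\chi_P(x)\ +\ C\sum_j\langle|f|^s\rangle_{\alpha P_j}^{1/s}\,\chi_{P_j}(x)\ +\ \sum_j|T(f\chi_{\alpha P_j})(x)|\,\chi_{P_j}(x),
\end{align*}
with $C$ depending only on $n,\alpha,s,q,r$ and the weak-type norms of $T$ and $\mathcal{M}_{T,\alpha}^{\#}$. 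Applying this with $P=Q$ and iterating on each produced cube $P_j$ gives $\mathcal{S}=\bigcup_{k\ge0}\mathcal{G}_k$, where $\mathcal{G}_0=\{Q\}$ and $\mathcal{G}_{k+1}$ consists of the stopping cubes of the cubes in $\mathcal{G}_k$. Summing the resulting series and noting that the sets $\bigcup_{P\in\mathcal{G}_m}P$ decrease with $|\bigcup_{P\in\mathcal{G}_m}P|\le 2^{-m}|Q|\to0$ — so the remainder $\sum_{P\in\mathcal{G}_m}|T(f\chi_{\alpha P})|\chi_P$ is eventually $0$ a.e.\ — gives $|T(f\chi_{\alpha Q})(x)|\chi_Q(x)\lesssim\sum_{P\in\mathcal{S}}\langle|f|^s\rangle_{\alpha P}^{1/s}\chi_P(x)$. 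The family $\mathcal{S}$ is $\tfrac12$-sparse: for $P\in\mathcal{G}_k$ set $E_P:=P\setminus\bigcup\{P'\in\mathcal{G}_{k+1}:P'\subseteq P\}$, so $|E_P|\ge\frac12|P|$, and since each cube of $\mathcal{S}$ lying properly below $P$ is contained in one of the stopping cubes of $P$, the $E_P$ are pairwise disjoint.

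To prove the one-step inequality, fix $P$, set $A:=\langle|f|^s\rangle_{\alpha P}^{1/s}$, and introduce the exceptional set
\begin{align*}
  \Omega:=\{x\in P:|f(x)|>DA\}\cup\{x\in P:|T(f\chi_{\alpha P})(x)|>DA\}\cup\{x\in P:\mathcal{M}_{T,\alpha}^{\#}(f\chi_{\alpha P})(x)>DA\}.
\end{align*}
First I would estimate $|\Omega|$: Chebyshev handles the first set; the weak $(q,q)$ bound for $T$ and the weak $(r,r)$ bound for $\mathcal{M}_{T,\alpha}^{\#}$ handle the second and third, and in those two one uses Hölder's inequality, together with $s=\max\{q,r\}\ge q,r$, to pass from $\int_{\alpha P}|f|^q$, $\int_{\alpha P}|f|^r$ to powers of $\int_{\alpha P}|f|^s$. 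This yields $|\Omega|\le(D^{-s}+cD^{-q}+cD^{-r})\,\alpha^n|P|$, so choosing $D=D(n,\alpha,s,q,r,\|T\|,\|\mathcal{M}_{T,\alpha}^{\#}\|)$ large gives $|\Omega|\le 2^{-(n+2)}|P|$. Then I would run the dyadic stopping-time selection on $\chi_\Omega$ inside $P$ at height $2^{-(n+1)}$, obtaining the maximal cubes $\{P_j\}$ with $|P_j\cap\Omega|>2^{-(n+1)}|P_j|$: these are pairwise disjoint, satisfy $\sum_j|P_j|\le 2^{n+1}|\Omega|\le\frac12|P|$, exhaust $\Omega\cap P$ up to a null set, and (since the dyadic parent of each $P_j$ meets $\Omega$ with density at most $2^{-(n+1)}$) obey $|P_j\setminus\Omega|\ge\frac12|P_j|>0$; also $\alpha P_j\subseteq\alpha P$ because $\alpha\ge1$.

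For the pointwise bound: off the stopping cubes, i.e.\ for a.e.\ $x\in P\setminus\bigcup_jP_j$, one has $x\notin\Omega$, hence $|T(f\chi_{\alpha P})(x)|\le DA$. For $x\in P_j$ I split $T(f\chi_{\alpha P})=T(f\chi_{\alpha P}\chi_{\mathbb{R}^n\setminus\alpha P_j})+T(f\chi_{\alpha P_j})$, keeping the last term as the recursion term, and for the first term I pick a point $\xi_j\in P_j$ and bound
\begin{align*}
  \big|T(f\chi_{\alpha P}\chi_{\mathbb{R}^n\setminus\alpha P_j})(x)\big|\le\big|T(f\chi_{\alpha P}\chi_{\mathbb{R}^n\setminus\alpha P_j})(x)-T(f\chi_{\alpha P}\chi_{\mathbb{R}^n\setminus\alpha P_j})(\xi_j)\big|+\big|T(f\chi_{\alpha P})(\xi_j)\big|+\big|T(f\chi_{\alpha P_j})(\xi_j)\big|.
\end{align*}
Testing $\mathcal{M}_{T,\alpha}^{\#}$ on the cube $P_j\ni\xi_j$ bounds the first difference by $\mathcal{M}_{T,\alpha}^{\#}(f\chi_{\alpha P})(\xi_j)$ for a.e.\ $x\in P_j$, and requiring $\xi_j\notin\Omega$ bounds both $\mathcal{M}_{T,\alpha}^{\#}(f\chi_{\alpha P})(\xi_j)$ and $|T(f\chi_{\alpha P})(\xi_j)|$ by $DA$. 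The hard part will be the remaining term $|T(f\chi_{\alpha P_j})(\xi_j)|$, which sees the operator at the possibly much smaller scale $P_j$: to kill it I would demand of $\xi_j$ one extra property, namely $|T(f\chi_{\alpha P_j})(\xi_j)|\le D'\langle|f|^s\rangle_{\alpha P_j}^{1/s}$; this is admissible because, by the weak $(q,q)$ bound for $T$ combined with Hölder exactly as above, $|\{\xi\in P_j:|T(f\chi_{\alpha P_j})(\xi)|>D'\langle|f|^s\rangle_{\alpha P_j}^{1/s}\}|\le c(D')^{-q}\alpha^n|P_j|\le\tfrac14|P_j|$ for $D'$ large, and $|P_j\setminus\Omega|\ge\tfrac12|P_j|$, so a legitimate $\xi_j$ exists. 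Collecting the contributions on each $P_j$, using $\sum_j\chi_{P_j}\le\chi_P$, and combining with the trivial bound off the stopping cubes gives the one-step inequality. The remaining points — the $\Omega$-measure estimate, the stopping selection, the iteration with vanishing remainder, and the verification that $\eta=\tfrac12$ works for sparseness — are routine, and the argument is exactly that of \cite{Lerner_Ombrosi_pointwise_sparse_domination_2020}.
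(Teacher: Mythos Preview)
Your proposal is correct and follows exactly the route the paper indicates: the paper does not give its own proof but simply states that the result ``directly follows from the proof of \cite[Theorem~1.1]{Lerner_Ombrosi_pointwise_sparse_domination_2020},'' and what you have written is precisely a faithful localization of that recursive stopping-cube argument. Your handling of the extra local term $|T(f\chi_{\alpha P_j})(\xi_j)|$ via a second weak-type selection on $\xi_j$ is a legitimate variant (one could alternatively include $\mathcal{M}(|f|^s\chi_{\alpha P})^{1/s}$ in $\Omega$ and bound $\langle|f|^s\rangle_{\alpha P_j}^{1/s}\lesssim A$ directly), but either way the argument closes with the same constants.
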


As an application of Proposition \ref{Prop: Pointwise sparse domination}, we have the following sparse domination result for the Schr\"odinger pseudo-multipliers.
\begin{proposition}
\label{Prop: Pointwise sparse domination for pseudo}
Let $\sigma \in S^{0}_{1,0}(\sqrt{\mathcal{L}})$ and $1\leq r < \infty$, $T= \sigma(x, \sqrt{\mathcal{L}})$.  Then for every function $f \in L^r(\mathbb{R}^n)$ supported in $\alpha Q$ for some cube $Q$ and $\alpha \geq 3$, there exists a sparse family $\mathcal{S}$ of subcubes of $Q$ such that for a.e. $x \in Q$,
\begin{align*}
    |T(f \chi_{\alpha Q})(x)| \chi_Q(x) &\lesssim \sum_{P \in \mathcal{S}} \left(\frac{1}{|\alpha P|} \int_{\alpha P} |f|^r \right)^{1/r} \chi_{P}(x) .
\end{align*}
    
\end{proposition}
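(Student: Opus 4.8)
The plan is to deduce the statement from the abstract sparse domination principle in Proposition~\ref{Prop: Pointwise sparse domination}, applied to the linear operator $T=\sigma(x,\sqrt{\mathcal{L}})$. Two ingredients are needed. First, $T$ is of weak type $(1,1)$, which is exactly the content of Theorem~\ref{Theorem: L^2 boundedness of the symbol S^0_{1,0}}. Second, the localized sharp grand maximal truncation operator $\mathcal{M}_{T,\alpha}^{\#}$ is of weak type $(r,r)$ for the given $r\in[1,\infty)$. Granting the second point, I would apply Proposition~\ref{Prop: Pointwise sparse domination} with $q=1$ and with the stated exponent $r$; since $r\geq 1$ this gives $s=\max\{1,r\}=r$, and any $f\in L^{r}(\mathbb{R}^{n})$ supported in the finite-measure set $\alpha Q$ automatically belongs to $L^{s}(\mathbb{R}^{n})=L^{r}(\mathbb{R}^{n})$, so the conclusion of Proposition~\ref{Prop: Pointwise sparse domination} is precisely the desired pointwise sparse bound.

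The remaining task is to control $\mathcal{M}_{T,\alpha}^{\#}$, and I would do this by proving the stronger pointwise estimate $\mathcal{M}_{T,\alpha}^{\#}f(x)\lesssim \mathcal{M}_{1}f(x)$, where $\mathcal{M}_{1}$ is the Hardy--Littlewood maximal function; since $\mathcal{M}_{1}$ is of weak type $(r,r)$ for every $r\geq 1$, this yields the second ingredient. Fix $x$, a cube $Q\ni x$, and points $x',x''\in Q$, and set $g:=f\chi_{\mathbb{R}^{n}\setminus\alpha Q}$. As $g$ is supported away from $\alpha Q$, the values $Tg(x')$ and $Tg(x'')$ are given by the absolutely convergent integrals $Tg(x')=\int_{\mathbb{R}^{n}}\sum_{j\geq 0}K_{j}(x',y)\,g(y)\,dy$ (and likewise at $x''$), which I would justify by density together with the kernel bound \eqref{Kernel estimate for all x and y}. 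Since $\alpha\geq 3$, we have $\mathbb{R}^{n}\setminus\alpha Q\subseteq \mathbb{R}^{n}\setminus 2Q$, so part $(1)$ of Lemma~\ref{Lemma: Difference kernel estimate with sum over j} (with $N=0$) gives
\begin{align*}
    |Tg(x')-Tg(x'')| &\leq \int_{\mathbb{R}^{n}\setminus\alpha Q}\Big(\sum_{j\geq 0}|K_{j}(x',y)-K_{j}(x'',y)|\Big)|f(y)|\,dy \\
    &\lesssim |x'-x''|\int_{\mathbb{R}^{n}\setminus\alpha Q}\frac{|f(y)|}{|x_{Q}-y|^{n+1}}\,dy .
\end{align*}
Bounding $|x'-x''|\lesssim r_{Q}$ and splitting $\mathbb{R}^{n}\setminus\alpha Q$ into the dyadic annuli $2^{k+1}\alpha Q\setminus 2^{k}\alpha Q$, $k\geq 0$, on which $|x_{Q}-y|\sim 2^{k}r_{Q}$, I obtain
\begin{align*}
    |Tg(x')-Tg(x'')| &\lesssim r_{Q}\sum_{k\geq 0}\frac{1}{(2^{k}r_{Q})^{n+1}}\int_{2^{k+1}\alpha Q}|f| \\
    &\lesssim \sum_{k\geq 0}2^{-k}\,\frac{1}{|2^{k+1}\alpha Q|}\int_{2^{k+1}\alpha Q}|f| \;\lesssim\; \mathcal{M}_{1}f(z)
\end{align*}
for any $z\in Q$, in particular for $z=x$. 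Taking $\esssup$ over $x',x''\in Q$ and then the supremum over all cubes $Q\ni x$ yields $\mathcal{M}_{T,\alpha}^{\#}f(x)\lesssim \mathcal{M}_{1}f(x)$.

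The genuine analytic content here is the kernel regularity already packaged in Lemma~\ref{Lemma: Difference kernel estimate with sum over j}, which absorbs the work of summing the dyadic pieces $K_{j}$ and of extracting the extra factor $|x_{Q}-y|^{-1}$ needed for the annular series to converge; everything else is the standard Calder\'on--Zygmund-type reduction. Consequently the only point that I expect to require some care is making precise the kernel representation of $Tg$ at points lying outside $\supp g$, which one settles by approximation and \eqref{Kernel estimate for all x and y}. Once $\mathcal{M}_{T,\alpha}^{\#}f\lesssim \mathcal{M}_{1}f$ is established, Proposition~\ref{Prop: Pointwise sparse domination} applies exactly as described in the first paragraph, and the proof is complete.
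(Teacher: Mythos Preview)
Your proposal is correct and follows essentially the same approach as the paper: both reduce to Proposition~\ref{Prop: Pointwise sparse domination} by invoking the weak $(1,1)$ bound for $T$ from Theorem~\ref{Theorem: L^2 boundedness of the symbol S^0_{1,0}} and then controlling $\mathcal{M}_{T,\alpha}^{\#}$ pointwise via Lemma~\ref{Lemma: Difference kernel estimate with sum over j} and a dyadic annular decomposition. The only cosmetic difference is that the paper bounds the annular averages by $\mathcal{M}_{r}f$ (via H\"older) rather than $\mathcal{M}_{1}f$, but since $\mathcal{M}_{1}\leq\mathcal{M}_{r}$ and both are weak $(r,r)$, the two conclusions are equivalent.
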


\begin{proof}
In view of Proposition \ref{Prop: Pointwise sparse domination}, enough to check $T$ is of weak type $(1,1)$ and $\mathcal{M}_{T, \alpha}^{\#}$ is of weak type $(r,r)$ for some $\alpha \geq 3$ and $1\leq r<\infty$. From Theorem \ref{Theorem: L^2 boundedness of the symbol S^0_{1,0}} we already know that $T$ is weak $(1,1)$. Therefore, it remains to verify that $\mathcal{M}_{T, \alpha}^{\#}$ is of weak type $(r,r)$.

Let $x \in \mathbb{R}^n$ and $Q= Q(x_Q, r_Q)$ be a cube such that $Q \ni x$. Also, let $f \in C_c^{\infty}(\mathbb{R}^n)$. Then, applying Lemma \ref{Lemma: Difference kernel estimate with sum over j} and H\"older's inequality we obtain
\begin{align*}
    |T(f \chi_{\mathbb{R}^n \setminus \alpha Q})(x')-T(f \chi_{\mathbb{R}^n \setminus \alpha Q})(x'')| &\lesssim \int_{|x_Q-y|>\alpha r} \sum_{j=0}^{\infty} |K_j(x', y) - K_j(x'',y)| |f(y)| \ dy \\
    &\lesssim \int_{|x_Q-y|>\alpha r} |x'-x''| |x_Q-y|^{-(n+1)} |f(y)| \ dy \\
    &\lesssim \sum_{k=\lfloor \log_2 \alpha \rfloor}^{\infty} \int_{2^k r_Q < |x_Q-y|\leq 2^{k+1} r_Q} \frac{r_Q}{(2^{k+1}r_Q)^{n+1}} |f(y)| \ dy \\
    &\lesssim \sum_{k=\lfloor \log_2 \alpha \rfloor}^{\infty} 2^{-k} \frac{1}{|Q_k|} \int_{Q_k} |f(y)| \ dy \\
    &\lesssim \sum_{k=\lfloor \log_2 \alpha \rfloor}^{\infty} 2^{-k} \mathcal{M}_{r}f(x) \lesssim \mathcal{M}_{r}f(x) .
\end{align*}
This implies $\mathcal{M}_{T, \alpha}^{\#}f(x) \lesssim \mathcal{M}_{r}f(x)$. Since $\mathcal{M}_{r}$ is of weak type $(r,r)$, we can conclude that $\mathcal{M}_{T, \alpha}^{\#}$ is also of weak type $(r,r)$.
\end{proof}

Now we are ready to prove Theorem \ref{Theorem: Weighted L^p boundedness of pseudo multiplier}.

\begin{proof}[Proof of Theorem \ref{Theorem: Weighted L^p boundedness of pseudo multiplier}]

First, using Lemma \ref{Lemma: Decomposition of space into ball}, we write
\begin{align}
\label{Decomposing the operator}
    \|Tf\|_{L^p(\omega)}^p &\leq \sum_{m \in \mathbb{N}} \|\chi_{B_m} Tf\|_{L^p(\omega)}^p ,
\end{align}
where $B_m = B(x_m, \rho(x_m))$.

For each $m \in \mathbb{N}$, using Lemma \ref{Lemma: System of dyadic cubes} there exists a cube $Q_m \in \mathscr{D}^k$ with side length $r_{Q_m}$, such that $B_m \subset Q_m \subset \Lambda_0 B_m$ for some $\Lambda_0>1$. Now with the help of $Q_m$, we decompose $\chi_{B_m} Tf$ as follows.
\begin{align*}
    \chi_{B_m}(x)\, Tf(x) &= \chi_{B_m}(x)\, T(f \chi_{4Q_m})(x) + \chi_{B_m}(x)\, T(f \chi_{(4Q_m)^c})(x), \quad x \in \mathbb{R}^n.
\end{align*}

Therefore, from \eqref{Decomposing the operator} we can write
\begin{align}
\label{Decomposing the operator second time}
    \|Tf\|_{L^p(\omega)}^p &\lesssim \sum_{m \in \mathbb{N}} \|\chi_{B_m} T(f \chi_{4Q_m})\|_{L^p(\omega)}^p + \sum_{m \in \mathbb{N}} \|\chi_{B_m} T(f \chi_{(4Q_m)^c})\|_{L^p(\omega)}^p \\
    &\nonumber =: I_1 + I_2 .
\end{align}

\subsection{Estimate of \texorpdfstring{$I_1$}{}}
For each $m$, using Proposition \ref{Prop: Pointwise sparse domination for pseudo}, there exists a sparse family $\mathcal{S}_m$ of subcubes of $Q_m$ such that for every $x \in Q_m$,
\begin{align*}
    |T(f \chi_{4Q_m})(x)| &\lesssim \sum_{P \in \mathcal{S}_m} \left(\frac{1}{|4 P|} \int_{4 P} |f|^r \right)^{1/r} \chi_{P}(x) .
\end{align*}
Recall that $B_m = B(x_m,\rho(x_m)) \subset Q_m \subset \Lambda_0 B_m$. Therefore from Lemma \ref{Lemma: Equivalence of two potential}, for any $x \in Q_m$ yields
\begin{align*}
    \rho(x) \sim \rho(x_m) \sim r_{Q_m} .
\end{align*}
Since $P \subset Q_m$, the above observation implies that
\begin{align}
\label{For P in cubr radius and critial function relation}
    \frac{r_P}{\rho(x_P)} \sim \frac{r_P}{\rho(x_m)} \sim \frac{r_P}{r_{Q_m}} \lesssim 1 .
\end{align}
Consequently, we get
\begin{align*}
    |T(f \chi_{4Q_m})(x)| \chi_{Q_m}(x) &\lesssim \sum_{P \in \mathcal{S}_m} \left(\frac{1}{|4 P|} \int_{4 P} |f|^r \right)^{1/r} \left(1+\frac{r_P}{\rho(x_P)}\right)^{-N} \chi_{P}(x) \\
    &\lesssim \mathcal{A}_{\mathcal{S}}^{r, 1, \rho, N}(f \chi_{4Q_m})(x) .
\end{align*}
Finally applying Proposition \ref{Prop: Boundedness of sparse operator} for any $ N \geq \theta \cdot \frac{p}{r} \max\left\{ \frac{1}{p - r}, 1 \right\} $, we can conclude that
\begin{align*}
    I_1 \lesssim \sum_{m \in \mathbb{N}} \|\chi_{B_m} T(f \chi_{4Q_m})\|_{L^p(\omega)}^p &\lesssim \sum_{m \in \mathbb{N}} \|\mathcal{A}_{\mathcal{S}}^{r, 1, \rho, N}(f \chi_{4Q_m})\|_{L^p(\omega)}^p \\
    &\lesssim [\omega]_{A_{p/r}^{\rho, \theta}}^{p \max\left\{ \frac{1}{p - r}, 1 \right\}} \sum_{m \in \mathbb{N}} \|f \chi_{4Q_m}\|_{L^p(\omega)}^p \\
    &\lesssim [\omega]_{A_{p/r}^{\rho, \theta}}^{p \max\left\{ \frac{1}{p - r}, 1 \right\}} \|f\|_{L^p(\omega)}^p ,
\end{align*}
where in the last line we have used Lemma \ref{Lemma: Decomposition of space into ball}.

\subsection{Estimate of \texorpdfstring{$I_2$}{}}
Applying Lemma \ref{Lemma: Difference kernel estimate with sum over j}, choosing $N=\theta/(p-r)+1 $ and H\"older's inequality for any $x \in B_m$ we get
\begin{align}
\label{Dominated the operator by maximal function}
    |T(f \chi_{(4Q_m)^c})(x)| & \lesssim \int_{|x_m-y|>4\rho(x_m)} \sum_{j=0}^{\infty} |K_j(x,y)| |f(y)| \ dy  \\
    &\nonumber \lesssim \int_{|x_m-y|>4\rho(x_m)} \left(1+\frac{|x_m-y|}{\rho(x_m)} \right)^{-(\frac{\theta}{p-r}+1)} |x_m-y|^{-n} |f(y)| \ dy \\
    &\nonumber \lesssim \sum_{k=2}^{\infty} \int_{2^k \rho(x_m) < |x_m-y|\leq 2^{k+1} \rho(x_m)} \frac{|f(y)| \ dy}{(1+\frac{2^k \rho(x_m)}{\rho(x_m)})^{\frac{\theta}{p-r}+1} (2^{k+1} \rho(x_m))^{n}} \\
    &\nonumber \lesssim \sum_{k=2}^{\infty} \frac{2^{-k}}{(1+\frac{2^k \rho(x_m)}{\rho(x_m)})^{\frac{\theta}{p-r}} |2^{k+1} Q_m|} \int_{2^{k+1}Q_m} |f(y)| \ dy \\
    &\nonumber \lesssim \sum_{k=2}^{\infty} 2^{-k} \left(1+\frac{2^k \rho(x_m)}{\rho(x_m)} \right)^{-\frac{\theta}{p-r}} \left(\frac{1}{|2^{k+1} Q_m|} \int_{2^{k+1}Q_m} |f(y)|^r \ dy \right)^{1/r} \\
    &\nonumber \lesssim \sum_{k=2}^{\infty} 2^{-k} \mathcal{M}_{r,\rho, \frac{\theta}{p-r}}f(x) \lesssim \mathcal{M}_{r,\rho, \frac{\theta}{p-r}}f(x) .
\end{align}
Finally, using Proposition \ref{Prop: Maximal function new weight with r power} we obtain
\begin{align*}
    I_2 \lesssim \sum_{m \in \mathbb{N}} \|\chi_{B_m} T(f \chi_{(4Q_m)^c})\|_{L^p(\omega)}^p &\lesssim \sum_{m \in \mathbb{N}} \|\chi_{B_m} \mathcal{M}_{r,\rho, \frac{\theta}{p-r}}f\|_{L^p(\omega)}^p \\
    &\lesssim \|\mathcal{M}_{r,\rho, \frac{\theta}{p-r}}f\|_{L^p(\omega)}^p \\
    &\lesssim [\omega]_{A_{p/r}^{\rho, \theta}}^{p \max\left\{ \frac{1}{p - r}, 1 \right\}} \|f\|_{L^p(\omega)}^p . 
\end{align*}
This completes the proof of the Theorem \ref{Theorem: Weighted L^p boundedness of pseudo multiplier}.
\end{proof}

\section{Quantitative weighted \texorpdfstring{$L^p$}{}-estimates for commutators of the Schr\"odinger pseudo-multipliers}
\label{Section: Weighted boundedness of commutators}
This section is devoted to the proof of Theorem \ref{Theorem: Weighted L^p boundedness of pseudo commutator}, which provides quantitative weighted estimates for the commutators of the Schr\"odinger pseudo-multiplier $\sigma(x, \sqrt{\mathcal{L}})$ with symbol $\sigma \in S^0_{1,0}(\sqrt{\mathcal{L}})$. The strategy is as follows. As in Theorem \ref{Theorem: Weighted L^p boundedness of pseudo multiplier}, we first decompose the operator into two parts: one is dominated by the sparse operator $\mathcal{T}_{\mathcal{S}, b}^{\rho, N}$ and its adjoint $\mathcal{T}_{\mathcal{S}, b}^{\rho, N, \star}$; the other is controlled using the $L^p$-boundedness of the maximal functions $\mathcal{M}_{\rho, \theta}$, $\mathcal{M}_{\mathcal{L} \log \mathcal{L}, \rho, \theta}$, and $\mathcal{M}_w^{\mathscr{D}}$.

Before entering the proof, and similar to Section \ref{Section: weighted Lp bound for operator}, we establish a sparse domination result for the commutator of Schr\"odinger pseudo-multipliers, obtained as a consequence of a more general sparse domination principle.

\begin{proposition}
\label{Prop: Pointwise sparse domination for commutators}
Let $T$ be a sublinear operator such that $T$ is of weak type $(q,q)$ and $\mathcal{M}_{T, \alpha}^{\#}$ is of weak type $(r,r)$ for some $\alpha \geq 3$, $1\leq q, r<\infty$ and let $b \in L^1_{loc}$. Also let $s=\max\{q,r\}$. Then for every function $f \in L^s(\mathbb{R}^n)$ supported in $\alpha Q$ for some cube $Q$, there exists a $3^n$ dyadic system $\{\mathscr{D}^j : j=1, \ldots, 3^n \}$, a cube $R=R_Q \in \mathscr{D}^j$ for some $j$, for which $3Q \subset R_Q$, $|R_Q|\leq 9^n |Q|$ and sparse family $\mathcal{S}$ of subcubes of $Q$ such that for a.e. $x \in Q$
\begin{align*}
    |[b, T](f \chi_{\alpha Q})(x)| \chi_Q(x) &\lesssim \sum_{P \in \mathcal{S}} |b(x)-b_{R_P}| \left(\frac{1}{|\alpha P|} \int_{\alpha P} |f|^{s} \right)^{1/s} \chi_{P}(x) \\
    &\hspace{2cm} + \sum_{P \in \mathcal{S}} \left(\frac{1}{|\alpha P|} \int_{\alpha P} |b-b_{R_P}|^s |f|^s \right)^{1/s} \chi_{P}(x) .
\end{align*}
    
\end{proposition}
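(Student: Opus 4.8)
The plan is to deduce this from the proof of the abstract sparse domination principle \cite[Theorem 1.1]{Lerner_Ombrosi_pointwise_sparse_domination_2020}, adapted to commutators following the now-standard recursive argument of Lerner--Ombrosi--Rivera-Gonzalez \cite{Lerner_Ombrosi_Rivera_Pointwise_Commutators_2017}. First I would fix the cube $Q$ and $f \in L^s$ supported on $\alpha Q$, and pick, via the $3^n$-dyadic-lattice trick, a dyadic cube $R=R_Q \in \mathscr{D}^j$ with $3Q \subset R_Q$ and $|R_Q|\leq 9^n|Q|$; this cube plays the role of a "reference" cube so that the BMO-type oscillation $|b(x)-b_{R_P}|$ is comparable to oscillations over dyadic cubes. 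The key algebraic identity is the pointwise splitting
\begin{align*}
    [b,T](f\chi_{\alpha Q})(x) = (b(x)-b_{R_Q})\, T(f\chi_{\alpha Q})(x) - T\big((b-b_{R_Q})f\chi_{\alpha Q}\big)(x),
\end{align*}
which reduces matters to sparse-dominating $T$ applied to $f\chi_{\alpha Q}$ (handled exactly as in Proposition \ref{Prop: Pointwise sparse domination}, using weak $(q,q)$ of $T$ and weak $(r,r)$ of $\mathcal{M}_{T,\alpha}^{\#}$, giving the first sum), and $T$ applied to the modified function $(b-b_{R_Q})f\chi_{\alpha Q}$ (which after the same stopping-time construction produces the averages $\big(\tfrac{1}{|\alpha P|}\int_{\alpha P}|b-b_{R_P}|^s|f|^s\big)^{1/s}$, the second sum).

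The core of the argument is the Calderón--Zygmund-type recursion. Following \cite{Lerner_Ombrosi_pointwise_sparse_domination_2020}, I would run the stopping-time procedure: starting from $Q$, select the maximal dyadic subcubes $P_k \subset Q$ for which either the local average $\big(\tfrac{1}{|\alpha P_k|}\int_{\alpha P_k}|f|^s\big)^{1/s}$ or the quantity controlling $\mathcal{M}_{T,\alpha}^{\#}$ or the "commutator average" $\big(\tfrac{1}{|\alpha P_k|}\int_{\alpha P_k}|b-b_{R_{P_k}}|^s|f|^s\big)^{1/s}$ exceeds a large multiple of the corresponding average over $Q$; the weak-type hypotheses guarantee $\sum_k |P_k| \leq \tfrac{1}{2}|Q|$, so that the sets $E_Q = Q \setminus \bigcup_k P_k$ furnish the sparseness. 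On $E_Q$ one estimates $|[b,T](f\chi_{\alpha Q})|$ pointwise by the two desired averages over $Q$ (using the identity above and that $b_{R_P}$ changes controllably from $R_Q$ to $R_{P_k}$ by a telescoping/BMO argument), and on each $P_k$ one iterates. Summing the geometric series over the generations produces the stated sparse bound.

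The main obstacle, and the point requiring the most care, is bookkeeping the shift of the reference cube: each $P$ in the sparse family carries its own $R_P$, and one must ensure that $|b(x)-b_{R_P}|$ (rather than $|b(x)-b_{R_Q}|$ for the original $Q$) appears, while controlling the errors $|b_{R_{P_k}} - b_{R_Q}|$ that arise when passing from one generation to the next. This is handled exactly as in \cite[Section 3]{Lerner_Ombrosi_Rivera_Pointwise_Commutators_2017}: the difference of averages over nested comparable cubes is dominated by a constant times $\tfrac{1}{|\alpha P_k|}\int_{\alpha P_k}|b-b_{R_{P_k}}|$, which is then absorbed into the second sum. A secondary technical point is that $\mathcal{M}_{T,\alpha}^{\#}$ was defined for sublinear $T$, so the commutator $[b,T]$ (sublinear here) fits the framework directly; one only needs that the relevant grand maximal truncation of $[b,T]$ is controlled, which in turn follows from the weak-type bounds for $T$ and $\mathcal{M}_{T,\alpha}^{\#}$ together with the splitting identity. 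Once these are in place, the remaining steps are routine and parallel to the proof of Proposition \ref{Prop: Pointwise sparse domination for pseudo}.
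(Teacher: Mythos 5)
Your sketch runs the correct and standard route --- the Lerner--Ombrosi--Rivera-R\'{\i}os splitting $[b,T]g = (b-b_{R_Q})Tg - T((b-b_{R_Q})g)$ followed by the recursive Calder\'on--Zygmund selection underlying the Lerner--Ombrosi sparse domination principle, with the per-generation shift of the reference cube $R_Q \to R_{P_k}$ absorbed into the second sparse sum. The paper does not write out this argument; it defers entirely to \cite[Theorem~1.1, eq.~(3.5)]{Wang_some_remarks_sparse_multilinear_2021}, which carries out precisely the scheme you describe, so your proposal coincides with the paper's intended proof in substance.
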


The proof of the above proposition can be found in \cite[Theorem 1.1]{Wang_some_remarks_sparse_multilinear_2021}, more specifically one can see, equation 3.5 of \cite{Wang_some_remarks_sparse_multilinear_2021}.

\begin{proposition}
\label{Prop: Pointwise sparse domination for pseudo commutator}
Let $T= \sigma(x, \sqrt{\mathcal{L}})$ with $\sigma \in S^{0}_{1,0}(\sqrt{\mathcal{L}})$ and let $b \in L^1_{loc}$. Then for every function $f \in L^r(\mathbb{R}^n)$ $(1\leq r < \infty)$ supported in $\alpha Q$ for some cube $Q$ and $\alpha \geq 3$, there exists $3^n$ dyadic system $\{\mathscr{D}^j : j=1, \ldots, 3^n \}$, a cube $R=R_Q \in \mathscr{D}^j$ for some $j$, for which $3Q \subset R_Q$, $|R_Q|\leq 9^n |Q|$ and sparse family $\mathcal{S}$ of subcubes of $Q$ such that for a.e. $x \in Q$
\begin{align*}
    |[b, T](f \chi_{\alpha Q})(x)| \chi_Q(x) &\lesssim \sum_{P \in \mathcal{S}} |b(x)-b_{R_P}| \left(\frac{1}{|\alpha P|} \int_{\alpha P} |f|^{r} \right)^{1/r} \chi_{P}(x) \\
    &\hspace{2cm} + \sum_{P \in \mathcal{S}} \left(\frac{1}{|\alpha P|} \int_{\alpha P} |b-b_{R_P}|^r |f|^r \right)^{1/r} \chi_{P}(x) .
\end{align*}
    
\end{proposition}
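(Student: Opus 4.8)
The plan is to deduce this result directly from the general sparse domination principle stated in Proposition \ref{Prop: Pointwise sparse domination for commutators}, exactly as Proposition \ref{Prop: Pointwise sparse domination for pseudo} was deduced from Proposition \ref{Prop: Pointwise sparse domination}. Thus the entire task reduces to verifying the two hypotheses of Proposition \ref{Prop: Pointwise sparse domination for commutators} for $T=\sigma(x,\sqrt{\mathcal{L}})$: namely, that $T$ is of weak type $(q,q)$ for some $q$, and that the localized sharp grand maximal truncation operator $\mathcal{M}_{T,\alpha}^{\#}$ is of weak type $(r,r)$ for the given $r$. Once these are in hand, one applies Proposition \ref{Prop: Pointwise sparse domination for commutators} with $q=1$, so that $s=\max\{1,r\}=r$, and the stated pointwise bound follows verbatim.

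The first hypothesis is already available: by Theorem \ref{Theorem: L^2 boundedness of the symbol S^0_{1,0}}, $T$ is of weak type $(1,1)$, so we may take $q=1$. The second hypothesis is verified by repeating, essentially word for word, the computation carried out in the proof of Proposition \ref{Prop: Pointwise sparse domination for pseudo}: for a cube $Q=Q(x_Q,r_Q)\ni x$, $f\in C_c^\infty(\mathbb{R}^n)$, and $x',x''\in Q$, one writes the difference $|T(f\chi_{\mathbb{R}^n\setminus\alpha Q})(x')-T(f\chi_{\mathbb{R}^n\setminus\alpha Q})(x'')|$ as an integral against $\sum_{j\ge 0}|K_j(x',y)-K_j(x'',y)|$ over $|x_Q-y|>\alpha r_Q$, invokes part (1) of Lemma \ref{Lemma: Difference kernel estimate with sum over j} to bound this by $C|x'-x''|\,|x_Q-y|^{-(n+1)}$ (discarding the favorable factor $(1+|x_Q-w|/\rho(x_Q))^{-N}$), decomposes the region $|x_Q-y|>\alpha r_Q$ into dyadic annuli $2^kr_Q<|x_Q-y|\le 2^{k+1}r_Q$, and sums the geometric series in $k$ to obtain the pointwise bound $\mathcal{M}_{T,\alpha}^{\#}f(x)\lesssim \mathcal{M}_r f(x)$. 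Since $\mathcal{M}_r$ is of weak type $(r,r)$, so is $\mathcal{M}_{T,\alpha}^{\#}$, and by density this extends from $C_c^\infty$ to all of $L^r$.

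With both hypotheses established, Proposition \ref{Prop: Pointwise sparse domination for commutators} applies directly and yields the claimed sparse domination for $[b,T]$, including the $3^n$ dyadic system, the auxiliary cube $R_Q$ with $3Q\subset R_Q$ and $|R_Q|\le 9^n|Q|$, and the sparse family $\mathcal{S}$ of subcubes of $Q$. I do not anticipate a genuine obstacle here: the argument is a routine transfer through a black-box principle, and the only point requiring any care is the kernel-difference estimate for $\mathcal{M}_{T,\alpha}^{\#}$, which is itself a direct consequence of the already-proved Lemma \ref{Lemma: Difference kernel estimate with sum over j}. Accordingly the proof can be kept short, with the bulk of it being the verification that $\mathcal{M}_{T,\alpha}^{\#}f\lesssim\mathcal{M}_r f$.
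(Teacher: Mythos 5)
Your proposal matches the paper's own proof exactly: both invoke the general commutator sparse-domination principle (Proposition \ref{Prop: Pointwise sparse domination for commutators}), using the weak type $(1,1)$ bound for $T$ from Theorem \ref{Theorem: L^2 boundedness of the symbol S^0_{1,0}} and the weak type $(r,r)$ bound for $\mathcal{M}_{T,\alpha}^{\#}$, the latter already established in the proof of Proposition \ref{Prop: Pointwise sparse domination for pseudo} via Lemma \ref{Lemma: Difference kernel estimate with sum over j}. Your write-up is correct; if anything it is slightly more explicit than the paper, which just cites the verification from Proposition \ref{Prop: Pointwise sparse domination for pseudo} without repeating the kernel-difference computation.
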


\begin{proof}
Similar to Proposition \ref{Prop: Pointwise sparse domination for pseudo}, since $T$ is of weak type $(1,1)$ and $\mathcal{M}_{T, \alpha}^{\#}$ is of weak type $(r,r)$ for some $\alpha \geq 3$ and $1\leq r<\infty$, the proof follows from Proposition \ref{Prop: Pointwise sparse domination for commutators}.
\end{proof}

Now we proceed to prove Theorem \ref{Theorem: Weighted L^p boundedness of pseudo commutator}.

\begin{proof}[Proof of Theorem \ref{Theorem: Weighted L^p boundedness of pseudo commutator}]
Analogous to \eqref{Decomposing the operator second time}, here also we write
\begin{align*}
    \|[b, T]f\|_{L^p(\omega)}^p &\lesssim \sum_{m \in \mathbb{N}} \|\chi_{B_m} [b, T](f \chi_{4Q_m})\|_{L^p(\omega)}^p + \sum_{m \in \mathbb{N}} \|\chi_{B_m} [b, T](f \chi_{(4Q_m)^c})\|_{L^p(\omega)}^p \\
    &\nonumber =: J_1 + J_2 ,
\end{align*}
where $B_m$ and $Q_m$ are the balls and cubes respectively, as defined in the proof of Theorem \ref{Theorem: Weighted L^p boundedness of pseudo multiplier}.

\subsection{Estimate of \texorpdfstring{$J_1$}{}}
For each $m$, applying Proposition \ref{Prop: Pointwise sparse domination for commutators}, there exists a sparse family $\mathcal{S}_m$ of subcubes of $Q_m$ such that for $x \in Q_m$,
\begin{align*}
    |[b, T](f \chi_{4 Q_m})(x)| &\lesssim \sum_{P \in \mathcal{S}_m} |b(x)-b_{R_P}| \left(\frac{1}{|4 P|} \int_{4 P} |f| \right) \chi_{P}(x) \\
    &\hspace{2cm} + \sum_{P \in \mathcal{S}_m} \left(\frac{1}{|4 P|} \int_{4 P} |b-b_{R_P}| |f| \right) \chi_{P}(x) .
\end{align*}
Similarly to \eqref{For P in cubr radius and critial function relation}, in this case we also have $\frac{r_P}{\rho(x_P)} \lesssim 1$. Therefore, for $x \in Q_m$ we obtain
\begin{align*}
    |[b, T](f \chi_{4 Q_m})(x)| &\lesssim \sum_{P \in \mathcal{S}_m} |b(x)-b_{R_P}| \left(\frac{1}{|4 P|} \int_{4 P} |f| \right) \left(1+\frac{r_P}{\rho(x_P)} \right)^{-N} \chi_{P}(x) \\
    &\hspace{2cm} + \sum_{P \in \mathcal{S}_m} \left(\frac{1}{|4 P|} \int_{4 P} |b-b_{R_P}| |f| \right) \left(1+\frac{r_P}{\rho(x_P)} \right)^{-N} \chi_{P}(x) \\
    &\lesssim \mathcal{T}_{\mathcal{S}, b}^{\rho, N}(|f| \chi_{4Q_m})(x) + \mathcal{T}_{\mathcal{S}, b}^{\rho, N, \star}(|f| \chi_{4Q_m})(x) .
\end{align*}
Let $\nu=\omega^{-\frac{1}{p-1}}$. Consequently, using Proposition \ref{Prop: Operator norm for sparse commutator operator} we conclude that
\begin{align*}
    J_1 &= \sum_{m \in \mathbb{N}} \|\chi_{B_m} [b, T](f \chi_{4Q_m})\|_{L^p(\omega)}^p \\
    &\lesssim \sum_{m \in \mathbb{N}} \|\mathcal{T}_{\mathcal{S}, b}^{\rho, N}(|f| \chi_{4Q_m})\|_{L^p(\omega)}^p + \sum_{m \in \mathbb{N}} \|\mathcal{T}_{\mathcal{S}, b}^{\rho, N, \star}(|f| \chi_{4Q_m})\|_{L^p(\omega)}^p \\
    &\lesssim \sum_{m \in \mathbb{N}} \|\mathcal{T}_{\mathcal{S}, b}^{\rho, N, \star}(|f| \chi_{4Q_m})\|_{L^{p'}(\nu)}^p + \sum_{m \in \mathbb{N}} \|\mathcal{T}_{\mathcal{S}, b}^{\rho, N, \star}(|f| \chi_{4Q_m})\|_{L^p(\omega)}^p \\
    &\lesssim [\omega]_{A_{p}^{\rho, \theta}}^{2 p \max\left\{ \frac{1}{p - 1}, 1 \right\}} \|b\|_{BMO_{\theta}(\rho)}^p \sum_{m \in \mathbb{N}} \|f \chi_{4Q_m}\|_{L^p(\omega)}^p \\
    &\lesssim [\omega]_{A_{p}^{\rho, \theta}}^{2 p \max\left\{ \frac{1}{p - 1}, 1 \right\}} \|b\|_{BMO_{\theta}(\rho)}^p \|f\|_{L^p(\omega)}^p ,
\end{align*}
where in the last line we have used the Lemma \ref{Lemma: Decomposition of space into ball}.

\subsection{Estimate of \texorpdfstring{$J_2$}{}}
Let us first write $\chi_{B_m} [b, T](f \chi_{(4Q_m)^c})$ as follows:
\begin{align*}
    & \chi_{B_m}(x) [b, T](f \chi_{(4Q_m)^c})(x) \\
    &= \chi_{B_m}(x) \left( b(x)-b_{Q_m} \right) T(f \chi_{(4Q_m)^c})(x) - \chi_{B_m}(x) T((b-b_{Q_m}) f \chi_{(4Q_m)^c})(x) .
\end{align*}
So that we obtain
\begin{align*}
    J_2 &\leq \sum_{m \in \mathbb{N}} \|\chi_{B_m} \left( b-b_{Q_m} \right) T(f \chi_{(4Q_m)^c})\|_{L^p(\omega)}^p + \sum_{m \in \mathbb{N}} \|\chi_{B_m} T((b-b_{Q_m}) f \chi_{(4Q_m)^c})\|_{L^p(\omega)}^p \\
    &=: J_{21} + J_{22} .
\end{align*}

\subsubsection{Estimate of \texorpdfstring{$J_{21}$}{}}
By duality we write
\begin{align*}
    & \|\chi_{B_m} \left( b-b_{Q_m} \right) T(f \chi_{(4Q_m)^c})\|_{L^p(\omega)} \\
    &= \sup_{\|g\|_{L^{p'}(\omega)} \leq 1} \left| \int_{\mathbb{R}^n} \chi_{B_m}(x) \left( b(x)-b_{Q_m} \right) T(f \chi_{(4Q_m)^c})(x) g(x) \omega(x)\, dx \right| .
\end{align*}
From \eqref{Dominated the operator by maximal function} taking $r=1$, one can easily see that, for any $x, y \in B_m \subset Q_m$,
\begin{align*}
    |T(f \chi_{(4Q_m)^c})(x)| &\lesssim \inf_{Q_m \ni y} \mathcal{M}_{\rho, \frac{\theta}{p-1}}f(y) .
\end{align*}
Therefore, using the above estimate and generalized H\"older's inequality \eqref{Inequality: generalized Holders inequality} we obtain
\begin{align}
\label{Use of generalized Holder in commutator}
    & \|\chi_{B_m} \left( b-b_{Q_m} \right) T(f \chi_{(4Q_m)^c})\|_{L^p(\omega)} \\
    &\nonumber \lesssim \sup_{\|g\|_{L^{p'}(\omega)} \leq 1} \left( \frac{1}{|Q_m|} \int_{\mathbb{R}^n} | b(x)-b_{Q_m}||g(x)| \omega(x)\, dx \right) \inf_{Q_m \ni y} \mathcal{M}_{\rho, \frac{\theta}{p-1}}f(y) \, |Q_m| \\
    &\nonumber \lesssim \sup_{\|g\|_{L^{p'}(\omega)} \leq 1} \|b-b_{Q_m}\|_{\exp \mathcal{L}, Q_m} \|g \omega\|_{\mathcal{L} \log \mathcal{L}, Q_m} \inf_{Q_m \ni y} \mathcal{M}_{\rho, \frac{\theta}{p-1}}f(y) \, |Q_m| .
\end{align}
Note that from \eqref{Exponential norm of BMO function} for $\theta'=\theta (l_0+1)$, we have
\begin{align}
\label{Estimate of b in exp norm}
    \|b-b_{Q_m}\|_{\exp \mathcal{L}, Q_m} \leq C \Psi_{\theta'}(Q_m) \|b\|_{BMO_{\theta}({\rho})} .
\end{align}
Also for $s>1$, using Lemma \ref{Lemma: llogl estimate in terms of maximal function} there exists a positive number $N_0$ such that
\begin{align}
\label{Estimate of g omega in LlogL norm}
    \|g \omega\|_{\mathcal{L} \log \mathcal{L}, Q_m} &\leq C [\omega]_{A_{p}^{\rho, \theta}} \inf_{Q_m \ni y} \mathcal{M}_{\omega}^{\mathscr{D}}(|g|^s)^{1/s}(y) \left(\frac{1}{|Q_m|} \int_{Q_m} \omega \right) \Psi_{N_0}(Q_m)^{1/s'} .
\end{align}
It is important to note that, since here $Q_m \in \mathscr{D}^k$, so we get $\mathcal{M}_{\omega}^{\mathscr{D}}$ in place of $\mathcal{M}_{\omega}$ in the above estimate.

Since $Q_m \subseteq \Lambda_0 B_m$ with $\Lambda_0>1$ and $B_m=B(x_m, \rho(x_m))$, for any $\theta\geq 0$ we have $\Psi_{\theta}(Q_m) \lesssim 1$. Therefore, plugging the estimates \eqref{Estimate of b in exp norm} and \eqref{Estimate of g omega in LlogL norm} into \eqref{Use of generalized Holder in commutator} and using H\"older's inequality we get
\begin{align*}
    & \|\chi_{B_m} \left( b-b_{Q_m} \right) T(f \chi_{(4Q_m)^c})\|_{L^p(\omega)} \\
    &\lesssim \sup_{\|g\|_{L^{p'}(\omega)} \leq 1} [\omega]_{A_{p}^{\rho, \theta}} \|b\|_{BMO_{\theta}({\rho})} \int_{\mathbb{R}^n} \chi_{Q_m}(x) \mathcal{M}_{1, \rho, \frac{\theta}{p-1}}f(x) \mathcal{M}_{\omega}^{\mathscr{D}}(|g|^s)^{1/s}(x) \omega(x) \, dx \\
    &\lesssim \sup_{\|g\|_{L^{p'}(\omega)} \leq 1} [\omega]_{A_{p}^{\rho, \theta}} \|b\|_{BMO_{\theta}({\rho})} \|\chi_{Q_m} \mathcal{M}_{\rho, \frac{\theta}{p-1}}f\|_{L^p(\omega)} \|\mathcal{M}_{\omega}^{\mathscr{D}}(|g|^s)^{1/s}\|_{L^{p'}(\omega)} \\
    &\lesssim [\omega]_{A_{p}^{\rho, \theta}} \|b\|_{BMO_{\theta}({\rho})} \|\chi_{Q_m} \mathcal{M}_{\rho, \frac{\theta}{p-1}}f\|_{L^p(\omega)} ,
\end{align*}
where in the last line we have used Proposition \ref{Prop: Boundedness of dyadic maximal functions} for $1<s<p'$.

Now using the above estimate and applying Proposition \ref{Prop: Maximal function new weight with r power} yields
\begin{align*}
    J_{21} &= \sum_{m \in \mathbb{N}} \|\chi_{B_m} \left( b-b_{Q_m} \right) T(f \chi_{(4Q_m)^c})\|_{L^p(\omega)}^p \\
    &\lesssim [\omega]_{A_{p}^{\rho, \theta}}^p \|b\|_{BMO_{\theta}({\rho})}^p \sum_{m \in \mathbb{N}} \|\chi_{Q_m} \mathcal{M}_{\rho, \frac{\theta}{p-1}}f\|_{L^p(\omega)}^p \\
    &\lesssim [\omega]_{A_{p}^{\rho, \theta}}^p \|b\|_{BMO_{\theta}({\rho})}^p \| \mathcal{M}_{\rho, \frac{\theta}{p-1}}f\|_{L^p(\omega)}^p \\
    &\lesssim [\omega]_{A_{p}^{\rho, \theta}}^p [\omega]_{A_{p}^{\rho, \theta}}^{\frac{p}{p-1}} \|b\|_{BMO_{\theta}({\rho})}^p \| f\|_{L^p(\omega)}^p \\
    &\lesssim [\omega]_{A_{p}^{\rho, \theta}}^{2 p \max\left\{ \frac{1}{p - 1}, 1 \right\}} \|b\|_{BMO_{\theta}(\rho)}^p \|f\|_{L^p(\omega)}^p .
\end{align*}

\subsubsection{Estimate of \texorpdfstring{$J_{22}$}{}}
Applying Lemma \ref{Lemma: Difference kernel estimate with sum over j} and taking $N=2\theta/(p-1) +\theta'+1$ with $\theta'=\theta (l_0+1)$ we have
\begin{align*}
    & |\chi_{B_m}(x) T((b-b_{Q_m}) f \chi_{(4Q_m)^c})(x)| \\
    &\lesssim \int_{|x_m-y|>4\rho(x_m)} \sum_{j=0}^{\infty} |K_j(x, y)| |b(y)-b_{Q_m}| |f(y)| \ dy \\
    &\lesssim \int_{|x_m-y|>2\rho(x_m)} \left(1+\frac{|x_m-y|}{\rho(x_m)} \right)^{-(\frac{2\theta}{p-1}+\theta'+1)} |x_m-y|^{-n} |b(y)-b_{Q_m}| |f(y)| \ dy \\
    &\lesssim \sum_{k=2}^{\infty} \int_{2^k \rho(x_m) < |x_m-y| \leq 2^{k+1} \rho(x_m)} \frac{1}{(1+\frac{2^k \rho(x_m)}{\rho(x_m)})^{\frac{2\theta}{p-1}+\theta'+1} (2^{k+1} \rho(x_m))^{n}} |b(y)-b_{Q_m}| |f(y)| \ dy \\
    &\lesssim \sum_{k=2}^{\infty} \frac{1}{(1+\frac{2^k \rho(x_m)}{\rho(x_m)})^{\frac{2\theta}{p-1}+\theta'+1} |2^{k+1} Q_m|} \int_{2^{k+1} Q_m} |b(y)-b_{Q_m}| |f(y)| \ dy .
\end{align*}
For each $k \geq 2$, writing $b(y)-b_{Q_m} = b(y)-b_{2^{k+1} Q_m} + b_{2^{k+1} Q_m} -b_{Q_m}$ and using generalized H\"older's inequality (\ref{Inequality: generalized Holders inequality}), Lemma \ref{Lemma: Complementary exponential function with BMO function}, the right hand side of the above estimate is dominated by
\begin{align*}
    & \sum_{k=2}^{\infty} \frac{1}{(1+\frac{2^k \rho(x_m)}{\rho(x_m)})^{\frac{2\theta}{p-1} +\theta'+1} |2^{k+1} Q_m|} \int_{2^{k+1} Q_m} |b(y)-b_{2^{k+1} Q_m}| |f(y)| \ dy \\
    & \hspace{3cm} + \sum_{k=2}^{\infty} \frac{1}{(1+\frac{2^k \rho(x_m)}{\rho(x_m)})^{\frac{2\theta}{p-1}+\theta'+1} |2^{k+1} Q_m|} |b_{2^{k+1} Q_m}-b_{Q_m}| \int_{2^{k+1} Q_m} |f(y)| \ dy \\
    &\lesssim \sum_{k=2}^{\infty} \frac{1}{(1+\frac{2^k \rho(x_m)}{\rho(x_m)})^{\frac{2\theta}{p-1}+ \theta'+1}} \|b-b_{2^{k+1} Q_m}\|_{\exp{\mathcal{L}}, 2^{k+1} Q_m} \|f\|_{\mathcal{L}\log \mathcal{L}, 2^{k+1} Q_m} \\
    & \hspace{5cm} + \sum_{k=2}^{\infty} \frac{k \|b\|_{BMO_{\theta}(\rho)}}{(1+\frac{2^k \rho(x_m)}{\rho(x_m)})^{\frac{2\theta}{p-1}+1} |2^{k+1} Q_m|} \int_{2^{k+1} Q_m} |f(y)| \ dy  \\
    &\lesssim \sum_{k=2}^{\infty} \frac{2^{-k}}{(1+\frac{2^k \rho(x_m)}{\rho(x_m)})^{\theta (l_0+1)}} \|b-b_{2^{k+1} Q_m}\|_{\exp{\mathcal{L}}, 2^{k+1} Q_m} \frac{1}{(1+\frac{2^k \rho(x_m)}{\rho(x_m)})^{\frac{2\theta}{p-1}}}\|f\|_{\mathcal{L}\log \mathcal{L}, 2^{k+1} Q_m} \\
    & \hspace{5cm} + \sum_{k=2}^{\infty} \frac{k 2^{-k} \|b\|_{BMO_{\theta}(\rho)}}{(1+\frac{2^k \rho(x_m)}{\rho(x_m)})^{\frac{2\theta}{p-1}} |2^{k+1} Q_m|} \int_{2^{k+1} Q_m} |f(y)| \ dy \\
    &\lesssim \sum_{k=2}^{\infty} 2^{-k} \|b\|_{BMO_{\theta}(\rho)} \mathcal{M}_{\mathcal{L} \log \mathcal{L}, \rho, \frac{2\theta}{p-1}}(f)(x) + \sum_{k=2}^{\infty} k 2^{-k} \|b\|_{BMO_{\theta}(\rho)} \mathcal{M}_{\rho, \frac{2\theta}{p-1}}(f)(x) \\
    &\lesssim \|b\|_{BMO_{\theta}(\rho)} \mathcal{M}_{\mathcal{L} \log \mathcal{L}, \rho, \frac{2\theta}{p-1}}(f)(x) ,
\end{align*}
where in the second inequality, we have used the fact $|b_{Q}-b_{Q_k}| \lesssim k \Psi_{\theta}(Q_k) \|b\|_{BMO_{\theta}(\rho)}$ and in the last line we have used $\mathcal{M}_{\rho, \theta}(f)(x) \leq \mathcal{M}_{\mathcal{L} \log \mathcal{L}, \rho, \theta}(f)(x) $ (see \cite[p. 2526]{Tang_Weighted_Schrodinger_Forum_Math_2015}).

Finally, using the above estimate, Lemma \ref{Lemma: Relation between variant maximal function and L log L maximal function} and Proposition \ref{Prop: Maximal function new weight with r power}, we get
\begin{align*}
    J_{22} &\lesssim \sum_{m \in \mathbb{N}} \|\chi_{B_m} T((b-b_{Q_m}) f \chi_{(4Q_m)^c})\|_{L^p(\omega)}^p \\
    &\lesssim  \|b\|_{BMO_{\theta}(\rho)}^p \sum_{m \in \mathbb{N}} \|\chi_{B_m} \mathcal{M}_{\mathcal{L} \log \mathcal{L}, \rho, \frac{2\theta}{p-1}}f\|_{L^p(\omega)}^p \\
    &\lesssim  \|b\|_{BMO_{\theta}(\rho)}^p \|\mathcal{M}_{\mathcal{L} \log \mathcal{L}, \rho, \frac{2\theta}{p-1}}f\|_{L^p(\omega)}^p \\
    &\lesssim  \|b\|_{BMO_{\theta}(\rho)}^p \|\mathcal{M}_{\rho, \frac{\theta}{p-1}} \mathcal{M}_{\rho, \frac{\theta}{p-1}}f\|_{L^p(\omega)}^p \\
    &\lesssim  [\omega]_{A_{p}^{\rho, \theta}}^{2 p \max\left\{ \frac{1}{p - 1}, 1 \right\}} \|b\|_{BMO_{\theta}(\rho)}^p \|f\|_{L^p(\omega)}^p .
\end{align*}
This completes the proof of Theorem \ref{Theorem: Weighted L^p boundedness of pseudo commutator}.
\end{proof}

\section{Compactness of commutators for the Schr\"odinger pseudo-multipliers}
\label{Section: Compactness of commutators}

In this section, we prove the compactness of commutators for the Schr\"odinger pseudo-multiplier, that is, Theorem \ref{Theorem: Weighted compactness of pseudo commutator}. To prove our theorem, we need the following Kolmogorov-Riesz compactness theorem.
\begin{theorem}\cite[Theorem 1.1]{Xue_Yabuta_Yan_Kolmogorov_Riesz_theorem}
\label{Theorem: Characterizations of compactness}
Let $1 < p < \infty$ and $\omega \in A_p^{\rho, \theta}$. The subset $\mathcal{F}$ in $L^p(\omega)$ is totally bounded if $\mathcal{F}$ satisfies the following conditions:
\begin{enumerate}
       \item  Norm boundedness uniformly: $\displaystyle{ \sup_{f \in \mathcal{F}} \|f\|_{L^p (\omega)} < \infty}$;
       \item  Control uniformly away from origin: $\displaystyle{\lim_{A \rightarrow \infty} \|\chi_{\mathcal{F}_A} f \|_{L^p (\omega)}=0}$, uniformly in $f\in \mathcal{F}$, where $\mathcal{F}_A = \{x \in \mathbb{R}^n : |x| > A\}$.
       \item Equicontinuous uniformly: $\displaystyle{\lim_{t \rightarrow 0} \| f(\cdot + t) - f(\cdot)\|_{L^p (\omega)} = 0   }$ uniformly in $f\in \mathcal{F}$.
\end{enumerate}
\end{theorem}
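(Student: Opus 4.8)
The plan is to run the classical Kolmogorov--Riesz argument, being careful that the weight $\omega$ is in general neither bounded nor bounded below; the only input we need from the hypothesis is that $\omega\in A_p^{\rho,\theta}$ forces $\omega\in L^1_{loc}$ and $\omega^{-1/(p-1)}\in L^1_{loc}$, which is immediate from \eqref{Definition: weight class}. Fix $\varepsilon>0$. By condition $(2)$ choose $A>0$ with $\|\chi_{\mathcal{F}_A}f\|_{L^p(\omega)}<\varepsilon/4$ for every $f\in\mathcal{F}$, and pick a smooth cutoff $\eta_A$ with $\chi_{B(0,A)}\le\eta_A\le\chi_{B(0,2A)}$. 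Since $0\le\eta_A\le1$ and $\eta_A=1$ on $B(0,A)$, we get $\|f-\eta_Af\|_{L^p(\omega)}\le\|\chi_{\mathcal{F}_A}f\|_{L^p(\omega)}<\varepsilon/4$ uniformly in $f\in\mathcal{F}$. Hence it suffices to show that $\{\eta_Af:f\in\mathcal{F}\}$ is totally bounded in $L^p(\omega)$.

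Next, for $0<t<1$ let $\phi_t\in C_c^\infty(\mathbb{R}^n)$ be a standard mollifier, $\operatorname{supp}\phi_t\subset B(0,t)$, $\int\phi_t=1$, and set $A_tg=\phi_t*g$, so that $A_t(\eta_Af)$ is supported in $\overline{B(0,2A+1)}=:\overline{B_R}$. (Note $\eta_Af\in L^1_{loc}(dx)$ since $\omega^{-1/(p-1)}\in L^1_{loc}$, so $A_t(\eta_Af)$ is well defined and continuous.) By Minkowski's integral inequality in $L^p(\omega)$,
\[
\|A_t(\eta_Af)-\eta_Af\|_{L^p(\omega)}\le\sup_{|y|\le t}\big\|(\eta_Af)(\cdot-y)-(\eta_Af)(\cdot)\big\|_{L^p(\omega)}.
\]
Writing $(\eta_Af)(x-y)-(\eta_Af)(x)=\eta_A(x-y)\big(f(x-y)-f(x)\big)+f(x)\big(\eta_A(x-y)-\eta_A(x)\big)$, the first summand has $L^p(\omega)$-norm at most $\|f(\cdot-y)-f(\cdot)\|_{L^p(\omega)}$ and the second at most $\|\nabla\eta_A\|_\infty\,|y|\,\|f\|_{L^p(\omega)}$; by conditions $(1)$ and $(3)$ the right-hand side tends to $0$ as $t\to0$ \emph{uniformly} in $f\in\mathcal{F}$. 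Fix $t$ with $\|A_t(\eta_Af)-\eta_Af\|_{L^p(\omega)}<\varepsilon/4$ for all $f\in\mathcal{F}$; then $\mathcal{F}$ lies within $\varepsilon/2$ of $\mathcal{G}:=\{A_t(\eta_Af):f\in\mathcal{F}\}$ in $L^p(\omega)$.

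It remains to show $\mathcal{G}$ is totally bounded. Every $h\in\mathcal{G}$ is supported in $\overline{B_R}$, and by Hölder's inequality with the locally integrable weight $\omega^{-1/(p-1)}$,
\[
\big|A_t(\eta_Af)(x)\big|\le\|\phi_t\|_\infty\int_{B(x,t)}|f|\,dz\le\|\phi_t\|_\infty\Big(\int_{B(x,t)}\omega^{-1/(p-1)}\Big)^{1/p'}\|f\|_{L^p(\omega)}\le C(A,t)\sup_{g\in\mathcal{F}}\|g\|_{L^p(\omega)},
\]
while differentiating under the integral sign (using $\phi_t\in C_c^\infty$) gives $\big|A_t(\eta_Af)(x)-A_t(\eta_Af)(x')\big|\le C'(A,t)\,|x-x'|\sup_{g\in\mathcal{F}}\|g\|_{L^p(\omega)}$. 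By condition $(1)$ these bounds are uniform over $f\in\mathcal{F}$, so $\mathcal{G}$ is bounded and equicontinuous in $C(\overline{B_R})$, hence precompact there by Arzel\`a--Ascoli, hence totally bounded in $C(\overline{B_R})$. Since $\omega(B_R)<\infty$, for functions supported in $B_R$ one has $\|g\|_{L^p(\omega)}\le\omega(B_R)^{1/p}\|g\|_{L^\infty}$, so $\mathcal{G}$ is totally bounded in $L^p(\omega)$. Combining the three steps, $\mathcal{F}$ is covered by finitely many $L^p(\omega)$-balls of radius $\varepsilon$, which proves total boundedness.

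As for the difficulty: there is no genuinely hard step, but the whole point is to track the ``uniformly in $f$'' quantifiers through the truncation and the mollification. This is precisely why one must use the \emph{smooth} cutoff $\eta_A$ rather than the sharp $\chi_{B(0,A)}$: the sharp truncation destroys the uniform $L^p(\omega)$ modulus of continuity needed in the second paragraph, whereas $\eta_Af$ retains it. Finally, the hypothesis $\omega\in A_p^{\rho,\theta}$ enters only through the local integrability of $\omega$ and of $\omega^{-1/(p-1)}$; no reverse H\"older inequality or maximal function bound is needed for this statement.
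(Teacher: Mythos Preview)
The paper does not give its own proof of this statement: Theorem~\ref{Theorem: Characterizations of compactness} is quoted verbatim from \cite[Theorem~1.1]{Xue_Yabuta_Yan_Kolmogorov_Riesz_theorem} and used as a black box in the proof of Theorem~\ref{Theorem: Weighted compactness of pseudo commutator}. So there is nothing in the paper to compare your argument against.

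That said, your proposal is a correct, self-contained proof along the classical Kolmogorov--Riesz template (truncate, mollify, Arzel\`a--Ascoli), and the adaptations to the weighted setting are handled properly. In particular: you correctly identify that the only property of $\omega\in A_p^{\rho,\theta}$ actually needed is local integrability of $\omega$ and of $\omega^{-1/(p-1)}$; the use of the \emph{smooth} cutoff $\eta_A$ (rather than $\chi_{B(0,A)}$) to preserve the uniform modulus of continuity is the right move; the Minkowski-in-$L^p(\omega)$ step and the two-term splitting of $(\eta_Af)(\cdot-y)-(\eta_Af)(\cdot)$ are clean; and the H\"older step producing the uniform $L^\infty$ and Lipschitz bounds on $\mathcal{G}$ over the fixed compact $\overline{B_R}$ is valid because all the balls $B(x,t)$ involved sit inside a single large ball. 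One cosmetic point: in the equicontinuity estimate for $\mathcal{G}$ you might make explicit that for $x,x'\in\overline{B_R}$ the support of $z\mapsto\phi_t(x-z)-\phi_t(x'-z)$ lies in a fixed ball independent of $x,x'$, so the H\"older constant $C'(A,t)$ is genuinely uniform; this is implicit in your ``differentiating under the integral sign'' but worth a half-line.
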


Now, we will discuss the proof of Theorem \ref{Theorem: Weighted compactness of pseudo commutator}.

\begin{proof}[Proof of Theorem \ref{Theorem: Weighted compactness of pseudo commutator}]
Since $b \in CMO_{\theta}(\rho) \subseteq BMO_{\theta}(\rho)$, then applying Theorem \ref{Theorem: Weighted L^p boundedness of pseudo commutator} for $T=\sigma(x, \sqrt{\mathcal{L}})$ we get $[b, T]$ is bounded on $L^p(\omega)$ for $1<p<\infty$ and $\omega \in A_p^{\rho, \theta}$. Therefore, to prove that $[b,T]$ is compact, for any bounded set $F \subseteq L^p(\omega)$, it suffices to show that $\{[b, T]f : f \in F \}$ is totally bounded in $L^p(\omega)$. By definition, it follows that since $b \in CMO_{\theta}(\rho)$, for $\epsilon>0$ there exists a sequence of functions $b_n \in C_c^{\infty}(\mathbb{R}^n)$ such that $\|b-b_n\|_{BMO_{\theta}(\rho)} < \epsilon$. Then using Theorem \ref{Theorem: Weighted L^p boundedness of pseudo commutator}, we get
\begin{align*}
    \|([b,T]-[b_n, T])f \|_{L^p(\omega)} &\leq \|[b-b_n, T]f\|_{L^p(\omega)} \lesssim \|b-b_n\|_{BMO_{\theta}(\rho)} \|f\|_{L^p(\omega)} \lesssim \epsilon .
\end{align*}
So it is enough to verify $[b, T]$ is compact on $L^p(\omega)$ for any $b \in C_c^{\infty}(\mathbb{R}^n)$. First, using \eqref{eq:operator-decom}, we write 
$$T=\sum_{j\geq 0} T_{j}, \quad \textit{where} \quad T_{j}= \sigma_{j}(x, \sqrt{\mathcal{L}}).$$

Let us choose $\phi \in C^{\infty}([0, \infty))$ so that $0 \leq \phi \leq 1$, $\phi(x) =1$ in $[0,1]$ and $=0$ outside $[0,2]$. Then for each $j\geq 0$ and for any $\gamma>0$, let us define 
\begin{align*}
    K_{j, \gamma}(x,y) &= K_j(x,y) [1-\phi(\gamma^{-1}|x-y|)] .
\end{align*}
Observe that for each $j\geq 0$, we have
\begin{align}
\label{Estimate: Dilated kernel estimate}
    |K_{j, \gamma}(x,y)| &\leq |K_j(x,y)| ,\\
\label{Estimate: Gradient estimate for dilated kernel}
   \text{and} \quad |\nabla K_{j, \gamma}(x,y)| &\lesssim |\nabla K_j(x,y)| + \frac{1}{|x-y|} |K_j(x,y)| .
\end{align}

The estimate \eqref{Estimate: Dilated kernel estimate} follows from the definition of $K_{j, \gamma}$ and the estimate \eqref{Estimate: Gradient estimate for dilated kernel} follows from the following estimate. For $|h|<\gamma$
\begin{align*}
    |K_{j, \gamma}(x+h,y)-K_{j, \gamma}(x,y)| &\leq 2|K_j(x+h,y)-K_j(x,y)| + C \frac{|h|}{|x-y|} |K_j(x,y)|,
\end{align*}
where $C$ is a positive constant depending on $\phi$. The above estimate can be easily proved using the properties of $\phi$.

For $\gamma>0$, we define 
\begin{align*}
    K_{\gamma}(x,y) &= \sum_{j\geq 0}^{\infty} K_{j, \gamma}(x,y), \quad x, y \in \mathbb{R}^n ,\quad
   \text{and} \quad \quad T_{\gamma}f(x) = \int_{\mathbb{R}^d} K_{\gamma}(x,y) f(y) \ dy .
\end{align*}
From the estimates \eqref{Estimate: Dilated kernel estimate} and \eqref{Estimate: Gradient estimate for dilated kernel}, it is clear that the behaviours of the kernel $K_{j, \gamma}(x,y)$ and its gradient $\nabla K_{j, \gamma}(x,y)$ are same as $K_{j}(x,y)$ and $\nabla K_{j}(x,y)$ respectively. 

Therefore, using a similar method as we used to prove \Cref{Theorem: Weighted L^p boundedness of pseudo commutator}, we get 
\begin{align}
\label{Estimate: L^p boundedness of dilated operator}
    \|T_{\gamma}f\|_{L^p(\omega)} &\lesssim \|f\|_{L^p(\omega)},
\end{align}
for all $1<p<\infty$ and $w \in A_{p}^{\rho,\theta}$.

Now, for any $b \in C_c^{\infty}(\mathbb{R}^d)$, using $|b(x)-b(y)| \leq C |x-y|$, taking $N=\theta/(p-1)$ and using \eqref{Kernel estimate for all x and y} we obtain
\begin{align}
\label{Estimate: Difference of commutator and dilated commutator}
    |[b, T]f(x)-[b, T_{\gamma}]f(x)| &\leq \int_{|x-y|\leq 2\gamma} \sum_{j\geq 0} |K_j(x,y)| |\phi(\gamma^{-1}|x-y|)||b(x)-b(y)||f(y)| \, dy \\
    &\nonumber \lesssim \int_{|x-y|\leq 2\gamma} \frac{1}{(1+ \frac{|x-y|}{\rho(x)} )^{\theta/(p-1)}} \frac{|f(y)|}{|x-y|^{n-1}} \, dy \\
    &\nonumber \lesssim \sum_{j=-\infty}^{0} \int_{2^j \gamma<|x-y|\leq 2^{j+1} \gamma} \frac{1}{(1+ \frac{2^j \gamma}{\rho(x)} )^{\theta/(p-1)}} \frac{|f(y)|}{(2^j \gamma)^{n-1}} \, dy \\
    &\nonumber \lesssim \mathcal{M}_{\rho, \frac{\theta}{p-1}}f(x) \sum_{j=-\infty}^{0} 2^j \gamma \lesssim \gamma \mathcal{M}_{\rho, \frac{\theta}{p-1}}f(x) .
\end{align}
Therefore, for $\omega \in A_p^{\rho, \theta}$ and using Proposition \ref{Prop: Maximal function new weight with r power}  we get
\begin{align}\label{Estimate: Difference of commutator and dilated commutator-Lp}
    \|[b, T]f(x)-[b, T_{\gamma}]f\|_{L^p(\omega)} &\lesssim \gamma \|f\|_{L^p(\omega)} .
\end{align}
This further implies
\begin{align*}
    \lim_{\gamma \to 0}\|[b, T]f(x)-[b, T_{\gamma}]f\|_{L^p(\omega)}=0 .
\end{align*}
As the norm limit of a compact operator is compact, therefore it is enough to show that $[b, T_{\gamma}]$ is compact on $L^p(\omega)$ for any $b \in C_c^{\infty}(\mathbb{R}^n)$, when $\gamma >0$ is small enough. Now for arbitrary bounded set $F$ in $L^p(\omega)$, let us set $\mathcal{F} = \{[b, T_{\gamma}]f : f \in F \}$. Therefore, by Theorem \ref{Theorem: Characterizations of compactness}, it suffices to verify that $\mathcal{F}$ satisfies all three conditions of Theorem \ref{Theorem: Characterizations of compactness}.

\subsection*{Condition \texorpdfstring{$(1)$}{}:}
Note that Theorem \ref{Theorem: Weighted L^p boundedness of pseudo commutator} and (\ref{Estimate: Difference of commutator and dilated commutator-Lp}) imply $[b, T_{\gamma}]$ is bounded on $L^p(\omega)$ for all $1<p<\infty$ and $w \in A_{p}^{\rho,\theta}$. Therefore, we conclude
\begin{align*}
    \sup_{f \in F} \|[b, T_{\gamma}]f\|_{L^p(\omega)} &\leq C \sup_{f \in F} \|f\|_{L^p(\omega)} \leq C .
\end{align*}
Hence, the first condition of Theorem \ref{Theorem: Characterizations of compactness} is satisfied.

\subsection*{Condition \texorpdfstring{$(2)$}{}:}
Since $C_c^{\infty}(\mathbb{R}^n)$ is dense in $L^p(\omega)$, for $f \in F$ and $\epsilon>0$, there exists $g \in C_c^{\infty}(\mathbb{R}^n)$ such that $\|f-g\|_{L^p(\omega)}<\epsilon$. Let us take $\mathcal{F}_{A}$ as defined in \Cref{Theorem: Characterizations of compactness}. Then, using boundedness of $[b, T_{\gamma}]$ on $L^p(\omega)$, we get
\begin{align}
\label{In condition 2 reduction to compactly function}
    \|[b, T_{\gamma}]f \chi_{\mathcal{F}_A} \|_{L^p(\omega)} &\leq \|[b, T_{\gamma}](f-g) \chi_{\mathcal{F}_A} \|_{L^p(\omega)} + \|[b, T_{\gamma}]g \chi_{\mathcal{F}_A} \|_{L^p(\omega)} \\
    &\nonumber \leq C \epsilon + C \|[b, T_{\gamma}]g \chi_{\mathcal{F}_A} \|_{L^p(\omega)} .
\end{align}
As $b \in C_c^{\infty}(\mathbb{R}^n)$, we can choose $R>0$ large enough so that $\supp{b} \subseteq B(0, R)$. For $\kappa>2$, let us choose $A>0$ so that $A \geq \kappa R$. This implies $b(x)=0$ if $|x|>A>\kappa R$. Using this observation we write
\begin{align}\label{eq:commu-gamma-1}
    & \left(\int_{|x|>\kappa R} |[b, T_{\gamma}]g(x)|^p \omega(x) \ dx \right)^{1/p} \\
    \nonumber &\leq \left[\int_{|x|>\kappa R} \left( \int_{|y|<R} \sum_{j\geq 0} |K_j(x,y)| |b(y)| |g(y)| \ dy \right)^p \omega(x) \ dx \right]^{1/p} \\
    \nonumber&\lesssim \sum_{k=0}^{\infty} \left[\int_{2^k \kappa R< |x|\leq 2^{k+1} \kappa R} \left( \int_{|y|<R} \sum_{j\geq 0} |K_j(x,y)| |b(y)| |g(y)| \ dy \right)^p \omega(x) \ dx \right]^{1/p}.
\end{align}
Since $|x|>2^k \kappa R$ and $|y|<R$, we have $|x-y|>(1-1/\kappa)2^{k}\kappa R$ with $\kappa>2$. Therefore, using \eqref{Kernel estimate for all x and y}, Holder's inequality and using Lemma \ref{Lemma: Equivalence of two potential}, for each $k \geq 0$, the first integral of the right side of the above inequality is dominated by
\begin{align}\label{eq:commu-gamma-2}
    \nonumber \int_{|y|<R} \sum_{j\geq 0}|K_j(x,y)| |b(y)| |g(y)| \, dy &\lesssim \int_{|y|<R} \frac{1}{(1+ \frac{|x-y|}{\rho(x)} )^{N}} \frac{1}{|x-y|^{n}} |b(y)| |g(y)| \, dy \\
   &\lesssim \frac{\|b\|_{L^{\infty}}}{(1+ \frac{(1-\frac{1}{\kappa})2^{k}\kappa R}{\rho(x)})^{N}} \frac{1}{((1-\frac{1}{\kappa})2^{k}\kappa R)^{n}}  \int_{|y|<R} |g(y)| \, dy \\
    \nonumber &\lesssim \frac{\|b\|_{L^{\infty}}}{(1+ \frac{(1-\frac{1}{\kappa})2^{k}\kappa R}{\rho(0)})^{\frac{N}{l_0+1}}} \frac{\|g\|_{L^p(\omega)}}{(1-\frac{1}{\kappa})^n (2^{k}\kappa R)^{n}} \left( \int_{|y|<R} \omega^{-\frac{1}{p-1}} \right)^{1-\frac{1}{p}} .
\end{align}
Putting the estimate \eqref{eq:commu-gamma-2} into \eqref{eq:commu-gamma-1}, we get
\begin{align}\label{eq:commu-gamma-3}
    & \left(\int_{|x|>\kappa R} |[b, T_{\gamma}]g(x)|^p \omega(x) \ dx \right)^{1/p} \lesssim \|b\|_{L^{\infty}} \|g\|_{L^p(\omega)} \sum_{k=0}^{\infty} \frac{1}{(1+ \frac{(1-\frac{1}{\kappa})2^k \kappa R}{\rho(0)})^{\frac{N}{l_0+1}}} \frac{1}{(1-\frac{1}{\kappa})^n (2^k \kappa R)^{n}} \\
    \nonumber& \hspace{3cm} \times \left(\int_{B(0, 2^{k+1} \kappa R)} \omega(x) \, dx \right)^{1/p} \left( \int_{B(0, R)} \omega^{-1/(p-1)}(y) \, dy \right)^{1-\frac{1}{p}} .
\end{align}
Taking $Q= Q(0, 2 \sqrt{n} 2^{k+1} \kappa R)$ and $E=B(0,R)$ in \cite[Lemma 2.2, (iii)]{Tang_Weighted_Schrodinger_Forum_Math_2015} yields
\begin{align}\label{eq:doub-weight}
    \omega(5Q) &\lesssim \left( \frac{\Psi_{\theta}(Q)|Q|}{|E|} \right)^p \omega(E) \lesssim \omega(E) (1+2^{k+1}\kappa R/\rho(0))^{\theta p} (2^{k+1}\kappa)^{n p}.
\end{align}
Putting the estimate \eqref{eq:doub-weight} in \eqref{eq:commu-gamma-3} and using the fact $B(0, 2^{k+1} \kappa R) \subset Q(0, 2 \sqrt{n} 2^{k+1} \kappa R)$ we get 
\begin{align}
\label{Making of weight in compactness}
    & \left(\int_{|x|>\kappa R} |[b, T_{\gamma}]g(x)|^p \omega(x) \, dx \right)^{1/p} \\
    &\nonumber \lesssim \|b\|_{L^{\infty}} \|g\|_{L^p(\omega)} \sum_{k=0}^{\infty} \frac{(1+2^{k+1}\kappa R/\rho(0))^{\theta}}{(1+ \frac{(1-1/\kappa)2^k \kappa R}{\rho(0)})^{N/(l_0+1)}} \frac{(2^{k+1}\kappa)^{n}}{(1-1/\kappa)^n (2^k \kappa R)^{n}} \\
    &\nonumber \hspace{4cm} \times \left(\int_{E} \omega(x) \ dx \right)^{1/p} \left( \int_{E} \omega^{-1/(p-1)}(y) \, dy \right)^{1-1/p} \\
    &\nonumber \lesssim \frac{\|b\|_{L^{\infty}} \|g\|_{L^p(\omega)}}{(1-1/\kappa)^{n+N/(l_0+1)}} \sum_{k=0}^{\infty} \frac{(1+2^{k+1}\kappa R/\rho(0))^{\theta}}{(1+ 2^{k}\kappa R/\rho(0))^{N/(l_0+1)}} \left(1+\frac{R}{\rho(0)} \right)^{\theta} \\
    &\nonumber \hspace{2cm} \times \left(\frac{1}{\Psi_{\theta}(E)|E|}\int_{E} \omega(x) \, dx \right)^{1/p} \left( \frac{1}{\Psi_{\theta}(E)|E|} \int_{E} \omega^{-1/(p-1)}(y) \, dy \right)^{1-\frac{1}{p}} \\
    &\nonumber \lesssim \frac{\|b\|_{L^{\infty}} \|g\|_{L^p(\omega)} \|\omega\|_{A_p^{\theta, \rho}}^{1/p}}{(1-1/\kappa)^{n+N/(l_0+1)}} \sum_{k=0}^{\infty} \frac{(1+2^{k}\kappa R/\rho(0))^{2\theta}}{(1+ 2^{k}\kappa R/\rho(0))^{N/(l_0+1)}} .
\end{align}
Now, observe that the series on the right-hand side of the above inequality is convergent. Indeed, if $R>\rho(0)$, then we have $1/(1+2^k \kappa R/\rho(0)) \leq 1/(2^k \kappa)$. Hence, for $N>2\theta(l_0+1)$ we get
\begin{align*}
    \sum_{k=0}^{\infty} \frac{(1+2^{k}\kappa R/\rho(0))^{2\theta}}{(1+ 2^{k}\kappa R/\rho(0))^{N/(l_0+1)}} &\leq \sum_{k=0}^{\infty} \frac{1}{(2^k \kappa)^{N/(l_0+1)-2\theta}} \lesssim \frac{1}{\kappa^{N/(l_0+1)-2\theta}} .
\end{align*}
On the other hand for $R\leq \rho(0)$, there exists $l>0$ such that $2^l R> \rho(0)$. Therefore, as in the previous case, we have $1/(1+2^j \kappa R/\rho(0)) \leq 2^{l}/(2^j \kappa)$. So, for $N>2\theta(l_0+1)$ gives
\begin{align*}
    \sum_{k=0}^{\infty} \frac{(1+2^{k}\kappa R/\rho(0))^{2\theta}}{(1+ 2^{k}\kappa R/\rho(0))^{N/(l_0+1)}} &\leq \sum_{k=0}^{\infty} \frac{2^{l(N/(l_0+1)-2\theta)}}{(2^k \kappa)^{N/(l_0+1)-2\theta}} \lesssim \frac{2^{l(N/(l_0+1)-2\theta)}}{\kappa^{N/(l_0+1)-2\theta}} .
\end{align*}
Using the above estimates in \eqref{Making of weight in compactness} yields
\begin{align*}
    \left(\int_{|x|>\kappa R} |[b, T_{\gamma}]g(x)|^p \omega(x) \ dx \right)^{1/p} &\lesssim \frac{\|b\|_{L^{\infty}} \|g\|_{L^p(\omega)} \|\omega\|_{A_p^{\theta, \rho}}^{1/p}}{(1-1/\kappa)^{n+N/(l_0+1)} \kappa^{N/(l_0+1)-2\theta}} \max\{2^{l(N/(l_0+1)-2\theta)}, 1\}.
\end{align*}
Hence, for $p>1$ and choosing $N>2\theta(l_0+1)$, for $g \in C_c^{\infty}(\mathbb{R}^n)$, there holds
\begin{align*}
    \lim_{A \to \infty} \|[b, T_{\gamma}]g \chi_{\mathcal{F}_A} \|_{L^p(\omega)} & \leq \lim_{\kappa \to \infty} \left(\int_{|x|>\kappa R} |[b, T_{\gamma}]g(x)|^p \omega(x) \ dx \right)^{1/p} =0 .
\end{align*}
Therefore, the second condition of Theorem \ref{Theorem: Characterizations of compactness} is satisfied.

\subsection*{Condition \texorpdfstring{$(3)$}{}:}
We want to show that for any $\epsilon>0$, if $|t|$ is sufficiently small then,
\begin{align*}
    \|[b, T_{\gamma}]f(\cdot+t)-[b, T_{\gamma}]f(\cdot)\|_{L^p(\omega)} \leq C \epsilon \quad \quad \forall f \in F.
\end{align*}
Therefore similarly as in \eqref{In condition 2 reduction to compactly function}, it is enough to verify that, for $|t|$ is sufficiently small,
\begin{align*}
    \|[b, T_{\gamma}]g(\cdot+t)-[b, T_{\gamma}]g(\cdot)\|_{L^p(\omega)} \leq C \epsilon \quad \quad \forall g \in C_c^{\infty}(\mathbb{R}^n) \subseteq F.
\end{align*}
Let us take $\gamma \in (0,1)$ and choose $|t|<\gamma/8$. We write
\begin{align*}
    [b, T_{\gamma}]g(x+t)-[b, T_{\gamma}]g(x) &= \int_{|x-y|>\gamma/2} K_{\gamma}(x,y)(b(x+t)-b(x))g(y) \, dy \\
    &+ \int_{|x-y|>\gamma/2} (K_{\gamma}(x+t,y)-K_{\gamma}(x,y))(b(x+t)-b(y)) g(y) \, dy \\
    &- \int_{|x-y|\leq \gamma/2} K_{\gamma}(x,y)(b(x)-b(y)) g(y) \, dy \\
    &+ \int_{|x-y|\leq \gamma/2} K_{\gamma}(x+t,y) (b(x+t)-b(y)) g(y) \, dy \\
    &=: J_1 + J_2 + J_3 + J_4 .
\end{align*}
First, we estimate $J_1$. As $b \in C_c^{\infty}(\mathbb{R}^n)$, we have $|b(x+t)-b(x)| \leq C |t| \|\nabla b\|_{L^{\infty}} \leq C |t|$. Therefore, using (\ref{Estimate: L^p boundedness of dilated operator}) we get
\begin{align}
\label{Estimate of J1 case}
    \|J_1\|_{L^p(\omega)} & \leq C |t| \|T_{\gamma}g\|_{L^p(\omega)} \leq C |t| \|g\|_{L^p(\omega)} \leq C |t| .
\end{align}
Note that for $|x-y| \leq \gamma/2$ and $|t|<\gamma/8$, we have $|x+t-y|<5\gamma/8$. This implies $\phi(\gamma^{-1}|x+t-y|) = \phi(\gamma^{-1}|x-y|) =1$. This will further implies $K_{\gamma}(x+t, y)=K_{\gamma}(x,y)=0$. So, we have $J_3 = J_4 =0$. Therefore, it only remains to estimate $J_2$. 

To estimate $J_2$, we write
\begin{align*}
    K_{\gamma}(x+t, y)-K_{\gamma}(x,y) &=\sum_{j=0}^{\infty} \left( K_{j, \gamma}(x+t, y)-K_{j, \gamma}(x,y) \right)\\
    &= \sum_{j=0}^{\infty} [K_j(x+t,y)-K_j(x,y)](1+\phi(\gamma^{-1}|x+t-y|)) \\
    & \hspace{2cm} + \sum_{j=0}^{\infty} K_j(x,y)[\phi(\gamma^{-1}|x+t-y|)-\phi(\gamma^{-1}|x-y|)] .
\end{align*}
Using the observation above, $J_2$ is dominated by
\begin{align*}
    |J_2| &\leq \int_{|x-y|>\gamma/2} \sum_{j=0}^{\infty} |K_j(x+t,y)-K_j(x,y)||1+\phi(\gamma^{-1}|x+t-y|)| |b(x+t)-b(y)| |g(y)|\,  dy \\
    &+ \int_{|x-y|>\gamma/2} \sum_{j=0}^{\infty}|K_j(x,y)| |\phi(\gamma^{-1}|x+t-y|)-\phi(\gamma^{-1}|x-y|)| |b(x+t)-b(y)| |g(y)| \, dy \\
    &=: J_{21} + J_{22} .
\end{align*}
Let us start with the estimate of $J_{21}$. Note that $|x-y|>\gamma/2$ and $|t|<\gamma/8$ imply $|x-y|>4|t|$ and $|x+t-y| \leq |x-y|+|t| \leq 2|x-y|$. Taking $0<\alpha<1$, we write
\begin{align*}
    |b(x+t)-b(y)| &= |b(x+t)-b(y)|^{1-\alpha} |b(x+t)-b(y)|^{\alpha} \\
    & \lesssim \|b\|_{L^{\infty}}^{1-\alpha} \|\nabla b\|_{L^{\infty}}^{\alpha} |x+t-y|^{\alpha} \lesssim |x-y|^{\alpha} .
\end{align*}
Using the above fact and applying Lemma \ref{Lemma: Difference kernel estimate with sum over j} with $Q=Q(x, \frac{3|t|}{2})$ yields
\begin{align}
\label{Estimate of J21 case}
    J_{21} &\lesssim \int_{|x-y|>4|t|} \frac{|t|}{(1+ \frac{|x-y|}{\rho(x)})^{\frac{\theta}{p-1}}} \frac{1}{|x-y|^{n+1}} |x-y|^{\alpha} |g(y)| \, dy \\
    &\nonumber \lesssim \sum_{k=2}^{\infty} \int_{2^{k}|t|\leq |x-y|<2^{k+1}|t|} \frac{|t|}{(1+ \frac{|x-y|}{\rho(x)})^{\frac{\theta}{p-1}}} \frac{1}{|x-y|^{n+1-\alpha}} |g(y)| \, dy \\
    &\nonumber \lesssim \sum_{k=2}^{\infty} \frac{|t|}{(1+ \frac{2^k |t|}{\rho(x)})^{\frac{\theta}{p-1}}} \frac{1}{(2^k |t|)^{n+1-\alpha}} \int_{B(x, 2^{k+1}|t|)} |g(y)| \, dy \\
    &\nonumber \lesssim |t|^{\alpha} \mathcal{M}_{\rho, N} g(x) \sum_{k=2}^{\infty} 2^{-(1-\alpha)k} \lesssim |t|^{\alpha} \mathcal{M}_{\rho, \frac{\theta}{p-1}}g(x) .
\end{align}

Finally, we estimate $J_{22}$. Note that if $|x-y|>3 \gamma$ and $|t|<\gamma/8$, then $|x+t-y|>|x-y|-|t|>2\gamma$. So, we have $\phi(\gamma^{-1}|x+t-y|)=\phi(\gamma^{-1}|x-y|)=0$. Also note that
\begin{align*}
    |\phi(\gamma^{-1}|x+t-y|)-\phi(\gamma^{-1}|x-y|)| \lesssim |t|/\gamma .
\end{align*}
Therefore using the observations and \eqref{Kernel estimate for all x and y}, with $N= \frac{\theta}{p-1}$ gives
\begin{align}
\label{Estimate of J22 case}
    J_{22} &\lesssim \frac{|t|}{\gamma} \sum_{j=0}^{\infty} \int_{\frac{\gamma}{2}<|x-y|\leq 3\gamma} \frac{1}{(1+ \frac{|x-y|}{\rho(x)})^{\frac{\theta}{p-1}}} \frac{1}{|x-y|^{n}} |b(x+t)-b(y)| |g(y)| \, dy \\
    &\nonumber \lesssim \|\nabla b\|_{L^{\infty}} \frac{|t|}{\gamma} \int_{\frac{\gamma}{2}<|x-y|\leq 3\gamma} \frac{1}{(1+ \frac{|x-y|}{\rho(x)})^{\frac{\theta}{p-1}}} \frac{|x+t-y|}{|x-y|^{n}} |g(y)| \, dy \\
    &\nonumber \lesssim |t| \frac{1}{(1+ \frac{3\gamma}{\rho(x)})^{\frac{\theta}{p-1}}} \frac{1}{(3\gamma)^n} \int_{|x-y|\leq 3\gamma} |g(y)| \, dy \lesssim |t| \mathcal{M}_{\rho, \frac{\theta}{p-1}} g(x) .
\end{align}
Now estimates \eqref{Estimate of J21 case} and \eqref{Estimate of J22 case} together with Proposition \ref{Prop: Maximal function new weight with r power} provides
\begin{align}
\label{Estimate of J2 case}
    \|J_2\|_{L^p(\omega)} &\lesssim (|t|^{\alpha}+|t|)\|\mathcal{M}_{\rho, \frac{\theta}{p-1}}g\|_{L^p(\omega)} \lesssim (|t|^{\alpha}+|t|) \|g\|_{L^p(\omega)} \leq C (|t|^{\alpha}+|t|) .
\end{align}
Therefore combining the estimates \eqref{Estimate of J1 case} and \eqref{Estimate of J2 case}, for $|t|$ sufficiently small we get
\begin{align*}
    \|[b, T_{\gamma}]g(\cdot+t)-[b, T_{\gamma}]g(\cdot)\|_{L^p(\omega)} \leq C \epsilon .
\end{align*}
This completes the proof of Theorem \ref{Theorem: Weighted compactness of pseudo commutator}.
\end{proof}

\section*{Acknowledgments} 
R.~Basak is supported by the National Science and Technology Council of Taiwan under research grant number 113-2811-M-003-014. J.~Singh is supported by the Prime Minister's Research Fellowship (PMRF), Ministry of Education, Government of India. M. N.~Vempati  research was supported, in part, by
the Louisiana Experimental Program to Stimulate Competitive Research (EPSCoR), funded
by the National Science Foundation and the Board of Regents Support Fund award number OIA-2437963.


\begin{thebibliography}{BBGG24b}

\bibitem[ABKP93]{Alvarez_Bagby_Kurtz_Weighted_estimates}
Josefina \'Alvarez, Richard~J. Bagby, Douglas~S. Kurtz, and Carlos P\'erez, \emph{Weighted estimates for commutators of linear operators}, Studia Math. \textbf{104} (1993), no.~2, 195--209. \MR{1211818}

\bibitem[BBD21]{Bui_Bui_Duong_Singular_JGA_2021}
The~Anh Bui, The~Quan Bui, and Xuan~Thinh Duong, \emph{Quantitative weighted estimates for some singular integrals related to critical functions}, J. Geom. Anal. \textbf{31} (2021), no.~10, 10215--10245. \MR{4303917}

\bibitem[BBD22]{Bui_Bui_Duong_square_function_new_weight_2022}
\bysame, \emph{Quantitative estimates for square functions with new class of weights}, Potential Anal. \textbf{57} (2022), no.~4, 545--569 (English).

\bibitem[BBGG23]{Bagchi_Basak_Garg_Ghosh_Grushin_2023}
Sayan Bagchi, Riju Basak, Rahul Garg, and Abhishek Ghosh, \emph{Sparse bounds for pseudo-multipliers associated to {G}rushin operators, {I}}, J. Fourier Anal. Appl. \textbf{29} (2023), no.~3, Paper No. 27, 38. \MR{4585471}

\bibitem[BBGG24a]{Bagchi_Basak_Garg_Ghosh_Operator_valued_2024}
\bysame, \emph{On some operator-valued {F}ourier pseudo-multipliers associated to {G}rushin operators}, J. Math. Anal. Appl. \textbf{539} (2024), no.~1, Paper No. 128498, 34. \MR{4749406}

\bibitem[BBGG24b]{BBGG-2}
\bysame, \emph{Sparse bounds for pseudo-multipliers associated to {G}rushin operators, {II}}, J. Geom. Anal. \textbf{34} (2024), no.~2, Paper No. 34, 49. \MR{4675196}

\bibitem[BC20]{Beltran-Cladek}
David Beltran and Laura Cladek, \emph{Sparse bounds for pseudodifferential operators}, J. Anal. Math. \textbf{140} (2020), no.~1, 89--116. \MR{4094458}

\bibitem[BF14]{Bernicot_Frey_Pseudo_semigroup_2014}
Fr\'ed\'eric Bernicot and Dorothee Frey, \emph{Pseudodifferential operators associated with a semigroup of operators}, J. Fourier Anal. Appl. \textbf{20} (2014), no.~1, 91--118. \MR{3180890}

\bibitem[BFKG12]{Bahouri_Fermanian_Gallagher_PseudodifferentialHeisenberg}
Hajer Bahouri, Clotilde Fermanian-Kammerer, and Isabelle Gallagher, \emph{Phase-space analysis and pseudodifferential calculus on the {H}eisenberg group}, Ast\'{e}risque (2012), no.~342, vi+127. \MR{2952066}

\bibitem[BG24]{Bagchi_Garg_L2_boundedness_JFA}
Sayan Bagchi and Rahul Garg, \emph{On {$L^2$}-boundedness of pseudo-multipliers associated to the {G}rushin operator}, J. Funct. Anal. \textbf{286} (2024), no.~3, Paper No. 110252, 74. \MR{4669590}

\bibitem[BHS11a]{Bongioanni_Harboure_Salinas_Schrodinger_first}
B.~Bongioanni, E.~Harboure, and O.~Salinas, \emph{Classes of weights related to {S}chr\"odinger operators}, J. Math. Anal. Appl. \textbf{373} (2011), no.~2, 563--579. \MR{2720705}

\bibitem[BHS11b]{Bongioanni_Harboure_Salinas_Schrodinger_JFFA_2011}
\bysame, \emph{Commutators of {R}iesz transforms related to {S}chr\"odinger operators}, J. Fourier Anal. Appl. \textbf{17} (2011), no.~1, 115--134. \MR{2765594}

\bibitem[BHS12]{Bongioanni_Harboure_Salinas_Schrodinger_second}
\bysame, \emph{Weighted inequalities for commutators of {S}chr\"odinger-{R}iesz transforms}, J. Math. Anal. Appl. \textbf{392} (2012), no.~1, 6--22. \MR{2914933}

\bibitem[BT15]{BagchiThangaveluHermitePseudo}
Sayan Bagchi and Sundaram Thangavelu, \emph{On {H}ermite pseudo-multipliers}, J. Funct. Anal. \textbf{268} (2015), no.~1, 140--170. \MR{3280055}

\bibitem[Bui20]{Bui_Hermite_Pseudo_2020}
The~Anh Bui, \emph{Hermite pseudo-multipliers on new {B}esov and {T}riebel-{L}izorkin spaces}, J. Approx. Theory \textbf{252} (2020), 105348, 16. \MR{4045217}

\bibitem[Cal65]{Calderon_Commutators}
A.-P. Calder\'on, \emph{Commutators of singular integral operators}, Proc. Nat. Acad. Sci. U.S.A. \textbf{53} (1965), 1092--1099. \MR{177312}

\bibitem[CDLW19]{Chen_Duong_Li_Wu_Compactness_Riesz_Transform_Stratified_group_2019}
Peng Chen, Xuan~Thinh Duong, Ji~Li, and Qingyan Wu, \emph{Compactness of {R}iesz transform commutator on stratified {L}ie groups}, J. Funct. Anal. \textbf{277} (2019), no.~6, 1639--1676. \MR{3985516}

\bibitem[CDR21]{Cardona_Delgado_Ruzhansky_JGA2021}
Duv\'{a}n Cardona, Julio Delgado, and Michael Ruzhansky, \emph{{$L^p$}-{B}ounds for {P}seudo-differential {O}perators on {G}raded {L}ie {G}roups}, J. Geom. Anal. \textbf{31} (2021), no.~12, 11603--11647. \MR{4322546}

\bibitem[CRW76]{Coifman_Rochberg_Weiss_Commutators}
R.~R. Coifman, R.~Rochberg, and Guido Weiss, \emph{Factorization theorems for {H}ardy spaces in several variables}, Ann. of Math. (2) \textbf{103} (1976), no.~3, 611--635. \MR{412721}

\bibitem[CV71]{Calderon_Vaillancourt_Pseudo_differential_1971}
Alberto-P. Calder\'on and R\'emi Vaillancourt, \emph{On the boundedness of pseudo-differential operators}, J. Math. Soc. Japan \textbf{23} (1971), 374--378. \MR{284872}

\bibitem[CY19]{Cao_Yabuta_Multilinear_Littlewood_Paley_2019}
Mingming Cao and K\^oz\^o Yabuta, \emph{The multilinear {L}ittlewood-{P}aley operators with minimal regularity conditions}, J. Fourier Anal. Appl. \textbf{25} (2019), no.~3, 1203--1247. \MR{3953502}

\bibitem[DLM{\etalchar{+}}18]{Duong_Li_Mo_Wu_Yang_compactness_Bessel_2018}
Xuan~Thinh Duong, Ji~Li, Suzhen Mao, Huoxiong Wu, and Dongyong Yang, \emph{Compactness of {R}iesz transform commutator associated with {B}essel operators}, J. Anal. Math. \textbf{135} (2018), no.~2, 639--673. \MR{3829612}

\bibitem[DYZ14]{Duong_Yan_Zhang_Schrodinger_operator_2014}
Xuan~Thinh Duong, Lixin Yan, and Chao Zhang, \emph{On characterization of {P}oisson integrals of {S}chr\"{o}dinger operators with {BMO} traces}, J. Funct. Anal. \textbf{266} (2014), no.~4, 2053--2085. \MR{3150151}

\bibitem[DZ99]{Dziubanski_Zienkiewicz_Hardy_Schrodinger_1999}
Jacek Dziuba{\'n}ski and Jacek Zienkiewicz, \emph{Hardy space {{\(H^1\)}} associated to {Schr{\"o}dinger} operator with potential satisfying reverse {H{\"o}lder} inequality}, Rev. Mat. Iberoam. \textbf{15} (1999), no.~2, 279--296 (English).

\bibitem[Epp96]{EppersonHermitePseudo}
Jay Epperson, \emph{Hermite multipliers and pseudo-multipliers}, Proc. Amer. Math. Soc. \textbf{124} (1996), no.~7, 2061--2068. \MR{1343690}

\bibitem[FR16]{Fischer_Ruzhansky_QuantizationBook}
Veronique Fischer and Michael Ruzhansky, \emph{Quantization on nilpotent {L}ie groups}, Progress in Mathematics, vol. 314, Birkh\"{a}user/Springer, [Cham], 2016. \MR{3469687}

\bibitem[Gra08]{Grafakos_Classical_Fourier}
Loukas Grafakos, \emph{Classical {F}ourier analysis}, second ed., Graduate Texts in Mathematics, vol. 249, Springer, New York, 2008. \MR{2445437}

\bibitem[GZ19]{Guo_Zhou_Compactness_Pseudo_differential_2019}
QingDong Guo and Jiang Zhou, \emph{Compactness of commutators of pseudo-differential operators with smooth symbols on weighted {L}ebesgue spaces}, J. Pseudo-Differ. Oper. Appl. \textbf{10} (2019), no.~3, 557--569. \MR{3990259}

\bibitem[H\"60]{Hormander_Translation_invariant}
Lars H\"ormander, \emph{Estimates for translation invariant operators in {$L\sp{p}$}\ spaces}, Acta Math. \textbf{104} (1960), 93--140. \MR{121655}

\bibitem[H\"85]{Hormander_Pseudo_Differential_operator}
\bysame, \emph{The analysis of linear partial differential operators. {III}}, Grundlehren der mathematischen Wissenschaften [Fundamental Principles of Mathematical Sciences], vol. 274, Springer-Verlag, Berlin, 1985, Pseudodifferential operators. \MR{781536}

\bibitem[Heb90]{Hebisch_Multiplier_Schrodinger}
Waldemar Hebisch, \emph{A multiplier theorem for {S}chr\"odinger operators}, Colloq. Math. \textbf{60/61} (1990), no.~2, 659--664. \MR{1096404}

\bibitem[Lap81]{Laptev_Fourier_Integral_Op}
A.~A. Laptev, \emph{Spectral asymptotics of a class of {F}ourier integral operators}, Trudy Moskov. Mat. Obshch. \textbf{43} (1981), 92--115. \MR{651330}

\bibitem[LN19]{Lrner_Nazarov_Dyadic_Calculus_2019}
Andrei~K. Lerner and Fedor Nazarov, \emph{Intuitive dyadic calculus: the basics}, Expo. Math. \textbf{37} (2019), no.~3, 225--265 (English).

\bibitem[LO20]{Lerner_Ombrosi_pointwise_sparse_domination_2020}
Andrei~K. Lerner and Sheldy Ombrosi, \emph{Some remarks on the pointwise sparse domination}, J. Geom. Anal. \textbf{30} (2020), no.~1, 1011--1027 (English).

\bibitem[LORR17]{Lerner_Ombrosi_Rivera_Pointwise_Commutators_2017}
Andrei~K. Lerner, Sheldy Ombrosi, and Israel~P. Rivera-R{\'{\i}}os, \emph{On pointwise and weighted estimates for commutators of {Calder{\'o}n}-{Zygmund} operators}, Adv. Math. \textbf{319} (2017), 153--181 (English).

\bibitem[Ly24a]{Ly_L2_boundedness_JFA}
Fu~Ken Ly, \emph{On the {$L^2$} boundedness of pseudo-multipliers for {H}ermite expansions}, J. Funct. Anal. \textbf{286} (2024), no.~2, Paper No. 110220, 27. \MR{4661641}

\bibitem[Ly24b]{Ly_Hermite_Weighted}
\bysame, \emph{Weighted estimates for {H}ermite pseudo-multipliers with rough symbols}, J. Approx. Theory \textbf{300} (2024), Paper No. 106043, 9. \MR{4732155}

\bibitem[Mau80]{Mauceri_Weyl_Transform_JFA}
Giancarlo Mauceri, \emph{The {W}eyl transform and bounded operators on {$L\sp{p}({\bf R}\sp{n})$}}, J. Functional Analysis \textbf{39} (1980), no.~3, 408--429. \MR{600625}

\bibitem[Mih56]{Mihlin_Fourier_Multiplier}
S.~G. Mihlin, \emph{On the multipliers of {F}ourier integrals}, Dokl. Akad. Nauk SSSR (N.S.) \textbf{109} (1956), 701--703. \MR{80799}

\bibitem[MRS12]{Michalowski_Rule_Staubach_Pseudo_differential}
Nicholas Michalowski, David~J. Rule, and Wolfgang Staubach, \emph{Weighted {$L^p$} boundedness of pseudodifferential operators and applications}, Canad. Math. Bull. \textbf{55} (2012), no.~3, 555--570. \MR{2957271}

\bibitem[P\'15]{Perez_Singular_Integrals_Weights_2015}
Carlos P\'erez, \emph{Singular integrals and weights}, Harmonic and geometric analysis, Adv. Courses Math. CRM Barcelona, Birkh\"auser/Springer Basel AG, Basel, 2015, pp.~91--143. \MR{3364669}

\bibitem[RR91]{rao_Ren_Theory_Orlich_1991}
M.~M. Rao and Z.~D. Ren, \emph{Theory of {O}rlicz spaces}, Monographs and Textbooks in Pure and Applied Mathematics, vol. 146, Marcel Dekker, Inc., New York, 1991. \MR{1113700}

\bibitem[RT10]{Ruzhansky_Turunen_Book}
Michael Ruzhansky and Ville Turunen, \emph{Pseudo-differential operators and symmetries}, Pseudo-Differential Operators. Theory and Applications, vol.~2, Birkh\"auser Verlag, Basel, 2010, Background analysis and advanced topics. \MR{2567604}

\bibitem[She95]{Shen_Schrodinger_operator_certain_potential_1995}
Zhong~Wei Shen, \emph{{$L^p$} estimates for {S}chr\"odinger operators with certain potentials}, Ann. Inst. Fourier (Grenoble) \textbf{45} (1995), no.~2, 513--546. \MR{1343560}

\bibitem[Ste93]{Stein-book}
Elias~M. Stein, \emph{Harmonic analysis: real-variable methods, orthogonality, and oscillatory integrals}, Princeton Mathematical Series, vol.~43, Princeton University Press, Princeton, NJ, 1993, With the assistance of Timothy S. Murphy, Monographs in Harmonic Analysis, III. \MR{1232192}

\bibitem[SW71]{Stein_Weiss_Fourier_Analysis}
Elias~M. Stein and Guido Weiss, \emph{Introduction to {F}ourier analysis on {E}uclidean spaces}, Princeton Mathematical Series, vol. No. 32, Princeton University Press, Princeton, NJ, 1971. \MR{304972}

\bibitem[Tan12]{Tang_Pseudo_Commutators_2012}
Lin Tang, \emph{Weighted norm inequalities for pseudo-differential operators with smooth symbols and their commutators}, J. Funct. Anal. \textbf{262} (2012), no.~4, 1603--1629. \MR{2873852}

\bibitem[Tan15]{Tang_Weighted_Schrodinger_Forum_Math_2015}
\bysame, \emph{Weighted norm inequalities for {S}chr\"{o}dinger type operators}, Forum Math. \textbf{27} (2015), no.~4, 2491--2532. \MR{3365805}

\bibitem[Tay81]{Taylor_Pseudo_differential}
Michael~E. Taylor, \emph{Pseudodifferential operators}, Princeton Mathematical Series, vol. No. 34, Princeton University Press, Princeton, NJ, 1981. \MR{618463}

\bibitem[TDOS02]{Duong_Ouhabaz_Sikora_Sharp_Multiplier_2002}
Xuan Thinh~Duong, El~Maati Ouhabaz, and Adam Sikora, \emph{Plancherel-type estimates and sharp spectral multipliers}, J. Funct. Anal. \textbf{196} (2002), no.~2, 443--485. \MR{1943098}

\bibitem[Tha87]{Thangavelu_Multiplier_Hermite}
S.~Thangavelu, \emph{Multipliers for {H}ermite expansions}, Rev. Mat. Iberoamericana \textbf{3} (1987), no.~1, 1--24. \MR{1008442}

\bibitem[TWZ19]{Tang_Wang_Zhu_Weighted_Schrodinger_2019}
L.~Tang, J.~Wang, and H.~Zhu, \emph{Weighted norm inequalities for area functions related to {S}chr\"{o}dinger operators}, Izv. Nats. Akad. Nauk Armenii Mat. \textbf{54} (2019), no.~1, 40--59. \MR{3935461}

\bibitem[TYYZ23]{Tao_Yang_Yuan_Zhang_compact_ball_Banach_2023}
Jin Tao, Dachun Yang, Wen Yuan, and Yangyang Zhang, \emph{Compactness characterizations of commutators on ball {B}anach function spaces}, Potential Anal. \textbf{58} (2023), no.~4, 645--679. \MR{4568877}

\bibitem[Uch78]{Uchiyama_Compactness_Commutators}
Akihito Uchiyama, \emph{On the compactness of operators of {H}ankel type}, Tohoku Math. J. (2) \textbf{30} (1978), no.~1, 163--171. \MR{467384}

\bibitem[Wan21]{Wang_some_remarks_sparse_multilinear_2021}
Zhidan Wang, \emph{Some remarks on the sparse dominations for commutators of multi(sub)linear operator}, J. Inequal. Appl. (2021), Paper No. 51, 11. \MR{4232822}

\bibitem[WWX20]{Wen_Wu_Xue_pseudo_commutator_2020}
Yongming Wen, Huoxiong Wu, and Qingying Xue, \emph{A note on multilinear pseudo-differential operators and iterated commutators}, Bull. Korean Math. Soc. \textbf{57} (2020), no.~4, 851--864. \MR{4130115}

\bibitem[WWX22]{Wen-Wu_Xue_Sparse_rho_variation_2022}
\bysame, \emph{Sparse domination and weighted inequalities for the {{\(\rho\)}}-variation of singular integrals and commutators}, J. Geom. Anal. \textbf{32} (2022), no.~12, 30 (English), Id/No 297.

\bibitem[XYY21]{Xue_Yabuta_Yan_Kolmogorov_Riesz_theorem}
Qingying Xue, K\^oz\^o{} Yabuta, and Jingquan Yan, \emph{Weighted {F}r\'echet-{K}olmogorov theorem and compactness of vector-valued multilinear operators}, J. Geom. Anal. \textbf{31} (2021), no.~10, 9891--9914. \MR{4303944}

\end{thebibliography}

\newcommand{\etalchar}[1]{$^{#1}$}
\providecommand{\bysame}{\leavevmode\hbox to3em{\hrulefill}\thinspace}
\providecommand{\MR}{\relax\ifhmode\unskip\space\fi MR }
\providecommand{\MRhref}[2]{%
  \href{http://www.ams.org/mathscinet-getitem?mr=#1}{#2}
}
\providecommand{\href}[2]{#2}

\end{document}